\definecolor{darkblue}{RGB}{0,0,170}
\definecolor{brickred}{RGB}{200,0,0}
\newcommand{\R}{\mathbb{R}}
\newcommand{\N}{\mathbb{N}}
\newcommand{\Sn}{{\mathbb{S}}^{n-1}}
\newcommand{\eps}{\varepsilon}
\newcommand{\dist}{\mathrm{dist}}
\renewcommand{\div}{\mathrm{div}}
\newcommand{\diam}{\text{diam}}
\newcommand{\pv}{\mathrm{p.v.}\!\!}
\newcommand{\Ds}{{\left(-\Delta\right)}^s}
\newcommand{\grad}{\nabla}
\newcommand{\intpart}[1]{\lfloor #1 \rfloor}
\newcommand{\lrangle}[1]{\langle #1 \rangle}
\newcommand{\Rn}{{\R}^n}
\theoremstyle{plain}
\newtheorem{theorem}{Theorem}[section]
\newtheorem{lemma}[theorem]{Lemma}
\newtheorem{proposition}[theorem]{Proposition}
\newtheorem{corollary}[theorem]{Corollary}
\theoremstyle{remark}
\newtheorem{remark}[theorem]{Remark}
\newtheorem{notation}[theorem]{Notation}
\theoremstyle{definition}
\newtheorem{definition}[theorem]{Definition}
\date{\today}
\address{Institut für Mathematik, Universität Zürich, Wintherturerstrasse 190, 8057 Zürich, Switzerland.}
\email{abatange@math.uni-frankfurt.de \ -- \ xavier.ros-oton@math.uzh.ch}
\author{Nicola Abatangelo}
\author{Xavier Ros-Oton}
\title{Obstacle problems for integro-differential operators: higher regularity of free boundaries}
\keywords{Obstacle problems, higher regularity, nonlocal equations, boundary regularity.}
\subjclass[2010]{35R35; 47G20; 35B65.}
\thanks{X.R. was supported by the European Research Council under the Grant Agreement No. 801867 ``Regularity and singularities in elliptic PDE (EllipticPDE)'', by the Swiss National Science Foundation, and by MINECO grant MTM2017-84214-C2-1-P}
\begin{document}

\begin{abstract}
We study the higher regularity of free boundaries in obstacle problems for integro-differential operators. 
Our main result establishes that, once free boundaries are~$C^{1,\alpha}$, then they are~$C^\infty$. 
This completes the study of regular points, initiated in~\cite{inventiones}.

In order to achieve this, we need to establish optimal boundary regularity estimates for solutions to linear nonlocal equations in~$C^{k,\alpha}$ domains. 
These new estimates are the core of our paper, and extend previously known results by Grubb (for~$k=\infty$) and by the second author and Serra (for~$k=1$).
\end{abstract}

\maketitle

%\tableofcontents

\section{Introduction}

Obstacle problems for integro-differential operators appear naturally in Probability and Finance. 
Namely, they arise when considering optimal stopping problems for L\'evy processes with jumps, which have been used in pricing models for American options since the 1970s; see~\cites{CT,Merton}.
More recently, such kind of obstacle problems have found applications in interacting particle systems and other related models in statistical mechanics; 
see~\cites{CDM,Serfaty,Ros-survey} and references therein.

Because of their connections to Probability, Finance, and Physics, in the last fifteen years there have been considerable efforts to understand obstacle problems for such kind of nonlocal operators.
Usually, one considers the obstacle problem
\begin{equation}\label{obstacle}
\begin{split}\min\{Lv,\,v-\varphi\}&=0\quad\textrm{in}\ \R^n, \\
 \lim_{|x|\to\infty} v(x) &=0,\end{split}
 \end{equation}
for a nonlocal operator~$L$, where~$\varphi$ is a given smooth obstacle with compact support.

The most basic and canonical example of integro-differential operator~$L$ is the fractional Laplacian~$(-\Delta)^s$, $s\in(0,1)$.
The mathematical study of the obstacle problem for the fractional Laplacian was initiated by Silvestre~\cite{S-obst} and Caffarelli, Salsa, and Silvestre~\cite{CSS}, and it is nowadays pretty well understood; see the survey paper~\cite{DS-survey}. 

The main regularity result for the free boundary~$\partial\{v>\varphi\}$ in the obstacle problem for the fractional Laplacian establishes that the free boundary is~$C^\infty$ outside a certain set of \textit{degenerate} ---or \textit{singular}--- points.
To show this, one takes the following steps: 
\begin{itemize} 
\item[(a)] The free boundary splits into regular points and degenerate points.
\item[(b)] Near regular points, the free boundary is~$C^{1,\alpha}$.
\item[(c)] Once the free boundary is~$C^{1,\alpha}$ near regular points, then it is actually~$C^\infty$.
\end{itemize} 
Parts (a) and (b) were established in~\cite{CSS} (see also Athanasopoulos, Caffarelli, and Salsa~\cite{ACS}), while part (c) was established first for~$s=\frac12$ by Koch, Petrosyan, and Shi~\cite{KPS} and by De Silva and Savin~\cite{DS} (independently and with different proofs), and later for all~$s\in(0,1)$ by Koch, Rüland, and Shi~\cite{KRS} and by Jhaveri and Neumayer~\cite{JN} (independently and with different proofs).

After the results in~\cite{CSS}, many more results have been obtained concerning the set of degenerate/singular points~\cites{GP,BFR,FS,GR,FJ}, the case of the fractional Laplacian with a drift~\cites{PP,GPPS,FR}, and also the parabolic version of the problem~\cites{CF,BFR2}.

For more general integro-differential operators, however, much less is known.
One of the few works in this direction is the one by Caffarelli, the second author, and Serra~\cite{inventiones}, which extended the results of~\cite{CSS} to a whole family of integro-differential operators of the form
\begin{equation}\label{L}
\begin{split}
Lu(x) &=\, \pv\int_{\Rn}\big(u(x)-u(x+y)\big)K(y)\;dy\\
&=\frac12\int_{\Rn}\big(2u(x)-u(x+y)-u(x-y)\big)K(y)\;dy
\end{split}
\end{equation}
with the kernel~$K$ satisfying
\begin{align}\label{L2}
\begin{split}
& K\ \textrm{is even, homogeneous, and} \\ 
& \frac{\lambda}{|y|^{n+2s}}\leq K(y)\leq\frac{\Lambda}{|y|^{n+2s}},\quad
\text{for any }y\in\Rn,\ 
\textrm{with } 0<\lambda\leq \Lambda,\ s\in(0,1).
\end{split}
\end{align}
The main result in~\cite{inventiones} establishes that, if~$\varphi\in C^{2,1}(\R^n)$ and
\begin{align}\label{phi}
\{\varphi>0\} \text{ is bounded},
\end{align}
then the free boundary splits into \emph{regular points}~$x_0$, at which 
\begin{equation}\label{regular-point}
\sup_{B_r(x_0)}(v-\varphi)\approx r^{1+s} \quad \textrm{for $r>0$ small},
\end{equation}
and a set of degenerate points, at which~$\sup_{B_r(x_0)}(v-\varphi)\lesssim r^{1+s+\alpha}$, with~$\alpha>0$.
Moreover, the set of regular points is an open subset of the free boundary, and it is~$C^{1,\alpha}$.
\medskip

The aim of this paper is to continue the study of regular points initiated in~\cite{inventiones}, and to show that, once the free boundary is~$C^{1,\alpha}$ near regular points, then it is actually~$C^\infty$ (as long as~$\varphi$ is~$C^\infty$).
This is stated next.

\begin{theorem}\label{thm:Cinfty}
Let $L$ be an operator as in \eqref{L}-\eqref{L2}, 
with $K\in C^\infty(\Sn)$, and $v$ be any solution to \eqref{obstacle}
with $\varphi\in C^\infty(\R^n)$ satisfying \eqref{phi}.
Let $x_0\in\partial\{v>\varphi\}$ be any regular free boundary point.

Then, the free boundary is~$C^\infty$ in a neighbourhood of~$x_0$.
\end{theorem}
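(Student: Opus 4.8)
The plan is to run the classical bootstrap scheme for free boundaries in obstacle problems, but with the new higher-order boundary regularity estimates for nonlocal operators in~$C^{k,\alpha}$ domains playing the role that Grubb's~$\mu$-transmission theory and the Ros-Oton--Serra estimates played in the~$C^\infty$ and~$C^{1,\alpha}$ cases respectively. Starting point: by~\cite{inventiones} we already know that in a neighbourhood of the regular point~$x_0$ the free boundary~$\Gamma=\partial\{v>\varphi\}$ is a~$C^{1,\alpha}$ graph, the contact set~$\{v=\varphi\}$ lies on one side, and~$w:=v-\varphi$ solves the linear nonlocal equation~$Lw=-L\varphi=:f$ in the (local) positivity set~$\Omega=\{v>\varphi\}$, with~$w=0$ in~$\Omega^c$ and~$f\in C^\infty$ near~$x_0$. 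Moreover~$w$ behaves like~$d^s$, where~$d=\dist(\cdot,\Omega^c)$, near~$\Gamma$; more precisely~$w/d^s$ extends continuously up to~$\Gamma$ and is bounded away from zero at~$x_0$ (this is the nondegeneracy~\eqref{regular-point} together with the~$C^{1,\alpha}$ regularity). So the whole problem is reduced to: given that~$\Omega$ is a~$C^{k,\alpha}$ domain and~$w$ solves~$Lw=f$ in~$\Omega$ with~$w\equiv 0$ outside, how smooth is~$w/d^s$ up to~$\Gamma$, and how does this feed back into the regularity of~$\Gamma$ itself?

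The bootstrap proceeds by induction on~$k$. Suppose~$\Gamma$ is~$C^{k,\alpha}$ near~$x_0$ (base case~$k=1$ from~\cite{inventiones}). Apply the optimal boundary regularity estimate for~$L$ in~$C^{k,\alpha}$ domains — the core result of the paper, extending Grubb and Ros-Oton--Serra — to conclude that~$w/d^s \in C^{k-1,\alpha}(\overline{\Omega}\cap B_\rho(x_0))$ (or the appropriate~$C^{k,\beta}$-type statement; the exact gain is dictated by the estimate). Now one recovers an equation for the free boundary. Write~$\Gamma$ locally as~$\{x_n = g(x')\}$ with~$g\in C^{k,\alpha}$; the unit normal~$\nu$ and the function~$d$ are then expressed through~$g$. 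Differentiating the equation~$Lw=f$ in tangential directions~$\tau=e_i - (\partial_i g)e_n$ along~$\Gamma$, and using the Hopf-type lemma / nondegeneracy to divide by the nonvanishing quantity~$w/d^s$ at~$x_0$, produces a relation of the form~$\partial_i g = \Phi_i(\text{lower-order data}, w/d^s, \nabla(w/d^s), \ldots)$ on~$\{x_n = g(x')\}$. Since the right-hand side involves~$w/d^s\in C^{k-1,\alpha}$ and~$g\in C^{k,\alpha}$ composed in a way that still lands in~$C^{k-1,\alpha}$, one gets~$\nabla g\in C^{k-1,\alpha}$, i.e. no gain yet — so instead one must set this up so that the normal derivative of~$w/d^s$, which by the transmission-type estimate is one order \emph{better} in the interior direction, or a Schauder-type argument for the fixed-domain equation for~$w/d^s$, upgrades~$g$ to~$C^{k+1,\alpha}$. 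Concretely, the cleanest route is: (i) flatten~$\Gamma$ by a~$C^{k,\alpha}$ diffeomorphism, turning~$L$ into a variable-coefficient nonlocal operator with~$C^{k-1,\alpha}$ coefficients acting on a half-space, (ii) use the~$C^{k,\alpha}$-domain estimate to get~$w/d^s$ on the flattened side, (iii) read off~$g$ from the condition that pins down the free boundary (e.g. that the~$(1+s)$-homogeneous blow-up is the half-space solution, equivalently a normalization of~$w/d^s$), gaining one derivative because the equation is elliptic of order~$2s$ and the right-hand side~$f$ is~$C^\infty$.

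The main obstacle — and the reason this paper is mostly about linear estimates — is step (ii): establishing the optimal boundary regularity for~$Lw=f$ in~$C^{k,\alpha}$ domains, i.e. that~$w/d^s$ is as regular as the~$C^{k,\alpha}$ geometry and the~$C^\infty$ right-hand side allow. Grubb's pseudodifferential~$\mu$-transmission calculus handles~$C^\infty$ domains but degenerates completely at finite regularity; the Ros-Oton--Serra method handles~$C^{1,\alpha}$ but stops at first order. Pushing to general~$k$ requires a genuinely new argument — presumably a higher-order Schauder / Liouville scheme in which one subtracts not just the leading~$d^s$ profile but a full Taylor-type expansion of~$w$ in powers of~$d$ with~$C^{k-j,\alpha}$ tangential coefficients, classifies the entire space of global solutions of the constant-coefficient model in a half-space with polynomial-times-$d^s$ growth, and controls the error from freezing coefficients on a~$C^{k,\alpha}$ domain. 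Handling the nonlocality of~$L$ (the equation at a point sees all of~$\Rn$, so ``freezing coefficients'' and ``flattening'' interact delicately with the tail of~$w$) is the delicate technical heart. Once that linear estimate is in hand, the free-boundary bootstrap above closes: each round upgrades~$\Gamma$ from~$C^{k,\alpha}$ to~$C^{k+1,\alpha}$, and iterating over all~$k$ (with~$\alpha$ fixed, or allowing~$\alpha\to1^-$ and then reabsorbing) yields~$\Gamma\in C^\infty$ near~$x_0$, which is Theorem~\ref{thm:Cinfty}.
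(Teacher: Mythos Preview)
Your overall plan---bootstrap from the $C^{1,\alpha}$ starting point of \cite{inventiones} using the new linear estimates---matches the paper's, but the specific mechanism you propose for gaining one derivative at each step has a gap, and it is precisely the gap that the paper's main tool is designed to close.

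You apply the boundary Schauder estimate (Theorem~\ref{thm:uoverds}) to $w=v-\varphi$ itself: if $\partial\Omega\in C^{\beta+1}$ then $w/d^s\in C^{\beta}$. As you correctly observe, feeding this back into the graph equation yields $\nabla g$ only as regular as what you started with---no gain. Your suggested fixes (extra regularity of $\partial_\nu(w/d^s)$, flattening and re-running Schauder, a ``fixed-domain equation for $w/d^s$'') are not carried out, and in fact none of them closes the loop: the Schauder estimate, by its very statement, loses one order relative to the domain and cannot by itself produce the bootstrap gain.

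The paper's route is different and decisive. One differentiates $Lw=f$ in the coordinate directions to obtain, for each $i$, a solution $\partial_i w$ of $L(\partial_i w)=\partial_i f$ in $\Omega\cap B_r(x_0)$ vanishing on the contact set; nondegeneracy at the regular point gives $\partial_n w\ge c_1 d^s$. One then applies not the Schauder estimate but the \emph{higher-order boundary Harnack estimate}, Theorem~\ref{thm:u1overu2}: if $\partial\Omega\in C^\beta$ and $u_1,u_2$ are two such solutions with $u_2\ge c_1 d^s$, then $u_1/u_2\in C^\beta(\overline\Omega)$---the \emph{same} $\beta$ as the domain, not $\beta-1$. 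With $u_1=\partial_i w$, $u_2=\partial_n w$ this yields $\partial_i w/\partial_n w\in C^\beta$, and since the unit normal to $\partial\Omega$ is
\[
\nu^i \;=\; \frac{\partial_i w/\partial_n w}{\sqrt{1+\sum_{j=1}^{n-1}(\partial_j w/\partial_n w)^2}},
\]
one gets $\nu\in C^\beta$, hence $\partial\Omega\in C^{\beta+1}$. This is the chain \eqref{nu2} in the proof of Theorem~\ref{thm:low-reg}, from which Theorem~\ref{thm:Cinfty} follows immediately. The entire gain comes from the quotient estimate for two solutions, not from $w/d^s$; your proposal never invokes Theorem~\ref{thm:u1overu2}, and that is the missing idea.
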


This is the analogue of step (c) explained above, and extends the results of~\cites{KRS,JN} to a much more general setting.

Furthermore, for less regular obstacles~$\varphi\in C^\beta$ we establish sharp regularity estimates for the free boundary, too.
Here, and throughout the paper, when~$\beta\notin\mathbb N$ we denote by~$C^\beta$ the space~$C^{k,\alpha}$, with~$k\in\mathbb Z$, $\alpha\in(0,1)$, and~$\beta=k+\alpha$.

\begin{theorem}\label{thm:low-reg}
Let~$L$ be an operator as in~\eqref{L}-\eqref{L2}, and~$v$ be any solution to~\eqref{obstacle}
with~$\varphi$ satisfying~\eqref{phi}.
Let~$\theta>2$ be such that~$\theta\notin \mathbb N$ and~$\theta\pm s\notin \mathbb N$.
Assume that~$\varphi\in C^{\theta+s}(\R^n)$, that~$K\in C^{2\theta-1}(\Sn)$, and let~$x_0\in \{v>\varphi\}$ be any regular free boundary point.

Then, the free boundary is~$C^{\theta}$ in a neighbourhood of~$x_0$.
\end{theorem}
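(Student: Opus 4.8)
The plan is to follow the same bootstrap philosophy as in the $C^\infty$ case (Theorem \ref{thm:Cinfty}), but carefully tracking the finite differentiability so that one sees exactly where the regularity of $\varphi$ and of the kernel $K$ enter. First I would reduce to a normalized situation: after a translation we may assume $x_0=0$, and since the free boundary is already known to be $C^{1,\alpha}$ near regular points by~\cite{inventiones}, we may flatten it, so that locally $\{v>\varphi\}=\{x_n>g(x')\}$ with $g\in C^{1,\alpha}$ and $g(0)=0$, $\nabla g(0)=0$. Writing $u=v-\varphi$, the function $u\geq0$ vanishes outside this subgraph domain and solves $Lu=-L\varphi=:f$ in $\{u>0\}$, with $f\in C^{\theta+s-2s}=C^{\theta-s}$ (since $L$ maps $C^{\beta+2s}$ into $C^\beta$ for the relevant range, using $K\in C^{2\theta-1}(\Sn)$ to have enough smoothness of the operator). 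The normalization~\eqref{regular-point} at a regular point gives that $u$ behaves like $d^{s}$ times a nondegenerate factor, where $d$ is the distance to the free boundary.

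The core step is the boundary-regularity bootstrap. The idea is: if the domain is $C^{k,\alpha}$ and $f\in C^{k-1,\alpha}$ (with the relevant indices avoiding integers and integers$\pm s$), then the optimal boundary estimates for linear nonlocal equations — the new estimates advertised as the heart of the paper, extending Grubb's $k=\infty$ result and Ros-Oton–Serra's $k=1$ result — give that $u/d^s$ extends to a $C^{k,\alpha}$ function up to the boundary, and in particular the "boundary data" $u/d^s$ is $C^{k,\alpha}(\overline{\{u>0\}})$. Next, one differentiates the obstacle problem: the partial derivatives $\partial_i u$ for $i=1,\dots,n-1$ (tangential-like directions) and suitable combinations solve linear nonlocal equations in $\{u>0\}$ with right-hand sides of one derivative less, and they vanish on the free boundary. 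A Hopf-type / nondegeneracy argument (again from~\cite{inventiones}, using~\eqref{regular-point}) shows that $\partial_n u$ does not degenerate, so the free boundary can be written, via the implicit function theorem applied to $u/d^s$ or to an expansion of $u$, as a level set of a $C^{k,\alpha}$ function; hence $g\in C^{k,\alpha}$. This improves the domain regularity by one order, which in turn (via the boundary estimates) improves the regularity of $u/d^s$, and one iterates. The iteration stops exactly when the regularity of $f$ runs out, i.e. at $f\in C^{\theta-s}$, which corresponds — after accounting for the $d^s$ factor and one order lost in passing from $u$ to the free boundary — to $g\in C^\theta$. The hypotheses $\theta\notin\mathbb N$ and $\theta\pm s\notin\mathbb N$ are precisely what is needed so that at every stage of the bootstrap the Hölder exponents appearing (which are shifts of $\theta$ by integers and by $\pm s$) stay non-integer, so the linear estimates apply cleanly without logarithmic losses; the assumption $K\in C^{2\theta-1}(\Sn)$ is what guarantees the operator is smooth enough to run the full bootstrap up to order $\theta$ (each application of $L$ or of a commutator with $\partial_i$ costs roughly two derivatives of the kernel).

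The main obstacle I expect is the boundary regularity estimate itself for $u/d^s$ in $C^{k,\alpha}$ domains: one must show that for solutions of $Lu=f$ vanishing outside a $C^{k,\alpha}$ domain, the quotient $u/d^s$ — where $d$ is a regularized distance to the boundary — is $C^{k,\alpha}$ up to the boundary, with the sharp dependence on $k$, $\alpha$, $s$, and the regularity of $K$. This requires (i) constructing a good barrier/comparison function behaving like $d^s$ with precise expansions, (ii) freezing the boundary and the kernel and estimating the error terms produced by the non-flatness of the domain (which is where $C^{k,\alpha}$ of $g$ feeds in, through Schauder-type incremental quotients), and (iii) handling the anisotropy of $K$ (constant coefficients but direction-dependent), for which one needs the fine description of the boundary behaviour of $L(d^s)$ near a hyperplane in each direction. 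Once this linear theory is in place with sharp exponents, the free-boundary bootstrap above is essentially bookkeeping: differentiate, apply the linear estimate, use nondegeneracy to upgrade the domain, repeat, and stop at $C^\theta$.
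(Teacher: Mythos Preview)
Your overall bootstrap strategy is correct, and you have identified most of the ingredients (differentiate the equation, use nondegeneracy of the normal derivative, iterate). However, there is a real gap in the linear estimate you plan to invoke. You center the argument on the Schauder-type estimate for $u/d^s$ (Theorem~\ref{thm:uoverds}). But that theorem requires the domain to be $C^{\beta+1}$ in order to conclude $u/d^s\in C^\beta$. If you only know $\partial\Omega\in C^\beta$, it yields at best $\partial_i u/d^s\in C^{\beta-1}$, hence $\nu\in C^{\beta-1}$ and $\partial\Omega\in C^\beta$ again --- no gain, and the iteration stalls. In addition, your proposed use of the implicit function theorem on $u/d^s$ cannot recover the free boundary: by nondegeneracy $u/d^s$ is strictly positive up to $\partial\Omega$, so its zero set is not the free boundary.

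What the paper actually uses is the higher-order \emph{boundary Harnack} estimate (Theorem~\ref{thm:u1overu2}): if $\partial\Omega\in C^\beta$ (only $C^\beta$, not $C^{\beta+1}$) and $u_1,u_2$ solve $Lu_i=f_i$ in $\Omega\cap B_1$, vanish in $B_1\setminus\Omega$, with $f_i\in C^{\beta-s}$ and $u_2\geq c_1 d^s$, then $u_1/u_2\in C^\beta(\overline\Omega\cap B_{1/2})$. One applies this with $w=v-\varphi$, $u_1=\partial_i w$, $u_2=\partial_n w$; the right-hand sides are $\partial_i f\in C^{\theta-1-s}$ and the lower bound $\partial_n w\geq c_1 d^s$ comes from~\eqref{regular-point} together with the known $C^{1,\alpha}$ regularity of the free boundary. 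This gives $\partial_i w/\partial_n w\in C^\beta$, and since the unit normal to the level sets $\{w=t\}$ is
\[
\nu^i=\frac{\partial_i w/\partial_n w}{\sqrt{\sum_{j=1}^{n-1}(\partial_j w/\partial_n w)^2+1}},
\]
one gets $\nu\in C^\beta$, hence $\partial\Omega\in C^{\beta+1}$. This one-order gain is exactly what drives the bootstrap; it terminates when $\beta=\theta-1$ (limited by $f_i\in C^{\theta-1-s}$), giving $\partial\Omega\in C^\theta$. So the missing idea in your plan is precisely the replacement of the $u/d^s$ Schauder estimate by the $u_1/u_2$ boundary Harnack, which requires one less degree of domain regularity and is what makes the scheme self-improving.
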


This sharp estimate for non-$C^\infty$ obstacles seems to be new even for the fractional Laplacian~$(-\Delta)^s$: it was only known for~$s=\frac12$, see~\cite{KPS}.

\subsection{Strategy of the proof}

To establish Theorems~\ref{thm:Cinfty} and~\ref{thm:low-reg}, we need a very fine understanding of solutions to nonlocal equations in~$C^{k,\alpha}$ domains.
It was first observed by De Silva and Savin~\cite{desilva-savin} (in the context of the classical obstacle problem) that the higher regularity of free boundaries can be proved by ``simply'' having sharp estimates for harmonic functions in~$C^{k,\alpha}$ domains.
More precisely, they showed a higher order boundary Harnack estimate of the type:
\begin{equation}\label{harmonic}
\left. \begin{array}{c} 
u_1,u_2\ \textrm{harmonic in}\ \Omega\cap B_1 \\ 
u_1=u_2=0 \textrm{ in } \partial\Omega\cap B_1 \\
u_2>0\ \textrm{in}\ \Omega\cap B_1 \\ 
\partial\Omega\in C^\beta,\quad \beta\notin\mathbb Z 
\end{array}\right\}
\quad \Longrightarrow\quad \frac{u_1}{u_2}\in C^\beta(\overline\Omega\cap B_{1/2}).
\end{equation}
Notice that this is better than what Schauder estimates give.
Indeed, by boundary Schauder estimates, we have that~$u_1,u_2\in C^\beta(\overline\Omega\cap B_{1/2})$ and 
this yields that the quotient~$u_1/u_2$ is~$C^{\beta-1}$ up to the boundary\footnote{By the Hopf Lemma, $u_2$ is comparable to the distance to the boundary, therefore the division by $u_2$ roughly corresponds to taking one derivative on $u_1$.}.
The result summarized in \eqref{harmonic} shows that the regularity of the quotient~$u_1/u_2$ can be improved to~$C^\beta$. We refer to~\cite{desilva-savin} for more details about this proof in the case of the classical obstacle problem.

Once one has \eqref{harmonic}, then the idea is to take~$u_1,u_2$ to be two derivatives of a solution~$v$ to the obstacle problem, with~$\partial\Omega$ being the free boundary, and then deduce that
\begin{align*}
\partial\Omega\in C^{1,\alpha}\Longrightarrow \frac{u_1}{u_2}\in C^{1,\alpha}\Longrightarrow \partial\Omega\in C^{2,\alpha}\Longrightarrow \frac{u_1}{u_2}\in C^{2,\alpha}\Longrightarrow \partial\Omega\in C^{3,\alpha}\Longrightarrow ...\Longrightarrow \partial\Omega\in C^\infty
\end{align*}
---this is because the normal vector to~$\partial\Omega$ can be expressed in terms of derivatives of~$v$, see~\eqref{nu1}.
Such strategy was later extended in~\cite{JN} in order to show the higher regularity of free boundaries in the obstacle problem for the fractional Laplacian, and it is the same strategy that we use here in order to prove Theorems~\ref{thm:Cinfty} and~\ref{thm:low-reg}.

The main difficulty thus is to establish fine estimates for solutions in~$C^{k,\alpha}$ domains.
This is a highly nontrivial task in the context of nonlocal operators, and even the sharp boundary Schauder-type estimates in~$C^{k,\alpha}$ domains was a completely open problem for operators of the type~\eqref{L}-\eqref{L2}.
The only known results in this direction are due to the second author and Serra~\cites{annali,duke,jde,JMPA} for~$k=1$, or to Grubb~\cites{Grubb,Grubb2} for~$k=\infty$, and are actually very delicate to establish. 

In case of the fractional Laplacian~$(-\Delta)^s$ there is an extra tool that one can use: the extension problem of Caffarelli and Silvestre~\cite{caffarelli-silvestre}.
Thanks to this, \cite{JN} established the necessary Schauder-type and higher order boundary Harnack estimates for the fractional Laplacian in~$C^{k,\alpha}$ domains.
Unfortunately, such extension technique is not available for more general nonlocal operators~\eqref{L}-\eqref{L2}, and thus our proofs must be completely independent from those in~\cite{JN}.

\subsection{Fine estimates for nonlocal operators in \texorpdfstring{$C^{k,\alpha}$}{Ck,a} domains}

We show the following generalization of~\eqref{harmonic} to nonlocal elliptic operators of the type~\eqref{L}-\eqref{L2}.
We remark that this is the first higher order boundary Harnack estimate for general nonlocal operators, and it even refines the estimates from~\cite{JN} for the fractional Laplacian.

\begin{theorem}\label{thm:u1overu2}
Let~$\beta>1$ be such that~$\beta\not\in\N$, $\beta\pm s\not\in\N$.
Let~$L$ be an operator as in~\eqref{L}-\eqref{L2}, with~$K\in C^{2\beta+1}(\Sn)$.
Let~$\Omega\subseteq\Rn$ be any bounded~$C^{\beta}$ domain
and~$u_1,u_2\in L^\infty(\Rn)$ be solutions of 
\begin{align*}
\left\lbrace\begin{aligned}
Lu_i &= f_i & & \hbox{in }\Omega\cap B_1 \\
u_i &= 0 & & \hbox{in }B_1\setminus\Omega,
\end{aligned}\right.
\end{align*}
with~$f_1,f_2\in C^{\beta-s}(\overline\Omega)$, $u_2\geq c_1d^s$ in~$B_1$ for some $c_1>0$, and~$\|f_2\|_{C^{\beta-s}(\overline{\Omega})}+\|u_2\|_{L^\infty(\Rn)} \leq C_2$.

Then,
\begin{align*}
\Big\|\frac{u_1}{u_2}\Big\|_{C^\beta(\overline\Omega\cap B_1)} \leq C\big(\|f_1\|_{C^{\beta-s}(\overline{\Omega})}+
\|u_1\|_{L^\infty(\Rn)}\big)
\end{align*}
for some~$C>0$ depending only on~$n,\ s,\ \beta,\ c_1,\ C_2,\ \Omega,\ \lambda,\ \Lambda,$ and~$\|K\|_{C^{2\beta+1}(\Sn)}$.
\end{theorem}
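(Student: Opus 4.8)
The plan is to follow the by-now classical strategy of De Silva--Savin, adapted to the nonlocal setting: bootstrap a series of first-order boundary Harnack estimates, where at each step the domain regularity and the regularity of the right-hand sides are upgraded together. The base case $\beta\in(1,2)$ should be essentially the estimate of Ros-Oton--Serra for $C^{1,\alpha}$ domains (the $k=1$ case cited in the excerpt), possibly after a flattening of the boundary and a careful treatment of the error terms produced by the change of variables, which for a nonlocal operator are themselves nonlocal operators with slightly worse kernels. So the first step is to set up the iteration carefully: assume the theorem holds for all admissible exponents in $(1,\beta)$ (with the matching hypotheses on $K$ and on the data), and prove it for $\beta$.

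\medskip

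\textbf{Step 1: Flatten the boundary.} After a $C^\beta$ diffeomorphism $\Phi$ straightening $\partial\Omega\cap B_1$ to a piece of $\{x_n=0\}$, the functions $\tilde u_i = u_i\circ\Phi^{-1}$ solve $\tilde L\tilde u_i = \tilde f_i$ in a half-ball, where $\tilde L = L + (\tilde L - L)$ and the commutator $\tilde L - L$ is an integro-differential operator whose kernel inherits $C^{\beta-1}$-type regularity from $D\Phi$. The point is that $\tilde L - L$ is of lower order in a scale-invariant sense, so it can be absorbed as a perturbation or treated as part of the right-hand side after applying the inductive hypothesis. One must check that $\tilde f_i$ stays in $C^{\beta-s}$ and that the condition $\tilde u_2\geq c_1 d^s$ is preserved (up to changing the constant).

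\medskip

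\textbf{Step 2: Differentiate and use the inductive hypothesis.} With the boundary flat, tangential derivatives $\partial_e$ with $e$ parallel to $\{x_n=0\}$ commute with $L$ (here one uses homogeneity and smoothness of $K$, which is why $K\in C^{2\beta+1}(\Sn)$ is needed — each differentiation of the equation costs regularity of the kernel), so $\partial_e u_1$ and $\partial_e u_2$ solve nonlocal equations with right-hand sides $\partial_e f_i$ plus controlled lower-order terms, now in a $C^{\beta-1}$ domain. Writing $w = u_1/u_2$, one has $\partial_e w = (\partial_e u_1 \cdot u_2 - u_1 \cdot \partial_e u_2)/u_2^2$; the numerator is, modulo the already-known $C^{\beta-1}$ bound on $w$ and the Hopf-type lower bound $u_2\geq c_1 d^s$, of the form ``solution vanishing like $d^s$ divided by another such solution'', so the inductive hypothesis at level $\beta-1$ (applied to suitable combinations, e.g. to $\partial_e u_1 - (\partial_e w)\, u_2$ which satisfies a good equation) gives $\partial_e w \in C^{\beta-1}$. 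This handles all tangential directions.

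\medskip

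\textbf{Step 3: Recover the normal direction and the main obstacle.} The missing ingredient is the regularity of $w$ in the $x_n$ direction. This is the genuinely hard part and the place where the nonlocal setting differs most from the local one: one cannot simply invoke an equation for $\partial_n w$. The plan is to use the equation itself — $L(u_1 - w\, u_2) = f_1 - w f_2 - [\,L,\,w\,]u_2$ where the commutator term $[L,w]u_2 = L(wu_2) - wLu_2$ is a nonlocal bilinear expression — to express the ``bad'' normal behaviour of $w$ in terms of quantities already controlled, exploiting that $L$ applied to a function vanishing to high order at the boundary sees one extra order of regularity in the normal direction (this is the nonlocal analogue of the fact that $\Delta$ determines $\partial_{nn}$ from $\partial_{ee}$). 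Concretely, I expect one needs a one-dimensional (in $x_n$) analysis: freeze the tangential variables, use the structure of $L$ acting on $d^s$-type functions (the explicit computations with ${}_2F_1$ hinted at by the \texttt{\textbackslash hyp} macro), and a careful expansion of $u_i$ in powers of $d$ with $C^\beta$ coefficients to read off the top-order normal regularity of the quotient. The estimates of Grubb ($k=\infty$) suggest the right ansatz for the expansion; the difficulty is making it quantitative and uniform with only finite kernel regularity $K\in C^{2\beta+1}(\Sn)$, and tracking the non-integer/non-resonance conditions $\beta\notin\N$, $\beta\pm s\notin\N$ which are exactly what prevents logarithmic terms from appearing in the expansion.

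\medskip

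\textbf{Step 4: Conclude.} Combining the tangential $C^{\beta-1}$ bounds on $\partial_e w$ from Step 2 with the normal-direction bound from Step 3, interpolating, and undoing the flattening diffeomorphism (which preserves $C^\beta$), one obtains $\|w\|_{C^\beta(\overline\Omega\cap B_1)}$ bounded linearly by $\|f_1\|_{C^{\beta-s}} + \|u_1\|_{L^\infty}$, with the constant depending only on the stated quantities. A standard covering and rescaling argument passes from $B_1$ to a fixed interior neighbourhood. The linear dependence on the $u_1$-data (rather than just a bound) comes for free from the linearity of the equation in $u_1$ once $u_2$ is fixed, together with homogeneity of all the estimates used.
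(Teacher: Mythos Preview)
Your proposal sketches an inductive bootstrap in the De Silva--Savin style, but it has genuine gaps and is quite far from the route the paper takes.

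The most concrete problem is Step~2. After a $C^\beta$ flattening $\Phi$, the transported operator $\tilde L$ has an $x$-dependent kernel (roughly $K(\Phi(x)-\Phi(y))|\det D\Phi(y)|$), so tangential translation invariance is lost and tangential derivatives do \emph{not} commute with $\tilde L$. Your Step~1 anticipates this by writing $\tilde L=L+(\tilde L-L)$ and calling the difference ``lower order in a scale-invariant sense'', but that is not true in any usable way: the kernel of $\tilde L-L$ has the same homogeneity $-(n+2s)$ near the diagonal as $K$ itself. Controlling $(\tilde L-L)\tilde u_i$ in $C^{\beta-1-s}$ is exactly as hard as the boundary Schauder theory the theorem is meant to establish; this is not a perturbation one can absorb. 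If instead you try to differentiate in the original curved domain, $\partial_e u_i$ no longer vanishes in $B_1\setminus\Omega$, so the inductive hypothesis does not apply either. Step~3 is, as you acknowledge, the heart of the matter, and the commutator $[L,w]u_2$ you write is a fully nonlocal object whose boundary regularity is precisely the unknown; the plan there is too vague to be a proof. (The $_2F_1$ guess from the \texttt{\textbackslash hyp} macro is a red herring---that macro is never used.)

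The paper does not argue by induction on $\beta$ at all. It proves directly, by a higher-order blow-up and compactness argument (Proposition~\ref{u1overu2}), that at each boundary point $z$ there is $Q\in\mathbf P_{\lfloor\beta\rfloor}$ with $|u_1-Qu_2|\le C|x-z|^{\beta+s}$, and then reads off the $C^\beta$ bound on $u_1/u_2$ by a Leibniz computation. Two ingredients make the blow-up work, and neither appears in your outline: (i) the sharp estimate $L(\eta d^s)\in C^{\beta-1-s}(\overline\Omega)$ for $\partial\Omega\in C^\beta$, together with an extra cancellation when $\eta$ vanishes at the base point (Section~\ref{sec:Lds}; this was open after \cite{duke} and is one of the paper's main technical contributions), which is exactly what keeps the right-hand side of the rescaled equations under control; and (ii) a Liouville theorem in a half-space for $L$ acting on functions with polynomial growth (Section~\ref{sec:polynom}), classifying blow-up limits as $p(x)(x\cdot e)_+^s$. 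Without (i) the rescaled right-hand sides blow up; without (ii) the contradiction cannot be closed.
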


Here, and throughout the paper, $d$ denotes a regularized version of the distance to the boundary function, see Definition~\ref{defi-d}.

An important step towards the proof of Theorem~\ref{thm:u1overu2} is the following boundary Schauder-type estimate for solutions to nonlocal elliptic equations in~$C^{k,\alpha}$ domains.

\begin{theorem}\label{thm:uoverds}
Let~$\beta>s$ be such that~$\beta\not\in\N$, $\beta\pm s\not\in\N$.
Let~$L$ be an operator as in~\eqref{L}-\eqref{L2}, with $K\in C^{2\beta+3}(\Sn)$.
Let~$\Omega\subseteq\Rn$ be any bounded~$C^{\beta+1}$ domain,
and~$u\in L^\infty(\Rn)$ be any solution of 
\begin{align*}
\left\lbrace\begin{aligned}
Lu &= f & & \hbox{in }\Omega\cap B_1 \\
u &= 0 & & \hbox{in }B_1\setminus\Omega
\end{aligned}\right.
\end{align*}
with $f\in C^{\beta-s}(\overline\Omega)$.
Then
\begin{align*}
\Big\|\frac{u}{d^s}\Big\|_{C^\beta(\overline\Omega\cap B_1)} \leq C\big(\|f\|_{C^{\beta-s}(\overline{\Omega})}+\|u\|_{L^\infty(\Rn)}\big)
\end{align*}
for some $C>0$ depending only on~$n,\ s,\ \beta,\ \Omega,\ \lambda,\ \Lambda,$ and $\|K\|_{C^{2\beta+3}(\Sn)}$.
\end{theorem}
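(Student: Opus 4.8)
The plan is to prove Theorem~\ref{thm:uoverds} by a bootstrap/induction on the regularity exponent~$\beta$, starting from the known case~$k=1$ (i.e.\ $\beta<1$, which is essentially the estimate of Ros-Oton--Serra) and climbing up one notch at a time. The engine of the induction is the heuristic that $u/d^s$ should be as regular as the equation allows, and that each extra derivative of regularity on~$\partial\Omega$ and on~$f$ buys one extra derivative on the quotient. Concretely, I would flatten the boundary near a point~$x_0\in\partial\Omega\cap B_1$ by a~$C^{\beta+1}$ diffeomorphism~$\Phi$, reducing to the case where~$\Omega$ is (locally) the half-space~$\{x_n>0\}$ at the cost of transforming~$L$ into an operator~$\widetilde L$ with~$x$-dependent kernel that is~$C^{\beta-s}$ in~$x$ and still~$C^{2\beta+3}$ on~$\Sn$ in the~$y$-variable. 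The model computation is then: on the half-space, $L(d^s)$ is \emph{not} identically zero but is a function that can be computed/estimated, and more generally $L\big((x_n)_+^s \psi(x)\big)$ has a controlled expansion. This is where the hypotheses~$\beta\pm s\notin\N$ enter: they guarantee that no resonances occur in the expansion of~$L$ applied to~$d^s$ times a polynomial, so the relevant linear maps on Taylor coefficients are invertible.

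The key steps, in order, would be: (1) \textbf{Reduction and localization.} Cover~$\partial\Omega\cap B_1$ by small balls, flatten, and reduce to a half-space problem with variable-coefficient operator; track carefully how the norms of~$K$ on~$\Sn$ and the~$C^{\beta+1}$ norm of~$\Phi$ produce the claimed dependence. (2) \textbf{Construction of an approximate solution.} Given the Taylor expansion of~$f$ at~$x_0$ up to order~$\intpart{\beta-s}$, construct a function of the form~$d^s(x)\,Q(x)$ with~$Q$ a polynomial of degree~$\intpart{\beta}$ such that~$L(d^s Q)$ matches~$f$ to the appropriate order at~$x_0$; solvability of the coefficients is exactly the non-resonance condition. (3) \textbf{Error estimate and iteration.} Subtract the approximate solution; the remainder~$w=u-d^sQ$ solves~$Lw=g$ with~$g$ vanishing to high order at~$x_0$, so by the lower-order case (inductive hypothesis) and a suitable rescaling/blow-up argument one gets $|w(x)|\le C|x-x_0|^{\beta+s}$, equivalently the desired~$C^\beta$ control of $u/d^s$ at~$x_0$. (4) \textbf{From pointwise to~$C^\beta$ norm.} Upgrade the pointwise-at-every-boundary-point estimates to a genuine~$C^\beta(\overline\Omega\cap B_1)$ bound via a standard interpolation/covering argument combined with interior Schauder estimates for~$L$, the latter being classical for kernels of the type~\eqref{L2}.

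I expect the main obstacle to be step~(2) together with the variable-coefficient bookkeeping in step~(3): computing~$L(d^s Q)$ on the half-space for a translation-invariant kernel is a one-dimensional special-function computation (hypergeometric-type, hence the $\hyp$ macro), but controlling the \emph{error} when~$d$ is only a regularized distance to a~$C^{\beta+1}$ boundary and the kernel depends on~$x$ in a merely~$C^{\beta-s}$ way requires delicate commutator estimates---one must show that replacing the frozen-coefficient operator by the true one perturbs~$L(d^sQ)$ by something of order~$|x-x_0|^{\beta-s}$ and no worse. A secondary difficulty is making the induction self-consistent: Theorem~\ref{thm:uoverds} at level~$\beta$ should follow from the \emph{same} theorem at a strictly smaller exponent, so one has to be careful that the flattening and the construction of~$d^sQ$ do not require more regularity of~$u/d^s$ than what is being proven. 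The hypotheses~$\beta\notin\N$, $\beta\pm s\notin\N$ are precisely what make all the relevant linear-algebraic inversions uniform, and the slightly generous demand~$K\in C^{2\beta+3}(\Sn)$ (rather than the optimal-looking~$C^{2\beta+1}$) is the price paid for differentiating the kernel twice in the flattening.
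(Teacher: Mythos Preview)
Your strategy is a reasonable template for local Schauder theory, but in the nonlocal setting two of its steps break down, and these are exactly the obstacles the paper is organized around. The more serious gap is in step~(3): after subtracting the ansatz and rescaling, $w_r(x)=r^{-\beta-s}(u-d^sQ)(x_0+rx)$ grows like $|x|^{\beta+s}$ at infinity, so the ``lower-order case'' of Theorem~\ref{thm:uoverds}---stated for $u\in L^\infty(\Rn)$---cannot be applied to it. Making sense of the equation for such functions and classifying the blow-up limits requires all of Section~\ref{sec:polynom}: Definition~\ref{def:Lgrowth} of $Lu\stackrel{k}{=}f$, the boundary estimate of Proposition~\ref{prop:growth-bound-reg}, and above all the half-space Liouville Theorem~\ref{liouville-nd}. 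None of this appears in your sketch, and the introduction singles it out as one of the two reasons the problem was open after~\cite{duke}. A related issue undermines step~(2): in the half-space model one has $L\big(p(x)(x_n)_+^s\big)\stackrel{k}{=}0$ for \emph{every} polynomial~$p$ (this is precisely the content of Theorem~\ref{liouville-nd}), so at leading order the map $Q\mapsto L(d^sQ)$ vanishes and cannot be inverted to match the Taylor jet of~$f$; the conditions $\beta\pm s\notin\N$ do not rescue this. Finally, the flattening in step~(1) takes $L$ out of the class~\eqref{L}--\eqref{L2} (the transformed kernel is neither homogeneous nor translation-invariant), so no base case or inductive hypothesis is available for~$\widetilde L$.

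The paper's proof is accordingly quite different: there is no induction on~$\beta$ and the equation is never flattened. Proposition~\ref{uoverds} is proved by contradiction and compactness. One defines $Q_{j,r}$ as the $L^2(B_r)$-projection of $u_j$ onto $d_j^s\mathbf{P}_{\intpart\beta}$ (not by matching $L(d^sQ)$ to~$f$), blows up, and uses Corollary~\ref{cor:Lds}---that $L(\eta d^s)\in C^{\beta-s}$ when $\partial\Omega\in C^{\beta+1}$---only to show that the right-hand side of the rescaled equation tends to zero modulo a polynomial. The limit $v$ then satisfies $L_\star v\stackrel{k}{=}0$ in a half-space with polynomial growth; Theorem~\ref{liouville-nd} forces $v=p(x)(x\cdot e)_+^s$, and the orthogonality inherited from the projection gives $p\equiv0$, a contradiction. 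The passage from this pointwise expansion to the global $C^\beta$ bound on $u/d^s$ (your step~(4)) is then a direct computation combining the expansion with the interior estimate~\eqref{calpha}.
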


This extends for the first time to all~$k\in \mathbb N$ the results for~$k=\infty$~\cites{Grubb,Grubb2}, and those for~$k=1$~\cites{annali,duke,jde,JMPA}.
Thus, our result completely settles the open question of establishing boundary Schauder estimates for nonlocal operators of the form~\eqref{L}-\eqref{L2} in~$C^{k,\alpha}$ domains.

\subsection{On the proofs of Theorems \ref{thm:u1overu2} and \ref{thm:uoverds}}

In order to establish our new fine estimates for nonlocal equations in~$C^{k,\alpha}$ domains, we develop a new, higher order version of the blow-up and compactness technique from~\cite{duke}.
This remained as an open problem after the results of~\cite{duke} mainly because of two reasons.

First, because the functions would grow too much at infinity whenever we want a higher order estimate, and thus one must be very careful when taking limits and giving a meaning to the limiting equation.

Second, because of a technical problem involving the 
%\footnote{Actually, $d$ denotes a regularized version of the distance function; see Definition \ref{defi-d}.} 
function $d^s$: one needs to show a result of the type
\begin{equation}\label{computation-Lds}
\partial\Omega\in C^\beta\quad\Longrightarrow\quad L(d^s)\in C^{\beta-1-s}(\overline\Omega).
\end{equation}
This was one of the results that had to be proved in \cite{duke}; however the proof given therein only gave that $L(d^s)\in C^s(\overline\Omega)$ (and actually under a non-sharp assumption of the domain).
To show that $L(d^s)$ is more regular than $C^s$ (in $C^{k,\alpha}$ domains) remained as an open problem after the results of~\cite{duke}.

We solve the first technical difficulty here by using some ideas by Dipierro, Savin, and Valdinoci~\cite{growth}; notice however that our proofs are completely independent from those in~\cite{growth}, and we moreover show some new results concerning nonlocal operators for functions with polynomial growth.
We think that some of these results (proved in Section~\ref{sec:polynom}) could be of independent interest.

Concerning the second key difficulty, we provide here a complete understanding of the regularity of~$L(d^s)$ in terms of the regularity of~$\partial\Omega$, proving~\eqref{computation-Lds} for the first time.
This answers the open question left in \cite{duke} and it allows us to proceed with the higher order blow-up and compactness technique to show Theorem~\ref{thm:uoverds}.
The proof of~\eqref{computation-Lds} is extremely technical.
Moreover, it is not simply a tedious computation but it requires several new ideas concerning nonlocal operators with homogeneous kernels~\eqref{L}-\eqref{L2}.
On top of that, there are various essential cancellations without which~\eqref{computation-Lds} would not hold.

Additionally, in order to prove Theorem~\ref{thm:u1overu2}, we need to establish a result in the spirit of~\eqref{computation-Lds} but for~$L(\eta d^s)$, $\eta\in C^\infty$, with an extra cancellation taking place in case that $\eta$ vanishes at a boundary point.
All this is done in Section~\ref{sec:Lds}, and we believe this to be an important contribution of this paper.

Finally, it is important to notice that the development of the new techniques in this paper (\textit{i.e.}, the higher order version of the blow-up technique from~\cite{duke}, and the proof of~\eqref{computation-Lds}) open the road to the study of the higher regularity of free boundaries in other obstacle problems that until now seemed out of reach, such as nonlocal operators with drift~\cites{PP,GPPS,FR}, or even the parabolic obstacle problem for the fractional Laplacian~\cites{CF,BFR2}.

\subsection{Organization of the paper}

The paper is organized as follows.
Section \ref{sec:Lds} is devoted to proving \eqref{computation-Lds} and related estimates. 
Section \ref{sec:polynom} deals with an extension of the definition of $L$
to include its evaluation on functions growing polynomially at infinity: beside the definition itself,
we are going to provide with interior and boundary regularity estimates, Liouville-type theorems, and some other technical details.
Section \ref{sec:uover} contains the proofs of Theorems \ref{thm:uoverds} and \ref{thm:u1overu2}.
Section \ref{sec:final} proves Theorems \ref{thm:Cinfty} and \ref{thm:low-reg}, and it concludes the paper.
We also attach in an appendix some small details and remarks that we need in the proofs, to lighten these up.

\subsection{Notations}
As already mentioned above, when $\beta\notin\mathbb N$ we use the single index notation $C^\beta$ for the Hölder spaces:
this corresponds to $C^{\intpart\beta,\beta-\intpart\beta}$ where $\intpart\cdot$ denotes the integer part of a positive real number.

Throughout the paper, we will denote % $B=B_1$, and 
$\lrangle{w}=w/|w|,\ w\in\Rn$.
Also, we will make extensive use of multi-indices $\alpha\in\N^n$, $\alpha=(\alpha_1,\ldots,\alpha_n)$, $|\alpha|=\alpha_1+\ldots+\alpha_n$: these will be mainly used 
to shorten higher order derivatives in the following way
\begin{align*}
\partial^\alpha = \Big(\frac{\partial}{\partial x_1}\Big)^{\alpha_1} \circ \ldots \circ \Big(\frac{\partial}{\partial x_n}\Big)^{\alpha_n}.
\end{align*}
As to other notations for derivatives, $\nabla$ will denote the gradient as customary. 
Instead, $D^k$, $k\in\N$, will be the full $k$-linear operator entailed by all possible derivatives of order $k$:
in this spirit, we also have
\begin{align*}
D^k=\big(\partial^\alpha\big)_{|\alpha|=k}.
\end{align*}

By $\mathbf{P}_k$ we mean the space of polynomials of order $k$: mind that we allow ourselves to avoid specifying the number of variables, 
as there will be never confusion to this regard. The coefficients of the polynomials will be identified as 
\begin{align*}
Q\in\mathbf{P}_k \qquad \Longrightarrow \qquad Q(x)=\sum_{\alpha\in\N^n,|\alpha|\leq k}q^{(\alpha)}x^\alpha,\quad x\in\Rn.
\end{align*}

Finally, as it often happens, $C$ will indicate an unspecified constant not depending on any of the relevant quantities,
and whose value will be allowed to change from line to line. We will make use of sub-indices whenever we will want to underline
the hidden dependencies of the constant.

\section{Nonlocal operators and the distance function}\label{sec:Lds}

The goal of this section is to prove \eqref{computation-Lds} and other related estimates for the distance function~$d^s$.

\subsection{A regularized distance}
Actually, we need $d$ to be more regular in the interior of $\Omega$ than just the distance function.
For this reason, we need the following.

\begin{definition}\label{defi-d}
Let $\Omega\subset\R^n$ be an open set with $C^\beta$ boundary.
We denote by $d\in C^\infty(\Omega)\cap C^\beta(\overline\Omega)$
a function satisfying
\begin{align*}
\frac1C\dist(\,\cdot\,,\Omega^c)\leq d\leq C\dist(\,\cdot\,,\Omega^c),\qquad
|D^jd|\leq C_jd^{\beta-j},\qquad
\text{ for all }j>\beta \text{ and some }C,C_j>0.
\end{align*}
The construction of such $d$ is provided in Lemma \ref{lem:regularized distance}.
\end{definition}

We aim at proving the following.

\begin{theorem}\label{thm:Lds}
Let $K$ be a kernel as in \eqref{L2}.
Let $\Omega\subseteq\Rn$ be a domain such that $0\in\partial\Omega$ and $\partial\Omega\cap B_1\in C^\beta$,
for some $\beta>1+s$, $\beta-s\not\in\N$, and assume $K\in C^{2\beta+1}(\Sn)$. 
Let $\psi\in C^{\beta-1}(\overline{B}_1)\cap C^\infty(\Omega\cap B_1)$ be given,
and let $d$ be given by Definition \ref{defi-d}. Assume
\[
\|\psi\|_{C^j(\{{\rm dist}(\,\cdot\, ,\Omega^c)>r\}\cap B_1)} \leq C_\star  r^{\beta-1-j}\quad\textrm{for all}\qquad j>\beta-1.
\]
Then the function defined by
\begin{align*}
L_\psi(d^s)(x)\ :=\ \pv\int_{B_1} \nabla(d^s)(y)\;K(y-x)\;\psi(y)\cdot (y-x)\;dy
\end{align*}
is of class $C^{\beta-1-s}$ in $B_{1/2}$ with
\begin{align*}
\big|D^j L_\psi(d^s)(x)\big|\leq C_j \, \big(|\psi(0)|+|x|\big) \, d(x)^{\beta-1-s-j}
\qquad\text{in }B_{1/2},\ \text{for any }j\in\N,\ \beta-1-s<j<\beta,
\end{align*}
for some $C_j$ depending only on $j,\ n,\ s,\ \beta,\ C_\star,\ \Omega,\ \lambda,\ \Lambda,$ and $\|K\|_{C^{2\beta+1}(\Sn)}$.
\end{theorem}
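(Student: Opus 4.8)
\textbf{Proof plan for Theorem \ref{thm:Lds}.}

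The plan is to localize and iterate on derivatives: since $d^s$ is smooth in the interior and only the behaviour near $\partial\Omega$ is delicate, I would fix a boundary point (say $x$ with $d(x)$ small) and estimate $D^j L_\psi(d^s)(x)$ by differentiating under the principal value integral. The key structural observation is that $L_\psi(d^s)$ is not the operator $L$ applied to $d^s$ but a ``drifted'' bilinear quantity involving $\nabla(d^s)$ and the vector field $\psi(y)(y-x)$; the factor $\psi(y)\cdot(y-x)$ vanishes to first order at $y=x$, which is exactly what makes the principal value converge and, more importantly, produces the extra power that turns the expected $C^{-s}$-type singularity into $C^{\beta-1-s}$. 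First I would reduce, via a partition of the kernel into a near-diagonal piece $|y-x|<d(x)/2$ and a far piece, to two separate estimates: the far piece is handled by the interior bounds $|D^j\psi|\le C_\star r^{\beta-1-j}$ together with $|D^j(d^s)|\le C d^{s-j}$ and the homogeneity/smoothness of $K$, giving a convergent integral with the claimed power of $d(x)$; the near-diagonal piece requires a Taylor expansion of $d^s$, of $\psi$, and of $K(\cdot-x)$ around $x$ and a careful tracking of cancellations.

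The main work, and the step I expect to be the real obstacle, is the near-diagonal analysis. Here one writes $\nabla(d^s)(y)=\nabla(d^s)(x)+\ldots$ and expands $\psi$ and the kernel to the appropriate order $\lfloor\beta\rfloor$; the odd-order terms in the $y-x$ expansion integrate to zero against the even kernel $K$ (this is the ``essential cancellation'' alluded to in the introduction), and one is left with a remainder of order $\beta-1-s-j$ in $d(x)$. To make this rigorous one needs: (i) the regularized distance estimates from Definition \ref{defi-d} and Lemma \ref{lem:regularized distance}, which control $D^j d$ for $j>\beta$ in the interior; (ii) a flattening/straightening of $\partial\Omega$ near $x$ using a $C^\beta$ diffeomorphism, so that $d^s$ becomes comparable to $(x_n)_+^s$ up to a $C^{\beta-s}$ error, which lets one compare $L_\psi(d^s)$ with the explicitly computable half-space model where $L((x_n)_+^s)$ is a constant; (iii) bookkeeping of how each derivative $\partial_x$ falling on $K(y-x)$ or on the explicit $x$-dependence in $\psi(y)\cdot(y-x)$ either improves or worsens the homogeneity by one. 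The role of $K\in C^{2\beta+1}(\Sn)$ is precisely to afford enough derivatives of the kernel in this expansion (one loses roughly $\beta$ derivatives in $y$ and another $\beta$ in the angular smoothing needed to handle the homogeneous singularity).

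The term $|\psi(0)|+|x|$ on the right-hand side is obtained by splitting $\psi(y)=\psi(0)+(\psi(y)-\psi(0))$: the contribution of the constant part $\psi(0)$ reproduces (a multiple of) the plain $L(d^s)$ computation and carries the factor $|\psi(0)|$, while $\psi(y)-\psi(0)$ vanishes at the origin and, combined with the constraint that we evaluate near $x$ with $|x|$ small, yields the extra factor $|x|$ through a further Taylor expansion of $\psi-\psi(0)$ around $0$ (here the hypothesis $0\in\partial\Omega$ is used). Finally, to upgrade the pointwise bounds $|D^j L_\psi(d^s)(x)|\le C_j(|\psi(0)|+|x|)d(x)^{\beta-1-s-j}$ to the statement ``$L_\psi(d^s)\in C^{\beta-1-s}(B_{1/2})$'', I would invoke the standard characterization of $C^{\gamma}$ spaces by weighted interior derivative estimates (of the form $|D^j g|\le C d^{\gamma-j}$ for all integers $j>\gamma$, together with a matching bound for $j=\lfloor\gamma\rfloor$), which is collected in the appendix; the decay rate $\beta-1-s-j<0$ for the relevant $j$ is exactly the borderline rate in that characterization. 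The delicate point throughout is that none of these expansions may lose the sharp exponent: every integration must be done keeping the precise order of vanishing of $\psi(y)\cdot(y-x)$ and exploiting the evenness of $K$, since otherwise one only recovers $C^s$, as in \cite{duke}.
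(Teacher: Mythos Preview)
Your high-level architecture---flatten the boundary, expand in Taylor series, and track powers of $d(x)$---matches the paper's, but the proposal misidentifies the source of the key cancellation and the near/far split as written does not close. The cancellation you invoke (``odd-order terms in the $y-x$ expansion integrate to zero against the even kernel $K$'') cannot do the work: the domain of integration is $B_1\cap\Omega$, which is \emph{not} symmetric about $x$ once $x$ is near $\partial\Omega$, so parity of $K$ (equivalently, oddness of $J(w)=K(w)w$) buys nothing by itself. The actual mechanism, after flattening, is that $(z_n)_+^{s-1}$ is $L$-harmonic in $\{z_n>0\}$: for any even $\tilde a$ homogeneous of degree $-n-2s$ one has $\pv\int_{B_1}(z_n)_+^{s-1}\tilde a(z-x)\,dz=-\int_{\R^n\setminus B_1}(z_n)_+^{s-1}\tilde a(z-x)\,dz$, and the right-hand side is smooth in $x$ (this is \cite{duke}*{Lemma~9.6}, packaged here as Lemma~\ref{lem:cancellation}). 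You mention the half-space model only as a perturbative comparison, but in fact it supplies the identity that lets one differentiate the principal value arbitrarily many times without losing regularity.

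This also explains why your far piece fails. After $j$ derivatives in $x$ fall on the kernel, the integral over $\{|y-x|>d(x)/2\}$ behaves like $\int_{|y-x|>r/2}d(y)^{s-1}|y-x|^{-n-2s-j+1}\,dy\sim r^{-s-j}$, not $r^{\beta-1-s-j}$; the bounds on $D^j\psi$ and $D^j(d^s)$ you cite are irrelevant here because the $x$-derivatives do not touch $\psi(y)$ or $d(y)$. The paper explicitly warns (after \eqref{operative-splitted}) that neither piece of its splitting satisfies the target bound individually: they must be recombined through the pointwise estimate \eqref{radial claim}, which compares $\partial^\alpha_x J(\phi(z)-\phi(x))$ against the model kernels $b_i(\xi,\lrangle{z-x})/|z-x|^{n+2s-i-1}$ and shows the discrepancy is $O(r^{\beta-|\alpha|}|z-p|^{-n-2s})$. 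Proving this uses a device absent from your plan: an auxiliary variable $\xi$ is introduced in $J(\phi(\xi+z-\hat x)-\phi(\hat x))$ so that the $\hat x$-derivatives can be distributed via Leibniz (Lemma~\ref{derivative-diagonal}) before Taylor-expanding to an order that depends on how many derivatives landed on each slot. Without this decoupling, one cannot separate the derivatives that hit $J$ (bounded) from those that hit high derivatives of $\phi$ (which blow up like $r^{\beta-j}$), and the bookkeeping collapses.
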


Before turning to its proof, we first give the following consequence, which implies  \eqref{computation-Lds}.

\begin{corollary}\label{cor:Lds}
Let $K$ be a kernel as in \eqref{L2}.
Let $\Omega\subseteq\Rn$ be a domain such that $0\in\partial\Omega$ and $\partial\Omega\cap B_1\in C^\beta$, for some $\beta>1+s$, $\beta-s\not\in\N$, and assume $K\in C^{2\beta+1}(\Sn)$. 
Let $\eta\in C^\infty(\Rn)$ be given, and let $d$ be given by Definition \ref{defi-d}. 

Then, $L(\eta d^s)\in C^{\beta-1-s}(\overline \Omega\cap B_{1/2})$, with 
\[\|L(\eta d^s)\|_{C^{\beta-1-s}(\overline \Omega\cap B_{1/2})} \leq C.\]
Moreover, for every $j\in\N$, $\beta-1-s<j<\beta$, we have
\begin{align*}
\big|D^j L(\eta d^s)(x)\big|\leq C_j \, \big(|\eta(0)|+|x|\big) \, d(x)^{\beta-1-s-j}
\qquad\text{in }\,\overline\Omega\cap B_{1/2},
\end{align*}
with $C$ and $C_j$ depending only on $j,\ n,\ s,\ \beta,\ \Omega,\ \lambda,\ \Lambda,$ and $\|K\|_{C^{2\beta+1}(\Sn)}$.
\end{corollary}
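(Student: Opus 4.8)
The plan is to deduce Corollary~\ref{cor:Lds} from Theorem~\ref{thm:Lds} by rewriting $L(\eta d^s)$, up to explicitly smooth remainders, as a combination of the operators $L_\psi(d^s)$, with $\psi$ built from $\eta$ and the geometry of the level sets of $d$. First I would record the pointwise identity
\[
Lu(x)\ =\ -\frac{1}{2s}\,\pv\int_{\Rn} K(y-x)\,(y-x)\cdot\nabla u(y)\,dy,
\]
valid for $u$ compactly supported and smooth on $\{u\ne 0\}$ (which, after the localisation below, covers $u=\eta d^s$); it follows by plugging $u(x)-u(x+y)=-\int_0^1 y\cdot\nabla u(x+ty)\,dt$ into \eqref{L} and using the homogeneity and evenness of $K$ to collapse the $t$-integral (substitute $z=ty$, use $K(z/t)=t^{n+2s}K(z)$ and $\int_0^1 t^{2s-1}\,dt=\tfrac{1}{2s}$). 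This is the right starting point because $\nabla(\eta d^s)$ is only weakly singular at $\partial\Omega$, of order $d^{s-1}$, whereas $\eta d^s$ itself is merely $C^s$ there. I would then split $\pv\int_{\Rn}=\pv\int_{B_1}+\int_{\Rn\setminus B_1}$: for $x\in B_{1/2}$ the outer integral is $C^\infty$ in $x$, and after a harmless localisation of $\eta d^s$ near $\partial\Omega$ (away from $\partial\Omega$ the function $\eta d^s$ is smooth and its $L$-image is smooth) one may assume $\eta d^s$ is supported where $|\nabla d|\approx 1$.

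The heart of the matter is the algebraic identity: setting $a:=\dfrac{\nabla\eta\cdot\nabla d}{|\nabla d|^2}$ and letting $\nabla_T\eta:=\nabla\eta-a\,\nabla d$ be the component of $\nabla\eta$ tangent to the level sets of $d$, a one-line computation gives
\[
\nabla(\eta d^s)\ =\ \Big(\eta+\frac{d}{s}\,a\Big)\nabla(d^s)\ +\ d^s\,\nabla_T\eta ,
\]
so that, with $\psi:=\eta+\tfrac{d}{s}\,a$,
\[
-2s\,L(\eta d^s)(x)\ =\ L_\psi(d^s)(x)\ +\ \pv\int_{B_1}K(y-x)\,d^s(y)\,\big((y-x)\cdot\nabla_T\eta(y)\big)\,dy\ +\ (\text{smooth in }x).
\]
Now $\psi$ is admissible in Theorem~\ref{thm:Lds}: it is $C^\infty$ in $\Omega\cap B_1$ and $C^{\beta-1}$ up to the boundary (since $\nabla d\in C^{\beta-1}(\overline\Omega)$ and $|\nabla d|\gtrsim 1$ on the relevant support), and $\|\psi\|_{C^j(\{\dist>r\}\cap B_1)}\lesssim r^{\beta-1-j}$ for $j>\beta-1$ follows from $|D^jd|\lesssim d^{\beta-j}$. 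Crucially $\psi(0)=\eta(0)$, because $d(0)=0$. Hence Theorem~\ref{thm:Lds} gives for the first term exactly the bound $C_j\big(|\eta(0)|+|x|\big)d(x)^{\beta-1-s-j}$ for $\beta-1-s<j<\beta$; this is the mechanism behind the claimed estimate, and in particular behind the extra gain when $\eta$ vanishes at~$0$. In the special case $\eta\equiv 1$ one has $a\equiv 0$, the tangential term vanishes, and the reduction is immediate, which is precisely~\eqref{computation-Lds}.

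What remains, and what I expect to be the main obstacle, is the tangential term $T(x):=\pv\int_{B_1}K(y-x)\,d^s(y)\,\big((y-x)\cdot\nabla_T\eta(y)\big)\,dy$, which is not literally of the form $L_\psi(d^s)$ and whose integrand is only $C^s$ near $\partial\Omega$; one expects $T$ to be no worse than $L_\psi(d^s)$ precisely because $\nabla_T\eta$ is tangent to the level sets of $d$, and I would establish this either by iterating the normal/tangential splitting after an integration by parts in the tangential directions, or by estimating $T$ directly with a blow-up and compactness scheme parallel to the proof of Theorem~\ref{thm:uoverds}, in both cases tracking the $(|\eta(0)|+|x|)$-type vanishing near $0$; the smooth outer remainder is handled along the way, again keeping track of this gain. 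Finally, collecting the three contributions and using that multiplication by $C^\infty$ functions preserves such pointwise derivative bounds (the extra $d^k$-factors from the Leibniz rule being absorbed by $d\le C$ on $\overline\Omega\cap B_{1/2}$), together with the standard fact that pointwise derivative bounds $|D^j(\cdot)|\lesssim d^{\,\beta-1-s-j}$ for exponents just above $\beta-1-s$ imply membership in $C^{\beta-1-s}(\overline\Omega\cap B_{1/2})$, yields the stated norm bound and derivative estimates.
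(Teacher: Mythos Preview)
Your starting point---the representation $Lu(x)=-\tfrac{1}{2s}\,\pv\int K(y-x)\,(y-x)\cdot\nabla u(y)\,dy$ together with the split into $B_1$ and $\Rn\setminus B_1$---is exactly what the paper does. The divergence is in how you package $\nabla(\eta d^s)$. You decompose $\nabla\eta$ into its normal and tangential parts relative to $\nabla d$, obtaining a \emph{scalar} $\psi=\eta+\tfrac{d}{s}a$ to feed into Theorem~\ref{thm:Lds} plus a leftover tangential term $T$ that you then flag as ``the main obstacle''. The paper avoids this split entirely: it simply sets
\[
\psi\ :=\ -\tfrac12\,\eta\,\nabla d\ -\ \tfrac{1}{2s}\,d\,\nabla\eta
\]
as a \emph{vector-valued} input and observes that
\[
L(\eta d^s)(x)\ =\ \pv\int_{\Rn} d(y)^{s-1}\,K(y-x)\;\psi(y)\cdot(y-x)\;dy,
\]
which, after discarding the smooth contribution from $\Rn\setminus B_1$, is exactly the integral analysed in the proof of Theorem~\ref{thm:Lds} (cf.\ \eqref{I_1} and \eqref{operative}: the object actually treated there is $\pv\int_{B_1} d^{s-1}K\,\psi\cdot(y-x)\,dy$ with vector-valued $\psi$). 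Since $d(0)=0$ one gets $|\psi(0)|=\tfrac12|\eta(0)|\,|\nabla d(0)|\asymp|\eta(0)|$, and the $(|\eta(0)|+|x|)$ factor follows at once. There is no residual term.

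Even staying within your own decomposition, the ``obstacle'' dissolves once you write $d^s\nabla_T\eta=d^{s-1}\cdot(d\,\nabla_T\eta)$: your tangential term $T$ is then itself of the form $\pv\int_{B_1} d^{s-1}K\,\tilde\psi\cdot(y-x)\,dy$ with $\tilde\psi:=d\,\nabla_T\eta$. This $\tilde\psi$ has the same $C^{\beta-1}$ boundary regularity and the same interior derivative blow-up as your $\psi$, and moreover $\tilde\psi(0)=0$; hence Theorem~\ref{thm:Lds} applies directly and yields $|D^jT(x)|\le C_j\,|x|\,d(x)^{\beta-1-s-j}$. Neither a tangential integration by parts nor a separate blow-up/compactness argument is needed.
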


We start by proving some preliminary lemmas.

\begin{lemma}\label{lem:representation of L}
Let $L$ be an operator as in \eqref{L}-\eqref{L2} and $u\in W^{1,1}_{loc}(\Rn)$ be such that
\begin{align*}
\int_{\Rn}\frac{|\nabla u(y)|}{1+|y|^{n+2s-1}}\;dy<\infty.
\end{align*}
Then,
\begin{equation}
\begin{split}
Lu(x)& =-\frac1{2s}\;\pv\int_{\Rn}\nabla u(x+y)\cdot y\,K(y)\;dy\\
&= -\frac1{2s}\;\pv\int_{\Rn}\nabla u(y)\cdot (y-x)\,K(y-x)\;dy
\qquad x\in\Rn.
\end{split}
\end{equation}
\end{lemma}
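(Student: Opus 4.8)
The two displayed identities are equivalent under the change of variables $y\mapsto y-x$, and since the operator $L$ in~\eqref{L} commutes with translations, it suffices to prove the first one at $x=0$, i.e.
\[
Lu(0)=-\frac1{2s}\,\pv\int_{\Rn}\nabla u(y)\cdot y\,K(y)\,dy.
\]
The plan is to carry out the computation in polar coordinates starting from the symmetrized form of $L$. Writing $y=t\theta$ with $t>0$, $\theta\in\Sn$, and using the homogeneity $K(y)=|y|^{-n-2s}K(\theta)$ from~\eqref{L2} together with $dy=t^{n-1}\,dt\,d\theta$, the symmetric representation becomes
\[
Lu(0)=\frac12\int_{\Sn}K(\theta)\int_0^\infty\frac{2u(0)-u(t\theta)-u(-t\theta)}{t^{1+2s}}\,dt\,d\theta.
\]
Since $u\in W^{1,1}_{loc}(\Rn)$, for a.e.\ $\theta\in\Sn$ the map $\rho\mapsto u(\rho\theta)$ is absolutely continuous on compact intervals, so I would rewrite $2u(0)-u(t\theta)-u(-t\theta)=-\int_0^t\big(\nabla u(\rho\theta)-\nabla u(-\rho\theta)\big)\cdot\theta\,d\rho$, substitute, apply Fubini on the region $\{0<\rho<t<\infty\}$, and use $\int_\rho^\infty t^{-1-2s}\,dt=\frac1{2s}\,\rho^{-2s}$ to obtain
\[
Lu(0)=-\frac1{4s}\int_{\Sn}K(\theta)\int_0^\infty\frac{\big(\nabla u(\rho\theta)-\nabla u(-\rho\theta)\big)\cdot\theta}{\rho^{2s}}\,d\rho\,d\theta.
\]

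Next I would exploit that $K$ is even: sending $\theta\mapsto-\theta$ in the part containing $\nabla u(-\rho\theta)$ shows that the two pieces contribute equally, so the right-hand side equals $-\frac1{2s}\int_{\Sn}K(\theta)\int_0^\infty\rho^{-2s}\,\nabla u(\rho\theta)\cdot\theta\,d\rho\,d\theta$. Undoing the polar change of variables---now reading $\theta=\lrangle{y}$, $\rho=|y|$, $d\rho\,d\theta=|y|^{1-n}\,dy$ and $K(\theta)=|y|^{n+2s}K(y)$---all powers of $|y|$ cancel, and this is exactly $-\frac1{2s}\int_{\Rn}\nabla u(y)\cdot y\,K(y)\,dy$, giving the claimed formula.

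The one point requiring care is convergence, and this is precisely where the weighted integrability hypothesis enters: since $|y\,K(y)|\lesssim|y|^{1-n-2s}$ fails to be locally integrable when $s\ge\frac12$, the integral $\int\nabla u(y)\cdot y\,K(y)\,dy$ must be understood as the principal value $\lim_{\eps\to0}\int_{|y|>\eps}$, and similarly on the left-hand side. I would make the argument rigorous by first proving the identity for $u$ smooth with rapid decay---where all integrals converge absolutely, Fubini is immediate, and the principal values are genuine integrals---and then passing to general $u$ by approximation, using $\int_{\Rn}|\nabla u(y)|/(1+|y|^{n+2s-1})\,dy<\infty$ to control the tails. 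I expect the $\eps$-truncation bookkeeping to be the only real obstacle: when one truncates near $y=0$, the Fubini step produces boundary terms of the form $\eps^{-2s}\int_{\Sn}K(\theta)\big(2u(0)-u(\eps\theta)-u(-\eps\theta)\big)\,d\theta$, and one must check that these vanish as $\eps\to0$ (equivalently, that the two principal values match up). The algebraic heart of the lemma, by contrast, is the short polar-coordinate computation above.
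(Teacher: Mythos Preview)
Your argument is correct in outline, but it takes a different route from the paper. The paper's proof is a one-line integration by parts in $\Rn$: using Euler's relation for the homogeneous kernel one has $\div\big(yK(y)\big)=nK(y)+y\cdot\nabla K(y)=nK(y)-(n+2s)K(y)=-2sK(y)$, so the symmetric integrand $\big(2u(x)-u(x+y)-u(x-y)\big)K(y)$ is rewritten as $-\frac1{2s}\big(2u(x)-u(x+y)-u(x-y)\big)\div(yK(y))$, and integrating by parts immediately yields the gradient formula. Your polar-coordinate approach arrives at the same identity by first rewriting the second difference as a radial integral of $\nabla u$ and then using Fubini together with $\int_\rho^\infty t^{-1-2s}\,dt=\frac1{2s}\rho^{-2s}$; this is essentially the one-dimensional version of the same integration by parts, carried out after separating the radial and angular variables. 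The paper's approach is shorter and makes the role of homogeneity transparent through the divergence identity, while yours avoids differentiating $K$ and instead handles everything with scalar integrals, at the cost of the extra Fubini and symmetry bookkeeping. Both are fine; the paper, like you, does not dwell on the $\eps$-truncation details.
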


\begin{proof}
Since $K$ is homogeneous, it follows from
\begin{align*}
\div\big(y K(y)\big)=nK(y)+y\cdot\nabla K(y)=nK(y)-(n+2s)K(y)=-2s\,K(y)
\end{align*}
and an integration by parts:
\begin{align*}
& \frac12\int_{\Rn}\big(2u(x)-u(x+y)-u(x-y)\big)K(y)\;dy \ = \\
& = -\frac1{4s}\int_{\Rn}\big(2u(x)-u(x+y)-u(x-y)\big)\div\big(y\,K(y)\big)\;dy \\
& = \frac1{4s}\int_{\Rn}\nabla_y\big(2u(x)-u(x+y)-u(x-y)\big)\cdot y\,K(y)\;dy \\
& = \frac1{4s}\int_{\Rn}\big(-\nabla u(x+y)+\nabla u(x-y)\big)\cdot y\,K(y)\;dy 
  = -\frac1{2s}\;\pv\int_{\Rn}\nabla u(x+y)\cdot y\,K(y)\;dy.
\end{align*}
\end{proof}

We next show how Corollary \ref{cor:Lds} follows from Theorem \ref{thm:Lds}.

\begin{proof}[Proof of Corollary \ref{cor:Lds}]
If, starting from Lemma \ref{lem:representation of L}, 
we take another step in the representation of $L(\eta d^s)$
by means of the product rule, we obtain
\begin{align*}
L(\eta d^s)(x) = & -\frac1{2s}\pv\int_{\Rn}\nabla(d^s)(y)\cdot (y-x)\,K(y-x)\;\eta(y)\;dy \\
& -\frac1{2s}\pv\int_{\Rn}d(y)^s\nabla\eta(y)\cdot (y-x)\,K(y-x)\;dy \\
= & \ \pv\int_{\Rn}d(y)^{s-1}\,K(y-x)\;\psi(y)\cdot (y-x)\;dy
\end{align*}
where we have denoted
\begin{align*}
\psi:=-\frac12\eta\nabla d-\frac1{2s}\, d\; \nabla\eta,
\qquad \text{in }\Rn.
\end{align*}
Notice that the regularity of $\psi$ is inherited by that of $d$ and $\eta$.

Since the function of $x$
\[\pv\int_{\Rn\setminus B_1}d(y)^{s-1}\,K(y-x)\;\psi(y)\cdot (y-x)\;dy\]
is $C^{2\beta+1}$ in $B_{1/2}$ (notice that the dependence on $x$ is only on the kernel $K$, which is $C^{2\beta+1}$ outside the origin and it is integrated in a region that does not contain the origin), then the result follows from Theorem~\ref{thm:Lds}.
\end{proof}

The rest of this section is devoted to the proof of Theorem \ref{thm:Lds}.
For this, we need several tools.

\subsection{Flattening of the boundary}
The first step is to flatten the boundary $\partial\Omega$ around $0\in\partial\Omega$.
Without loss of generality we can suppose the following facts:
\begin{itemize}
\item There exists a $C^\beta$-diffeomorphism $\phi:B_1\to B_1$ such that $\phi(0)=0$,
$\phi(B_1\cap\{z_n=0\})=B_1\cap\partial\Omega$, and $(z_n)_+=d(\phi(z))$,
%(which in turn implies $\partial_n\phi(z)=\nabla d(\phi(z))$),
\textit{i.e.} we do not need to rescale $\Omega$ for $\partial\Omega\cap B_1$
to be flattened via a single diffeomorphism; note that relation $z_n=d(\phi(z))$
in turn implies
\begin{align*}
\delta_{j,n}=\grad d(\phi(z))\partial_j\phi(z)
\quad\text{and therefore}\quad
\grad d(\phi(z))=\partial_n\phi(z);
\end{align*}
\item $\partial\Omega$ is flat outside $B_1$, so that $\phi$ can be extended to a global $C^\beta$-diffeomorphism 
$\phi:\Rn\to\Rn$ which coincides with the identity outside $B_1$.
\item $\phi\in C^\infty(B_1\cap\{z_n>0\})$ with   
\begin{align}\label{diffeo}
|D^j\phi| \leq C_j d^{\beta-j},
\qquad \text{in }B_1\cap\{z_n>0\}, \text{ for }j\in\N,\ j>\beta.
\end{align}
\end{itemize}
The construction of $\phi$ is provided in Lemma \ref{lem:diffeo}.

\begin{remark}
As seen in the proof of Corollary \ref{cor:Lds}, by splitting
\begin{align*}
L(\eta d^s)(x)  =\ &\pv\int_{B_1}d(y)^{s-1}\,K(y-x)\;\psi(y)\cdot (y-x)\;dy\,+\\
&\qquad\qquad+\int_{\Rn\setminus B_1}d(y)^{s-1}\,K(y-x)\;\psi(y)\cdot (y-x)\;dy,
\end{align*}
it is clear that we can limit our analysis to the first integral,
as the second one is returning a function as smooth as the kernel. 
For this reason,
from now on we only deal with
\begin{align}\label{I_1}
\pv\int_{B_1}d(y)^{s-1}\,K(y-x)\;\psi(y)\cdot (y-x)\;dy
\end{align}
by taking advantage of the above diffeomorphism.
\end{remark}

\subsection{Yet another representation for \texorpdfstring{$L$}{L}}

With the change of variables $\phi(z)=y$ and $\phi(\hat x)=x$ we get
\begin{multline*}
\pv\int_{B_1}d(y)^{s-1}\,K(y-x)\;\psi(y)\cdot (y-x)\;dy \ = \\
=\ \pv\int_{B_1}(z_n)_+^{s-1}K\big(\phi(z)-\phi(\hat x)\big)\;\psi(\phi(z))\cdot
\big(\phi(z)-\phi(\hat x)\big)\;\big|\!\det D\phi(z)\big|\;dz.
\end{multline*}
Let us define
\[J(w)  := K(w) w 			\label{J kernel}\]
and
\[\rho(z)  = \psi(\phi(z))\,\big|\!\det D\phi(z)\big|		\label{new jacobian}\]
in order to write
\begin{align*}
\pv\int_{B_1}d(y)^{s-1}\,K(y-x)\;\psi(y)\cdot (y-x)\;dy = 
\pv\int_{B_1}(z_n)_+^{s-1}J(\phi(z)-\phi(\hat x))\cdot\rho(z)\;dz
\end{align*}
and let us define
\begin{align}\label{operative}
I(\hat x):=\pv\int_{B_1}(z_n)_+^{s-1}J\big(\phi(z)-\phi(\hat x)\big)\cdot\rho(z)\;dz.
\end{align}

\begin{remark}\label{rmk:regularity of J}
For further reference, let us state here the regularity of the functions involved here.
The kernel $J$ is still homogeneous and it inherits the regularity of $K$ far from the origin.
Moreover, $J$ is odd (since $K$ is even) and 
\begin{align*}
J(w)=\frac{K(\lrangle{w})\lrangle{w}}{|w|^{n+2s-1}}=\frac{J(\lrangle{w})}{|w|^{n+2s-1}},
\qquad w\in\Rn\setminus\{0\}.
\end{align*}
% where $\lrangle{w}= w/|w|$. 
On the other hand, $\rho\in C^{\beta-1}(\overline{B}_1)\cap C^\infty(\{z_n>0\}\cap B_1)$, with the corresponding interior bounds inherited from $\psi$ and $\phi$.
\end{remark}

\subsection{A supplementary variable}

In order to continue with the argument, we decouple
the dependence on~$\hat x$,  a trick that will be functional in the rest of the analysis.
Fix $r\in(0,1)$ and $p\in B_1$ such that $d(p)=2r$.
We set, 
\begin{align}\label{operative-splitted}
I(\hat x)=I_1(\hat x)+I_r(\hat x,\hat x)
\end{align}
where 
\begin{align*}
I_1(\hat x) &= \int_{B_1\setminus B_r(p)}(z_n)_+^{s-1}J\big(\phi(z)-\phi(\hat x)\big)\cdot\rho(z)\;dz, \\
I_r(\xi,\hat x) &= \pv\int_{B_r(p)}(z_n)_+^{s-1}J\big(\phi(\xi + z-\hat x)-\phi(\hat x)\big)\cdot\rho(z)\;dz.
\end{align*}
Notice that in $B_r(p)$ the function $\phi$ is $C^\infty$, while in $B_1\setminus B_r(p)$ it is only $C^{\beta-1}$.

The reader should be warned that, despite the splitting described above, each of the two integrals separately does not satisfy the bounds we want to prove, but they need to be combined again to prove the regularity of \eqref{operative}: one key step is the cancellation taking place in \eqref{radial claim}.

\subsection{Expansion of the kernel}

We are now going to Taylor-expand the function $J(\phi(\xi+z-\hat x)-\phi(\xi))$
around the point $\xi\in B_1$, using $z-\hat x$ as an increment: according to the order of the expansion 
we need, the size of $z-\hat x$ will be suitably chosen\footnote{Namely,
when we will expand to some order less than $\beta$ then $z-\hat x$ will be 
allowed to be arbitrarily large (because we have global $C^\beta$ regularity);
instead, when we will expand to order larger than $\beta$, we will restrict $z-\hat x$
to a small ball of radius $r$ in order to have $\xi_n+z_n-\hat x_n>0$.}.
In view of the regularity of $\phi$, for any $j\in\{1,\ldots,\intpart\beta\}$, we can write
\begin{align}\label{phi-expansion}
\phi(\xi+z-\hat x)-\phi(\xi)=\sum_{1\leq|\alpha|\leq j}
\partial^\alpha\phi(\xi)(z-\hat x)^\alpha+e_j(\xi,z-\hat x)
\end{align}
for some $e_j:B_1\times B_2\to \R$ which is uniformly $C^{\beta-j}$ in the first variable and 
uniformly $C^\beta$ in the second one, which moreover satisfies 
(as a consequence of \eqref{diffeo})
\begin{align*}
\big|e_j(\xi,z-\hat x)\big|\ \leq\ \left\lbrace\begin{aligned}
& |z-\hat x|^{j+1}					& & \text{ if }|z-\hat x|<1,\ j\leq\intpart\beta-1, \\
& |z-\hat x|^{\beta} 					& & \text{ if }|z-\hat x|<1,\ j=\intpart\beta, \\
& r^{\beta-j-1}|z-\hat x|^{j+1} 		& & \text{ if }|z-\hat x|<r,\ j\geq\intpart\beta+1.
\end{aligned}\right.
\end{align*}

Using \eqref{phi-expansion}, we deduce
\begin{align*}
& J\big(\phi(\xi+z-\hat x)-\phi(\xi)\big) = J\bigg(
\sum_{1\leq|\alpha|\leq j}|z-\overline{x}|^{|\alpha|}
\partial^\alpha\phi(\xi)\lrangle{z-\hat x}^\alpha+e_j(\xi,z-\hat x) 
\bigg) \\
& \ = |z-\hat x|^{-n-2s+1}\ J\bigg(
\sum_{1\leq|\alpha|\leq j}|z-\hat x|^{|\alpha|-1}
\partial^\alpha\phi(\xi)\lrangle{z-\hat x}^\alpha+|z-\hat x|^{-1}e_j(\xi,z-\hat x) 
\bigg)
\end{align*} 
where we have used the homogeneity of kernel $K$.
We further expand the last obtained quantity, this time by taking advantage of the regularity of $K$.
In particular, we expand around the point 
\begin{align*}
\sum_{|\alpha|=1} \partial^\alpha\phi(\xi)\,\lrangle{z-\hat x}^\alpha=D\phi(\xi)\lrangle{z-\hat x},
\end{align*}
deducing (using again the multi-index notation)
\begin{equation}\label{J-expansion-first}
\begin{aligned}
 &\hspace{-10mm} J\bigg(
\sum_{1\leq|\alpha|\leq j}|z-\hat x|^{|\alpha|-1}
\partial^\alpha\phi(\xi)\lrangle{z-\hat x}^\alpha+|z-\hat x|^{-1}e_j(\xi,z-\hat x) 
\bigg)= \\
& \qquad= J\big(D\phi(\xi)\lrangle{z-\hat x}\big)+E(\xi,z-\hat x)\ + \\
&  \hspace{-3mm}+\sum_{1\leq|\gamma|\leq j-1}\!\!\!\!\!\!\partial^\gamma J\big(D\phi(\xi)\lrangle{z-\hat x}\big)
\Bigg(\sum_{2\leq|\alpha|\leq j}\!\!\!|z-\hat x|^{|\alpha|-1}
\partial^\alpha\phi(\xi)\lrangle{z-\hat x}^\alpha+|z-\hat x|^{-1}e_j(\xi,z-\hat x) \Bigg)^\gamma \hspace{-3mm}
\end{aligned}
\end{equation}
and, after having grouped together the terms with the same homogeneity in $|z-\hat x|$,
\begin{align}\label{J-expansion}
J\big(\phi(\xi+z-\hat x)-\phi(\xi)\big) & = \sum_{i=0}^{j-1}
\frac{b_i\big(\xi,\lrangle{z-\hat x}\big)}{{|z-\hat x|}^{n+2s-i-1}}
+\frac{R_j(\xi,z-\hat x)}{{|z-\hat x|}^{n+2s-1}},
\end{align}
with
\begin{align*}
\big|R_j(\xi,z-\hat x)\big|\leq C \left\lbrace\begin{aligned}
& |z-\hat x|^{j}						& & \text{if }|z-\hat x|<1,\ j\leq\intpart\beta-1, \\
& |z-\hat x|^{\beta-1} 				& & \text{if }|z-\hat x|<1,\ j=\intpart\beta, \\
& r^{\beta-j-1}|z-\hat x|^{j+1} 		& & \text{if }|z-\hat x|<r,\ j\geq\intpart\beta+1.
\end{aligned}\right.
\end{align*}
Remark also that
\begin{align*}
b_i(\xi,-\theta)=(-1)^{i+1}b_i(\xi,\theta), \qquad \theta\in\Sn.
\end{align*}
As an example, one has
\begin{align*}
b_0(\xi,\theta)=J\big(D\phi(\xi)\,\theta\big)
\qquad\text{and}\qquad
b_1(\xi,\theta)=DJ\big(D\phi(\xi)\,\theta\big)\big[ D\phi(\xi)\,\theta\big].
\end{align*}

Inside $B_r(p)$, we have the following bounds for $b_i$.

\begin{lemma}\label{lem:bi-estim}
Let $\alpha\in\N^n$, $\theta\in\Sn$, and $\xi\in B_r(p)$. 
There exists $C>0$ (independent of $r$ and $p$) such that
\begin{align*}
\big|\partial^\alpha_\xi b_i(\xi,\theta)\big|\leq 
\left\lbrace\begin{aligned}
& C & & \text{if } |\alpha|+i+1<\beta, \\
& Cr^{\beta-i-1-|\alpha|} & & \text{if } \beta<|\alpha|+i+1<2\beta+2.
\end{aligned}\right.
\end{align*}
\end{lemma}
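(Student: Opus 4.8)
The plan is to make the dependence of $b_i(\xi,\theta)$ on $\phi$ and on $J$ completely explicit, then reduce the estimate to a counting (``budget'') identity for the orders of the $\phi$-derivatives that occur, and finally close with a one-line elementary inequality. The only genuinely delicate point will be the combinatorial bookkeeping; everything else is routine once the structure is in place.

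First I would record the precise shape of the $b_i$. Going back to the expansion that leads to \eqref{J-expansion}: writing $h=z-\hat x$ and $\theta=\langle h\rangle$, one has $\phi(\xi+h)-\phi(\xi)=|h|\,A$ with $A=D\phi(\xi)\theta+\sum_{k\ge1}a_k(\xi,\theta)\,|h|^{k}+(\text{remainder})$, where $a_k(\xi,\theta)=\sum_{|\mu|=k+1}\partial^\mu\phi(\xi)\,\theta^\mu$. Since $J$ is homogeneous of degree $-(n+2s-1)$ and $J\in C^{2\beta+1}(\Rn\setminus\{0\})$ (because $K\in C^{2\beta+1}(\Sn)$ is homogeneous), Taylor-expanding $J$ about $D\phi(\xi)\theta$ to the relevant order and collecting the terms homogeneous of degree $i$ in $|h|$ shows that $b_i(\xi,\theta)$ is a finite linear combination of
\[
D^{m}J\big(D\phi(\xi)\theta\big)\big[a_{k_1}(\xi,\theta),\dots,a_{k_m}(\xi,\theta)\big],\qquad m\ge1,\ k_l\ge1,\ k_1+\dots+k_m=i,
\]
plus the single term $J(D\phi(\xi)\theta)$ when $i=0$; the remainder terms in $A$ feed only into $R_j$. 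Thus every term of $b_i$ is the product of a factor $D^{m}J(D\phi(\xi)\theta)$ with $m\le i$ and of $\phi$-derivative factors (contracted with copies of $\theta$), of orders $1$ (the base point $D\phi$) and $k_1+1,\dots,k_m+1$; moreover the ``order budget'' $\sum(\text{order}-1)$ taken over all these $\phi$-factors equals $\sum_l k_l=i$.

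Next I would differentiate in $\xi$. Applying $\partial^\alpha_\xi$ term by term, using the Leibniz rule and, on the composite factor $D^{m}J(D\phi(\xi)\theta)$, the Fa\`a di Bruno formula, each resulting summand is again a product of one factor $D^{m'}J(D\phi(\xi)\theta)$ with $m'\le m+|\alpha|\le i+|\alpha|$ and of finitely many $\phi$-derivative factors (each contracted with $\theta$'s), of orders $q_1,\dots,q_N\ge1$ satisfying the budget identity $\sum_{l=1}^{N}(q_l-1)=i+|\alpha|$. Indeed this holds for $b_i$ with right-hand side $i$ (as just noted), and each of the $|\alpha|$ derivatives increases the left-hand side by exactly one: either it raises the order of one existing $\phi$-factor by one, or --- when it lands inside $D^{m'}J(D\phi(\xi)\theta)$ --- it spawns one new $\phi$-factor of order $2$ (contributing $2-1=1$) and turns $D^{m'}J$ into $D^{m'+1}J$. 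Verifying this carefully, and tracking that the largest factor order stays $\le i+1+|\alpha|$, is the heart of the proof.

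Finally I would estimate. Since $\phi$ is a $C^{\beta}$-diffeomorphism, $c\le|D\phi(\xi)\theta|\le C$ uniformly in $\xi,\theta$, hence $|D^{m'}J(D\phi(\xi)\theta)|\le C|D\phi(\xi)\theta|^{-(n+2s-1)-m'}\le C$, using $m'\le i+|\alpha|<2\beta+1$ --- this is exactly where the hypotheses $|\alpha|+i+1<2\beta+2$ and $K\in C^{2\beta+1}(\Sn)$ are used. For the $\phi$-factors, since $\xi\in B_r(p)$ with $d(p)=2r$ one has $d(\xi)\approx r$, so Definition~\ref{defi-d} and \eqref{diffeo} give $|\partial^{q}\phi(\xi)|\le C$ if $q\le\beta$ and $|\partial^{q}\phi(\xi)|\le C\,r^{\beta-q}$ if $q>\beta$. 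Thus, with $S:=\{l:q_l>\beta\}$, every summand of $\partial^\alpha_\xi b_i(\xi,\theta)$ is bounded by $C\prod_{l\in S}r^{\beta-q_l}=C\,r^{\sigma}$ with $\sigma=\sum_{l\in S}(\beta-q_l)=|S|(\beta-1)-\sum_{l\in S}(q_l-1)$. If $|\alpha|+i+1<\beta$, then the largest factor order is $\le i+1+|\alpha|<\beta$, so $S=\emptyset$ and the bound is $C$. If $\beta<|\alpha|+i+1<2\beta+2$, then either $S=\emptyset$ and $C\le C\,r^{\beta-i-1-|\alpha|}$ since the exponent is negative, or $S\ne\emptyset$ and, using the budget identity and $\beta>1$,
\[
\sigma=|S|(\beta-1)-\sum_{l\in S}(q_l-1)\ \ge\ (\beta-1)-\sum_{l=1}^{N}(q_l-1)\ =\ \beta-1-i-|\alpha|,
\]
so $r^{\sigma}\le r^{\beta-i-1-|\alpha|}$ because $r<1$. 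Summing the finitely many summands gives the claim, with a constant depending only on $n,s,\beta,\Omega,\lambda,\Lambda$ and $\|K\|_{C^{2\beta+1}(\Sn)}$ (through the bounds on $D^j\phi$ and on $D^{m'}J$). I expect the Fa\`a di Bruno bookkeeping of the middle step --- in particular pinning down the budget identity and the uniform bound on the $D^{m'}J$-factor --- to be the main obstacle; the final inequality is then immediate.
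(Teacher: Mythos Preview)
Your proposal is correct and follows essentially the same approach as the paper. The paper's proof is a two-sentence sketch: it simply observes from \eqref{J-expansion-first}--\eqref{J-expansion} that $b_i(\cdot,\theta)$ is built from derivatives of $J$ of order at most $i$ and derivatives of $\phi$ of order at most $i+1$, and then invokes \eqref{diffeo}; your budget identity $\sum_l(q_l-1)=i+|\alpha|$ is exactly the bookkeeping that makes this sketch rigorous, and your case analysis at the end is the explicit form of what the paper leaves to the reader.
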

\begin{proof}
By \eqref{J-expansion-first} and \eqref{J-expansion},  
each $b_i(\cdot,\theta)$ contains derivatives and of the kernel $J$ of order $i$ and of the diffeomorphism $\phi$ of order $i+1$ at most.
From \eqref{diffeo} it follows then the claim of the lemma.
\end{proof}

\begin{lemma}\label{lem:error-estim}
Let $\alpha,\gamma\in\N^n$, $\gamma\leq\alpha$, $w\in B_r$, and $\xi\in B_r(p)$. 
There exists $C>0$ such that
\begin{align*}
\big| \partial^\gamma_w\partial^{\alpha-\gamma}_\xi R_{|\gamma|+1}(\xi,w) \big| \leq 
\left\lbrace\begin{aligned}
& C|w| & & \text{if } |\alpha|+2<\beta \\
& Cr^{\beta-|\alpha|-2}|w| & & \text{if } |\alpha|+2>\beta,\ |\alpha|+|\gamma|<2\beta.
\end{aligned}\right.
\end{align*}
\end{lemma}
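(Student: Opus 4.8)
The plan is to trace the remainder $R_{|\gamma|+1}(\xi,w)$ through the two-stage expansion \eqref{phi-expansion}–\eqref{J-expansion} and control each of its pieces after applying $\partial^\gamma_w\partial^{\alpha-\gamma}_\xi$. Set $j=|\gamma|+1$. By definition, $R_j$ is whatever is left in \eqref{J-expansion} after peeling off the $j$ homogeneous leading terms $b_0,\dots,b_{j-1}$; so $R_j$ is a sum of three types of contributions: (i) the error $E(\xi,z-\hat x)$ coming from Taylor-expanding $J$ about $D\phi(\xi)\lrangle{z-\hat x}$ to order $j-1$, which is of size $|z-\hat x|^{-(n+2s-1)}$ times a quantity controlled by the $(j-1)$-st derivative of $J$ and by the higher-order terms in the argument; (ii) the contributions of $e_j(\xi,z-\hat x)$ (the Taylor remainder of $\phi$ itself) entering both directly and inside the $\gamma$-powers in \eqref{J-expansion-first}; and (iii) the ``crossed'' homogeneous terms of order $\geq j$ in $|z-\hat x|$ produced when one multiplies out the powers $(\,\cdots)^\gamma$ in \eqref{J-expansion-first}. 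The first step is therefore to write $R_j$ explicitly as this finite sum, so that differentiation can be applied termwise.

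Next I would differentiate. For a term of type (iii) — a finite product of factors $\partial^{\delta}\phi(\xi)\lrangle{z-\hat x}^\delta$ with $|\delta|\geq 2$, multiplied by $\partial^\mu J(D\phi(\xi)\lrangle{z-\hat x})$ and by a power of $|z-\hat x|$ — the $\xi$-derivatives land either on the $\partial^\delta\phi$ factors (raising their order, hence by \eqref{diffeo} contributing $d^{\beta-|\delta|-(\text{number of }\xi\text{-derivatives})}$, and near $B_r(p)$ one uses $d\asymp r$ when the order exceeds $\beta$) or on $\partial^\mu J$ via the chain rule through $D\phi(\xi)$, which again produces higher derivatives of $\phi$ controlled by \eqref{diffeo}. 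The $w$-derivatives act on the homogeneous, scale-invariant pieces $\lrangle{w}^\delta$, $\partial^\mu J(D\phi(\xi)\lrangle{w})$, $|w|^{k}$ and simply shift the homogeneity in $|w|$ by $-|\gamma|$; since each such term carries an intrinsic homogeneity $\geq j = |\gamma|+1$ in $|w|$, after taking $|\gamma|$ of the $w$-derivatives the remaining homogeneity is $\geq 1$, which gives the factor $|w|$. Combining the two: when every derivative of $\phi$ that appears has order $<\beta$ we get a bounded constant times $|w|$, i.e.\ the first branch; when some derivative of $\phi$ has order $>\beta$ — which, counting, happens exactly in the regime $|\alpha|+2>\beta$ — the worst loss is $r^{\beta-|\alpha|-2}$, giving the second branch. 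For terms of type (ii) I would use the stated bounds on $|e_j|$ and the fact that $e_j$ is uniformly $C^{\beta-j}$ in $\xi$ and $C^\beta$ in $w$, so that $\partial^\gamma_w\partial^{\alpha-\gamma}_\xi e_j$ obeys bounds of the same form; dividing by $|z-\hat x|$ and by the homogeneity factor then reproduces the two branches after the same bookkeeping (this is essentially the bound on $R_j$ already displayed before the lemma, differentiated). Type (i) is handled identically, the $(j-1)$-st derivative of $J$ being smooth and homogeneous of the right degree away from the origin, and the leftover argument being of order $\geq j$.

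The main obstacle, and the part deserving care rather than being routine, is the bookkeeping of homogeneity degrees together with the derivative-order thresholds: one must check that the total number of ``extra'' derivatives forced onto $\phi$ across all factors of a product term is bounded by $|\alpha|$, that the intrinsic $|w|$-homogeneity of each term of $R_{|\gamma|+1}$ is at least $|\gamma|+1$ (this is precisely why the index is $|\gamma|+1$ and not less), and that the constraint $|\alpha|+|\gamma|<2\beta$ is exactly what keeps every derivative of $\phi$ that appears below the order $2\beta$ at which \eqref{diffeo} still applies — equivalently, below the point where $\|K\|_{C^{2\beta+1}}$ and the $C^\beta$-regularity of $\partial\Omega$ run out. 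Once this accounting is set up cleanly, each individual estimate is a direct application of \eqref{diffeo}, Lemma \ref{lem:bi-estim}, the bound on $|e_j|$, and $d\asymp r$ on $B_r(p)$, and the proof closes by summing the finitely many terms.
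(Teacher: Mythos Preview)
Your approach is correct in spirit and would close after careful bookkeeping, but the paper takes a much shorter route that avoids decomposing $R_{|\gamma|+1}$ into types (i)--(iii) altogether. The paper rewrites \eqref{J-expansion}, via the homogeneity of $J$, as
\[
J\Big(\frac{\phi(\xi+w)-\phi(\xi)}{|w|}\Big)=\sum_{i=0}^{j-1}b_i(\xi,\lrangle{w})\,|w|^i+R_j(\xi,w),
\]
and then observes that, after writing $w=t\theta$ with $t>0$, $\theta\in\Sn$, the left-hand side is a smooth function of $(t,\theta,\xi)$ on $B_r(p)$ and the displayed sum is precisely its Taylor expansion in $t$ at $t=0$. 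Thus $R_{|\gamma|+1}$ is the Taylor remainder of order $|\gamma|+1$ in $t$; decomposing $\partial^\gamma_w$ into $t$- and $\theta$-derivatives, the standard remainder estimate yields $|\partial^\gamma_w\partial^{\alpha-\gamma}_\xi R_{|\gamma|+1}|\leq c(\phi)\,|w|$ in one stroke. The constant $c(\phi)$ is then bounded by $\|\phi\|_{C^{|\alpha|+2}(B_r(p))}$ (since the left-hand side involves derivatives of $\phi$ up to order $|\gamma|+2\leq|\alpha|+2$), and \eqref{diffeo} gives the two branches.

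What your approach buys is an explicit term-by-term understanding, which is conceptually reassuring; what the paper's buys is brevity and robustness --- no need to separately estimate the $e_j$-terms, the $E$-term, and the crossed terms, and no risk of losing track of which regularity of $e_j$ (global $C^{\beta-j}$ versus interior smooth with \eqref{diffeo}-type blow-up) is the relevant one. Your discussion of type (ii) in fact grazes this issue: you invoke ``$e_j$ is uniformly $C^{\beta-j}$ in $\xi$'', but for $j=|\gamma|+1$ possibly exceeding $\intpart\beta$ this is not the right statement --- inside $B_r(p)$ one should instead use the smooth interior bounds coming from \eqref{diffeo}. This is not a fatal gap, just an imprecision that the paper's polar-coordinate trick sidesteps entirely.
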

\begin{proof}
Equation \eqref{J-expansion} can be rewritten, using the homogeneity of $J$, as 
\begin{align}\label{4564654546464}
J\Big(\frac{\phi(\xi+w)-\phi(\xi)}{|w|}\Big) & = \sum_{i=0}^{j-1}
b_i(\xi,\lrangle{w})|w|^i+R_j(\xi,w), \qquad w\in B_{r/2}, \xi\in B_r(p).
\end{align}
Fixing $\alpha,\gamma\in\N^n$, $w\neq 0$, and choosing $j=|\gamma|+1$ in the last formula, one can show that
\begin{align*}
\partial_w^\gamma\partial_\xi^{\alpha-\gamma} J\Big(\frac{\phi(\xi+w)-\phi(\xi)}{|w|}\Big) 
= \sum_{i=0}^{|\gamma|}
\partial_w^\gamma\partial_\xi^{\alpha-\gamma} \Big( b_i(\xi,\lrangle{w})|w|^i \Big)
+\partial_w^\gamma\partial_\xi^{\alpha-\gamma}R_{|\gamma|+1}(\xi,w),
\end{align*}
with\footnote{This can be performed by writing 
$w=t\theta$, $t>0$, $\theta\in\Sn$, decomposing the derivatives in $w$ into derivatives in $t$ and $\theta$,
and noticing that \eqref{4564654546464} is a Taylor expansion in the $t$ variable.}
\begin{align*}
\big| \partial_w^\gamma\partial_\xi^{\alpha-\gamma}R_{|\gamma|+1}(\xi,w) \big| \leq c(\phi) |w|.
\end{align*}
Since $R_{|\gamma|+1}$ contains derivatives of $J$ of order $|\gamma|+1$ and of $\phi$ of order $|\gamma|+2$ (\textit{cf.} \eqref{J-expansion-first}),
then
\begin{align*}
c(\phi)\leq C\|\phi\|_{C^{|\alpha|+2}(B_r(p))}\leq 
\left\lbrace\begin{aligned}
& C & & \text{if } |\alpha|+2<\beta, \\
& Cr^{\beta-|\alpha|-2} & & \text{if } |\alpha|+2>\beta,\ |\alpha|+|\gamma|<2\beta.
\end{aligned}\right.
\end{align*}
\end{proof}

The following is an important regularity result in which we use crucially the fact that $(z_n)_+^{s-1}$ solves $L\big[(z_n)_+^{s-1}\big]=0$ in $\{z_n>0\}$.

\begin{lemma}\label{lem:cancellation} 
Let $\psi\in C^{\beta-1}(\overline B_1)$ and, for $j\in\N$,
$a_j\in C^{j+\beta}(\Sn)$ be such that $\beta-s\not\in\N$ and
\begin{align}\label{parity-aj}
a_j(-\theta)= (-1)^{j+1} a_j(\theta),
\qquad\theta\in\Sn.
\end{align} 
Then the function defined by
\begin{align*}
I_j(x)\ :=\ \pv\int_{B_1} (z_n)_+^{s-1}\,\frac{a_j\big(\lrangle{z-x}\big)}{{|z-x|}^{n+2s-j-1}}\;\psi(z)\;dz
\end{align*}
is of class~$C^{j+\beta-1-s}$ in $B_{1/2}$ with
\begin{align*}
\|I_j\|_{C^{j+\beta-s-1}(B_{1/2})}\leq \|a_j\|_{C^{j+\beta}(\Sn)}\|\psi\|_{C^{\beta-1}(\overline B_1)}.
\end{align*}
\end{lemma}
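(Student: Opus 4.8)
The plan is to reduce the statement to a model situation on the half-space and then exploit the fact that $(z_n)_+^{s-1}$ is (up to a constant) the nonlocal harmonic function $L[(z_n)_+^{s-1}]=0$ in $\{z_n>0\}$. First I would observe that the kernel $a_j(\lrangle{z-x})|z-x|^{-n-2s+j+1}$ is exactly $J$-type up to relabelling: it is homogeneous of degree $-(n+2s-j-1)$ and has the parity $a_j(-\theta)=(-1)^{j+1}a_j(\theta)$ recorded in \eqref{parity-aj}, which is precisely the parity making the principal value well-defined and making it look like ``$j$ derivatives applied to an even Calderón–Zygmund kernel". Indeed, writing $a_j(\theta)/|w|^{n+2s-j-1}$ as $\partial^\gamma$ (with $|\gamma|=j$) of something homogeneous of degree $-(n+2s-1)$ with an even angular profile, one recognizes $I_j$ as, morally, $D^j$ of the $j=0$ operator $L_\psi$ studied implicitly in Lemma~\ref{lem:representation of L}. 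So the first reduction is: it suffices to prove the case $j=0$ with a \emph{uniform} seminorm bound, and then commute $j$ derivatives through the kernel, each derivative costing one order of homogeneity and one order of angular regularity of $a_j$ — which matches the hypotheses $a_j\in C^{j+\beta}(\Sn)$ and the gain $C^{j+\beta-1-s}$.

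For the base case $j=0$, I would decompose $\psi(z)=\psi(x)+(\psi(z)-\psi(x))$. The first piece gives $\psi(x)\cdot\pv\int_{B_1}(z_n)_+^{s-1}a_0(\lrangle{z-x})|z-x|^{-(n+2s-1)}\,dz$; here the $\pv$ integral in $z$, extended to all of $\R^n$, is up to constants $L[(z_n)_+^{s-1}](x)$, which vanishes for $x_n>0$ — this is exactly the \emph{crucial cancellation} advertised before the lemma — so only a tail term on $\R^n\setminus B_1$ survives, and that tail is smooth in $x\in B_{1/2}$ (kernel away from the diagonal). The multiplication by $\psi(x)\in C^{\beta-1}$ then costs one order, yielding a $C^{\beta-1}$ contribution — already better than needed. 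For the second piece, $(\psi(z)-\psi(x))$ gains $|z-x|^{\min(\beta-1,1)}$ (or a logarithmic/integer subtlety absorbed by $\beta-s\notin\N$), which tames the singularity of the Calderón–Zygmund kernel and lets me run a standard incremental-quotient / Hölder estimate for singular integrals against the weight $(z_n)_+^{s-1}$ (which is itself locally integrable since $s-1>-1$): this is where one shows $C^{\beta-1-s}$ regularity rather than merely $C^{\beta-1}$, the loss of $s$ coming from the weight $(z_n)_+^{s-1}$ interacting with the difference quotient near $\{z_n=0\}$.

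Assembling: split $I_j = I_j^{\mathrm{tail}} + I_j^{\mathrm{sing}}$ as above, bound the tail by differentiating under the integral sign (kernel regularity $C^{j+\beta}$ on $\Sn$ suffices), and for the singular part iterate the $j=0$ argument after writing $a_j/|w|^{n+2s-j-1}=\partial_w^\gamma(\text{even kernel of degree }-(n+2s-1))$ plus lower-order commutator kernels of the same type but smaller $j'$, each handled by induction on $j$; collecting the constants gives the claimed bound $\|I_j\|_{C^{j+\beta-s-1}(B_{1/2})}\le \|a_j\|_{C^{j+\beta}(\Sn)}\|\psi\|_{C^{\beta-1}(\overline B_1)}$. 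I expect the main obstacle to be the singular part in the base case: making the difference-quotient estimate for the weighted singular integral genuinely sharp — i.e. extracting the full order $\beta-1-s$ and not less — requires carefully splitting the $z$-integral into the region $|z-x|\lesssim \dist(x,\{z_n=0\})$ (where one Taylor-expands both $\psi$ and the smooth angular kernel) and the complementary region (where one integrates the weight directly), and checking that the conditions $\beta-s\notin\N$, together with $s-1>-1$, rule out exactly the borderline logarithmic divergences. This is also the step where the parity assumption \eqref{parity-aj} is essential, as it kills the odd part of the kernel that would otherwise produce a non-convergent principal value and spoil the cancellation with $L[(z_n)_+^{s-1}]$.
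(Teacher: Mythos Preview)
Your strategy is essentially the paper's: separate the polynomial part of $\psi$, for which the integral becomes (after enough $x$-derivatives) a smooth tail via the identity that even kernels of degree $-(n+2s)$ annihilate $(z_n)_+^{s-1}$, and bound the remainder by direct size estimates. Two concrete gaps remain, though.

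First, the cancellation is invoked at the wrong homogeneity. In your $j=0$ base case with the constant piece $\psi(x)$, the kernel $a_0(\lrangle{w})/|w|^{n+2s-1}$ is \emph{odd} of degree $-(n+2s-1)$; this is not an operator of type $L$ acting on $(z_n)_+^{s-1}$, and the full-space $\pv$ integral has no reason to vanish. The identity actually used (from \cite{duke}*{Lemma 9.6}) requires an \emph{even} kernel of degree $-(n+2s)$. The paper reaches this by differentiating the polynomial-coefficient integrals $j+|\alpha|+1$ times in $x$ --- one more than you propose --- which simultaneously fixes the degree and flips the parity; only then does the integral collapse to a smooth tail over $\R^n\setminus B_1$. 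Your conclusion that the constant piece is smooth is right, but the mechanism needs this extra derivative.

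Second, and more seriously, the split $\psi(z)=\psi(x)+(\psi(z)-\psi(x))$ is too coarse when $\beta>2$. The remainder then gains only $|z-x|$, and because it depends on $x$ through $\psi(x)$, each $x$-derivative either increases the kernel's singularity or peels off a term $-\partial^\alpha\psi(x)$, leaving a new remainder with the \emph{same} $O(|z-x|)$ gain against a strictly more singular kernel; iterating does not recover $C^{\beta-1-s}$. The paper instead Taylor-expands $\psi$ to order $\lfloor\beta\rfloor-1$ around a \emph{fixed} point $x_0$, then re-centers the polynomial part at $x$: the coefficients $\widetilde\Psi_\alpha(x_0,x)$ are smooth in $x$ and exit the integral, while the remainder $P(x_0,z)$ satisfies $|P(x_0,z)|\le C|z-x_0|^{\beta-1}$ and is \emph{independent of $x$}. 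Derivatives in $x$ then hit only the kernel, and a finite-difference argument (with uniform integrability via Lemma~\ref{lem:incr-quot} and Vitali) justifies differentiating under the integral and yields the sharp exponent. Your final paragraph hints at a local Taylor expansion, but it must be carried to this full order and anchored at a fixed point, not at the variable $x$.

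A minor wording issue: you write the $I_j$-kernel as $\partial^\gamma$ of a kernel of degree $-(n+2s-1)$, but derivatives increase singularity, so the less singular $I_j$-kernel cannot be a derivative of the more singular $I_0$-kernel. What is correct (and what you presumably intend) is the converse: $\partial_x^\gamma I_j$ with $|\gamma|=j$ is an integral of $I_0$ type.
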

\begin{proof}
Fix some point $x_0\in B_{1/2}$ and write
\begin{align*}
\psi(z) &= \sum_{|\alpha|\leq\lfloor\beta\rfloor-1}\Psi_\alpha(x_0)(z-x_0)^\alpha+P(x_0,z) 
         = \sum_{|\alpha|\leq\lfloor\beta\rfloor-1}\widetilde\Psi_\alpha(x_0,x)(z-x)^\alpha+P(x_0,z)
\end{align*}
where $\Psi_\alpha,\widetilde\Psi_\alpha(\cdot,x)\in C^{\beta-1-|\alpha|}(B_{1/2})$ for any $x\in B_1$,
$\widetilde\Psi_\alpha(x_0,\cdot)\in C^\infty(B_1)$ for every $x_0\in B_{1/2}$, and
$P\in C^{\beta-1}(B_{1/2}\times B_1)$ with 
\begin{align*}
\big|P(x_0,z)\big|\leq\|\psi\|_{C^{\beta-1}(B_1)}|z-x_0|^{\beta-1}.
\end{align*}
We plug such expansion for $\psi$ in the integral defining $I_j$: mind that the coefficients
$\widetilde\Psi_\alpha$ exit the integral.
For any $\alpha$ as above, we have 
\begin{align}\label{ij-taylor-terms}
\pv\int_{B_1} (z_n)_+^{s-1}\frac{a_j\big(\lrangle{z-x}\big)}{{|z-x|}^{n+2s-j-1}}\;(z-x)^\alpha\;dz
=\pv\int_{B_1} (z_n)_+^{s-1}\frac{a_j\big(\lrangle{z-x}\big)\,\lrangle{z-x}^\alpha}{{|z-x|}^{n+2s-j-|\alpha|-1}}\;dz,
\end{align}
where we underline that $a_j(\lrangle{z-x})\lrangle{z-x}^\alpha$ satisfies \eqref{parity-aj} by replacing $j$ with $j+|\alpha|$.
We differentiate $j+|\alpha|+1$ times (remark that, by assumption, $j+|\alpha|<j+\beta-1$)
by exploiting the homogeneity of the kernel as follows: 
take $\gamma\in\N^n$ with $|\gamma|=j+1$ and
\[\begin{split}
\partial_x^{\alpha+\gamma}\:\pv\int_{B_1} (z_n)_+^{s-1}\frac{a_j\big(\lrangle{z-x}\big)\lrangle{z-x}^\alpha}{{|z-x|}^{n+2s-j-|\alpha|-1}}\;dz 
&=\pv\int_{B_1} (z_n)_+^{s-1}\frac{\widetilde a_j\big(\lrangle{z-x}\big)}{{|z-x|}^{n+2s}}\;dz  \\
&=\ -\int_{\R^N\setminus B_1} (z_n)_+^{s-1}\frac{\widetilde a_j\big(\lrangle{z-x}\big)}{{|z-x|}^{n+2s}}\;dz
\end{split}\]
where we have used that $\widetilde a_j$ are homogeneous of degree 0, even on $\mathbb{S}^{n-1}$, and \cite{duke}*{Lemma 9.6}.
The expression obtained for the derivatives is smooth and \textit{a fortiori} the original function will be.

Now, we deal with the regularity of the remainder
\begin{align}\label{4653}
\int_{B_1} (z_n)_+^{s-1}\,\frac{a_j\big(\lrangle{z-x}\big)}{{|z-x|}^{n+2s-j-1}}\;P(z,x_0)\;dz.
\end{align}
The idea is to show that we can take $\lfloor\beta\rfloor-1$ derivatives in a fixed direction exactly at the point $x_0$
(which has been fixed before, but it is arbitrary) via appropriate limits of higher order difference quotients.
To this end, let us denote by
\begin{align}\label{def:finite difference}
\Delta_h^k f(x)=\sum_{i=0}^k(-1)^i\binom{k}{i}\;f\bigg(x+\Big(\frac{k}2-i\Big)h\bigg)
\end{align}
the centred finite difference of order $k$ and recall that
\begin{align*}
\lim_{|h|\downarrow 0} |h|^{-k}\Delta_h^k f(x) = \frac{\partial^k f}{\partial h^k}(x).
\end{align*}
Consider Lemma \ref{lem:incr-quot} with $N=\intpart{\beta}-1$, $\gamma=n+2s-j-1$, and
\begin{align*}
\kappa(x)=\frac{a_j(\lrangle{x})}{|x|^{n+2s-j-1}}.
\end{align*}
Since, for any $|h|<\frac12$, by Lemma \ref{lem:incr-quot} it follows
\begin{align*}
\frac1{|h|^{N}} |\Delta_h^N\kappa(z-x_0)\;P(z,x_0)|
& \leq C\frac{{(|z-x_0|+|h|)}^{(\gamma-1)N}}{\prod_{i=0}^N\left|z-x_0+\big(\frac{N}2-i\big)h\right|^{\gamma}}\;|P(z,x_0)| \\
& \leq C\|\psi\|_{C^{\beta-1}(B_1)}\frac{{(|z-x_0|+|h|)}^{(\gamma-1)N}}{\prod_{i=0}^N\left|z-x_0+\big(\frac{N}2-i\big)h\right|^{\gamma}}\;|z-x_0|^{\beta-1},
\end{align*}
then, for any $\eps>0$,
\begin{align*}
& \frac1{|h|^{N}} \int_{E} |\Delta_h^N\kappa(z-x_0)\,P(z,x_0)| \; dz \\
& \leq C\|\psi\|_{C^{\beta-1}(B_1)} \int_E 
\frac{{(|z-x_0|+|h|)}^{(\gamma-1)N}}{\prod_{i=0}^N\left|z-x_0+\big(\frac{N}2-i\big)h\right|^{\gamma}}\,|z-x_0|^{\beta-1} \, dz
\\ & \leq
C\|\psi\|_{C^{\beta-1}(B_1)} {|h|}^{-N-\gamma+\beta-1+n}\int_{\frac1{|h|}(E-x_0)}
\frac{{(|\zeta|+1)}^{(\gamma-1)N}}{\prod_{i=0}^N\left|\zeta+\big(\frac{N}2-i\big)\frac{h}{|h|}\right|^{\gamma}}\;|\zeta|^{\beta-1} \; d\zeta \\
& \leq C\|\psi\|_{C^{\beta-1}(B_1)} {|h|}^{-N-\gamma+\beta-1+n}\int_{\frac1{|h|}(E-x_0)}
{(|\zeta|+1)}^{-N-\gamma+\beta-1} \; d\zeta \\
& \leq C\|\psi\|_{C^{\beta-1}(B_1)} \int_{E} {(|z-x_0|+1)}^{-N-\gamma+\beta-1} \; dz < \eps
\end{align*}
provided that $|E|$ is small enough, regardless the value of $|h|$.
This means that $|h|^{-N}\Delta_h^N\kappa(z-x_0)P(z,x_0)$ is uniformly integrable in $B_1$ 
(as a family indexed on $|h|$). As it is also pointwisely converging, we conclude by the 
Vitali convergence theorem (\textit{cf.} for example \cite{tao}*{Theorem 1.5.13}) 
that every $N$-th order derivative of \eqref{4653}
is of type
\begin{align*}
\int_{B_1} (z_n)_+^{s-1}\,\frac{\widehat a_j\big(\lrangle{z-x_0}\big)}{{|z-x_0|}^{n+2s-j-1+N}}\;P(z,x_0)\;dz
\end{align*}
which, in turn, is of magnitude
\begin{align*}
\|a_j\|_{C^N(\Sn)}\|\psi\|_{C^{\beta-1}(B_1)} \int_{B_1} (z_n)_+^{s-1}{|z-x_0|}^{-n-2s+j+\beta-N} \;dz
\leq C\|a_j\|_{C^N(\Sn)}\|\psi\|_{C^{\beta-1}(B_1)}
\end{align*}
if $N\leq j+\intpart\beta-2$, or
\begin{multline*}
\|a_j\|_{C^N(\Sn)}\|\psi\|_{C^{\beta-1}(B_1)} \int_{B_1} (z_n)_+^{s-1}{|z-x_0|}^{-n-2s+j+\beta-N} \;dz\ \leq \\
\leq\ C\|a_j\|_{C^N(\Sn)}\|\psi\|_{C^{\beta-1}(B_1)}{(x_{0})}_{n}^{\beta-\intpart\beta-s}
\end{multline*}
if $N= j+\intpart\beta-1$ and $\beta-\intpart\beta<s$, or
\begin{multline*}
\|a_j\|_{C^N(\Sn)}\|\psi\|_{C^{\beta-1}(B_1)} \int_{B_1} (z_n)_+^{s-1}{|z-x_0|}^{-n-2s+j+\beta-N} \;dz\ \leq \\
\leq\ C\|a_j\|_{C^N(\Sn)}\|\psi\|_{C^{\beta-1}(B_1)}{(x_{0})}_{n}^{\beta-\intpart\beta-s-1}
\end{multline*}
if $N= j+\intpart\beta$ and $\beta-\intpart\beta>s$.
We therefore have that 
\begin{align*}
\|I_j\|_{C^{j+\beta-s-1}(\overline{B}_1)}\leq \|a_j\|_{C^N(\Sn)}\|\psi\|_{C^{\beta-1}(B_1)}.
\end{align*}
\end{proof}

The following is the interior regularity counterpart of Lemma \ref{lem:cancellation},
which was instead studying some global regularity.

\begin{lemma}\label{lem:cancellation2} 
Let $p\in\{x\in\Rn:x_n>0\}$ and $r>0$ be such that $\overline{B_r(p)}\subset B_1$.
For $k\in\N$, $\beta>1$, $k>\beta-s$, let $\psi\in C^{\beta-1}(\overline B_1)\cap C^k(\overline{B_r(p)})$. 
For $j\in\N$, let $a_j\in C^{j+k-1}(\Sn)$ satisfy \eqref{parity-aj}.
Then the function defined by
\begin{align*}
I_j(x)\ :=\ \pv\int_{B_1} (z_n)_+^{s-1}\,\frac{a_j\big(\lrangle{z-x}\big)}{{|z-x|}^{n+2s-j-1}}\;\psi(z)\;dz
\end{align*}
is of class~$C^{k+j-1}$ in $B_{r/2}(p)$ with
\begin{multline}\label{eq:cancellation2}
|D^{k+j-1}I_j(x)|\leq \\
\leq C \|a_j\|_{C^{j+k-1}(\Sn)}
\left(
r^{\intpart\beta+1-k-s}+\|\psi\|_{C^k(\overline{B_r(p)})} r^{1-s}+\|\psi\|_{C^{\beta-1}(\overline{B}_1)}r^{\beta-k-s}
\right),
\quad x\in B_{r/2}(p).
\end{multline}
\end{lemma}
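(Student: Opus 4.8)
The plan is to mimic the structure of the proof of Lemma~\ref{lem:cancellation}, but to track the interior (weighted) estimates carefully near $p$, where $\psi$ is only as smooth as $C^k$, while away from $B_r(p)$ only the weaker $C^{\beta-1}$ regularity is available. First I would fix $x_0\in B_{r/2}(p)$ and split the kernel integral according to whether $z\in B_r(p)$ or $z\in B_1\setminus B_r(p)$; write $I_j=I_j^{\rm near}+I_j^{\rm far}$. In the far piece the singularity $z-x$ stays bounded below by $\sim r$ for $x\in B_{r/2}(p)$, so the kernel $\lrangle{z-x}^{\phantom{a}}$-part is smooth in $x$; differentiating $k+j-1$ times in $x$ produces a factor $\lesssim r^{-(k+j-1)-(n+2s-j-1)+n}=r^{-k-2s+2-\ldots}$ against $\|\psi\|_{C^{\beta-1}(\overline B_1)}$ integrated over $B_1\setminus B_r(p)$; one checks this yields the term $\|\psi\|_{C^{\beta-1}(\overline B_1)}\,r^{\beta-k-s}$ in~\eqref{eq:cancellation2} (here one keeps one integration against $(z_n)_+^{s-1}$ producing the additional $r^{s-1}\cdot r^{\ldots}$; the bookkeeping of exponents is routine).

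For the near piece I would expand $\psi$ around $x_0$ to order $k-1$ on $B_r(p)$: $\psi(z)=\sum_{|\alpha|\le k-1}\Psi_\alpha(x_0)(z-x_0)^\alpha+P(x_0,z)$ with $|P(x_0,z)|\le \|\psi\|_{C^k(\overline{B_r(p)})}|z-x_0|^k$. As in Lemma~\ref{lem:cancellation}, after absorbing the monomial $(z-x)^\alpha$ into the angular factor (replacing $j$ by $j+|\alpha|$ in the parity relation~\eqref{parity-aj}), the polynomial terms of the expansion give, upon differentiating $j+|\alpha|+1$ times and using the homogeneity of the kernel together with the fact that $(z_n)_+^{s-1}$ is $L$-harmonic in $\{z_n>0\}$ (via \cite{duke}*{Lemma 9.6}), an expression that is smooth in $x$ in $B_{r/2}(p)$ — but now I must bound its $C^{k+j-1}$ norm in $B_{r/2}(p)$ in terms of $r$, which forces the constant $r^{\intpart\beta+1-k-s}$ via the contributions where $\psi$ is only controlled in $C^{\beta-1}$ on the ``collar'' between the Taylor expansion scale and the full ball. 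For the remainder $P$ I would reuse the finite-difference / Vitali argument of Lemma~\ref{lem:cancellation}: take $N=k-1$ centred finite differences $\Delta_h^{N}$ of the kernel $\kappa(x)=a_j(\lrangle x)|x|^{-(n+2s-j-1)}$ at $x_0$, invoke Lemma~\ref{lem:incr-quot} to get the pointwise bound, establish uniform integrability over $B_r(p)$ (now the integral is over a ball of radius $r$, so rescaling by $r$ rather than by $|h|$ gives the factor $r^{1-s}$ against $\|\psi\|_{C^k(\overline{B_r(p)})}$), and pass to the limit to see every $(k+j-1)$-st derivative of $I_j^{\rm near}$ at $x_0$ equals $\int_{B_r(p)}(z_n)_+^{s-1}\,\widehat a_j(\lrangle{z-x_0})\,|z-x_0|^{-n-2s+j+k-1}\,P(x_0,z)\,dz$, whose magnitude is $\lesssim \|a_j\|_{C^{j+k-1}(\Sn)}\|\psi\|_{C^k(\overline{B_r(p)})}\int_{B_r(p)}(z_n)_+^{s-1}|z-x_0|^{1-2s}\,dz \lesssim \|a_j\|_{C^{j+k-1}(\Sn)}\|\psi\|_{C^k(\overline{B_r(p)})}\,r^{1-s}$ since the exponent $(s-1)+(1-2s)+n>-1$ near a boundary point and $x_0$ is at distance $\sim r$ from $\{z_n=0\}$.

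Collecting the three contributions — $r^{\intpart\beta+1-k-s}\|a_j\|$ from the matching of the two scales in the polynomial part, $r^{1-s}\|\psi\|_{C^k(\overline{B_r(p)})}\|a_j\|$ from the near remainder, and $r^{\beta-k-s}\|\psi\|_{C^{\beta-1}(\overline B_1)}\|a_j\|$ from the far piece — gives exactly~\eqref{eq:cancellation2}. The main obstacle I anticipate is the careful tracking of the $r$-exponents when the Taylor expansion of $\psi$ is cut at order $k-1$ but $\psi$ is only $C^{\beta-1}$ globally with $k>\beta-s$: one has to interpolate between the two regularity scales precisely to land on $r^{\intpart\beta+1-k-s}$ rather than something weaker, and to make sure no logarithmic losses appear when some intermediate exponent is borderline — this is where the hypotheses $\beta-s\notin\N$ (inherited from the ambient assumptions) and $k>\beta-s$ are used. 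A secondary technical point is checking that differentiating the kernel in $x$ inside $B_r(p)$ — where $z-x$ can be small — is legitimate after the expansion has isolated the genuinely singular part, exactly as in the footnote to Lemma~\ref{lem:error-estim}; I would handle this by writing $w=z-x_0$ in polar form and reducing $x$-derivatives to $t$- and $\theta$-derivatives of the already-expanded integrand.
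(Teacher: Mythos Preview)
Your overall architecture --- split into near and far, Taylor-expand $\psi$, use the cancellation via \cite{duke}*{Lemma~9.6} for the polynomial pieces, and handle the remainder by the finite-difference/Vitali argument --- matches the paper's. But there is a genuine gap in how you organize the polynomial terms, and it shows up first in your far piece.

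A direct bound on $I_j^{\rm far}$ does \emph{not} give $\|\psi\|_{C^{\beta-1}}\,r^{\beta-k-s}$. Differentiating the kernel $k+j-1$ times yields a bound $|z-x|^{-(n+2s+k-2)}$; integrating this against $(z_n)_+^{s-1}|\psi(z)|$ on $B_1\setminus B_r(p)$ with only $|\psi|\le\|\psi\|_{L^\infty}$ gives, via Lemma~\ref{scaling of integral}, $Cr^{1-k-s}$, which is strictly worse than $r^{\beta-k-s}$ since $\beta>1$. To recover the correct exponent you must \emph{also} Taylor-expand $\psi$ on the far region, to order $\lfloor\beta\rfloor-1$, and use the remainder bound $|P_\beta(x_0,z)|\le\|\psi\|_{C^{\beta-1}}|z-x_0|^{\beta-1}$ there; only this remainder produces $r^{\beta-k-s}$.

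Once you do that, the subtracted polynomial terms on the far region (orders $|\alpha|\le\lfloor\beta\rfloor-1$) must still be accounted for, and treating them separately on $B_1\setminus B_r(p)$ again gives the bad $r^{|\alpha|+1-k-s}$ (worst case $r^{1-k-s}$). The paper's key step is to \emph{recombine} these with the matching low-order polynomial terms from the near expansion into a single integral over all of $B_1$ (the term \eqref{globalterm}); this global integral is then handled exactly as \eqref{ij-taylor-terms} in Lemma~\ref{lem:cancellation} and is smooth in $x$ with no $r$-dependence. The $r^{\lfloor\beta\rfloor+1-k-s}$ contribution you are looking for comes instead from the \emph{high-order} polynomial terms $\lfloor\beta\rfloor\le|\alpha|\le k-1$ in the near expansion (the term \eqref{localterm}), which have no far counterpart: for those one uses the cancellation to write $\int_{B_r(p)}=-\int_{\R^n\setminus B_1}-\int_{B_1\setminus B_r(p)}$ after reaching homogeneity $-(n+2s-1)$, and the resulting annulus integral yields $r^{|\alpha|+1-k-s}$. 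Your phrase about ``the collar between the Taylor expansion scale and the full ball'' gestures at this, but the mechanism is precisely this double Taylor expansion (order $k-1$ on $B_r(p)$, order $\lfloor\beta\rfloor-1$ on $B_1\setminus B_r(p)$) followed by recombination of the common low-order part --- without it the exponents do not close.
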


\begin{proof}
The $\pv\ $ specification only matters when $j=0$, so we allow ourselves to drop it from now on.

Fix some point $x_0\in B_{r/2}(p)$ and write
\begin{align*}
\psi(z) &= \sum_{|\alpha|\leq k-1}\partial^\alpha\psi(x_0)(z-x_0)^\alpha+P_k(x_0,z) \\
        &= \sum_{|\alpha|\leq k-1}\Psi_\alpha(x_0,x)(z-x)^\alpha+P_k(x_0,z),
&  z\in B_r(p) \\
\psi(z) &= \sum_{|\alpha|\leq \intpart\beta-1}\partial^\alpha\psi(x_0)(z-x_0)^\alpha+P_\beta(x_0,z)  \\
        &= \sum_{|\alpha|\leq \intpart\beta-1}\Psi_\alpha(x_0,x)(z-x)^\alpha+P_\beta(x_0,z),
&  z\in B_1\setminus B_r(p)         
\end{align*}
where $\Psi_\alpha(\cdot,x)\in C^{k-|\alpha|}(\overline{B_{r/2}(p)})$ for any $x\in B_r(p)$,
$\Psi_\alpha(x_0,\cdot)\in C^\infty(B_r(p))$ for every $x_0\in B_{r/2}(p)$, and
$P\in C^k(\overline{B_{r/2}(p)}\times \overline{B_r(p)})$ with 
\begin{align*}
\big|P(x_0,z)\big|\leq\|\psi\|_{C^k(\overline{B_r(p)})}|z-x_0|^k.
\end{align*}
%Similar properties are fulfilled by~$\Psi''_\alpha$ as well. 
%Note that $\Psi'_\alpha(x_0,\cdot)=\Psi''_\alpha(x_0,\cdot)$ in $B_r(p)$ 
for any $x_0\in B_{r/2}(p)$ and $|\alpha|\leq\intpart\beta-1$.
We plug these expansions into the definition of $I_j$ so that
\begin{align}
& I_j(x) = \int_{B_r(p)} (z_n)_+^{s-1}\,\frac{a_j\big(\lrangle{z-x}\big)}{{|z-x|}^{n+2s-j-1}}\;\psi(z)\;dz
			+\int_{B_1\setminus B_r(p)} (z_n)_+^{s-1}\,\frac{a_j\big(\lrangle{z-x}\big)}{{|z-x|}^{n+2s-j-1}}\;\psi(z)\;dz 
			\nonumber \\
& = \sum_{|\alpha|\leq \intpart\beta-1}\Psi_\alpha(x_0,x)
\int_{B_1} (z_n)_+^{s-1}\,\frac{a_j\big(\lrangle{z-x}\big)\lrangle{z-x}^\alpha}{{|z-x|}^{n+2s-j-1-|\alpha|}}\;dz  
	\label{globalterm}\\
& + \sum_{\intpart\beta\leq|\alpha|\leq k-1}\Psi_\alpha(x_0,x) 
\int_{B_r(p)} (z_n)_+^{s-1}\,\frac{a_j\big(\lrangle{z-x}\big)\lrangle{z-x}^\alpha}{{|z-x|}^{n+2s-j-1-|\alpha|}}\;dz\ +
	\label{localterm}\\
&  \int_{B_r(p)} (z_n)_+^{s-1}\frac{a_j\big(\lrangle{z-x}\big)}{{|z-x|}^{n+2s-j-1}}\,P_k(x_0,z)\,dz
+ \int_{B_1\setminus B_r(p)} (z_n)_+^{s-1}\frac{a_j\big(\lrangle{z-x}\big)}{{|z-x|}^{n+2s-j-1}}\,P_\beta(x_0,z)\,dz.
	\label{remainders}
\end{align}
We study now the regularity of \eqref{globalterm}, \eqref{localterm}, and \eqref{remainders}. The one of \eqref{globalterm} is proved in an analogous way to that of \eqref{ij-taylor-terms}, so we skip this.

Let us look at \eqref{localterm}. Again we use some ideas from the study of \eqref{ij-taylor-terms}. 
For any $\gamma\in\N^n$ with $|\gamma|=j+|\alpha|$
\begin{multline*}
\partial^\gamma\int_{B_r(p)} (z_n)_+^{s-1}\,\frac{a_j\big(\lrangle{z-x}\big)\lrangle{z-x}^\alpha}{{|z-x|}^{n+2s-j-1-|\alpha|}}\;dz =
\int_{B_r(p)} (z_n)_+^{s-1}\,\frac{\widetilde a_j\big(\lrangle{z-x}\big)}{{|z-x|}^{n+2s-1}}\;dz\ = \\
=\ -\int_{\Rn\setminus B_1} (z_n)_+^{s-1}\,\frac{\widetilde a_j\big(\lrangle{z-x}\big)}{{|z-x|}^{n+2s-1}}\;dz 
-\int_{B_1\setminus B_r(p)} (z_n)_+^{s-1}\,\frac{\widetilde a_j\big(\lrangle{z-x}\big)}{{|z-x|}^{n+2s-1}}\;dz.
\end{multline*}
This means that, when we take $\eta\in\N^n$ such that $|\eta+\gamma|=j+k-1$ (and therefore $|\eta|=k-1-|\alpha|$), it holds
\begin{multline*}
\partial^{\eta+\gamma}\int_{B_r(p)} (z_n)_+^{s-1}\,\frac{a_j\big(\lrangle{z-x}\big)\lrangle{z-x}^\alpha}{{|z-x|}^{n+2s-j-1-|\alpha|}}\;dz = g(x)-\partial^\eta\int_{B_1\setminus B_r(p)} (z_n)_+^{s-1}\,\frac{\widetilde a_j\big(\lrangle{z-x}\big)}{{|z-x|}^{n+2s-1}}\;dz \\
=g(x)-\int_{B_1\setminus B_r(p)} (z_n)_+^{s-1}\,\frac{\widehat a_j\big(\lrangle{z-x}\big)}{{|z-x|}^{n+2s-2+k-|\alpha|}}\;dz
\end{multline*}
for some $g\in C^\infty(\overline{B_{r/2}(p)})$, 
and therefore
\begin{align*}
\left| D^{k+j-1} \int_{B_r(p)} (z_n)_+^{s-1}\,\frac{a_j\big(\lrangle{z-x}\big)\lrangle{z-x}^\alpha}{{|z-x|}^{n+2s-j-1-|\alpha|}}\;dz \right| \leq C\|a_j\|_{C^{k+j-1}(\Sn)}r^{|\alpha|+1-k-s}.
\end{align*}

As to \eqref{remainders}, we proceed as in the proof of Lemma \ref{lem:cancellation}.
In this case
\begin{multline*}
\left| D^{k+j-1}\big|_{x_0}\int_{B_r(p)} (z_n)_+^{s-1}\,\frac{a_j\big(\lrangle{z-x}\big)}{{|z-x|}^{n+2s-j-1}}\;P_k(x_0,z)\;dz \right|\ \leq \\
\leq\ C\|a_j\|_{C^{k+j-1}(\Sn)}\|\psi\|_{C^k(\overline{B_r(p)})}\int_{B_r(p)} \frac{(z_n)_+^{s-1}}{{|z-x|}^{n+2s-2}}\;dz \ \leq \\
\leq\ C\|a_j\|_{C^{k+j-1}(\Sn)}\|\psi\|_{C^k(\overline{B_r(p)})} r^{1-s}
\end{multline*}
where we have used Lemma \ref{scaling of integral}. Similarly,
\begin{multline*}
\left| D^{k+j-1}\big|_{x_0} \int_{B_1\setminus B_r(p)} (z_n)_+^{s-1}\,\frac{a_j\big(\lrangle{z-x}\big)}{{|z-x|}^{n+2s-j-1}}\;P_\beta(x_0,z)\;dz \right| \ \leq \\
\leq\ C\|a_j\|_{C^{k+j-1}(\Sn)}\|\psi\|_{C^{\beta-1}(\overline{B}_1)}\int_{B_1\setminus B_r(p)} \frac{(z_n)_+^{s-1}}{{|z-x|}^{n+2s-1+k-\beta}}\;dz \ \leq \\
\leq\ C\|a_j\|_{C^{k+j-1}(\Sn)}\|\psi\|_{C^{\beta-1}(\overline{B}_1)}r^{\beta-k-s}
\end{multline*}
again in view of Lemma \ref{scaling of integral}.
\end{proof}

Using the previous results, we can finally give the:

\begin{proof}[Proof of Theorem \ref{thm:Lds}]
We are interested in proving the $C^{\beta-s-1}$ regularity of $I$ defined as in~\eqref{operative} and remodulated as in \eqref{operative-splitted}. 
In order to do so, we are going to take the derivatives of $I_1$ and $I_r$ in the $\hat x$ variable \emph{evaluated} at the point $p$. 
For this reason, we can think of $\hat x\in B_r(p)$.

We fix $\alpha\in\N^n$ such that
\begin{align}\label{q>beta}
q:=|\alpha|>\beta.
\end{align}

For simplicity, throughout the rest of the proof we drop the hat script and we recast $\hat x$ to simply $x$.
We also drop the $\pv\ $ specification in (some of) the integrals.

Let us consider first $I_r$. 
By Lemma \ref{derivative-diagonal}, we have
\begin{align*}
\partial_x^\alpha\big|_p I_r(x,x) = \sum_{\gamma\leq\alpha}\binom{\alpha}{\gamma}
\partial_x^\gamma\big|_{p}\partial_\xi^{\alpha-\gamma}\big|_{p}I_r(\xi,x).
\end{align*}
Moreover, we take advantage of the expansion of the kernel $J$ in \eqref{J-expansion}.
In particular, we proceed as follows 
\begin{align*}
\partial_x^\alpha I_r(x,x) = & \ \sum_{\gamma\leq\alpha}\binom{\alpha}{\gamma}
\partial_x^\gamma\partial_\xi^{\alpha-\gamma}I_r(\xi,x) \\
= &\ \sum_{\gamma\leq\alpha}\binom{\alpha}{\gamma}\sum_{i=0}^{|\gamma|}
\partial_x^\gamma\partial_\xi^{\alpha-\gamma}\int_{B_r(p)}(z_n)_+^{s-1}
\frac{b_i\big(\xi,\lrangle{z-x}\big)}{{|z-x|}^{n+2s-i-1}}\cdot\rho(z)\;dz  \\
&\ +\sum_{\gamma\leq\alpha}\binom{\alpha}{\gamma}
\partial_x^\gamma\partial_\xi^{\alpha-\gamma}\int_{B_r(p)}(z_n)_+^{s-1}\frac{R_{|\gamma|+1}(\xi,z-x)}{{|z-x|}^{n+2s-1}}\cdot\rho(z)\;dz.
\end{align*}
Notice that, to use \eqref{J-expansion}, we implicitly use $|\gamma|+1$ derivatives on the kernel $J$. Then, we want to compute $|\alpha|$ more, so we need $|\alpha|+|\gamma|+1<2\beta+1$: since $\gamma\leq\alpha$, this gives $|\alpha|<\beta$.

First of all, let us deal with the integral borne by the error term: using Lemma \ref{lem:error-estim},
\begin{align*}
& \left|\partial_x^\gamma\partial_\xi^{\alpha-\gamma}\int_{B_r(p)}(z_n)_+^{s-1}\frac{R_{|\gamma|+1}(\xi,z-x)}{{|z-x|}^{n+2s-1}}\cdot\rho(z)\;dz\right|\leq  \\
& \leq \int_{B_r(p)}(z_n)_+^{s-1}\frac{\big|\partial_x^\gamma\partial_\xi^{\alpha-\gamma}R_{|\gamma|+1}(\xi,z-x)\big|}{{|z-x|}^{n+2s-1}}\,|\rho(z)|\;dz \\
& \leq C(\rho(0)+|p|)r^{\beta-|\alpha|-2}\int_{B_r(p)}(z_n)_+^{s-1}{|z-x|}^{-n-2s+2}\;dz
\leq C(\rho(0)+|p|)r^{\beta-s-1-|\alpha|}
\end{align*} 
where, in the last estimate, we have applied Lemma \ref{scaling of integral} 
and the fact that $|\rho(z)|\leq |\rho(0)|+C|p|$ in $B_r(p)$ (recall to this end that $d(p)=2r$ by assumption).
This takes care of the remainder term.

If $|\gamma|>i+\beta-1-s$, we write
\begin{multline*}
\partial_x^\gamma\int_{B_r(p)}(z_n)_+^{s-1}
\frac{\partial_\xi^{\alpha-\gamma}b_i\big(\xi,\lrangle{z-x}\big)}{{|z-x|}^{n+2s-i-1}}\cdot\rho(z)\;dz \ = \\
=\ -\partial_x^\gamma\int_{B_1\setminus B_r(p)}(z_n)_+^{s-1}
\frac{\partial_\xi^{\alpha-\gamma}b_i\big(\xi,\lrangle{z-x}\big)}{{|z-x|}^{n+2s-i-1}}\cdot\rho(z)\;dz 
+\Gamma(x) 
\end{multline*}
for 
\begin{align*}
\Gamma(x)=\partial_x^\gamma\int_{B_1}(z_n)_+^{s-1}
\frac{\partial_\xi^{\alpha-\gamma}b_i\big(\xi,\lrangle{z-x}\big)}{{|z-x|}^{n+2s-i-1}}\cdot\rho(z)\;dz 
\end{align*}
to which we can apply Lemma \ref{lem:cancellation2} 
to say
\begin{multline*}
\big|\Gamma(x)\big|\leq C \|\partial_\xi^{\alpha-\gamma}b_i\big(\xi,\cdot\big)\|_{C^{|\gamma|}(\Sn)}\times \\
\times\left(
r^{\intpart\beta-|\gamma|+i-s}+\|\rho\|_{C^{|\gamma|-i+1}(\overline{B_r(p)})} r^{1-s}+\|\rho\|_{C^{\beta-1}(\overline{B}_1)}r^{\beta-|\gamma|+i-1-s}
\right), 
\qquad x\in B_{r/2}(p).
\end{multline*}
Note that
\begin{align*}
\frac{\partial_\xi^{\alpha-\gamma}b_i\big(\xi,\lrangle{z-x}\big)}{{|z-x|}^{n+2s-i-1}}
\end{align*}
is still homogeneous in $z-x$ and, in view of this and of Lemma \ref{lem:bi-estim},
\begin{align}\label{32525}
\left|\partial_x^\gamma\left(\frac{\partial_\xi^{\alpha-\gamma}b_i\big(\xi,\lrangle{z-x}\big)}{{|z-x|}^{n+2s-i-1}}\right)\right|
\leq \left\lbrace\begin{aligned} 
& C{|z-x|}^{-n-2s+i+1-|\gamma|} & & \text{if }|\alpha|-|\gamma|+i+1<\beta, \\
& Cr^{\beta-i-1-|\alpha|+|\gamma|}{|z-x|}^{-n-2s+i+1-|\gamma|} & & \text{if }|\alpha|-|\gamma|+i+1>\beta.
\end{aligned}\right.
\end{align}
This yields, \textit{cf}. \eqref{eq:cancellation2},
\begin{multline*}
\big|\Gamma(x)\big|\leq C\left(
r^{\intpart\beta-|\gamma|+i-s}+r^{\beta-|\gamma|+i-1-s}+r^{\beta-|\gamma|+i-1-s}
\right) \times \\
\times\left\lbrace\begin{aligned} 
& 1 & & \text{if }|\alpha|-|\gamma|+i+1<\beta, \\
& r^{\beta-i-1-|\alpha|+|\gamma|}  & & \text{if }|\alpha|-|\gamma|+i+1>\beta.
\end{aligned}\right.
\end{multline*}

Now, when $|\gamma|<i+\beta-1-s$ write
\begin{multline*}
\int_{B_r(p)}(z_n)_+^{s-1}
\frac{b_i\big(\xi,\lrangle{z-x}\big)}{{|z-x|}^{n+2s-i-1}}\cdot\rho(z)\;dz\ = \\
=\ \int_{B_1}(z_n)_+^{s-1}
\frac{b_i\big(\xi,\lrangle{z-x}\big)}{{|z-x|}^{n+2s-i-1}}\cdot\rho(z)\;dz-
\int_{B_1\setminus B_r(p)}(z_n)_+^{s-1}
\frac{b_i\big(\xi,\lrangle{z-x}\big)}{{|z-x|}^{n+2s-i-1}}\cdot\rho(z)\;dz.
\end{multline*}
The analysis of the first addend on the right-hand side is covered by Lemma \ref{lem:cancellation},
in view of the relation $|\gamma|<i+\beta-1-s$. 
So we only deal with the integrals in the ``annular'' region $B_1\setminus B_r(p)$.
For $\beta-|\alpha|+|\gamma|-1<i\leq|\gamma|$ we have (\textit{cf}. \eqref{32525})
\begin{multline*}
\left|\partial_x^\gamma\partial_\xi^{\alpha-\gamma}\int_{B_1\setminus B_r(p)}(z_n)_+^{s-1}
\frac{b_i\big(\xi,\lrangle{z-x}\big)}{{|z-x|}^{n+2s-i-1}}\cdot\rho(z)\;dz\right|
\leq \\
\leq r^{\beta-i-1-|\alpha|+|\gamma|}\int_{B_1\setminus B_r(p)}(z_n)_+^{s-1}{|z-x|}^{-n-2s+i+1-|\gamma|}
|\rho(z)|\;dz;
\end{multline*}
using now that $|\rho(z)|\leq |\rho(0)|+C|z|$ for $z\in B_1$, and applying Lemma \ref{scaling of integral},
we deduce
\begin{align*}
\left|\sum_{i=\intpart\beta-|\alpha|+|\gamma|}^{|\gamma|}\partial_x^\gamma\big|_p\partial_\xi^{\alpha-\gamma}\big|_p\int_{B_1\setminus B_r(p)}(z_n)_+^{s-1}
\frac{b_i\big(\xi,\lrangle{z-x}\big)}{{|z-x|}^{n+2s-i-1}}\cdot\rho(z)\;dz\right|
\leq C(|\rho(0)|+|p|)\,r^{\beta-s-1-|\alpha|}.
\end{align*}
So we are left with
\begin{align*}
\partial_x^\alpha\big|_p I_r(x,x)=\sum_{\gamma\leq\alpha}\binom{\alpha}{\gamma}\sum_{i=0}^{\intpart\beta-|\alpha|+|\gamma|-1}
\partial_x^\gamma\partial_\xi^{\alpha-\gamma}\int_{B_1\setminus B_r(p)}
\!\!\!(z_n)_+^{s-1}
\frac{b_i\big(\xi,\lrangle{z-x}\big)}{{|z-x|}^{n+2s-i-1}}\cdot\rho(z)\;dz 
+\Theta_q(r,p)
\end{align*}
where $|\Theta_q(r,p)|\leq C(|\rho(0)|+|p|)\,r^{\beta-s-1-q}$, $q=|\alpha|$.

We now claim that
\begin{multline}\label{integral of radial claim}
\left| \big( \partial^\alpha I_1 \big) (p)-\sum_{\gamma\leq\alpha}\binom{\alpha}{\gamma}\sum_{i=0}^{\intpart\beta-|\alpha|+|\gamma|-1}
\partial_x^\gamma\big|_p\partial_\xi^{\alpha-\gamma}\big|_p\int_{B_1\setminus B_r(p)}(z_n)_+^{s-1}
\frac{b_i\big(\xi,\lrangle{z-x}\big)}{{|z-x|}^{n+2s-i-1}}\cdot\rho(z)\;dz \right| \leq \\
\leq\ C(|\rho(0)|+|p|)\,r^{\beta-s-1-q}.
\end{multline}
To this end, we are going to prove that, for $z\in B_1\setminus B_r(p)$,
\begin{align}\label{radial claim}
\left| \partial_x^\alpha\big|_p J\big(\phi(z)-\phi(x)\big)
-\sum_{\gamma\leq\alpha}\binom{\alpha}{\gamma}\sum_{i=0}^{\intpart\beta-|\alpha|+|\gamma|-1}
\partial_x^\gamma\big|_p\partial_\xi^{\alpha-\gamma}\big|_p
\frac{b_i\big(\xi,\lrangle{z-x}\big)}{{|z-x|}^{n+2s-i-1}} \right|
\leq C \frac{r^{\beta-|\alpha|}}{{|z-p|}^{n+2s}}
\end{align}
and we postpone the proof to further below.

If \eqref{radial claim} holds then, 
recalling the definition of $I_1$ in \eqref{operative-splitted} and using \eqref{radial claim}, we entail
\begin{align*}
& \left| \big( \partial^\alpha I_1 \big) (p)-\sum_{\gamma\leq\alpha}\binom{\alpha}{\gamma}\sum_{i=0}^{\intpart\beta-|\alpha|+|\gamma|-1}
\partial_x^\gamma\big|_p\partial_\xi^{\alpha-\gamma}\big|_p\int_{B_1\setminus B_r(p)}(z_n)_+^{s-1}
\frac{b_i\big(\xi,\lrangle{z-x}\big)}{{|z-x|}^{n+2s-i-1}}\cdot\rho(z)\;dz \right| \leq \\
& \leq\ \int_{B_1\setminus B_r(p)}(z_n)_+^{s-1} \; \Bigg| \partial_x^\alpha\big|_p J\big(\phi(z)-\phi(x)\big) \\
& \qquad\qquad -\sum_{\gamma\leq\alpha}\binom{\alpha}{\gamma}\sum_{i=0}^{\intpart\beta-|\alpha|+|\gamma|-1}
\partial_x^\gamma\big|_p\partial_\xi^{\alpha-\gamma}\big|_p
\frac{b_i\big(\xi,\lrangle{z-x}\big)}{{|z-x|}^{n+2s-i-1}} \Bigg| \; |\rho(z)| \;dz  \\
& \leq\ \, r^{\beta-|\alpha|} \int_{B_1\setminus B_r(p)}(z_n)_+^{s-1} {|z-p|}^{-n-2s} (|\rho(0)|+|z|) \;dz  
\ \leq\ C (|\rho(0)|+|p|) \, r^{\beta-1-s-|\alpha|}
\end{align*}
where we have used Lemma \ref{scaling of integral} in the last passage, proving \eqref{integral of radial claim}.

Remark that
\begin{align}
\partial_x^\gamma\big|_p\partial_\xi^{\alpha-\gamma}\big|_p
\frac{b_i\big(\xi,\lrangle{z-x}\big)}{{|z-x|}^{n+2s-i-1}}=
\frac{\widetilde b_i(p,\lrangle{z-p})}{{|z-p|}^{n+2s-i-1-|\gamma|}},
\qquad z\in B_1,\text{ for some $\widetilde b_i$ suitably chosen}, \label{homo of derivatives} \\
i\in\{0,\ldots,\intpart\beta-|\alpha|+|\gamma|-1\},\ \gamma\leq\alpha. \nonumber
\end{align}
and therefore
\begin{align}\label{homo of derivatives 2}
\left(\sum_{\gamma\leq\alpha}\binom{\alpha}{\gamma}\sum_{i=0}^{\intpart\beta-|\alpha|+|\gamma|-1}
\partial_x^\gamma\big|_p\partial_\xi^{\alpha-\gamma}\big|_p
\frac{b_i\big(\xi,\lrangle{z-x}\big)}{{|z-x|}^{n+2s-i-1}}\right)_{|\alpha|\leq q}
=\sum_{j=0}^{\intpart\beta-1}\frac{B_j(p,\lrangle{z-p})}{{|z-p|}^{n+2s-j-1+|\alpha|}}.
\end{align}

\emph{Proof of \eqref{radial claim}}.
Let us first remark that an inequality of the type of \eqref{radial claim} holds for $z\in B_r(p)$
as a result of \eqref{J-expansion} and Lemma \ref{lem:error-estim}; namely,
\begin{align}\label{radial claim in small ball}
\left| \partial_x^\alpha\big|_p J\big(\phi(z)-\phi(x)\big)
-\sum_{\gamma\leq\alpha}\binom{\alpha}{\gamma}\sum_{i=0}^{\intpart\beta-|\alpha|+|\gamma|-1}
\partial_x^\gamma\big|_p\partial_\xi^{\alpha-\gamma}\big|_p
\frac{b_i\big(\xi,\lrangle{z-x}\big)}{{|z-x|}^{n+2s-i-1}} \right|
\leq C \frac{r^{\beta-|\alpha|-1}}{{|z-p|}^{n+2s-1}}.
\end{align}
This is because $\phi$ is smooth in $B_r(p)$, although the estimates on its $\alpha$-derivatives
are getting worse upon approaching the boundary.

We use Lemma \ref{lem:super chain rule} to write
\begin{align*}
& \partial_x^\alpha|_p J\big(\phi(z)-\phi(x)\big) \ = \\
& =\ \sum_{j=0}^{q-1}\sum_{k_1+\ldots+k_{q-j}=j} (-1)^{q-j} c_{q,j,k_1,\ldots,k_{q-j}}
D^{q-j}J\big( \phi(z) - \phi(p) \big) \left[ D^{k_1}\phi(p),\ldots,D^{k_{q-j}}\phi(p) \right]. 
\end{align*}
If $j\geq \intpart\beta>\beta-1$ then 
\begin{align*}
& \Bigg| \sum_{\overset{k_1+\ldots+k_{q-j}=q}{k_1,\ldots,k_{q-j}>0}} (-1)^{q-j}
D^{q-j}J\big( \phi(z) - \phi(p) \big) \left[ D^{k_1}\phi(p),\ldots,D^{k_{q-j}}\phi(p) \right] \Bigg| \ \leq \\
& \leq\ C \, \Big| D^{q-j}J\big( \phi(z) - \phi(p) \big) \left[ D^{j+1}\phi(p), D\phi(p), \ldots, D\phi(p) \right] \Big| \\
& \leq\ C \, \Bigg| D^{q-j}J\Big( \frac{\phi(z) - \phi(p)}{|z-p|} \Big) \Bigg|  \frac{r^{\beta-j-1}}{\big|z-p\big|^{n+2s-1+q-j}}
\end{align*}
by the homogeneity of $J$ and \eqref{diffeo}. In particular, whenever $|z-p|>r$
\begin{multline*}
\Bigg| \sum_{j=\intpart\beta}^{q-1} \sum_{k_1+\ldots+k_{q-j}=j} (-1)^{q-j} c_{q,j,k_1,\ldots,k_{q-j}}
D^{q-j}J\big( \phi(z) - \phi(p) \big) \left[ D^{k_1}\phi(p),\ldots,D^{k_{q-j}}\phi(p) \right] \Bigg|\ \leq \\
\leq\ C\frac{r^{\beta-q}}{{|z-p|}^{n+2s}}.
\end{multline*}
Let us set $z=p+t\theta$, $\theta\in\Sn$, $t=|z-p|>0$.
In view of the last computations and of \eqref{homo of derivatives 2}, we can rewrite \eqref{radial claim} as
\begin{align*}
& \left| \sum_{j=0}^{\intpart\beta-1} t^{-n-2s+1-q+j} \Phi_{q,j}(t,\theta,p)
-\sum_{j=0}^{\intpart\beta-1}{t}^{-n-2s+j+1}B_j(p,\theta) \right|
\leq C \frac{r^{\beta-q}}{{t}^{n+2s}},
\qquad t>r, \\
& \Phi_{q,j}(t,\theta,p):=\binom{q}{j}\sum_{k_1+\ldots+k_{q-j}=j}\binom{j}{k_1,\ldots,k_{q-j}} (-1)^{q-j} \times\\
& \hspace{.29\linewidth} \times D^{q-j}J\Big( \frac{\phi(p+t\theta) - \phi(p)}t \Big) \left[ D^{k_1}\phi(p),\ldots,D^{k_{q-j}}\phi(p) \right] 
\end{align*}
which we write again as
\begin{align}\label{radial claim in its simplest form}
\left| \sum_{j=0}^{\intpart\beta-1} t^{j} \Phi_{q,j}(t,\theta,p)
-\sum_{j=0}^{\intpart\beta-1}{t}^{j}B_j(p,\theta) \right|
\leq C \, r^{\beta-q} \, t^{q-1},
\qquad t>r.
\end{align}
Notice that $\Phi_{q,j}(\cdot,\theta,p)\in C^{\beta-1}([0,1])$ thanks to Lemma \ref{lem:regularity of incremental quotients}
and the regularity of $J$.
Moreover, 
\begin{align*}
\|\Phi_{q,j}(\cdot,\theta,p)\|_{C^{\beta-1}([0,1])}\leq C\,\|\phi\|_{C^{\beta}(\overline{B}_1)}.
\end{align*}
Also, \eqref{radial claim in small ball}
translates to
\begin{align*}
\left| \sum_{j=0}^{\intpart\beta-1} t^{j} \Phi_{q,j}(t,\theta,p)
-\sum_{j=0}^{\intpart\beta-1}{t}^{j}B_j(p,\theta) \right|
% \leq Cr^{\beta-q-1}  %
\leq C \, r^{\beta-q} \, t^{q-1} \leq Cr^{\beta-1},
\qquad t\leq r.
\end{align*}
Therefore, these two last observations plus Lemma \ref{lem:prolongation Calpha} allow us to conclude that
\eqref{radial claim in its simplest form} holds and in turn \eqref{radial claim} holds as well.
This also completes the proof of \eqref{integral of radial claim}.
\end{proof}

\section{Nonlocal equations for functions with polynomial growth at infinity}
%\section{Operator evaluation on functions with polynomial growth}
\label{sec:polynom}

We introduce now some tools that will be needed in the following section, where we develop a new higher order version of the blow-up and compactness argument from \cite{duke}.

First, we need the following.

\begin{definition}\label{def:Lgrowth}
Let $k\in\N$, $\Omega\subset\Rn$ a bounded domain,
$u\in L^1_{\rm loc}(\R^n)$, and $f\in L^\infty(\Omega)$. 
Assume that $u$ satisfies
\begin{align*}
\int_{\Rn}\frac{|u(y)|}{1+|y|^{n+2s+k}}\;dy.
\end{align*}
We say that
\begin{align}\label{k-equation}
Lu \stackrel{k}{=} f
\qquad\text{in }\Omega
\end{align}
if there exist a family of polynomials ${(p_R)}_{R>0}\subseteq\mathbf{P}_{k-1}$ 
and a family of functions ${(f_R:\Omega\to\R)}_{R>0}$
such that
\begin{align*}
& L(u\chi_{B_R}) = f_R+p_R
\qquad\text{in }\Omega, \text{ for any }R>\diam(\Omega) \\
\text{and }\ & \lim_{R\uparrow\infty}\|f_R-f\|_{L^\infty(\Omega)} = 0.
\end{align*}
In the case of an unbounded $\Omega$, we say that \eqref{k-equation} holds
if it does in any bounded subdomain.
The equations are to be understood in the distributional sense.
\end{definition}

\begin{remark} 
This definition is very similar to \cite{growth}*{Definition 1.1}
but with one important difference: in \cite{growth} the authors require the convergence $f_R\to f$
to be merely pointwise a.e., whereas we strengthen this by asking it to be uniform.
This simplifies some proofs and it allows us to prove Proposition \ref{prop:growth-bound-reg},
which is an essential tool in our blow-up arguments in Section \ref{sec:uover}.
\end{remark}

\subsection{An associated extension problem}\label{par:extension}
The above definition enjoys, in the particular case when $L$ is the fractional Laplacian, an extension property of Caffarelli-Silvestre type. 

\begin{lemma}\label{lem:cs-extension}
Let $\Omega\subset\R^n$ be any domain, and $\phi\in C(\Rn)\cap C^2(\Omega)$ be such that
\begin{align}\label{weighted-Linfty}
|\phi(x)| \leq C_0\big(1+|x|\big)^{k+s+\alpha},
\qquad \text{for some }k\in\N,\ \alpha<s \text{ and for any }x\in\Rn,
\end{align}
Then, there exists an extension $\widetilde\phi:\Rn\times[0,\infty)\to \R$, with polynomial growth in $\R^n\times [0,\infty)$, such that
\begin{align*}
\left\lbrace\begin{aligned}
\div\big(y^{1-2s}\nabla\widetilde\phi\big) & = 0 & & \text{in }\Rn\times(0,+\infty), \\
\widetilde\phi & = \phi & & \text{on }\Rn\times\{0\}, \\
-a_{n,s}\,y^{1-2s}\partial_y\widetilde\phi & 
\stackrel{k}{=} \Ds\phi & & \text{on }\Omega\times\{0\}.
\end{aligned}\right.
\end{align*}
\end{lemma}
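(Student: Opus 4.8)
The plan is to construct the extension $\widetilde\phi$ by the usual Caffarelli--Silvestre Poisson-type formula adapted to functions of polynomial growth, and then verify the third equation in the sense of Definition~\ref{def:Lgrowth} by approximating $\phi$ with its truncations $\phi\chi_{B_R}$. First I would recall that the Caffarelli--Silvestre extension of a function $g$ decaying at infinity is given by convolution in the $x$-variable with the Poisson kernel $P_y(x)=c_{n,s}\,y^{2s}(|x|^2+y^2)^{-(n+2s)/2}$; the point is that this kernel has enough decay in $x$ to integrate against $|g(x)|\lesssim(1+|x|)^{k+s+\alpha}$, because $P_y(x)$ decays like $|x|^{-n-2s}$ and $k+s+\alpha < n+2s$ fails in general, so one cannot use the raw Poisson formula. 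Instead I would mollify: write $\phi = \phi\chi_{B_R} + \phi(1-\chi_{B_R})$ and handle the pieces separately, or — cleaner — define $\widetilde\phi$ directly as a solution of the degenerate-elliptic equation $\div(y^{1-2s}\nabla\widetilde\phi)=0$ with boundary data $\phi$ in the class of polynomially growing functions, using a Perron/limiting argument or the known existence theory for this weighted equation with growth (the growth exponent $k+s+\alpha$ with $\alpha<s$ is exactly the borderline that keeps the normal-derivative trace well-defined modulo polynomials of degree $k-1$).

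The key steps, in order, are: (1) Existence of a polynomially growing harmonic extension $\widetilde\phi$ for the operator $\div(y^{1-2s}\nabla\cdot)$ with trace $\phi$ — I would obtain this by truncating, $\phi_R:=\phi\chi_{B_{2R}}$, taking the standard bounded extension $\widetilde\phi_R$ of $\phi_R$ (which is classical since $\phi_R\in C(\Rn)\cap L^\infty$), and then passing to the limit using interior estimates for the weighted equation together with the growth bound~\eqref{weighted-Linfty}, subtracting suitable polynomials $\widetilde p_R(x)$ (harmonic for the weighted operator, i.e.\ independent of $y$ and of degree $\le k-1$ since $\alpha<s$) to keep the sequence precompact on compact sets. (2) Identification of the trace: $\widetilde\phi = \phi$ on $\Rn\times\{0\}$ follows from continuity of $\phi$ and locally uniform convergence. (3) The Neumann-type identity: for each $R$, the classical Caffarelli--Silvestre result gives $-a_{n,s}y^{1-2s}\partial_y\widetilde\phi_R\big|_{y=0}=\Ds\phi_R$ on $\Rn$, hence in particular on $\Omega\times\{0\}$; combined with $\widetilde\phi = \widetilde\phi_R + (\text{correction terms that are polynomials in }x\text{ plus a piece whose Neumann trace converges uniformly on }\Omega)$, this is exactly the statement that $-a_{n,s}y^{1-2s}\partial_y\widetilde\phi \stackrel{k}{=}\Ds\phi$ on $\Omega\times\{0\}$ in the sense of Definition~\ref{def:Lgrowth}, with $p_R:=$ the polynomial coming from the correction and $f_R:=L(\phi\chi_{B_R})-p_R$, for which $\|f_R-\Ds\phi\|_{L^\infty(\Omega)}\to 0$ because the tail contributions $L(\phi(1-\chi_{B_R}))$ evaluated at points of the bounded set $\Omega$ decay modulo polynomials (this is where $\Omega$ bounded and the homogeneous kernel of $\Ds$ are used — the far-field kernel is smooth and its Taylor polynomial of degree $k-1$ in $x\in\Omega$ absorbs the non-decaying part, leaving a uniformly small remainder since $k+s+\alpha-(n+2s)<-k$... more precisely since $\alpha<s$).

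The main obstacle I expect is step~(3): making rigorous the bookkeeping of which polynomial $p_R$ to subtract and showing the resulting $f_R$ converges \emph{uniformly} on $\Omega$ (not just pointwise), as the definition demands. This requires a quantitative estimate on $L\big(\phi\,(1-\chi_{B_R})\big)(x)$ for $x\in\Omega$: one splits $\phi(1-\chi_{B_R})$ and Taylor-expands the kernel $K(x-y)=|x-y|^{-n-2s}K(\langle x-y\rangle)$ — equivalently the Poisson/Neumann kernel on the extension side — in $x$ around a fixed point of $\Omega$ to order $k-1$, collects the Taylor terms into $p_R$, and bounds the remainder by $C_0\int_{|y|>R}(1+|y|)^{k+s+\alpha}\,\mathrm{diam}(\Omega)^{k}|y|^{-n-2s-k}\,dy \lesssim R^{s+\alpha-2s}=R^{\alpha-s}\to 0$, using $\alpha<s$. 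A secondary technical point is justifying that the polynomial corrections on the extension side ($y$-independent functions) are genuinely $\div(y^{1-2s}\nabla\cdot)$-harmonic so that $\widetilde\phi$ stays a solution; this is immediate since such functions have vanishing $y$-derivative. Once these estimates are in place the three equations hold and the lemma follows; everything else is the standard theory of the Caffarelli--Silvestre extension for $L^\infty$ data.
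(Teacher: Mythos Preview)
Your overall strategy (truncate, extend, subtract corrections, pass to the limit, verify the Neumann trace via Definition~\ref{def:Lgrowth}) matches the paper's, but there is a genuine error in the correction you propose in step~(1), repeated at the end: you assert that a $y$-independent polynomial $\widetilde p_R(x)$ is harmonic for $\div(y^{1-2s}\nabla\,\cdot\,)$ ``since such functions have vanishing $y$-derivative''. This is false. For $p=p(x)$ one has
\[
\div\big(y^{1-2s}\nabla p\big)=\partial_y\big(y^{1-2s}\partial_y p\big)+y^{1-2s}\Delta_x p = y^{1-2s}\Delta_x p,
\]
which vanishes only if $p$ is harmonic in $x$. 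Generic polynomials of degree $\le k-1$ are not, and restricting to $x$-harmonic ones does not leave enough freedom to normalize the sequence. Moreover, a $y$-independent correction changes the trace on $\{y=0\}$, so even if you could make it harmonic you would no longer have $\widetilde\phi|_{y=0}=\phi$.

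The paper resolves this differently: it writes the truncated extension as $\widetilde\phi_R(x,y)=y^{2s}\big(y^{-2s}\widetilde\phi_R(x,y)\big)$, takes $Q_R(x,y)$ to be the Taylor polynomial of degree $k-1$ of $y^{-2s}\widetilde\phi_R$ at the interior point $(0,1)$, and subtracts $y^{2s}Q_R(x,y)$. This correction has \emph{zero trace} on $\{y=0\}$ (so $\phi_R$ is preserved) and is shown to satisfy $\div\big(y^{1-2s}\nabla(y^{2s}Q_R)\big)=0$ by matching derivatives at $(0,1)$ and then invoking unique continuation for harmonic polynomials. With this subtraction one gets the uniform bounds needed to pass to the limit. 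Your tail estimate in step~(3), giving a remainder of order $R^{\alpha-s}\to 0$, is essentially correct and is what drives the uniform convergence $f_R\to f$; the gap is entirely in the construction of the extension, not in the verification of the third equation.
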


\begin{proof}
Cut off $\phi$ on a ball of radius $R>0$ and define 
\begin{align*}
\phi_R:=\phi\chi_{B_R}
\qquad \text{in }\R^n.
\end{align*}
Recall that (see \cite{caffarelli-silvestre})
\begin{align*}
\widetilde\phi_R(x,y):=\big(P(\cdot,y)*\phi_R\big)(x),
\quad \text{with }\ P(x,y)=c_{n,s}\frac{y^{2s}}{\big(|x|^2+y^2\big)^{n/2+s}},
\qquad x\in\Rn,\ y>0,
\end{align*}
solves
\begin{align*}
\left\lbrace\begin{aligned}
\div\big(y^{1-2s}\nabla\widetilde\phi_R\big) & = 0 & & \text{in }\Rn\times(0,+\infty) \\
\widetilde\phi_R & = \phi_R & & \text{on }\Rn\times\{0\}.
\end{aligned}\right.
\end{align*}
Remark that, for any $j\in\N$,
\begin{multline}
\left| D^j\big(y^{-2s}\widetilde\phi_R(x,y)\big) \right| =
\left| c_{n,s}\,D^j\int_{B_R}\frac{\phi(z)}{\big(|x-z|^2+y^2\big)^{n/2+s}}\;dz \right| \ \leq \\
\leq\ C \int_{B_R}\frac{|\phi(z)|}{\big(|x-z|^2+y^2\big)^{n/2+s+j/2}}\;dz
\end{multline}
and, if $j=k$ (where $k$ is the one in \eqref{weighted-Linfty}), then
\begin{align*}
& \left| D^k\big(y^{-2s}\widetilde\phi_R(x,y)\big) \right| \leq
C\int_{\Rn}\frac{|\phi(z)|}{\big(|x-z|^2+y^2\big)^{(k+s+\alpha)/2}}\frac{dz}{\big(|x-z|^2+y^2\big)^{(n+s-\alpha)/2}} \\
& \leq\ CC_0\int_{\Rn}\frac{\big(1+|z|\big)^{k+s+\alpha}}{\big(|x-z|^2+y^2\big)^{(k+s+\alpha)/2}}\frac{dz}{\big(|x-z|^2+y^2\big)^{(n+s-\alpha)/2}} \\
& \leq\ \frac{CC_0}{y^{2s+k}}      %\sup_{z\in\Rn}\frac{|\phi(z)|}{\big(1+|z|\big)^{k+s+\alpha}} 
\int_{\Rn}\frac{\big(1+|x+y\zeta|\big)^{k+s+\alpha}}{\big(1+|\zeta|^2\big)^{(k+s+\alpha)/2}}\frac{d\zeta}{\big(1+|\zeta|^2\big)^{(n+s-\alpha)/2}} 
 \leq CC_0\left(y^{-2s-k}\big(1+|x|\big)^{k+s+\alpha}+y^{\alpha-s}\right)
%\sup_{z\in\Rn}\frac{|\phi(z)|}{\big(1+|z|\big)^{k+s+\alpha}} 
\end{align*}
where the constant $C$ is independent of $R$.
Let us also denote by $Q_R$ the Taylor polynomial of degree $k-1$ of $y^{-2s}\widetilde\phi_R(x,y)$ centred at $(x,y)=(0,1)$. 
The difference $y^{-2s}\widetilde\phi_R(x,y)-Q_R(x,y)$ of course satisfies the same
estimate as above and moreover
\begin{align}
& D^j\big|_{(0,1)}\big(y^{-2s}\widetilde\phi_R(x,y)-Q_R(x,y)\big)=0
\qquad\text{whenever }0\leq j\leq k-1  \label{8989988999}  \\
& \big|y^{-2s}\widetilde\phi_R(x,y)-Q_R(x,y)\big|\leq CC_0\big(y^{-2s}(1+|x|)^{2k+\alpha+s}+|y|^{k+\alpha-s}\big)
%\sup_{z\in\Rn}\frac{|\phi(z)|}{\big(|z|+1\big)^{k+s+\alpha}},
\quad x\in\Rn,\ y>0, \nonumber
\end{align}
where again the value of $C$ is independent of $R$.

We now claim to have
\begin{align}\label{466466444}
\left\lbrace\begin{aligned}
\div\big[y^{1-2s}\nabla\big(\widetilde\phi_R(x,y)-y^{2s}Q_R(x,y)\big)\big] & = 0 & & \text{in }\Rn\times(0,\infty), \\
\widetilde\phi_R(x,y)-y^{2s}Q_R(x,y) & = \phi_R(x) & & \text{on }\Rn\times\{0\}.
\end{aligned}\right.
\end{align}
In order to justify \eqref{466466444} we only need to verify
\begin{align*}
\div\big[y^{1-2s}\nabla\big(y^{2s}Q_R(x,y)\big)\big]=0
\qquad\text{in }\Rn\times(0,\infty), \text{ for any }R>0.
\end{align*}
Let us first notice that this equality at the point $(0,1)$ because $Q_R$ is a Taylor polynomial based at that point
(\textit{cf}. \eqref{8989988999}).
In a small neighbourhood of $(0,1)$ the same must be true, because the remainder term in the Taylor expansion
is always lower order with respect to $Q_R$, so no cancellation is in order.
Then, the equality extends to the full $\Rn\times(0,\infty)$ by unique continuation of harmonic polynomials.

We send $R\uparrow\infty$ and, using the uniform bounds above and the elliptic estimates entailed by the equation,
deduce the existence of some $\widetilde\phi:\Rn\times[0,\infty)$
satisfying
\begin{align*}
\left\lbrace\begin{aligned}
\div\big(y^{1-2s}\nabla\widetilde\phi(x,y)\big) & = 0 & & \text{in }\Rn\times(0,\infty) \\
\widetilde\phi(x,y) & = \phi(x) & & \text{on }\Rn\times\{0\} \\
|\widetilde\phi(x,y)| & \leq CC_0\big((1+|x|)^{2k+\alpha+s}+|y|^{k+\alpha+s}\big) & & \text{in }\Rn\times(0,\infty).
%\sup_{z\in\Rn}\frac{|\phi(z)|}{\big(|z|+1\big)^{k+s+\alpha}} 
\end{aligned}\right.
\end{align*}
Moreover,
\begin{align*}
\Ds\phi_R
&= -a_{n,s}\big(y^{1-2s}\partial_y\widetilde \phi_R \big)\big|_{y=0} \\
&= -a_{n,s}\big(y^{1-2s}\partial_y(\widetilde \phi_R -y^{2s} Q_R)\big)\big|_{y=0}
-a_{n,s}\big(y^{1-2s}\partial_y (y^{2s} Q_R ) \big)\big|_{y=0}
\end{align*}
where $y^{1-2s}\partial_y (y^{2s} Q_R)$ is a polynomial of degree at most $k-1$ and
\begin{align*}
\big(y^{1-2s}\partial_y(\widetilde \phi_R -y^{2s} Q_R)\big)\big|_{y=0} 
\longrightarrow \big(y^{1-2s}\partial_y\widetilde \phi\big)\big|_{y=0}
\qquad \text{in } L^\infty_{loc}(\Omega),\text{ as }R\uparrow\infty.
\end{align*}
\end{proof}

\begin{remark}
Clearly, $\widetilde\phi$ might be suitably modified by adding harmonic polynomials 
with trivial trace on $\Rn\times\{0\}$, so that the notion of harmonic extension
is not unambiguously determined.
\end{remark}

\subsection{Limiting problems}

In the following result we will denote by $\mathfrak{L}=\mathfrak{L}(\lambda,\Lambda,s,k)$ the set of all operators $L$ of the form \eqref{L}-\eqref{L2} such that $K\in C^k(\Sn)$.

\begin{lemma}\label{limit with growth}
Let ${(u_m)}_{m\in\N}\subseteq C(\Rn)$ be such that
\begin{align*}
L_m u_m = f_m + P_m, &
\qquad\text{in } B_1,\ f_m\in L^\infty(B_1),\ P_m\in\mathbf{P}_{k-1},\ L_m\in\mathfrak{L}, \\
\int_{\Rn}\frac{|u_m(y)|}{1+|y|^{n+2s+k}}\;dy<\infty. &
\end{align*}
Suppose that there exist $u\in C(\Rn),\ f\in L^\infty(B_1),\text{ and }L\in\mathfrak{L}$ 
such that, as $m\uparrow\infty$,
\begin{align*}
& u_m\to u \text{ in }L^\infty_{loc}(\Rn),
\quad f_m\to f \text{ in }L^\infty(B_1), 
\quad K_m\to K \text{ in }C^k(\Sn), \\
& \text{and }\int_{\Rn}\frac{|u_m(y)-u(y)|}{1+|y|^{n+2s+k}}\;dy\to 0.
\end{align*}
Then
\begin{align*}
%\int_{\Rn}\frac{|u(y)|}{1+|y|^{n+2s+k}}\;dy<\infty
%\qquad\text{and}\qquad
Lu \overset{k}{=} f 
\quad \text{in }B_1
\end{align*}
in the sense of Definition \ref{def:Lgrowth}.
\end{lemma}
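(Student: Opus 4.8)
The plan is to pass to the limit in the defining relation of Definition \ref{def:Lgrowth} along the approximating sequence. Fix $R>\diam(B_1)=2$. For each $m$, since $u_m$ satisfies the integrability condition, we may consider $u_m\chi_{B_R}$, which is in $L^\infty$ with compact support, so $L_m(u_m\chi_{B_R})$ is a well-defined (distributional) function in $B_1$; in fact, because $u_m$ solves $L_mu_m=f_m+P_m$ in $B_1$, we can write
\begin{align*}
L_m(u_m\chi_{B_R}) = f_m + P_m - L_m(u_m\chi_{\Rn\setminus B_R})
\qquad\text{in }B_1.
\end{align*}
The point is that the tail term $L_m(u_m\chi_{\Rn\setminus B_R})(x)=\int_{\Rn\setminus B_R}(u_m(x)-u_m(x+y)-\ldots)K_m(y)\,dy$ has, for $x\in B_1$, a very explicit and regular structure: the kernel is evaluated away from the origin, so this is a smooth function of $x\in B_1$, and one can Taylor-expand $K_m(\cdot-x)$ around $x=0$ up to order $k-1$ to extract a polynomial $Q_{m,R}\in\mathbf P_{k-1}$ plus a remainder controlled by $\int_{\Rn\setminus B_R}|u_m(y)|\,|y|^{-n-2s-k}\,dy$ uniformly for $x\in B_1$. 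This reorganizes the identity into the required form $L_m(u_m\chi_{B_R})=f_{m,R}+p_{m,R}$ with $f_{m,R}=f_m+(\text{small uniform remainder in }R)$ and $p_{m,R}\in\mathbf P_{k-1}$.

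First I would fix $R$ and let $m\uparrow\infty$. By the hypotheses, $u_m\chi_{B_R}\to u\chi_{B_R}$ in $L^\infty(\Rn)$ (using $u_m\to u$ in $L^\infty_{loc}$), and $K_m\to K$ in $C^k(\Sn)$; standard stability of the nonlocal operator under these convergences (the kernels are uniformly bounded below and above by \eqref{L2}, and $u_m\chi_{B_R}$ are uniformly bounded with fixed compact support) gives $L_m(u_m\chi_{B_R})\to L(u\chi_{B_R})$ in the distributional sense on $B_1$. On the right-hand side, $f_m\to f$ in $L^\infty(B_1)$, and for the polynomial and remainder pieces one uses the tail convergence $\int_{\Rn}|u_m-u|\,(1+|y|)^{-n-2s-k}\,dy\to0$ together with $K_m\to K$: the coefficients of $p_{m,R}$ (which are, up to constants, the integrals $\int_{\Rn\setminus B_R}u_m(y)\partial^\alpha K_m(y)\,dy$ for $|\alpha|\le k-1$) converge to those of a limiting polynomial $p_R\in\mathbf P_{k-1}$, and the remainder functions $f_{m,R}-f_m$ converge in $L^\infty(B_1)$ to some $g_R$ with $\|g_R\|_{L^\infty(B_1)}\le C\int_{\Rn\setminus B_R}|u(y)|\,|y|^{-n-2s-k}\,dy$. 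Passing to the limit we obtain
\begin{align*}
L(u\chi_{B_R}) = f + g_R + p_R \qquad\text{in }B_1,\quad R>2,
\end{align*}
with $p_R\in\mathbf P_{k-1}$ and $\|g_R\|_{L^\infty(B_1)}\to0$ as $R\uparrow\infty$ by dominated convergence (using that $u$ inherits the integrability $\int|u|(1+|y|)^{-n-2s-k}<\infty$ from the tail-convergence hypothesis). Setting $f_R:=f+g_R$ exhibits exactly the structure required by Definition \ref{def:Lgrowth}, so $Lu\overset{k}{=}f$ in $B_1$. For an unbounded $\Omega$ the same argument applies verbatim on each bounded subdomain (note here $\Omega=B_1$ is already bounded).

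The main obstacle — and the only genuinely non-routine point — is justifying the interchange of the limit $m\uparrow\infty$ with the nonlocal operator applied to $u_m\chi_{B_R}$ and, in particular, the convergence of the polynomial parts $p_{m,R}\to p_R$. The near-field part of $L_m(u_m\chi_{B_R})$ converges because $u_m\chi_{B_R}\to u\chi_{B_R}$ uniformly with fixed compact support and the kernels are uniformly elliptic; the subtlety is purely in the far field, where one must be careful that the polynomial correction is well-defined (this is where one uses that $K_m(\cdot-x)$, for $x$ in a compact set away from the singularities of the integration region, can be Taylor-expanded and that the resulting coefficient integrals converge thanks to the joint hypotheses $u_m\to u$ in the weighted $L^1$ sense and $K_m\to K$ in $C^k$). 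Once this bookkeeping is done, everything else is a direct passage to the limit. I would also remark that the uniformity (rather than a.e.\ pointwise) convergence $f_m\to f$ is used precisely to keep the error $g_R$ estimate uniform in $x\in B_1$, which is what makes $\|f_R-f\|_{L^\infty(\Omega)}\to0$ hold as required.
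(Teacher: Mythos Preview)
Your overall architecture matches the paper's: fix $R>2$, pass to the limit $m\to\infty$ in the identity $L_m(u_m\chi_{B_R})=f_m+P_m-L_m(u_m\chi_{\R^n\setminus B_R})$, then let $R\to\infty$. The near-field convergence, the distributional stability, and the final dominated-convergence estimate for the remainder are all handled correctly.

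There is, however, a genuine gap in your treatment of the polynomial part. You assert that the coefficients of $p_{m,R}$ are (up to constants) the integrals $\int_{\R^n\setminus B_R}u_m(y)\,\partial^\alpha K_m(y)\,dy$ for $|\alpha|\le k-1$, and that these converge as $m\to\infty$. Neither claim is justified by the hypotheses: for $|\alpha|<k$ one has $|\partial^\alpha K_m(y)|\sim |y|^{-n-2s-|\alpha|}$, and the only weighted control available is $\int|u_m-u|\,(1+|y|)^{-n-2s-k}\,dy\to 0$, which is strictly weaker. Moreover $p_{m,R}$ also contains $P_m$, on which no convergence is assumed at all. So the coefficient-by-coefficient argument does not go through.

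The paper avoids this by differentiating $k$ times: for $|\gamma|=k$ one has $\partial^\gamma P_m\equiv 0$ and
\[
\partial^\gamma_x\big(P_m-L_m(u_m\chi_{\R^n\setminus B_R})\big)(x)
=-\int_{\R^n\setminus B_R}u_m(y)\,\partial^\gamma_x K_m(x-y)\,dy,
\]
which \emph{does} converge uniformly on $B_1$ (now the kernel decays like $|y|^{-n-2s-k}$). Integrating back $k$ times produces the limit up to a polynomial in $\mathbf P_{k-1}$, which is then identified via the already-established distributional convergence of the whole expression. Your argument is easily repaired along the same lines: once you know the order-$k$ Taylor remainder $g_{m,R}$ converges uniformly and $L_m(u_m\chi_{B_R})\to L(u\chi_{B_R})$ distributionally, the polynomials $p_{m,R}=L_m(u_m\chi_{B_R})-f_m-g_{m,R}$ converge distributionally in the finite-dimensional space $\mathbf P_{k-1}$, hence uniformly, to some $p_R$; no individual coefficient integral needs to converge.
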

\begin{proof}
This result is the counterpart of \cite{growth}*{Theorem 1.6}.

By definition, we have $Lu\overset{k}{=}f$ in $B_1$ if there exists a family of polynomials
${(p_R)}_{R>0}\subseteq\mathbf{P}_{k-1}$
and a family of functions ${(g_R)}_{R>0}\subseteq L^\infty(B_1)$ such that
\begin{align*}
& L(u\chi_{B_R}) = g_R+p_R
\qquad\text{in }B_1, \text{ for any }R>1 \\
\text{and }\ & \lim_{r\uparrow\infty}\|g_R-f\|_{L^\infty(\Omega)} = 0.
\end{align*}
For $R>2$, by the convergence $u_m\to u$ in $L^\infty(B_R)$ and $K_m\to K$ in $C^k(\Sn)$ as $m\uparrow\infty$,
we have 
\begin{align*}
L_m(u_m\chi_{B_R})\to L(u\chi_{B_R})
\qquad\text{in }B_1,\ \text{ as }m\uparrow\infty
\end{align*}
in the distributional sense.
Therefore,
\begin{align*}
L(u\chi_{B_R})=\lim_{m\uparrow\infty}\big(f_m+P_m-L_m(u_m\chi_{\Rn\setminus B_R})\big)
=f+\lim_{m\uparrow\infty}\big(P_m-L_m(u_m\chi_{\Rn\setminus B_R})\big)
\end{align*}
and let us notice that, since $P_m\in\mathbf{P}_{k-1}$ for any $m\in\N$,
then for any $\gamma\in\N^n$, $|\gamma|=k$, for $x\in B_1$ we have
\begin{align*}
& \partial^\gamma\big(P_m-L_m(u_m\chi_{\Rn\setminus B_R})\big)(x) =
-\partial^\gamma \int_{\Rn\setminus B_R}u_m(y)\,K_m(x-y)\;dy= \\
& = -\int_{\Rn\setminus B_R}u_m(y)\,\partial^\gamma_x K_m(x-y)\;dy 
\ \longrightarrow\ -\int_{\Rn\setminus B_R}u(y)\,\partial^\gamma_x K(x-y)\;dy,
% \qquad \text{uniformly as }m\uparrow\infty
\end{align*}
uniformly as $m\uparrow\infty$.
Integrating the above relation $k$ times, we deduce that there exists $p_R\in\mathbf{P}_{k-1}$ such that, for $x\in B_1$,
\begin{align*}
\lim_{m\uparrow\infty}\big(P_m-L_m(u_m\chi_{\Rn\setminus B_R})\big)(x)=
p_R(x)-\Theta(x)
\end{align*}
with
\begin{align*}
\partial^\gamma\Theta(x)=\int_{\Rn\setminus B_R}u(y)\,\partial^\gamma_x K(x-y)\;dy \quad\text{for }x\in B_1,
\quad\text{and}\quad \partial^\alpha\Theta(0)=0 \text{ for any }\alpha\leq\gamma
\end{align*}
For this reason, for $x\in B_1$,
\begin{align*}
& \lim_{R\uparrow\infty}\Big|\lim_{m\uparrow\infty}\big(P_m-L_m(u_m\chi_{\Rn\setminus B_R})\big)(x)-p_R(x)\Big|\leq \\
& \leq C
\lim_{R\uparrow\infty}\Big(|x|^k\sup_{z\in B_1}\int_{\Rn\setminus B_R}\big| u(y) D^k K(x-y)\big|\;dy\Big) \\
& \leq C\|K\|_{C^k(\Sn)}\lim_{R\uparrow\infty}\sup_{z\in B_1}\int_{\Rn\setminus B_R}\frac{|u(y)|}{{|x-y|}^{n+2s+k}}\;dy \\
& \leq C\|K\|_{C^k(\Sn)}\lim_{R\uparrow\infty}\int_{\Rn\setminus B_R}\frac{|u(y)|}{{(|y|-1)}^{n+2s+k}}\;dy \ =\ 0 
\end{align*}
by dominated convergence.
\end{proof}

\subsection{Regularity estimates}

We next establish some regularity estimates for functions with polynomial growth.
They will essentially follow from the following.

\begin{lemma}\label{lem: u tilde}
Let $L$ be an operator as in \eqref{L}-\eqref{L2}.
Assume that $U\subset B_1\subset\Rn$ is a
$C^\beta$ domain, $\beta>1$.
Consider $f\in L^\infty(U)$,  
and assume to have a solution of
\begin{align*}
\left\lbrace\begin{aligned}
Lu & \stackrel{k}{=} f & & \text{in } U \\
\frac{|u(x)|}{1+|x|^{k+s+\alpha}} & \in L^\infty(\Rn) & & \alpha<s. 
\end{aligned}\right.
\end{align*}
Then
\begin{align*}
\widetilde u := u\chi_{B_2}
\end{align*}
satisfies 
\begin{align}\label{utilde}
L \widetilde u = \widetilde f \qquad\text{in}\quad U
\end{align}
with 
\begin{align*}
\big\| \widetilde f \big\|_{L^\infty(U)} \leq C\left(\|f\|_{L^\infty(U)}+\Big\|\frac{u}{1+|x|^{k+s+\alpha}}\Big\|_{L^\infty(\Rn)}\right)
\end{align*}
for some $C>0$ depending only on $n,s,U,\lambda$, and $\Lambda$.
\end{lemma}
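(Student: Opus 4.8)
The plan is to reduce \eqref{utilde} to the case of a bounded right-hand side by truncation, being careful with the polynomial terms built into the definition of $\stackrel{k}{=}$. First I would unwind Definition~\ref{def:Lgrowth}: there are $p_R\in\mathbf{P}_{k-1}$ and $f_R$ with $f_R\to f$ in $L^\infty(U)$ such that $L(u\chi_{B_R})=f_R+p_R$ in $U$ for every $R>2$. Writing $u\chi_{B_R}=\widetilde u+u\chi_{B_R\setminus B_2}$ and using that $U\subset B_1$ lies at distance $\ge 1$ from $\partial B_2$, the density $u\chi_{B_R\setminus B_2}$ vanishes in a neighbourhood of $U$, so for $x\in U$ (and using that $K$ is even)
\[
L\big(u\chi_{B_R\setminus B_2}\big)(x)=-\int_{B_R\setminus B_2}u(y)\,K(y-x)\;dy ,
\]
a genuine (convergent) integral. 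Hence, in $U$,
\[
L\widetilde u=f_R+p_R+\int_{B_R\setminus B_2}u(y)\,K(\,\cdot\,-y)\;dy ,
\]
and since the right-hand side is the sum of an $L^\infty(U)$ function, a polynomial, and a smooth function, $L\widetilde u$ restricted to $U$ agrees with an $L^\infty(U)$ function; calling it $\widetilde f$ makes \eqref{utilde} hold by definition, and the whole content of the lemma is the quantitative bound on $\widetilde f$.

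Second, I would isolate the polynomial part of the tail. Taylor-expanding $x\mapsto K(x-y)$ around $x=0$ to order $k$ and using the homogeneity of $K$ (so that $|\partial^\gamma K(w)|\lesssim|w|^{-n-2s-|\gamma|}$), one obtains $\int_{B_R\setminus B_2}u(y)K(x-y)\,dy=\widetilde Q_R(x)+\mathcal E_R(x)$ with $\widetilde Q_R\in\mathbf{P}_{k-1}$ and $|\mathcal E_R(x)|\lesssim|x|^k\int_{\Rn\setminus B_2}|y|^{-n-2s-k}|u(y)|\,dy$. Since $|u(y)|\le C_u(1+|y|)^{k+s+\alpha}$ with $\alpha<s$ (write $C_u:=\|u/(1+|x|^{k+s+\alpha})\|_{L^\infty(\Rn)}$), this last integral is finite, $\mathcal E_R\to\mathcal E$ uniformly on $U$ as $R\uparrow\infty$, and $\|\mathcal E\|_{L^\infty(U)}\le C C_u$. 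Thus $\widetilde f=f_R+\pi_R+\mathcal E_R$ in $U$ with $\pi_R:=p_R+\widetilde Q_R\in\mathbf{P}_{k-1}$; as the left-hand side is independent of $R$ while $f_R\to f$ and $\mathcal E_R\to\mathcal E$ in $L^\infty(U)$, the polynomials $\pi_R$ converge in $L^\infty(U)$ to some $\pi\in\mathbf{P}_{k-1}$, and $\widetilde f=f+\mathcal E+\pi$. It remains to show $\|\pi\|_{L^\infty(U)}\le C(\|f\|_{L^\infty(U)}+C_u)$.

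For this I would use duality, exploiting that $\widetilde u$ — unlike $u$ — is genuinely bounded, with $\|\widetilde u\|_{L^\infty(\Rn)}\le C C_u$. For every $\psi\in C_c^\infty(U)$, testing the identity $L\widetilde u=f+\mathcal E+\pi$ gives
\[
\int_U\pi\,\psi=\int_{\Rn}\widetilde u\,L\psi-\int_U(f+\mathcal E)\,\psi ,
\]
so that $\big|\int_U\pi\,\psi\big|\le C C_u\,\|L\psi\|_{L^1(\Rn)}+\big(\|f\|_{L^\infty(U)}+C C_u\big)\|\psi\|_{L^1(U)}$, where $\|L\psi\|_{L^1(\Rn)}<\infty$ because $L\psi$ decays like $|x|^{-n-2s}$ at infinity. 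Since the space $\mathbf{P}_{k-1}$, viewed on the open set $U$, is finite dimensional and is separated by $C_c^\infty(U)$, one can fix once and for all finitely many test functions $\psi_1,\dots,\psi_N\in C_c^\infty(U)$ for which $q\mapsto\big(\int_U q\,\psi_i\big)_i$ is injective on $\mathbf{P}_{k-1}$; since an injective linear map between finite-dimensional spaces is bounded below, this yields $\|\pi\|_{L^\infty(U)}\le C\max_i\big|\int_U\pi\,\psi_i\big|\le C\big(\|f\|_{L^\infty(U)}+C_u\big)$, and combined with $\widetilde f=f+\mathcal E+\pi$ this proves the claim.

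The main obstacle is exactly this polynomial ambiguity: the tail $\int_{B_R\setminus B_2}u\,K(\,\cdot\,-y)\,dy$ contains a part that grows polynomially in $R$ (since $u$ grows faster than $|x|^{k-1}$, its Taylor coefficients in $x$ diverge), cancelled by $p_R$, and one has to argue that the surviving polynomial $\pi$ is controlled by the data — which the duality step achieves by replacing the polynomially-growing $u$ with the bounded $\widetilde u$. The remaining work — the bookkeeping of the Taylor expansion of the homogeneous kernel $K$ and making the manipulations precise in the distributional sense — is routine.
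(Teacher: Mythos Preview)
Your argument is correct and reaches the same conclusion as the paper, but the route you take to bound the polynomial part is genuinely different. The decomposition $L\widetilde u=f_R+p_R+\int_{B_R\setminus B_2}u(y)K(\cdot-y)\,dy$ and the Taylor expansion of the tail are essentially what the paper does as well (the paper phrases it via $k$-th order finite differences of $L\widetilde u$, which amounts to the same estimate on the remainder $\mathcal E$). Where the two diverge is in controlling the surviving polynomial $\pi\in\mathbf P_{k-1}$: the paper splits $\widetilde u=\widetilde u_1+\widetilde u_2$ by solving two Dirichlet problems in $U$, bounds $\widetilde u_2$ via the maximum principle, and then invokes a separate compactness lemma (Lemma~\ref{lem:tildep}) to pass from $\|\widetilde u_2\|_{L^\infty}$ to $\|\pi\|_{L^\infty}$. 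Your duality step---test $L\widetilde u=f+\mathcal E+\pi$ against finitely many $\psi_i\in C_c^\infty(U)$, use $\|\widetilde u\|_{L^\infty(\Rn)}\leq C C_u$ and $L\psi_i\in L^1(\Rn)$, then invoke finite-dimensionality of $\mathbf P_{k-1}$---bypasses both the Dirichlet solvability and the compactness argument entirely. In particular your proof does not actually use that $U$ is a $C^\beta$ domain (only that it is open and contained in $B_1$), whereas the paper's route needs this hypothesis to solve the auxiliary problems and to run the compactness in Lemma~\ref{lem:tildep}. Both approaches do tacitly use that $K$ is $C^k$ away from the origin (to differentiate/Taylor-expand the tail kernel), an assumption not explicit in the lemma's statement.
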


\begin{proof}
By definition we have that there exist ${(f_R)}_{R>1}\subset L^\infty(U)$
and ${(p_R)}_{R>1}\subseteq\mathbf{P}_{k-1}$ such that
\begin{align}\label{trunc}
& L(u\chi_{B_R})=f_R+p_R\qquad\text{in }U \text{ for any }R>1, \\
& \lim_{R\uparrow\infty}\|f_R-f\|_{L^\infty(U)}=0.
\end{align}
Let us define $\widetilde{u}:=u\chi_{B_2}$. Then
\begin{align*}
L\widetilde{u}=-L(u\chi_{B_R\setminus B_2})+f_R+p_R
\qquad \text{in }U.
\end{align*}
Let us remark that, for every multi-index $\gamma$, $|\gamma|=k$,
\begin{align*}
\partial^\gamma\big[ L(u\chi_{B_R\setminus B_2}) \big](x)
=-\partial^\gamma\int_{B_R\setminus B_2}u(z)\,K(z-x)\;dz
=-\int_{B_R\setminus B_2}u(z)\,\partial^\gamma K(z-x)\;dz,
\quad x\in U,
\end{align*}
and therefore
\begin{align}\label{3543143131213}
\left| D^k\big[ L(u\chi_{B_R\setminus B_2}) \big](x) \right|
\leq C \int_{\Rn\setminus B_2}\frac{|u(z)|}{{|z-x|}^{n+2s+k}}
\leq C\left\|\frac{u}{1+|x|^{k+s+\alpha}}\right\|_{L^\infty(\Rn)},
\qquad x\in U.
\end{align}
From this and \eqref{trunc} we deduce, for $x\in U$ and $|h|\leq 2\dist(x,\partial U)/k$,
\footnote{Recall the finite difference operator as defined in \eqref{def:finite difference}.}
%$\Delta^k_h(L\widetilde{u})(x) =
%\sum_{j=0}^k (-1)^k \binom{k}{j} L\widetilde{u}\bigg(x+\Big(\frac{k}2-j\Big)h\bigg)$
\begin{align}\label{bound on Lutilda}
\begin{split}
\left\|\Delta^k_h(L\widetilde{u})(x)\right\|_{L^\infty(U)} 
& \leq \left\|\Delta^k_h[L(u\chi_{B_R\setminus B_2})](x)\right\|_{L^\infty(U)}
+ \|\Delta^k_h f_R\|_{L^\infty(U)} + \|\Delta^k_h p_R\|_{L^\infty(U)} \\
& \leq C\left\|\frac{u}{1+|x|^{k+s+\alpha}}\right\|_{L^\infty(\Rn)}|h|^k
+ 2\|f\|_{L^\infty(U)}\sum_{j=0}^k\binom{k}{j}
\end{split}
\end{align}
at least for $R>0$ large enough (in such a way that 
$\|f_R\|_{L^\infty(U)}\leq2\|f\|_{L^\infty(U)}$).
Mind that, here, we have also used that $\Delta_h^k p_R \equiv 0$ as $p_R$ has degree at most $k-1$ by assumption.

From \eqref{bound on Lutilda} we deduce that 
there exist $\widetilde g\in L^\infty(U)$, 
and a polynomial $\widetilde p\in\mathbf{P}_{k-1}$
such that 
\begin{align*}
\left\lbrace\begin{aligned}
L\widetilde u &= \widetilde g + \widetilde p  && \text{in }U \\
\widetilde u  &= 0  && \text{in } \Rn\setminus B_2.
\end{aligned}\right.
\end{align*}
We split $\widetilde u=\widetilde u_1+\widetilde u_2$ by setting
\begin{align*}
\left\lbrace\begin{aligned}
L\widetilde u_1 &= \widetilde g  && \text{in }U \\
\widetilde u_1  &= \widetilde u  && \text{in } \Rn\setminus U 
\end{aligned}\right.
\qquad\text{and}\qquad
\left\lbrace\begin{aligned}
L\widetilde u_2 &= \widetilde p  && \text{in }U \\
\widetilde u_2  &= 0  && \text{in } \Rn\setminus U.
\end{aligned}\right.
\end{align*}
Remark that we have
\begin{align*}
\|\widetilde g\|_{L^\infty(U)}\leq 
C\left(\|f\|_{L^\infty(U)}+\left\|\frac{u}{1+|x|^{k+s+\alpha}}\right\|_{L^\infty(\Rn)}\right)
\end{align*}
by construction. This entails also
\begin{align*}
\|\widetilde u_1\|_{L^\infty(U)}\leq 
C\left(\|f\|_{L^\infty(U)}+\left\|\frac{u}{1+|x|^{k+s+\alpha}}\right\|_{L^\infty(\Rn)}\right)
\end{align*}
by standard elliptic estimates. Therefore
\begin{align*}
\|\widetilde u_2\|_{L^\infty(U)}\leq 
\|\widetilde u\|_{L^\infty(U)}+\|\widetilde u_1\|_{L^\infty(U)}\leq 
C\left(\|f\|_{L^\infty(U)}+\left\|\frac{u}{1+|x|^{k+s+\alpha}}\right\|_{L^\infty(\Rn)}\right).
\end{align*}
Thanks to Lemma \ref{lem:tildep} below, this implies that 
\begin{align*}
\|\widetilde p\|_{L^\infty(U)}\leq 
C\left(\|f\|_{L^\infty(U)}+\left\|\frac{u}{1+|x|^{k+s+\alpha}}\right\|_{L^\infty(\Rn)}\right),
\end{align*}
and the result follows. 
\end{proof}

\begin{lemma}\label{lem:tildep}
Let $D\subset\Rn$ be a bounded $C^\beta$ domain, $\beta>1$, and $Q\in\mathbf{P}_\ell$,  $\ell\in\N$.
Let $u$ be the only bounded solution of 
\begin{align*}
\left\lbrace\begin{aligned}
Lu &= Q & &  \text{in }D \\
u  &= 0 & &  \text{in }\Rn\setminus D.
\end{aligned}\right.
\end{align*}
Then there exists $C=C(n,\ell,D,\lambda,\Lambda)>0$ such that 
\begin{align*}
\|u\|_{L^\infty(D)}\geq C\|Q\|_{L^\infty(D)}.
\end{align*}
\end{lemma}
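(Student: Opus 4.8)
The plan is to argue by contradiction through a soft compactness argument, using crucially that $\mathbf{P}_\ell$ is finite-dimensional. Suppose the statement fails. Then there are operators $L_m$ as in \eqref{L}--\eqref{L2} (with the same $\lambda,\Lambda$), polynomials $Q_m\in\mathbf{P}_\ell$ normalized by $\|Q_m\|_{L^\infty(D)}=1$, and corresponding bounded solutions $u_m$ of $L_m u_m=Q_m$ in $D$, $u_m=0$ in $\Rn\setminus D$, with $\|u_m\|_{L^\infty(D)}\to 0$. Since $u_m\equiv 0$ outside $D$, this forces $\|u_m\|_{L^\infty(\Rn)}\to 0$, and hence also $\|u_m\|_{L^1(\Rn)}\le|D|\,\|u_m\|_{L^\infty(\Rn)}\to 0$. (If one prefers to keep $L$ fixed, one simply takes $L_m\equiv L$; the only reason to allow a sequence of operators is to make transparent that the final constant depends on the kernels solely through $\lambda$ and $\Lambda$.)

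Next I would invoke that the space of polynomials of degree at most $\ell$ in $n$ variables is finite-dimensional, and that $\|\cdot\|_{L^\infty(D)}$ is a genuine norm on it, since a polynomial vanishing on the nonempty open set $D$ is identically zero. Consequently the sphere $\{Q\in\mathbf{P}_\ell:\|Q\|_{L^\infty(D)}=1\}$ is compact, so, after passing to a subsequence, $Q_m\to Q$ uniformly on $\overline D$ with $\|Q\|_{L^\infty(D)}=1$; in particular $Q\not\equiv 0$ in $D$.

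The one computation worth isolating is a uniform bound on $L_m$ applied to a fixed test function. For $\varphi\in C^\infty_c(D)$, split the integral defining $L_m\varphi$ into $B_1$ and its complement, use $|2\varphi(x)-\varphi(x+y)-\varphi(x-y)|\le\|D^2\varphi\|_{L^\infty}|y|^2$ near the origin together with $K_m(y)\le\Lambda|y|^{-n-2s}$, and use $K_m(y)\le\Lambda|y|^{-n-2s}$ with $\|\varphi\|_{L^\infty}$ away from the origin; this gives $\|L_m\varphi\|_{L^\infty(\Rn)}\le C(n,s,\Lambda,\varphi)$, with $C$ independent of $m$. Since the equation $L_m u_m=Q_m$ in $D$ is understood distributionally, testing it against $\varphi$ and using the self-adjointness of $L_m$ (symmetrization of the double integral) yields $\int_{\Rn}u_m\,L_m\varphi=\int_D Q_m\,\varphi$, whence
\[
\Big|\int_D Q_m\,\varphi\Big|=\Big|\int_{\Rn}u_m\,L_m\varphi\Big|\le\|u_m\|_{L^1(\Rn)}\,\|L_m\varphi\|_{L^\infty(\Rn)}\longrightarrow 0 .
\]
Letting $m\to\infty$ and using $Q_m\to Q$ uniformly on $\overline D$, we get $\int_D Q\,\varphi=0$ for every $\varphi\in C^\infty_c(D)$, hence $Q\equiv 0$ in $D$, contradicting $\|Q\|_{L^\infty(D)}=1$.

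I do not expect any genuine obstacle here: the proof is entirely routine once the contradiction setup is in place. The only points requiring a touch of care are the (standard) distributional/self-adjoint formulation of the equation, so that pairing with $L_m\varphi$ is legitimate, and the uniform estimate $\|L_m\varphi\|_{L^\infty}\le C$; the regularity $\partial D\in C^\beta$ with $\beta>1$ is used only to guarantee the existence and uniqueness of the bounded solutions $u_m$, which is part of the hypotheses.
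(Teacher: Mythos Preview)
Your proof is correct and takes a genuinely different, more elementary route than the paper's. The paper argues by contradiction as you do, but then passes to the limit in the \emph{equation}: it extracts a weakly convergent subsequence of operators $L_{k_m}\to L_\star$ (via a compactness lemma for the class $\mathfrak{L}$), uses boundary $C^s$ estimates plus Ascoli--Arzel\`a to get $v_{k_m}\to v$ in $L^\infty$, and then invokes a stability result to deduce that the limit $v$ solves $L_\star v=Q$ in $D$ with $v\equiv 0$, contradicting $\|Q\|_{L^\infty(D)}=1$. Your argument sidesteps all of this machinery by testing against a fixed $\varphi\in C^\infty_c(D)$ and using self-adjointness together with the elementary uniform bound $\|L_m\varphi\|_{L^\infty}\le C(\varphi,\Lambda)$; since $\|u_m\|_{L^1}\to 0$, you get $\int_D Q\varphi=0$ directly, without ever needing a limiting operator, regularity estimates for $u_m$, or stability of solutions. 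The paper's approach has the virtue of being a template it reuses elsewhere (operator compactness and stability appear repeatedly in Section~\ref{sec:polynom} and Section~\ref{sec:uover}), while yours is self-contained and uses nothing beyond the ellipticity bounds \eqref{L2} and the finite-dimensionality of $\mathbf{P}_\ell$.
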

\begin{proof}
Suppose that there are sequences ${(L_k)}_{k\in\N}\subseteq\mathfrak{L},\ {(v_k)}_{k\in\N}\subseteq L^\infty(D),\ 
{(Q_k)}_{k\in\N}\subseteq\mathbf{P}_\ell$, satisfying
\begin{align*}
\|Q_k\|_{L^\infty(D)}=1
\qquad\text{and}\qquad
\left\lbrace\begin{aligned}
L_kv_k &= Q_k & & \text{in } D \\
v_k &= 0 & & \text{in } \Rn\setminus D \\
\lim_{k\uparrow\infty} \|v_k\|_{L^\infty(D)} &= 0.  
\end{aligned}\right.
\end{align*}
We can now extract subsequences ${(L_{k_m})}_{m\in\N},\ {(v_{k_m})}_{m\in\N},\ 
{(Q_{k_m})}_{m\in\N}$ in such a way that, as $m\uparrow\infty$,
\begin{align*}
L_{k_m} \to L  & \qquad \text{ weakly} \\
v_{k_m} \to v  & \qquad \text{ in }L^\infty(D) \\
Q_{k_m} \to Q  & \qquad \text{ in }L^\infty(D) 
\end{align*}
In particular, for the convergence of $L_{k_m}$ we can use \cite{jde}*{Lemma 3.1}, whereas
for that of $v_{k_m}$ we need \cite{annali}*{Theorem 1.2} plus the Ascoli-Arzel\`a Theorem; 
the convergence of $Q_{k_m}$ simply follows from its boundedness in a finite-dimensional space.
We now apply \cite{jde}*{Lemma 3.1} and we have
\begin{align*}
\|Q\|_{L^\infty(D)}=1
\qquad\text{and}\qquad
\left\lbrace\begin{aligned}
Lv &= Q & & \text{in }\Omega \\
v &= 0 & & \text{in }\Rn\setminus\Omega, 
\end{aligned}\right.
\end{align*}
but at the same time $\|v\|_{L^\infty(D)}=0$, a contradiction.
\end{proof}

As a consequence of Lemma \ref{lem: u tilde}, we deduce the following.

\begin{proposition}[Boundary regularity]\label{prop:growth-bound-reg}
Let $L$ be an operator as in \eqref{L}-\eqref{L2}.
Assume that $\Omega$ is a domain of class $C^{1,\gamma}$, $\gamma>0$. 
Consider $f\in L^\infty(\Omega)$ and assume to have a solution of
\begin{align*}
\left\lbrace\begin{aligned}
Lu & \stackrel{k}{=} f & & \text{in }\Omega\cap B_1 \\
u & = 0 & & \text{in }B_1\setminus \Omega \\
\frac{|u(x)|}{1+|x|^{k+s+\alpha}} & \in L^\infty(\Rn) & & \alpha<s. 
\end{aligned}\right.
\end{align*}
Then there exists $C>0$ depending only on $n,s,\alpha,k$, and $\Omega$, such that
\begin{align*}
\|u\|_{C^s(\overline B_{1/2})}\leq C\left(\|f\|_{L^\infty(\Omega\cap B_1)}+\left\|\frac{u}{1+|x|^{k+s+\alpha}}\right\|_{L^\infty(\Rn)}\right).
\end{align*}
Furthermore, the same result holds if $\|f\|_{L^\infty(\Omega\cap B_1)}$ is replaced by $\|d^{s-\varepsilon}f\|_{L^\infty(\Omega\cap B_1)}$, with $\varepsilon>0$.
\end{proposition}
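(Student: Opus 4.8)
The plan is to reduce the statement to Lemma~\ref{lem: u tilde} together with the already available $C^s$ boundary regularity estimates for the Dirichlet problem in $C^{1,\gamma}$ domains with bounded (respectively, $d^{s-\varepsilon}$-weighted bounded) right-hand side, see \cite{annali}*{Theorem 1.2}. The only genuinely new ingredient is Lemma~\ref{lem: u tilde}, which turns the equation $Lu\stackrel{k}{=}f$ for the polynomially growing function $u$ into a genuine equation $L\widetilde u=\widetilde f$ for the truncation $\widetilde u:=u\chi_{B_2}$, with $\widetilde f\in L^\infty$ quantitatively controlled by $\|f\|_{L^\infty(\Omega\cap B_1)}$ and by the growth seminorm $\|u/(1+|x|^{k+s+\alpha})\|_{L^\infty(\Rn)}$.

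First I would localize, since Lemma~\ref{lem: u tilde} is stated for a $C^\beta$ domain while $\Omega\cap B_1$ has a corner along $\partial B_1$. (If $B_{1/2}$ does not meet $\partial\Omega$ the statement is immediate from interior estimates, or from $u\equiv0$ on $B_{1/2}$, so assume it does.) I would fix, once and for all, a bounded domain $U$ of class $C^{1,\gamma}$ with
\[
U\cap B_{5/8}=\Omega\cap B_{5/8}\qquad\text{and}\qquad U\subseteq\Omega\cap B_{3/4},
\]
which exists because $\partial\Omega\cap B_{3/4}$ is a $C^{1,\gamma}$ hypersurface: one simply rounds off the corner well inside $\Omega$. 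Then $Lu\stackrel{k}{=}f$ still holds in $U$ (restrict the families $(f_R)$, $(p_R)$ of Definition~\ref{def:Lgrowth}), while $u=0$ in $B_{5/8}\setminus U=B_{5/8}\setminus\Omega$, and from now on all constants are allowed to depend on $\Omega$ through this fixed $U$.

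Next I would apply Lemma~\ref{lem: u tilde} on $U$. This gives $L\widetilde u=\widetilde f$ in $U$ with $\|\widetilde f\|_{L^\infty(U)}\le C(\|f\|_{L^\infty(\Omega\cap B_1)}+\|u/(1+|x|^{k+s+\alpha})\|_{L^\infty(\Rn)})$, where I used $\|f\|_{L^\infty(U)}\le\|f\|_{L^\infty(\Omega\cap B_1)}$. Moreover $\widetilde u\in L^\infty(\Rn)$, it is supported in $\overline{B_2}$ with $\|\widetilde u\|_{L^\infty(\Rn)}=\sup_{B_2}|u|\le(1+2^{k+s+\alpha})\|u/(1+|x|^{k+s+\alpha})\|_{L^\infty(\Rn)}$, and $\widetilde u=0$ in $B_{5/8}\setminus U$ (since $u$ vanishes there and $B_{5/8}\subset B_2$). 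Hence $\widetilde u$ is an admissible function for the localized $C^s$ boundary estimate in the $C^{1,\gamma}$ domain $U$ with the genuine bounded right-hand side $\widetilde f$; that estimate yields $\|\widetilde u\|_{C^s(\overline B_{1/2})}\le C(\|\widetilde f\|_{L^\infty(U\cap B_{5/8})}+\|\widetilde u\|_{L^\infty(\Rn)})$. Since $u=\widetilde u$ on $B_2\supset B_{1/2}$, chaining these bounds proves the first estimate, with $C$ depending only on $n$, $s$, $\alpha$, $k$, and $\Omega$.

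For the weighted statement I would rerun exactly the same argument, replacing $\|f\|_{L^\infty(\Omega\cap B_1)}$ throughout by $\|d^{s-\varepsilon}f\|_{L^\infty(\Omega\cap B_1)}$. This requires the $d^{s-\varepsilon}$-weighted analogue of Lemma~\ref{lem: u tilde}, which follows from its proof verbatim: there $f$ enters only via the norms $\|f_R\|_{L^\infty}$ and via the function $L(u\chi_{B_R\setminus B_2})$, and the latter is of class $C^\infty$ up to $\partial U$ --- its $x$-dependence sits only in the kernel, which is integrated over a region bounded away from the origin --- so its weighted norm is controlled by its $L^\infty$ norm; the polynomial term is then controlled exactly as before through Lemma~\ref{lem:tildep}, now using the weighted elliptic bound $\|\widetilde u_1\|_{L^\infty(U)}\le C(\|d^{s-\varepsilon}\widetilde g\|_{L^\infty(U)}+\|\widetilde u\|_{L^\infty(\Rn)})$. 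One then closes with the $d^{s-\varepsilon}$-weighted version of the $C^s$ boundary estimate. I do not expect any deep difficulty: the substance is entirely contained in Lemma~\ref{lem: u tilde}, and the only point requiring care is the bookkeeping --- making sure the localization to the genuinely $C^{1,\gamma}$ domain $U$ is compatible with the hypotheses of Lemma~\ref{lem: u tilde} and of the cited boundary estimate, and that the weighted norms propagate correctly through the splitting $\widetilde u=\widetilde u_1+\widetilde u_2$ used in the proof of Lemma~\ref{lem: u tilde}.
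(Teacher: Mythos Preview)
Your proposal is correct and follows essentially the same approach as the paper: truncate to $\widetilde u=u\chi_{B_2}$, invoke Lemma~\ref{lem: u tilde} to obtain a genuine equation $L\widetilde u=\widetilde f$ with $\widetilde f$ controlled by $\|f\|_{L^\infty}$ and the growth norm, and then apply the known $C^s$ boundary estimate from \cite{annali}*{Theorem~1.2} (respectively \cite{annali}*{Proposition~3.1} in the weighted case). Your construction of an auxiliary $C^{1,\gamma}$ domain $U\subset\Omega\cap B_1$ to avoid the corner along $\partial B_1$, and your sketch of how Lemma~\ref{lem: u tilde} adapts to the $d^{s-\varepsilon}$-weighted setting, are points the paper simply glosses over, but the substance is identical.
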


\begin{proof}
Remark that $u=\widetilde u:=u\chi_{B_2}$ in $B_{1/2}$, so that it is sufficient 
to estimate $\widetilde u$.
Applying the $C^s$ regularity estimates to problem \eqref{utilde}, see \cite{annali}*{Theorem 1.2},
we deduce 
\begin{align*}
\|\widetilde u\|_{C^s(B_{1/2})}\leq C\big(\|\widetilde f\|_{L^\infty(\Omega\cap B_1)}+\|u\|_{L^\infty(B_2)}\big).
\end{align*}
Now, by Lemma \ref{lem: u tilde}, we know that
\begin{align*}
\|\widetilde f\|_{L^\infty(\Omega\cap B_1)}+\|u\|_{L^\infty(B_2)}
\leq
C\Big(\|f\|_{L^\infty(\Omega)}+\left\|\frac{u}{1+|x|^{k+s+\alpha}}\right\|_{L^\infty(\Rn)}\Big),
\end{align*}
and thus the result follows.

The case in which $|f|\leq Cd^{\varepsilon-s}$ is analogous, using \cite{annali}*{Proposition~3.1} instead of \cite{annali}*{Theorem~1.2}.
\end{proof}

Finally, we also prove interior estimates.

\begin{proposition}[Interior regularity]\label{prop:growth-int-reg}
Let $L$ be an operator as in \eqref{L}-\eqref{L2}.
Let $f\in C^{\eta-s}(\overline B_1)$, $\eta>s$, $\eta+s\not\in\N$, and $k\in\N$.
Let $u\in L^\infty(\Rn)$ be a solution of
\begin{align*}
Lu = f \qquad \text{in }B_1.
\end{align*}
Then, for some $C=C(n,s,\eta)>0$
\begin{align*}
\|u\|_{C^{\eta+s}(\overline B_{1/2})} \leq 
C\Big(\Big\|\frac{u}{1+|x|^{\eta+s}}\Big\|_{L^\infty(\Rn)}+\big[f\big]_{C^{\eta-s}(\overline B_1)}\Big).
\end{align*}
\end{proposition}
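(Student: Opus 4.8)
Here the statement to prove is Proposition~\ref{prop:growth-int-reg}, an interior Schauder estimate with a $2s$--gain in which only the weighted $L^\infty$--norm of $u$ appears on the right. The plan is to reduce it to a top--order Hölder seminorm bound and then to establish that bound by a blow--up and compactness argument, in the spirit of \cite{duke} (and of the arguments of Section~\ref{sec:uover}), using precisely the polynomial--growth machinery of the present section. Write $\eta+s=\nu+\sigma$ with $\nu=\intpart{\eta+s}$ and $\sigma=(\eta+s)-\nu\in(0,1)$ (here we use $\eta+s\notin\N$), and set $k:=\nu+1$.

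\emph{Step 1: reduction.} Since $\|u\|_{L^\infty(B_{1/2})}\le C\,\|u/(1+|x|^{\eta+s})\|_{L^\infty(\Rn)}$ and, by the standard interpolation inequalities in Hölder spaces on a ball, $\|D^j u\|_{L^\infty(B_{1/2})}\le \varepsilon\,[D^\nu u]_{C^\sigma(B_{1/2})}+C_\varepsilon\,\|u\|_{L^\infty(B_{3/4})}$ for $1\le j\le\nu$, it suffices to prove the top--order seminorm estimate $[D^\nu u]_{C^\sigma(\overline B_{1/2})}\le C\big(\|u/(1+|x|^{\eta+s})\|_{L^\infty(\Rn)}+[f]_{C^{\eta-s}(\overline B_1)}\big)$ (with a routine covering argument to pass from interior balls to $\overline B_{1/2}$). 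Equivalently, by the Campanato characterization of $C^{\eta+s}$, it is enough to produce, for each $x_0\in\overline B_{1/2}$, a polynomial $P_{x_0}\in\mathbf P_\nu$ with controlled coefficients such that $\|u-P_{x_0}\|_{L^\infty(B_r(x_0))}\le C\,r^{\eta+s}\,(\text{right--hand side})$ for all small $r$, uniformly in $x_0$.

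\emph{Step 2: blow--up.} Suppose the seminorm bound fails: there are $L_m\in\mathfrak L$, $f_m\in C^{\eta-s}(\overline B_1)$ and $u_m\in L^\infty(\Rn)$ with $L_m u_m=f_m$ in $B_1$, normalized by $\|u_m/(1+|x|^{\eta+s})\|_{L^\infty(\Rn)}+[f_m]_{C^{\eta-s}(\overline B_1)}=1$, but for which the monotone quantity
\[
\theta_m(r):=\sup_{x_0\in\overline B_{1/2}}\ \sup_{r\le\rho\le r_0}\ \rho^{-(\eta+s)}\inf_{P\in\mathbf P_\nu}\big\|u_m-w_m-P\big\|_{L^\infty(B_\rho(x_0))}
\]
tends to $+\infty$ as $r\downarrow 0$, where $r_0$ is a fixed small radius and $w_m$ is a fixed solution of $L_m w_m=f_m$ chosen so as to absorb the non--oscillatory part of $f_m$ (so that only the seminorm $[f_m]_{C^{\eta-s}}$, not $\|f_m\|_{L^\infty}$, will survive after rescaling). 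Pick $r_m\downarrow0$, points $x_m\in\overline B_{1/2}$, scales $\rho_m\ge r_m$ and near--optimal $P_m\in\mathbf P_\nu$ almost realizing $\theta_m(r_m)$, and set $v_m(x):=\big[(u_m-w_m)(x_m+\rho_m x)-P_m(x_m+\rho_m x)\big]\big/\big(\rho_m^{\eta+s}\,\theta_m(r_m)\big)$. By construction $v_m$ is uniformly bounded on $B_1$, satisfies the non--degeneracy $\inf_{P\in\mathbf P_\nu}\|v_m-P\|_{L^\infty(B_1)}\ge c_0>0$, has uniformly controlled polynomial growth $|v_m(x)|\le C(1+|x|)^{\eta+s}$ with $\eta+s<\nu+1=k$, and solves $L_m v_m\overset{k}{=}g_m$ in $B_{1/\rho_m}$ with $[g_m]_{C^{\eta-s}}\le\theta_m(r_m)^{-1}\to0$ (the terms $L_m P_m$, $L_m$ of the subtracted Taylor part, and the rescaled $w_m$--correction being harmless in the sense of Definition~\ref{def:Lgrowth}). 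These are exactly the features the polynomial--growth framework of this section was designed for, and they reflect the fact that cutting off the far part of $u$ produces a remainder that is controlled only modulo polynomials.

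\emph{Step 3: passage to the limit and Liouville.} Up to a subsequence $K_m\to K$ in $C^k(\Sn)$, hence $L_m\to L\in\mathfrak L$, and by the interior Hölder estimates for operators in $\mathfrak L$ together with the uniform growth control and Arzelà--Ascoli, $v_m\to v$ in $L^\infty_{loc}(\Rn)$ with $\int_{\Rn}|v_m-v|/(1+|y|^{n+2s+k})\,dy\to0$. By Lemma~\ref{limit with growth}, $Lv\overset{k}{=}0$ in $\Rn$ in the sense of Definition~\ref{def:Lgrowth}, and $v$ has polynomial growth of order $<\nu+1$; a Liouville--type theorem for such entire solutions (of the kind established in this section, cf.\ also \cite{duke},\cite{growth}) then forces $v\in\mathbf P_\nu$. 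This contradicts $\inf_{P\in\mathbf P_\nu}\|v-P\|_{L^\infty(B_1)}\ge c_0$ (the convergence being strong enough in $B_1$), and proves the claimed seminorm bound, hence the proposition. I expect the main obstacle to be the bookkeeping in Step~2: one must choose $w_m$ and subtract the right Taylor polynomial so that $g_m\to0$ using only $[f_m]_{C^{\eta-s}}$, and simultaneously keep the growth of $v_m$ at infinity uniformly of order $<\nu+1$ so that Lemma~\ref{limit with growth} applies and the limiting problem $Lv\overset{k}{=}0$ is meaningful; the requisite compactness (uniform interior continuity of the $v_m$) is the other ingredient, but it follows from the a priori interior estimates for operators in $\mathfrak L$.
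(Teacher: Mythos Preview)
Your approach is genuinely different from the paper's, and far more elaborate than necessary. The paper's proof is three lines: set $\widetilde u:=u\chi_{B_2}$ (which coincides with $u$ on $B_{1/2}$), observe that
\[
\big[L\widetilde u\big]_{C^{\eta-s}(\overline B_1)}
\le \big[L(u\chi_{\Rn\setminus B_2})\big]_{C^{\eta-s}(\overline B_1)} + \big[f\big]_{C^{\eta-s}(\overline B_1)}
\le C\,\Big\|\frac{u}{1+|x|^{\eta+s}}\Big\|_{L^\infty(\Rn)} + \big[f\big]_{C^{\eta-s}(\overline B_1)},
\]
the first term being estimated exactly as in~\eqref{3543143131213} by differentiating the kernel under the integral over $\Rn\setminus B_2$; and then apply the \emph{existing} interior Schauder estimate \cite{jde}*{Theorem~1.1} to the bounded, compactly supported $\widetilde u$. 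The point you missed is that the polynomial--growth framework is not needed here at all: one truncates first and then quotes classical interior Schauder, with the weighted norm arising only as the natural bound on the tail term $L(u\chi_{\Rn\setminus B_2})$.

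Your blow--up scheme could in principle be made to work, but as written it has real loose ends. First, the Liouville step: you invoke ``a Liouville--type theorem for such entire solutions of the kind established in this section'', but the only Liouville result proved here is Theorem~\ref{liouville-nd} for the \emph{half--space with zero Dirichlet data}; the whole--space statement $Lv\overset{k}{=}0$ in $\Rn$ with $|v|\lesssim(1+|x|)^{\eta+s}$ $\Rightarrow$ $v\in\mathbf P_\nu$ is not proved in the paper and would need a separate argument or citation. Second, your Step~2 is sketchy where it matters most: the construction of $w_m$ ``absorbing the non--oscillatory part of $f_m$'' so that only $[f_m]_{C^{\eta-s}}$ survives is not specified, and the claimed uniform growth $|v_m(x)|\le C(1+|x|)^{\eta+s}$ requires the standard dyadic comparison of near--optimal polynomials $P_m$ across scales, which you have not carried out. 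None of this is needed once you notice the truncation shortcut.
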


\begin{proof}
Remark that $u=\widetilde u:=u\chi_{B_2}$ in $B_{1/2}$, so that it is sufficient 
to estimate $\widetilde u$. As we have done in \eqref{3543143131213}, we can show that
\begin{align*}
\big[L\widetilde u\big]_{C^{\eta-s}(\overline B_1)}
\leq \big[L(u\chi_{\Rn\setminus B_2})\big]_{C^{\eta-s}(\overline B_1)} + \big[f\big]_{C^{\eta-s}(\overline B_1)} 
\leq C\Big\|\frac{u}{1+|x|^{\eta+s}}\Big\|_{L^\infty(\Rn)} + \big[f\big]_{C^{\eta-s}(\overline B_1)}.
\end{align*}
It suffices now to apply the interior Schauder estimates to $\widetilde u$, see \cite{jde}*{Theorem 1.1}.
\end{proof}

\subsection{The Liouville theorem in a half-space}

In our higher order blow-up and compactness argument we also need the following classification result.

\begin{theorem}\label{liouville-nd}
Let $e\in\Sn$ be fixed. 
Let $u$ satisfy
\begin{align*}
\left\lbrace\begin{aligned}
Lu & \stackrel{k}{=} 0 & & \text{in }\{x\cdot e>0\} \\
u & = 0 & & \text{in }\{x\cdot e\leq 0\} \\
|u(x)| & \leq C_0\big(1+|x|\big)^{k+s+\alpha} & & \text{in }\Rn,\ \alpha<s. 
\end{aligned}\right.
\end{align*}

Then, $u$ is of the form
\begin{align}\label{1D}
u(x)=p(x)(x\cdot e)_+^s
\end{align}
for some polynomial $p\in\mathbf{P}_k$.
\end{theorem}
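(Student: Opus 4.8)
The strategy is a standard (but here slightly delicate, because of the polynomial growth) Liouville-type argument by iterated incremental quotients. I would first reduce to $e=e_n$ so that the half-space is $\{x_n>0\}$ and $(x\cdot e)_+^s=(x_n)_+^s$. The key observation is that the equation $Lu\overset{k}{=}0$ together with the growth $|u(x)|\le C_0(1+|x|)^{k+s+\alpha}$ is \emph{scale-invariant} in a suitable sense: if $u$ satisfies the hypotheses, then for $\tau>0$ the rescaling $u_\tau(x):=\tau^{-k-s-\alpha}u(\tau x)$ satisfies the same equation in $\{x_n>0\}$, vanishes in $\{x_n\le 0\}$, and obeys the \emph{same} growth bound with the \emph{same} constant $C_0$. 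The plan is to use this to run a blow-down/compactness scheme controlling incremental quotients of $u$.

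Concretely, I would argue as follows. Pick a unit direction $\zeta$ \emph{tangent} to the hyperplane, $\zeta\cdot e_n=0$. The tangential increment $\Delta_h^{k+1}u$ (with $h$ parallel to $\zeta$, in the notation \eqref{def:finite difference}) again vanishes on $\{x_n\le 0\}$ and satisfies $L(\Delta_h^{k+1}u)\overset{k}{=}0$ in $\{x_n>0\}$, because finite differences commute with the translation-invariant operator $L$ and annihilate the polynomial ambiguity in Definition~\ref{def:Lgrowth} (polynomials of degree $\le k-1$ are killed, but here we take one extra order $k+1$ to be safe and to beat the growth). Using the boundary regularity estimate Proposition~\ref{prop:growth-bound-reg} (applied on balls of radius $R$ and rescaled), one gets $u\in C^s_{loc}$ up to the boundary with the growth-adapted bound, hence $\|\Delta_h^{k+1}u\|_{C^s(\overline{B_R})}\le C|h|^{k+1}R^{\,s+\alpha-1}\to 0$ as $R\to\infty$ for each fixed $h$ (the exponent being $<0$ since $\alpha<s<1$); a blow-down argument, letting $R\to\infty$ after the rescaling $v_R(x)=R^{-k-s-\alpha}\Delta_h^{k+1}u(Rx)$ and using the compactness/stability Lemma~\ref{limit with growth}, forces $\Delta_h^{k+1}u\equiv 0$ on $\Rn$. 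Since $h$ was an arbitrary tangential vector and $k+1$ an arbitrary (fixed) order, $u$ restricted to each hyperplane $\{x_n=t\}$, $t>0$, is a polynomial of degree $\le k$ in the $n-1$ tangential variables.

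Next I would capture the $x_n$-dependence. Because $(x_n)_+^s$ solves $L[(x_n)_+^s]=0$ in $\{x_n>0\}$ (this is the one-dimensional fact used repeatedly, e.g. in Lemma~\ref{lem:cancellation}), it is natural to write $u(x)=w(x)(x_n)_+^s$ in $\{x_n>0\}$ and show $w$ extends to a global polynomial of degree $\le k$. The boundary Schauder/quotient machinery — here one can invoke the higher-order boundary Harnack estimate Theorem~\ref{thm:u1overu2} with $u_1=u$, $u_2=(x_n)_+^s$ (suitably localized/truncated and combined with Lemma~\ref{lem: u tilde} to pass from $\overset{k}{=}$ to an honest bounded equation on $B_2$) — gives that $w=u/d^s$ is $C^\beta$ up to $\{x_n=0\}$ for every $\beta$, hence $C^\infty(\overline{\{x_n\ge0\}})$ locally, with the growth $|w(x)|\le C(1+|x|)^{k+\alpha}$ inherited from that of $u$ (dividing by $(x_n)_+^s$ costs one "half-derivative", consistent with the exponent bookkeeping). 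Then the same tangential-increment argument applied to $w$, plus an increment argument in the $x_n$ direction (now legitimate since $w$ is smooth up to the boundary and harmonic-type estimates control $\partial_n^{k+1}w$ with a decaying rescaled bound), shows $\Delta_h^{k+1}w\equiv 0$ for \emph{all} directions $h$, i.e. $w$ is a polynomial of degree $\le k$ globally. This yields \eqref{1D} with $p=w\in\mathbf{P}_k$.

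**Main obstacle.** The delicate point is handling the polynomial ambiguity and the growth simultaneously: the equation $Lu\overset{k}{=}0$ only determines $Lu$ up to a polynomial of degree $k-1$, and $u$ itself grows like $|x|^{k+s+\alpha}$, so one must be careful that every differentiation/incremental-quotient step both kills the right amount of the polynomial ambiguity \emph{and} improves the growth exponent enough for the blow-down limit to be trivial. This is precisely why one takes increments of order $k+1$ (not $k$) and why the stability statement has to be the growth-aware Lemma~\ref{limit with growth} rather than a naive $L^\infty$ compactness; in the fractional Laplacian case one could alternatively lift everything to the Caffarelli–Silvestre extension via Lemma~\ref{lem:cs-extension} and run the analogous Liouville argument for the degenerate-elliptic extension in the upper half-space, classifying the trace, but the purely nonlocal argument above avoids that and works for all kernels \eqref{L}-\eqref{L2}. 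A secondary technical nuisance is justifying that $u=w\,(x_n)_+^s$ with $w$ as regular as claimed requires applying Theorem~\ref{thm:u1overu2}/Theorem~\ref{thm:uoverds} on large balls and rescaling, tracking that the constants do not blow up — routine but bookkeeping-heavy.
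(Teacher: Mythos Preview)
Your plan has two genuine gaps.

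\textbf{First, the incremental-quotient step does not go through as written.} From Proposition~\ref{prop:growth-bound-reg} (rescaled) you only get $[u]_{C^s(B_{R/2})}\le CR^{k+\alpha}$; in particular $u$ is merely $C^s$, not $C^{k+1}$, so a single $(k+1)$-st order difference $\Delta_h^{k+1}u$ does \emph{not} pick up a factor $|h|^{k+1}$, and the bound $\|\Delta_h^{k+1}u\|_{C^s(B_R)}\le C|h|^{k+1}R^{s+\alpha-1}$ is unjustified. The paper instead iterates \emph{first-order} tangential increments normalized by $|h|^s$: setting $w_1(x)=\big(u(x+h\tau)-u(x)\big)/|h|^s$ one checks (using the $C^s$ bound at every scale $R$) that $|w_1(x)|\le C(1+|x|)^{k+\alpha}$ globally, so $w_1$ satisfies the same hypotheses with growth exponent lowered by $s$. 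Repeating $j$ times with $js>k+\alpha$ eventually gives $[w_j]_{C^s(B_R)}\le CR^{k+\alpha-js}\to 0$, whence $w_j\equiv 0$. Unwinding, $u$ is a polynomial in $x'$ with $x_n$-dependent coefficients $W_\alpha(x_n)$.

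\textbf{Second, your normal-direction step is circular.} You propose invoking Theorem~\ref{thm:u1overu2} (or Theorem~\ref{thm:uoverds}) to show $w=u/(x_n)_+^s$ is smooth up to the boundary, but in this paper those theorems are proved \emph{via} Propositions~\ref{uoverds} and~\ref{u1overu2}, whose blow-up arguments rely precisely on the Liouville Theorem~\ref{liouville-nd} you are trying to establish. The paper avoids this by instead observing that each one-dimensional coefficient $W_\alpha$ satisfies $LW_\alpha\overset{k}{=}0$ in $\{x_n>0\}$, reducing (via \cite{duke}*{Lemma~2.1}) to the fractional Laplacian in one variable, and then applying the independent one-dimensional Liouville result Proposition~\ref{lem:Liouville-1D}, which is proved separately through the Caffarelli--Silvestre extension (Lemma~\ref{lem:cs-extension}).
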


First, we need the following one-dimensional version of the result.

\begin{proposition}\label{lem:Liouville-1D}
Let $u:\R\to\R$ satisfy
\begin{align*}
\left\lbrace\begin{aligned}
\Ds u & \stackrel{k}{=} 0 & & \text{in }\{x>0\} \\
u & = 0 & & \text{in }\{x\leq 0\} \\
|u(x)| & \leq C_0\big(1+|x|\big)^{k+s+\alpha} & & \text{in }\R,\ \alpha<s. 
\end{aligned}\right.
\end{align*}
Then there exists a polynomial $p:\R\to\R$ of degree at most $k$ such that
\begin{align*}
u(x)=p(x)x_+^s.
\end{align*}
\end{proposition}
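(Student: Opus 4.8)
The plan is to reduce to the case where $u$ itself (not just a high-order difference) solves a clean equation, and then to use the known $1$-D Liouville theorem for $(-\Delta)^s$ in a half-line together with an induction on $k$. First I would unwind Definition~\ref{def:Lgrowth}: since $\Ds u \stackrel{k}{=} 0$ in $\{x>0\}$, there is a family $(p_R)_{R>0}\subseteq\mathbf{P}_{k-1}$ and functions $f_R\to 0$ uniformly on bounded subsets of $\{x>0\}$ with $\Ds(u\chi_{B_R}) = f_R+p_R$. Applying $\partial_x^k$ kills the polynomials $p_R$, and, exactly as in the proof of Lemma~\ref{lem: u tilde} (see~\eqref{3543143131213}), the tails are controlled: $\partial_x^k\Ds(u\chi_{B_R\setminus B_2})$ is bounded on compact subsets of $\{x>0\}$ by $C\|u/(1+|x|^{k+s+\alpha})\|_{L^\infty(\R)}$ uniformly in $R$. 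Passing to the limit, $v:=\partial_x^k u$ satisfies $\Ds v = g$ in $\{x>0\}$ in the distributional sense, for some $g\in L^\infty_{loc}(\{x>0\})$ obtained as the (locally uniform) limit of the $\partial_x^k f_R$ plus the tail contributions; in fact, since $f_R\to 0$, one gets that $g$ is, modulo the explicitly computable tail term $\int_{|y|>R}u(y)\partial_x^k K(x-y)\,dy$ which converges, a fixed locally bounded function. The key point is that $v$ has growth $|v(x)|\le C(1+|x|)^{s+\alpha}$ with $\alpha<s$ (by interpolation, since $u$ grows like $|x|^{k+s+\alpha}$ and $v$ is its $k$-th derivative — one may need to justify this growth bound for $v$ using interior estimates à la Proposition~\ref{prop:growth-int-reg} applied on dyadic annuli inside $\{x>0\}$).

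Next I would handle the base case $k=0$: here $\Ds u = 0$ in $\{x>0\}$, $u=0$ in $\{x\le 0\}$, and $|u(x)|\le C_0(1+|x|)^{s+\alpha}$ with $\alpha<s$. This is the classical one-dimensional Liouville theorem for the fractional Laplacian in a half-line (the growth is below $2s$ but the natural barrier is $x_+^s$, which is $s$-harmonic in $\{x>0\}$). One shows $u=c\,x_+^s$ for a constant $c$: subtract the multiple of $x_+^s$ matching $u$ at one point, use the sub/supersolution comparison together with the sublinear growth to squeeze the difference to zero. (Alternatively one invokes the Caffarelli–Silvestre extension of Lemma~\ref{lem:cs-extension} with $k=0$ and a Liouville theorem in the extended half-space for the degenerate operator.) For the inductive step, assume the statement holds for $k-1$. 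Given $u$ as in the proposition with parameter $k$, form $v=u'$. One checks $v$ satisfies $\Ds v \stackrel{k-1}{=} \text{(something)}$ in $\{x>0\}$; more precisely, differentiating the defining relations $\Ds(u\chi_{B_R})=f_R+p_R$ once gives $\Ds((u\chi_{B_R})')= f_R' + p_R' + (\text{boundary terms from }\chi_{B_R}')$, and the boundary terms, being supported near $|x|=R$, contribute only tail quantities; after the same limiting argument $v$ solves $\Ds v \stackrel{k-1}{=} h$ in $\{x>0\}$ where $h$ is actually affine... — here I expect the cleanest route is instead: since $v$ has growth $<s+\alpha'$ with, essentially, one fewer power, and $\Ds v$ is (after the tail analysis) a polynomial of low degree plus a decaying piece, one deduces $\Ds v\stackrel{k-1}{=}0$, apply the inductive hypothesis to get $v(x)=q(x)x_+^s$ with $q\in\mathbf{P}_{k-1}$, and then integrate: $u(x)=\int_0^x q(t)t_+^s\,dt + (\text{const})$. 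The antiderivative of $q(t)t^s$ on $\{t>0\}$ is of the form $\widetilde q(x)x_+^{s+1}$ with $\widetilde q\in\mathbf{P}_{k-1}$; but we want the form $p(x)x_+^s$, so one must check that $\widetilde q(x)x^{s+1}=p(x)x^s$ with $p\in\mathbf{P}_k$, which is immediate ($p(x)=x\widetilde q(x)$), and that the integration constant is forced to be $0$ because $u=0$ on $\{x\le 0\}$ forces continuity/vanishing at $0$ (indeed $u(0^+)$ must equal $0$ as $u\equiv 0$ on the left and $u$ is continuous).

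The main obstacle I anticipate is not the induction skeleton but two analytic points that must be executed carefully. The first is making rigorous the growth bound for $v=u'$: one only knows $u$ up to polynomial-times-$x^{s+\alpha}$ growth, and to run the induction one needs $v$ to land in a growth class with strictly one fewer degree and still with exponent $<s+\alpha<2s$; this requires interior regularity estimates (Proposition~\ref{prop:growth-int-reg}, or its scaled version) applied on balls $B_{r(x_0)}(x_0)$ with $r(x_0)\sim x_0$ inside $\{x>0\}$, converting the bound on $u$ over $B_{2r}(x_0)$ into a bound on $u'$ over $B_r(x_0)$ with the right power of $r$. The second is the bookkeeping of the polynomial/tail decomposition under differentiation: one must verify that differentiating the relation $\Ds(u\chi_{B_R})=f_R+p_R$ preserves the structure (a polynomial of degree $k-2$ plus a uniformly-convergent-to-$0$ error) so that $v$ genuinely satisfies $\Ds v\stackrel{k-1}{=}0$ in the sense of Definition~\ref{def:Lgrowth}. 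Both are routine-but-delicate, and I would spend most of the write-up on them; everything else is comparison with the barrier $x_+^s$ and elementary calculus.
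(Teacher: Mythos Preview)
Your induction breaks at a basic structural point: differentiating $u$ in the (only, normal) direction does not preserve the hypotheses of the proposition. If $u$ is of the target form $p(x)x_+^s$ with $p(0)\neq 0$, then $u'(x)=p'(x)x_+^s+s\,p(x)\,x_+^{s-1}$, which is \emph{not} of the form (polynomial)$\cdot x_+^s$ and is unbounded as $x\downarrow 0$. In particular $v=u'$ fails the growth hypothesis $|v(x)|\le C(1+|x|)^{(k-1)+s+\alpha}$ near the origin, so you cannot apply the proposition to $v$ with parameter $k-1$. Your integration step confirms the mismatch: the antiderivative of $q(t)t_+^s$ produces only $p(x)x_+^s$ with $p(0)=0$, so even if the inductive conclusion $v=q\,x_+^s$ held, you would miss the leading mode $c\,x_+^s$. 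The tangential-difference trick used in the proof of Theorem~\ref{liouville-nd} works precisely because tangential directions preserve the half-space and avoid this normal singularity; in one dimension there is no tangential direction available.

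The paper's proof avoids this entirely by using the extension of Lemma~\ref{lem:cs-extension}: lift $u$ to $U$ solving $\div(y^{1-2s}\nabla U)=0$ in $\R\times(0,\infty)$ with polynomial growth, expand $U$ in polar coordinates as $\sum_j a_j\Theta_j(\theta)r^{j+s}$ with $(\Theta_j)$ a complete orthogonal system in $L^2((0,\pi),(\sin\theta)^{1-2s}d\theta)$, and use the growth bounds on $U$ and then on $u$ to kill all coefficients $a_j$ with $j>k$. No differentiation across the boundary is needed.
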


\begin{proof}
Let $U:\R\times[0,\infty)\to\R$ be a harmonic extension of $u$ in the sense of Lemma \ref{lem:cs-extension}
satisfying
\begin{align*}
\left\lbrace\begin{aligned}
\div\big(y^{1-2s}\nabla U(x,y)\big) & = 0 & & \text{in }\R\times(0,\infty) \\
U(x,y) & = u(x) & & \text{on }\R\times\{0\} \\
|U(x,y)| & \leq CC_0\big(1+|x|^{2k+\alpha+s}+|y|^{k+\alpha+s}\big)  & & \text{in }\R\times(0,\infty).
\end{aligned}\right.
\end{align*}
We now exploit \cite{duke}*{Lemma 6.1 and (the proof of) Lemma 6.2} to write $U$ as
\begin{align*}
U(x,y)=U(r\cos\theta,r\sin\theta)=\sum_{j=0}^\infty a_j\Theta_j(\theta)\, r^{j+s},
\qquad a_j\in\R,\ x\in\R,\ y\in[0,\infty),
\end{align*}
where ${(\Theta_j)}_{j\in\N}$ is a complete orthogonal system in 
$L^2\big((0,\pi),(\sin\theta)^{1-2s}d\theta\big)$ and therefore
\begin{align*}
\int_{\partial B_R\cap\{y>0\}}U(x,y)^2\,  y^{1-2s}\;d\sigma
=\sum_{j=0}^\infty a_j^2\, R^{2+2j}.
\end{align*}
The known polynomial bound on $U$ yields
\begin{align*}
\int_{\partial B_R\cap\{y>0\}}U(x,y)^2\,  y^{1-2s}\;d\sigma
\leq CR^{4k+2\alpha+2}
\end{align*}
from which we deduce that $a_j=0$ for any $j\geq 2k+1$.
This entails
\begin{align*}
U(x,y)=\sum_{j=0}^{2k} a_j\Theta_j(\theta)\, r^{j+s},
\qquad x\in\R, y\in[0,\infty)
\end{align*}
and, in particular,
\begin{align*}
u(x)=\left\lbrace\begin{aligned}
& \sum_{j=0}^{2k} a_j\Theta_j(0)\, x^{j+s}=x^s\,\sum_{j=0}^{2k}a_j\Theta_j(0)x^j
& & \text{for } x>0, \\
& \sum_{j=0}^{2k} a_j\Theta_j(\pi)\, |x|^{j+s}=0
& & \text{for } x<0.
\end{aligned}\right.
\end{align*}
Similarly as above, the polynomial bound on $u$ gives that $a_j=0$ also for $j\in\{k+1,\ldots,2k\}$,
and this induces the claimed representation on $u$,
concluding the proof.
\end{proof}

We can now give the proof of the Liouville-type theorem.

\begin{proof}[Proof of Theorem \ref{liouville-nd}]
Define
\begin{align*}
v_R(x)=R^{-k-s-\alpha}u(Rx),
\qquad x\in\Rn,\ R>1.
\end{align*}
Then
\begin{align*}
\left\|\frac{v_R(x)}{\big(1+|x|\big)^{k+s+\alpha}}\right\|_{L^\infty(\Rn)}
\leq C_0 % \left\|\frac{u(x)}{\big(1+|x|\big)^{k+s+\alpha}}\right\|_{L^\infty(\Rn)}
\end{align*}
and 
\begin{align*}
\left\lbrace\begin{aligned}
Lv_R & \stackrel{k}{=} 0 & & \text{in }\{x\cdot e>0\} \\
v_R & = 0 & & \text{in }\{x\cdot e\leq 0\}.
\end{aligned}\right.
\end{align*}
Applying Proposition \ref{prop:growth-bound-reg} to $v_R$ yields that
$\|v_R\|_{C^s(B_{1/2})}\leq c_0$, with $c_0>0$ independent of $R$, which in turn implies
that $[u]_{C^s(B_{R/2})}=R^{k+\alpha}[v_R]_{C^s(B_{1/2})}\leq c_0R^{k+\alpha}$.
From now on we suppose $e=e_n$.

Pick now any $\tau\in\Sn$ such that $\tau_n=0$ and $h\in(0,R/2)$.
Consider 
\begin{align*}
w_1(x)=\frac{u(x+h\tau)-u(x)}{h^s},
\qquad x\in\Rn.
\end{align*}
The above analysis gives
\begin{align*}
\|w_1\|_{L^\infty(B_R)}\leq c_0R^{k+\alpha},
\qquad\text{for any }R>1
\end{align*} 
and, since $\tau$ is orthogonal to $e$,
\begin{align*}
\left\lbrace\begin{aligned}
Lw_1 & \stackrel{k}{=} 0 & & \text{in }\{x_n>0\}, \\
w_1 & = 0 & & \text{in }\{x_n\leq 0\}.
\end{aligned}\right.
\end{align*}
Repeating the same argument as in the first part of the proof,
we deduce that $[w_1]_{C^s(B_R)}\leq c_1R^{k+\alpha-s}$.

Iterating the above scheme a finite number of times, 
we will eventually end up with some $w_j$ satisfying
\begin{align*}
\left\lbrace\begin{aligned}
Lw_j & \stackrel{k}{=} 0 & & \text{in }\{x_n>0\}, \\
w_j & = 0 & & \text{in }\{x_n\leq 0\}.
\end{aligned}\right.
\end{align*}
and $[w_j]_{C^s(B_R)}\leq c_jR^{k+\alpha-js}$,
with $k+\alpha< js$.
Letting $R\to\infty$, this entails that $w_j\equiv 0$ in $\R^n$, regardless the choice of $\tau$, as long as $\tau_n=0$. 
This means that
\begin{align*}
w_{j-1}(x)=\widetilde W_1(x_n)
\end{align*}
for some $\widetilde W_1:\R\to\R$. 
In turn, this gives that 
\begin{align*}
w_{j-2}(x+h\tau)-w_{j-2}(x)=\widetilde W_1(x_n)\,h^s,
\qquad x\in\R^n,\ h>0,\ \tau_n=0,
\end{align*}
which implies\footnote{In general, if $f\in C(\R)$ satisfies $f(x+h)-f(x)=c_h$ for every $x\in\R$ and $h>0$,
then $f$ is an affine function.}
\begin{align*}
w_{j-2}(x)=\widetilde W_2(x_n)\cdot x' + \widetilde W_3(x_n)
\end{align*}
for some $\widetilde W_2:\R\to\R^{n-1}$, $\widetilde W_3:\R\to\R$.
Iterating the process, what we deduce on $u$ is that
\begin{align*}
u(x)=\sum_{\alpha\in\N^{n-1},\, |\alpha|\leq j-1} (x')^\alpha\, W_\alpha(x_n).
\end{align*}

Now, notice that for every $\alpha\in\N^{n-1}$ with $|\alpha|\leq j-1$ we have
\begin{align*}
\frac{1}{\alpha!}\,\partial_{x'}^{\alpha}u(x) = W_\alpha(x_n),
\end{align*}
and a similar identity can be written in terms of incremental quotients of $u$.
Then, since $Lu\stackrel{k}{=}0$ in $\{x_n>0\}$, it is not difficult to see that $L W_\alpha\stackrel{k}{=}0$ in $\{x_n>0\}$.
Since $W_\alpha$ is a one-dimensional function, \cite{duke}*{Lemma 2.1} yields that
$\Ds W_\alpha\stackrel{k}{=}0$ in $(0,\infty)$.

Finally, by Proposition \ref{lem:Liouville-1D} each of the $W_\alpha$ must be of the form $W_\alpha(x_n)=p_\alpha(x_n)(x_n)_+^s$ for some polynomial $p_\alpha:\R\to\R$, and therefore $u$ must be of the form $u(x)=p(x)(x_n)_+^s$, for some polynomial $p$.
By the growth condition on $u$, $p$ must be of degree at most $k$, and the theorem is proved.
\end{proof}

\section{Higher order boundary Schauder and boundary Harnack estimates}
%\section{Proof of Theorems \texorpdfstring{\ref{thm:uoverds}}{uoverds} and \texorpdfstring{\ref{thm:u1overu2}}{u1overu2}}
\label{sec:uover}

The goal of this section is to prove Theorems \ref{thm:uoverds} and \ref{thm:u1overu2}.
For this, we develop a higher order blow-up and compactness argument that allows us for the first time to show sharp boundary regularity results for nonlocal equations in $C^\beta$ domains.

The key step towards the proof of Theorems \ref{thm:uoverds} is the following.

\begin{proposition}\label{uoverds}
Let $\beta>s$ be such that $\beta\not\in\N$ and $\beta\pm s\not\in\N$.
Let $\Omega\subseteq\Rn$ be a bounded domain of class
$C^{\beta+1}$, $z\in\partial\Omega$, and $u\in L^\infty(\Rn)$ any solution of 
\begin{align*}
\left\lbrace\begin{aligned}
Lu &= f & & \hbox{in }\Omega\cap B_1(z) \\
u &= 0 & & \hbox{in }B_1(z)\setminus\Omega
\end{aligned}\right.
\end{align*}
with $[f]_{C^{\beta-s}(\overline\Omega)}\leq 1$ and $\|u\|_{L^\infty(\R^n)}\leq 1$.
Suppose that $\partial\Omega\cap B_1(z)$ is the graph of a function with $C^{\beta+1}$ norm less than $1$.

Then, for any $z\in\partial\Omega\cap B_{1/2}$, there exists a $Q(\cdot,z)\in\mathbf{P}_{\intpart\beta}$ such that
\begin{align*}
\big|u(x)-Q(x,z)d^s(x)\big| \leq C{|x-z|}^{\beta+s},
\qquad\text{for any }x\in B_1(z),
\end{align*}
where $C>0$ depends only on $n,s,\beta,$ and $\|K\|_{C^{2\beta+3}(\Sn)}$.

Moreover, if $x_0\in\Omega\cap B_1(z)$, $d(x_0)=2r=|z-x_0|>0$, then
\begin{align}\label{calpha}
\big[u-Q(\cdot,z)d^s\big]_{C^{\beta+s}(\overline B_r(x_0))}\leq C.
\end{align}
\end{proposition}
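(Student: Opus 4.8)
The plan is to argue by contradiction, through a higher-order version of the blow-up and compactness scheme of~\cite{duke}, with Corollary~\ref{cor:Lds} and the polynomial-growth machinery of Section~\ref{sec:polynom} as the new inputs. Since $\partial\Omega\cap B_1(z)$ is a small-norm $C^{\beta+1}$ graph, after flattening and using that Definition~\ref{defi-d} is stable under dilations (with constants that only improve as the scale shrinks), it suffices to prove the dyadic statement: there are $\rho_0\in(0,\tfrac12)$ and $C$ so that for each $z\in\partial\Omega\cap B_{1/2}$ and each $j\in\N$ one finds $Q_j\in\mathbf{P}_{\intpart\beta}$ with $\|u-Q_j d^s\|_{L^\infty(B_{\rho_0^j}(z))}\le C\rho_0^{j(\beta+s)}$ and $\|Q_{j+1}-Q_j\|\le C\rho_0^{j\beta}$ (coefficient norm rescaled to the scale $\rho_0^j$); then $Q(\cdot,z):=\lim_j Q_j$ gives the pointwise bound. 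Estimate~\eqref{calpha} then follows a posteriori: on $B_r(x_0)$ the function $w:=u-Q(\cdot,z)d^s$ solves $Lw=f-L(Q(\cdot,z)d^s)$, and since $\Omega\in C^{\beta+1}$, Corollary~\ref{cor:Lds} gives $L(Q(\cdot,z)d^s)\in C^{\beta-s}(\overline\Omega)$, hence $[Lw]_{C^{\beta-s}(B_r(x_0))}\le C$; as also $\|w\|_{L^\infty(B_r(x_0))}\le C r^{\beta+s}$ by the pointwise bound, the scale-invariant interior Schauder estimate of Proposition~\ref{prop:growth-int-reg} yields $[w]_{C^{\beta+s}(\overline B_{r/2}(x_0))}\le C$, and a covering argument promotes $B_{r/2}$ to $B_r$. (This is exactly why $\Omega$ is assumed one order smoother than $\beta$.)

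To prove the dyadic statement I would negate it: the monotone excess $\theta(\rho):=\sup\{(\rho')^{-\beta-s}\inf_{Q\in\mathbf{P}_{\intpart\beta}}\|u-Q d^s\|_{L^\infty(B_{\rho'}(z))}\}$, the supremum being over all admissible $(\Omega,u,f,z)$ and all $\rho'\ge\rho$ — finite for each $\rho>0$ since $\|u\|_{L^\infty}\le1$ — must then diverge as $\rho\downarrow0$. Picking admissible configurations $(\Omega_m,u_m,f_m,z_m)$ and scales $r_m\to0$ that nearly realise $\theta$, translating $z_m$ to $0$ with inner normal $e$, letting $Q_m$ be a near-minimiser at scale $r_m$, $M_m:=\inf_Q\|u_m-Q d_m^s\|_{L^\infty(B_{r_m})}$, and setting $v_m(x):=M_m^{-1}(u_m-Q_m d_m^s)(r_m x)$, one has $\inf_{Q}\|v_m-Q\,\tilde d_m^s\|_{L^\infty(B_1)}\ge1$ with $\tilde d_m(x):=r_m^{-1}d_m(r_mx)\to(x\cdot e)_+$ locally uniformly. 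The first real step is the polynomial growth bound $\|v_m\|_{L^\infty(B_R)}\le C R^{\beta+s}$ for $1\le R\le 1/r_m$: this is a telescoping of near-minimisers over the dyadic scales between $r_m$ and $R r_m$, using the non-degeneracy $\|P d^s\|_{L^\infty(B_\rho(z))}\ge c\,\rho^s\sum_{|\alpha|\le\intpart\beta}\rho^{|\alpha|}|p^{(\alpha)}|$ (valid for small $\rho$, as $\Omega$ is $C^1$-close to a half-space and $d\approx\dist$) together with the definition of $\theta$.

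Next one passes to the limit. By the scaling of $L$, $v_m$ solves $L v_m\stackrel{k}{=}\tilde f_m$ in $\{\tilde d_m>0\}\cap B_{1/r_m}$ and $v_m=0$ on $\{\tilde d_m\le0\}$, with $k:=\intpart\beta+1$ and $\tilde f_m$ collecting the rescalings of $f_m$ and of $L(Q_m d_m^s)$. Two facts close the argument: Taylor polynomials of degree $\le k-1$ are invisible to $\stackrel{k}{=}$ (Definition~\ref{def:Lgrowth}), so only the Hölder remainders at the origin survive, and the sharp bounds of Corollary~\ref{cor:Lds} control the remainder of $L(Q_m d_m^s)$; together with the normalising factor (which tends to zero as $\theta\to\infty$), and if one tracks them a renormalisation of the $Q_m$ whose limit $\bar Q$ is reabsorbed via $L(v-\bar Q(x\cdot e)_+^s)\stackrel{k}{=}0$, one obtains $\tilde f_m\to0$ locally uniformly. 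The boundary estimate of Proposition~\ref{prop:growth-bound-reg} (possibly in its $d^{s-\eps}f$ form) gives uniform $C^s$ bounds, so $v_m\to v$ in $L^\infty_{loc}$ along a subsequence; Lemma~\ref{limit with growth} (with the operator held fixed) then yields $Lv\stackrel{k}{=}0$ in $\{x\cdot e>0\}$, $v=0$ in $\{x\cdot e\le0\}$, $|v(x)|\le C(1+|x|)^{\beta+s}$, while $\|v\|_{L^\infty(\overline B_1)}=1$ and $\inf_{Q\in\mathbf{P}_{\intpart\beta}}\|v-Q(x\cdot e)_+^s\|_{L^\infty(B_1)}\ge1$ survive. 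By the Liouville Theorem~\ref{liouville-nd}, $v(x)=p(x)(x\cdot e)_+^s$ with $\deg p\le k$, and the growth bound forces $\deg p\le\intpart\beta$; hence the last infimum vanishes, a contradiction. Estimate~\eqref{calpha} follows either as above or by running the same blow-up centred at interior points $x_0$ with $d(x_0)=2r=|z-x_0|$, rescaled by $r$, and passing the $C^{\beta+s}$ seminorm to the limit.

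The main obstacle is the compactness step. One must, first, give rigorous meaning to the limiting equation for functions of polynomial growth $(1+|x|)^{\beta+s}$ — the purpose of Definition~\ref{def:Lgrowth} and Lemma~\ref{limit with growth} — and, second and more delicately, show that the forcing produced by the blow-up, which a priori involves $L(Q_m d^s)$ with $Q_m$ of possibly large coefficients, does not obstruct the limit: this is precisely where the sharp regularity of $L(Q_m d^s)$ from Corollary~\ref{cor:Lds} (rather than merely $L(Q_m d^s)\in C^s$) and the polynomial ambiguity of $\stackrel{k}{=}$ are essential, and controlling the coefficient growth of the $Q_m$ along the telescoping is the technical heart of the argument.
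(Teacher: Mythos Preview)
Your proposal is correct and follows the same blow-up and compactness scheme as the paper, with the same key inputs (Corollary~\ref{cor:Lds} for the regularity of $L(Qd^s)$, Proposition~\ref{prop:growth-bound-reg} for compactness, Lemma~\ref{limit with growth} to pass the equation to the limit, Theorem~\ref{liouville-nd} for the classification, and Proposition~\ref{prop:growth-int-reg} for~\eqref{calpha}). Two minor differences worth noting: the paper takes the near-minimisers $Q_{j,r}$ to be the $L^2(B_r)$-projections of $u_j$ onto $d_j^s\mathbf{P}_{\intpart\beta}$, so the final contradiction comes from the orthogonality identity $\int_{B_1}p(x\cdot e)^2(x\cdot e)_+^{2s}\,dx=0$ rather than from passing your $L^\infty$-minimality to the limit; and the forcing term coming from $L(Q_{j_m,r_m}d_{j_m}^s)$ vanishes not by ``reabsorbing'' a limiting $\bar Q$ but because the very same telescoping you invoke for the growth bound also yields $|q_{j_m,r_m}^{(\alpha)}|/\theta(r_m)\to 0$, so that the $C^{\beta-s}$ remainder provided by Corollary~\ref{cor:Lds}, once divided by $\theta(r_m)$, tends to zero locally uniformly --- this is exactly the point you correctly single out as the technical heart.
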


\begin{remark}
The case $\beta\in(0,s)$ in Theorem \ref{thm:uoverds} is covered by
\cite{annali}*{Theorem 1.2}.
\end{remark}

\begin{proof}[Proof of Proposition \ref{uoverds}]
We assume without loss of generality that $0\in\partial\Omega$ and $z=0$.

We argue by contradiction: suppose that, for any $j\in\N$, there exists 
$\Omega_j\subseteq\Rn,u_j\in L^\infty(\Rn),f_j\in C^{\beta-s}(\overline\Omega_j),r_j>0$, and $L_j\in\mathfrak L$
such that
\begin{align*}
\left\lbrace\begin{aligned}
L_ju_j &= f_j & & \hbox{in }\Omega_j\cap B_1,  \\
u_j &= 0 & & \hbox{in }B_1\setminus\Omega_j,
\end{aligned}\right.
\end{align*}
with $\|f_j\|_{C^{\beta-s}(\overline\Omega)}+\|u_j\|_{L^\infty(\R^n)}+\|K_j\|_{C^{\beta+2}(\Sn)}\leq C_0$ and $0\in\partial\Omega_j\in C^{\beta+1}$;
moreover,
\begin{align*}
\sup_{j\in\N}\sup_{r>0}r^{-\beta-s}\|u_j-Qd_j^s\|_{L^\infty(B_r)}=\infty,
\qquad\text{for any }Q\in\textbf{P}_{\intpart{\beta}}.
\end{align*}
Let us consider $Q_{j,r}\in\textbf{P}_{\intpart{\beta}}$ as the polynomial obtained
upon taking the $L^2(B_r)$-projection
of $u_j$ over $d_j^s\textbf{P}_{\intpart{\beta}}$: in particular,
\begin{align*}
\| u_j - Q_{j,r} d_j^s \|_{L^2(B_r)} &\leq \| u_j - Qd_j^s \|_{L^2(B_r)} 
& \quad\text{for any }Q\in\textbf{P}_{\intpart{\beta}}, \\
\int_{B_r} \big( u_j - Q_{j,r}d_j^s \big) \, Qd_j^s &= 0
& \quad\text{for any }Q\in\textbf{P}_{\intpart{\beta}}.
\end{align*}
Define the monotone quantity
\begin{align*}
\theta(r):=\sup_{j\in\N}\sup_{\rho\geq r}\rho^{-\beta-s}\|u_j-Q_{j,\rho}d_j^s\|_{L^\infty(B_\rho)}.
\end{align*}
We have that $\theta(r)\uparrow\infty$ as $r\downarrow0$ ---the proof of which we defer to Lemma \ref{lem:theta explodes}---
and therefore there are sequences 
$(r_m)_{m\in\N}$ and $(j_m)_{m\in\N}$ such that
\begin{align}\label{4654646463465}
\frac{\|u_{j_m}-Q_{j_m,r_m}d_{j_m}^s\|_{L^\infty(B_{r_m})}}{r_m^{\beta+s}\,\theta(r_m)} \geq \frac12.
\end{align} 
Define now
\begin{align*}
v_m(x):=\frac{u_{j_m}(r_mx)-Q_{j_m,r_m}(r_mx)d_{j_m}^s(r_mx)}{r_m^{\beta+s}\,\theta(r_m)} 
\qquad x\in\Rn,\ m\in\N.
\end{align*} 
and notice that $\|v_m\|_{L^\infty(B_1)}\geq 1/2$ and
\begin{align}\label{1654864}
\int_{B_1} v_m(x)\,Q(r_mx)\,d_{j_m}^s(r_mx)\;dx=0,
\qquad m\in\N,\ Q\in\mathbf{P}_{\intpart\beta}.
\end{align}

Write now
\begin{align*}
Q_{j,r}(x)=\sum_{|\alpha|\leq\intpart{\beta}}q_{j,r}^{(\alpha)}x^\alpha,
\qquad\alpha\in\N^n,\ q_{j,r}^{(\alpha)}\in\R.
\end{align*}
Using a rescaled version of Lemma \ref{lem:poly} and that $d_j^s\geq cr^s$ in $B_r\cap\{d_j>r/2\}$, we estimate for any $\alpha$ such that $|\alpha|\leq\intpart{\beta}$
\begin{align*}
& r^{|\alpha|+s}\big|q_{j,r}^{(\alpha)}-q_{j,2r}^{(\alpha)}\big|\ \leq \\
& \leq\ c_\beta \big\|Q_{j,r}d_j^s-Q_{j,2r}d_j^s\big\|_{L^\infty(B_r\cap\{d_j>r/2\})} 
\leq c_\beta \big\|u_j-Q_{j,r}d_j^s\big\|_{L^\infty(B_r)}
+ c_\beta \big\|u_j-Q_{j,2r}d_j^s\big\|_{L^\infty(B_{2r})}  \\
& \leq c_\beta\,\theta(r)r^{\beta+s}+c_\beta\,\theta(2r)(2r)^{\beta+s}
\leq 2c_\beta\,\theta(r)(2r)^{\beta+s}
\end{align*}
so that it holds
\begin{align*}
\big|q_{j,r}^{(\alpha)}-q_{j,2r}^{(\alpha)}\big|
\leq c_\beta' \, \theta(r) \, r^{\beta-|\alpha|},
\qquad\text{for any }|\alpha|\leq\intpart{\beta},\ r>0,\ j\in\N.
\end{align*}
Iterating the inequality above we get, for any $k\in\N$,
\begin{multline*}
\big|q_{j,r}^{(\alpha)}-q_{j,2^kr}^{(\alpha)}\big| \leq \sum_{i=0}^{k-1}\big|q_{j,2^{i}r}^{(\alpha)}-q_{j,2^{i+1}r}^{(\alpha)}\big|
  \leq c_\beta'\sum_{i=0}^{k-1} \theta(2^ir)(2^ir)^{\beta-|\alpha|} \ \leq \\
\leq\ c_\beta' \, \theta(r) \, r^{\beta-|\alpha|}\sum_{i=0}^{k-1} \frac{\theta(2^i r)}{\theta(r)}2^{i(\beta-|\alpha|)}
  \leq c_\beta'' \, \theta(r) \, (2^kr)^{\beta-|\alpha|}.
\end{multline*}
It follows from this that, for any $R>1$,
\begin{align*}
\big|q_{j,r}^{(\alpha)}-q_{j,Rr}^{(\alpha)}\big|\leq c_\beta\theta(r)(Rr)^{\beta-|\alpha|}
\end{align*}
and thus 
\begin{align*}
\big\|Q_{j,Rr}d_{j}^s-Q_{j,r}d_{j}^s\big\|_{L^\infty(B_{Rr})}\leq c_\beta\theta(r)(Rr)^{\beta+s}.
\end{align*}
Hence,
\begin{align*}
& \|v_m\|_{L^\infty(B_R)} 
=\frac1{r_m^{\beta+s}\,\theta(r_m)}\big\|u_{j_m}-Q_{j_m,r_m}d_{j_m}^s\big\|_{L^\infty(B_{Rr_m})} \\
& \leq\ \frac1{r_m^{\beta+s}\,\theta(r_m)}\left(\big\|u_{j_m}-Q_{j_m,Rr_m}d_{j_m}^s\big\|_{L^\infty(B_{Rr_m})} 
		+\big\|Q_{j_m,Rr_m}d_{j_m}^s-Q_{j_m,r_m}d_{j_m}^s\big\|_{L^\infty(B_{Rr_m})}\right) \\
& \leq\ \frac1{r_m^{\beta+s}\,\theta(r_m)}\big(\theta(Rr_m)(Rr_m)^{\beta+s}+c_\beta\theta(r_m)(Rr_m)^{\beta+s}\big)
\leq (1+c_\beta) R^{\beta+s}
\end{align*}
where we recall that, by definition, $\theta$ is monotone decreasing.

Moreover, for each $r>0$ we have
\[\frac{|q_{j,r}^{(\alpha)}-q_{j,2^kr}^{(\alpha)}|}{\theta(r)} \leq \sum_{i=0}^k\frac{\theta(2^{k-i}r)}{\theta(r)}(2^{k-i}r)^{\beta-|\alpha|},\]
and choosing $k\in\mathbb N$ such that $2^kr\in [1,2)$, we deduce
\[\frac{|q_{j,r}^{(\alpha)}-q_{j,2^kr}^{(\alpha)}|}{\theta(r)} \leq C\sum_{i=0}^k\frac{\theta(2^{-i})}{\theta(r)}(2^{-i})^{\beta-|\alpha|}\longrightarrow 0\qquad\textrm{as}\ r\downarrow0.\]
In particular, 
\[\frac{|q_{j,r}^{(\alpha)}|}{\theta(r)}\longrightarrow 0\qquad\textrm{as}\ r\downarrow0.\]

Let us now consider the identity 
\begin{align*}
L_{j_m}v_m(x) 
& =\frac{r_m^{s-\beta}}{\theta(r_m)}\big[L_{j_m}u_{j_m}(r_mx)-L_{j_m}(Q_{j_m,r_m}d^s_{j_m})(r_mx)\big] \\
& =\frac{r_m^{s-\beta}}{\theta(r_m)}\big[f_{j_m}(r_mx)-L_{j_m}(Q_{j_m,r_m}d^s_{j_m})(r_mx)\big]
\qquad\text{for }x\in r_{j_m}^{-1}\Omega_{j_m}=\{y:r_{j_m}y\in\Omega_{j_m}\}.
\end{align*}
As $r_m\downarrow 0$, up to extracting a further subsequence, $r_m^{-1}\Omega_{j_m}$ is converging to 
a half-space $\Pi=\{x\in\Rn:x\cdot e>0, \text{ for some }e\in\Sn\}$. 
Moreover, as we have both $f_{j_m}, L_{j_m}(Q_{j_m,r_m}d_{j_m}^s)\in C^{\beta-s}(\overline\Omega_{j_m})$
---by Theorem \ref{thm:Lds}---,
there exists a polynomial $P_m\in\mathbf{P}_{\intpart{\beta-s}}$
\begin{align*}
r_m^{s-\beta}\big|f_{j_m}(r_mx)- L_{j_m}(Q_{j_m,r_m}d_{j_m}^s)(r_mx)-P_m(r_mx)\big|
\leq  C_0 |x|^{\beta-s}\big([f_{j_m}]_{C^{\beta-s}}+\|Q_{j_m,r_m}\|_{B_1}\big),
\end{align*}
and therefore $|L_{j_m}v_m-P_m|\downarrow 0$ as $m\uparrow\infty$ in $L^\infty_{loc}(\Pi)$.

By Proposition \ref{prop:growth-bound-reg} 
and the Ascoli-Arzel\`a Theorem we deduce that\footnote{Up to extracting a subsequence;
with an abuse of notation we keep $v_m$.} $v_m$ 
is converging in $L^\infty_{loc}(\Rn)$ to some $v\in C(\Rn)$ 
(recall that $v_m=0$ in $\Rn\setminus\Omega_{j_m}$ for every $m\in\N$).
Also, as $\|K_{j_m}\|_{C^{2\beta+3}(\Sn)}$ is uniformly bounded by assumption, 
then (up to passing to a subsequence) $K_{j_m}$ is converging to $K_\star$ in $C^{\intpart{\beta-s}+1}(\Sn)$,
since $\beta>1$. 
In conclusion, as an application of Lemma~\ref{limit with growth} we have that 
\begin{align*}
\left\lbrace\begin{aligned}
L_\star v & \stackrel{k}{=} 0  					& & \text{in }\Pi, \quad  k\geq\intpart{\beta-s}+1, \\
v &= 0											& & \text{in }\Rn\setminus\Pi, \\
\|v\|_{L^\infty(B_R)} &\leq (1+c_\beta)R^{\beta+s},
\end{aligned}\right.
\end{align*}
and moreover it follows from  \eqref{4654646463465} that
\begin{equation}\label{v-contr}
\|v\|_{L^\infty(B_1)}\geq \frac12.
\end{equation}
We are now in the assumptions of Proposition \ref{liouville-nd} (note in particular that $\beta+s<\intpart{\beta-s}+1+2s\leq k+2s$) and therefore 
\begin{align*}
v(x)=p(x\cdot e)(x\cdot e)_+^s,
\qquad x\in\Rn,\ p\in\mathbf{P}_k.
\end{align*}
Actually, the control on $\|v\|_{L^\infty(B_R)}$ yields $p\in\mathbf{P}_{\intpart\beta}$ and 
\begin{align*}
p(t)=\sum_{i=0}^d \pi_i t^i,
\qquad t\in\R,\quad \deg(p)\leq\intpart\beta.
\end{align*}
Let $i_0\in\{0,\ldots,\deg(p)\}$ be the minimum value for which $\pi_i\neq 0$.
Notice at this point that, by \eqref{1654864}, we have in particular that
\begin{align*}
r_m^{-s-i_0}\int_{B_1} v_m(x)\,Q(r_mx)\,d_{j_m}^s(r_mx)\;dx=0,
\qquad m\in\N,\ Q\in\mathbf{P}_{\intpart\beta}.
\end{align*}
Choose in particular
\begin{align*}
Q_m(x)=\sum_{i=0}^d \pi_ir_m^{-i} (x\cdot e)^i.
\end{align*}
Passing to the limit as $m\uparrow\infty$ ($r_m\downarrow 0$), we get
\begin{multline*}
0 = \lim_{m\uparrow\infty}r_m^{-s}\int_{B_1} v_m(x)\,Q_m(r_mx)\,d_{j_m}^s(r_mx)\;dx=
\lim_{m\uparrow\infty}r_m^{-s}\int_{B_1} v_m(x)\,p(x\cdot e)\,d_{j_m}^s(r_mx)\;dx \ = \\
=\ \int_{B_1} p(x\cdot e)^{2}\,(x\cdot e)^{2s}_+\;dx.
\end{multline*}
This yields that $p\equiv 0$ and in turn $v\equiv 0$, too.
This is in contradiction with \eqref{v-contr}, and hence the first part of the Proposition is proved.

We finally show \eqref{calpha}. 
Let
\begin{align*}
v_r(x):=r^{-\beta-s}u(x_0+rx)-r^{-\beta-s}Q(x_0+rx,z)d^s(x_0+rx)
\qquad x\in \Rn.
\end{align*}
The first part of the proof is telling us that
\begin{align*}
\|v_r\|_{L^\infty(B_1)}\leq C.
\end{align*}
By Proposition \ref{prop:growth-int-reg} we have that
\begin{align*}
\big[u-Qd^s\big]_{C^{\beta+s}(\overline B_{r/2}(x_0))}
& =\big[v_r\big]_{C^{\beta+s}(\overline B_{1/2})} \leq C\left(\Big\|\frac{v_r}{1+|x|^{\beta+s}}\Big\|_{L^\infty(\Rn)}+\big[Lv_r\big]_{C^{\beta-s}(\overline B_1)}\right) \\
& \leq C\left(\|u\|_{L^\infty(\Rn)}+\big[Lu\big]_{C^{\beta-s}(\overline B_r(x_0))}+\big[L(Q(\cdot,z)d^s)\big]_{C^{\beta-s}(\overline B_r(x_0))}\right) \\
& \leq C\left(\|u\|_{L^\infty(\Rn)}+\big[f\big]_{C^{\beta-s}(\overline\Omega)}+\big[L(Q(\cdot,z)d^s)\big]_{C^{\beta-s}(\overline B_{1/2})}\right)
\end{align*}
which is finite by Corollary \ref{cor:Lds} ---recall that $\partial\Omega\in C^{\beta+1}$. 
\end{proof}

\begin{lemma}\label{lem:theta explodes}
Let $\beta>0$, $\beta\not\in\N$, $\Omega\subset\Rn$ such that $0\in\partial\Omega$, and $u\in C(\overline{B}_1)$.
If, for any $r\in(0,1)$, $Q_r\in\mathbf{P}_{\intpart\beta}$ satisfies
\begin{align*}
\|u-Q_rd^s\|_{L^2(B_r)}\leq\|u-Qd^s\|_{L^2(B_r)},
\qquad\text{for any }Q\in\mathbf{P}_{\intpart\beta},
\end{align*}
and
\begin{align*}
\|u-Q_rd^s\|_{L^\infty(B_r)}\leq c_0r^{\beta+s}
\end{align*}
then there exists $Q_0\in\mathbf{P}_{\intpart\beta}$ such that
\begin{align*}
\|u-Q_0d^s\|_{L^\infty(B_r)}\leq Cc_0 \, r^{\beta+s},
\qquad r\in(0,1),
\end{align*}
where $C>0$ only depends on $n,s,$ and $\beta$.
\end{lemma}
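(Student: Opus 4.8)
This is a \emph{Campanato-type} statement: the scale-by-scale best polynomials $Q_r$ converge, as $r\downarrow 0$, to a single polynomial $Q_0$ that already realizes the decay at every scale. The plan is to bound the drift of the coefficients of $Q_r$ between comparable radii, sum a geometric series, and pass to the limit. The only non-algebraic input I need is the following fact about $d$: since $0\in\partial\Omega$ one has $d\le C\dist(\,\cdot\,,\Omega^c)\le C|x|$ on $B_r$, so $\|d^s\|_{L^\infty(B_r)}\le Cr^s$; and the set $E_r:=B_r\cap\{d>r/2\}$ is uniformly non-degenerate, i.e.\ it contains a ball of radius $\sim r$ on which $d^s\ge c\,r^s$ (an interior-ball/corkscrew property at $0$). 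Combining the latter with a rescaled version of Lemma~\ref{lem:poly} gives the key linear-algebra estimate: for every $Q=\sum_{|\alpha|\le\intpart\beta}q^{(\alpha)}x^\alpha$ and every such $\alpha$,
\[
r^{|\alpha|+s}\,\big|q^{(\alpha)}\big|\ \le\ C\,\big\|Q\,d^s\big\|_{L^\infty(E_r)},\qquad C=C(n,s,\beta).
\]

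Next, for comparable radii $\rho\le r\le 2\rho$ I would estimate
\[
\big\|Q_\rho d^s-Q_r d^s\big\|_{L^\infty(E_\rho)}\ \le\ \big\|u-Q_\rho d^s\big\|_{L^\infty(B_\rho)}+\big\|u-Q_r d^s\big\|_{L^\infty(B_r)}\ \le\ c_0\rho^{\beta+s}+c_0 r^{\beta+s}\ \le\ C c_0\,\rho^{\beta+s},
\]
and feed this (with radius $\rho$) into the displayed coefficient estimate to obtain $\big|q_\rho^{(\alpha)}-q_r^{(\alpha)}\big|\le C c_0\,\rho^{\beta-|\alpha|}$. Iterating this along the dyadic chain $r,\frac{r}{2},\frac{r}{4},\dots$ down to any $r'\in(0,r)$ and summing $\sum_i (r/2^i)^{\beta-|\alpha|}$ --- a geometric series, convergent precisely because $\beta\notin\N$ forces $\beta-|\alpha|\ge\beta-\intpart\beta>0$ --- yields
\[
\big|q_{r'}^{(\alpha)}-q_r^{(\alpha)}\big|\ \le\ C c_0\,r^{\beta-|\alpha|},\qquad 0<r'\le r<1.
\]
In particular each coefficient $q_r^{(\alpha)}$ is Cauchy as $r\downarrow 0$; I would set $q_0^{(\alpha)}:=\lim_{r\downarrow 0}q_r^{(\alpha)}$ and $Q_0:=\sum_{|\alpha|\le\intpart\beta}q_0^{(\alpha)}x^\alpha$, and let $r'\downarrow 0$ above to get $\big|q_r^{(\alpha)}-q_0^{(\alpha)}\big|\le C c_0\,r^{\beta-|\alpha|}$.

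The lemma then follows from the triangle inequality: for $r\in(0,1)$,
\[
\big\|u-Q_0 d^s\big\|_{L^\infty(B_r)}\ \le\ \big\|u-Q_r d^s\big\|_{L^\infty(B_r)}+\big\|(Q_r-Q_0)d^s\big\|_{L^\infty(B_r)}\ \le\ c_0 r^{\beta+s}+\Big(\sum_{|\alpha|\le\intpart\beta}\big|q_r^{(\alpha)}-q_0^{(\alpha)}\big|\,r^{|\alpha|}\Big)Cr^s\ \le\ C c_0\,r^{\beta+s}.
\]
I expect the dyadic iteration itself to be routine; the point that needs care is keeping all constants independent of $\Omega$ --- the coefficient estimate on $E_r$ a priori sees the interior geometry of $\Omega$ near $0$, so one must ensure the interior-ball radius at $0$ is comparable to $r$ with a universal ratio (which it is in the situation of Proposition~\ref{uoverds}, where $\partial\Omega$ is a graph with norm $\le 1$). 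A secondary point is that the limiting coefficient $q_0^{(\alpha)}$ must be shown independent of the chosen sequence $r\downarrow 0$, which is immediate once the comparable-radii bound above is established for \emph{all} pairs $\rho\le r\le 2\rho$, not just dyadically related ones.
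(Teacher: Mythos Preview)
Your proposal is correct and follows essentially the same Campanato-type argument as the paper: compare $Q_r$ and $Q_{2r}$ (or $Q_\rho$ and $Q_{2\rho}$) via the triangle inequality, extract coefficient bounds on the interior set $\{d>r/2\}$ using a rescaled Lemma~\ref{lem:poly}, sum the resulting geometric series (convergent because $\beta\notin\N$), and conclude by the triangle inequality. Your remark about the dependence of the constant on the interior geometry of $\Omega$ is well-taken and is indeed the one place where uniformity must be checked; the paper handles this implicitly in the same way you suggest.
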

\begin{proof}
It holds
\begin{align*}
\|Q_rd^s-Q_{2r}d^s\|_{L^\infty(B_r)}\leq \|u-Q_rd^s\|_{L^\infty(B_r)}+\|u-Q_{2r}d^s\|_{L^\infty(B_{2r})}\leq 
(1+2^{\beta+s})c_0r^{\beta+s}.
\end{align*}
In particular,
\begin{align*}
|Q_r(x)d^s(x)-Q_{2r}(x)d^s(x)|\leq
(1+2^{\beta+s})c_0r^{\beta+s},
\qquad x\in\partial B_r,
\end{align*}
which yields, by a rescaled version of Lemma \ref{lem:poly},
\begin{align}\label{qr}
\big|q^{(\alpha)}_r-q^{(\alpha)}_{2r}\big|\leq
Cc_0\,r^{\beta-|\alpha|},
\qquad \alpha\in\N^n,\ |\alpha|\leq\intpart\beta.
\end{align}
Also, by a similar reasoning,
\begin{align*}
\|Q_1d^s\|_{L^\infty(B_1)} \leq c_0+\|u\|_{L^\infty(B_1)}
\end{align*}
implies
\begin{align}\label{q1}
\big|q_1^{(\alpha)}\big|\leq C(c_0+\|u\|_{L^\infty(B_1)}),
\qquad \alpha\in\N^n,\ |\alpha|\leq\intpart\beta.
\end{align}
Since $\beta\not\in\N$, then $\beta-|\alpha|\geq\beta-\intpart\beta>0$,
and this, together with \eqref{qr} and \eqref{q1}, yields the existence of limits
\begin{align*}
q_0^{(\alpha)}=\lim_{r\downarrow 0}q_r^{(\alpha)},
\qquad \alpha\in\N^n,\ |\alpha|\leq\intpart\beta.
\end{align*}
Moreover, using a telescopic series and \eqref{qr}, 
\begin{align*}
\big|q^{(\alpha)}_0-q^{(\alpha)}_{r}\big|\leq\sum_{j=0}^\infty\big|q^{(\alpha)}_{2^{-j}r}-q^{(\alpha)}_{2^{-j-1}r}\big|
\leq Cc_0\sum_{j=0}^\infty (2^{-j}r)^{\beta-|\alpha|}\leq Cc_0\,r^{\beta-|\alpha|}, &
\qquad \alpha\in\N^n,\ |\alpha|\leq\intpart\beta, \\
\|Q_0d^s-Q_rd^s\|_{L^\infty(B_r)}\leq 
C\sum_{|\alpha|\leq\intpart\beta}\big|q^{(\alpha)}_0-q^{(\alpha)}_{r}\big|r^{|\alpha|+s}\leq 
Cc_0 \, r^{\beta+s}.
\end{align*}
and therefore 
\begin{align*}
\|u-Q_0d^s\|_{L^\infty(B_r)}\leq\|u-Q_rd^s\|_{L^\infty(B_r)}+\|Q_rd^s-Q_0d^s\|_{L^\infty(B_r)}\leq
Cc_0r^{\beta+s}.
\end{align*}
\end{proof}

We are now in position to prove Theorem \ref{thm:uoverds}.

\begin{proof}[Proof of Theorem \ref{thm:uoverds}]
Let $\gamma\in\N^n$, $|\gamma|=\intpart\beta$. Let us compute
\begin{align*}
\partial^\gamma\big(d^{-s}u\big)=\sum_{\alpha\leq\gamma}\big(\partial^\alpha u\big)\big(\partial^{\gamma-\alpha}d^{-s}\big)
\end{align*}
Let $r>0$ be fixed and $x_0\in\Omega,z\in\partial\Omega$ be such that $d(x_0)=2r=|x_0-z|$.
Consider $x_1,x_2\in B_r(x_0)\subset\Omega$. Then, for $Q=Q(\cdot,z)\in\mathbf{P}_{\intpart\beta}$
as constructed in Proposition \ref{uoverds},
\begin{align*}
& \partial^\gamma\big(d^{-s}u\big)(x_1)-\partial^\gamma\big(d^{-s}u\big)(x_2)=
\sum_{\alpha\leq\gamma}\big[
\partial^\alpha u(x_1)\,\partial^{\gamma-\alpha}d^{-s}(x_1)
-\partial^\alpha u(x_2)\,\partial^{\gamma-\alpha}d^{-s}(x_2) \big] \\
& =\ \sum_{\alpha\leq\gamma}\big[
\partial^\alpha u(x_1)
-\partial^\alpha u(x_2) \big]\,\partial^{\gamma-\alpha}d^{-s}(x_1)
+ \sum_{\alpha\leq\gamma}\partial^\alpha u(x_2)\,\big[
\partial^{\gamma-\alpha}d^{-s}(x_1)
-\partial^{\gamma-\alpha}d^{-s}(x_2) \big] \\
& =\ \sum_{\alpha\leq\gamma}\big[
\partial^\alpha \big(u-Qd^s\big)(x_1)
-\partial^\alpha \big(u-Qd^s\big)(x_2) \big]\,\partial^{\gamma-\alpha}d^{-s}(x_1) \\
& \qquad +\sum_{\alpha\leq\gamma}\big[
\partial^\alpha \big(Qd^s\big)(x_1)
-\partial^\alpha \big(Qd^s)(x_2) \big]\,\partial^{\gamma-\alpha}d^{-s}(x_1) \\
& \qquad +\sum_{\alpha\leq\gamma}\partial^\alpha \big(u-Qd^s\big)(x_2)\,\big[
\partial^{\gamma-\alpha}d^{-s}(x_1)
-\partial^{\gamma-\alpha}d^{-s}(x_2) \big] \\
& \qquad +\sum_{\alpha\leq\gamma}\partial^\alpha \big(Qd^s\big)(x_2)\,\big[
\partial^{\gamma-\alpha}d^{-s}(x_1)
-\partial^{\gamma-\alpha}d^{-s}(x_2) \big] \\
& =\ \sum_{\alpha\leq\gamma}\big[
\partial^\alpha \big(u-Qd^s\big)(x_1)
-\partial^\alpha \big(u-Qd^s\big)(x_2) \big]\,\partial^{\gamma-\alpha}d^{-s}(x_1) \\
& \qquad +\sum_{\alpha\leq\gamma}\partial^\alpha \big(u-Qd^s\big)(x_2)\,\big[
\partial^{\gamma-\alpha}d^{-s}(x_1)
-\partial^{\gamma-\alpha}d^{-s}(x_2) \big] \\
& \qquad +\sum_{\alpha\leq\gamma}\big[
\partial^\alpha \big(Qd^s\big)(x_1)\,\partial^{\gamma-\alpha}d^{-s}(x_1)
-\partial^\alpha \big(Qd^s\big)(x_2)\,\partial^{\gamma-\alpha}d^{-s}(x_2) \big] \\
& =\ \sum_{\alpha\leq\gamma}\big[
\partial^\alpha \big(u-Qd^s\big)(x_1)
-\partial^\alpha \big(u-Qd^s\big)(x_2) \big]\,\partial^{\gamma-\alpha}d^{-s}(x_1) \\
& \qquad +\sum_{\alpha\leq\gamma}\partial^\alpha \big(u-Qd^s\big)(x_2)\,\big[
\partial^{\gamma-\alpha}d^{-s}(x_1)
-\partial^{\gamma-\alpha}d^{-s}(x_2) \big] + \partial^\gamma Q(x_1)-\partial^\gamma Q(x_2) 
\end{align*}
where we notice that, as $|\gamma|=\intpart\beta\geq\deg Q$,
\begin{align*}
\partial^\gamma Q(x_1)-\partial^\gamma Q(x_2)=q^{(\gamma)}-q^{(\gamma)}=0.
\end{align*}
Now we have that, by \eqref{calpha}, that
\begin{align*}
\big|\partial^\alpha \big(u-Qd^s\big)(x_1)
-\partial^\alpha \big(u-Qd^s\big)(x_2) \big|& \leq [u-Qd^s]_{C^{\beta+|\alpha|-\intpart\beta}(B_r(x_0))}|x_1-x_2|^{\beta-\intpart\beta} \\ &\leq Cr^{\beta+s-(|\alpha|+\beta-\intpart\beta)}|x_1-x_2|^{\beta-\intpart\beta};
\end{align*}
also, by Lemma \ref{lem:regularized distance},
\begin{align*}
\big|\partial^{\gamma-\alpha}d^{-s}(x_1)\big|\leq Cr^{-s-|\gamma|+|\alpha|},
\end{align*}
so that 
\begin{align*}
\Big|\sum_{\alpha\leq\gamma}\big[
\partial^\alpha \big(u-Qd^s\big)(x_1)
-\partial^\alpha \big(u-Qd^s\big)(x_2) \big]\,\partial^{\gamma-\alpha}d^{-s}(x_1)\Big|
\leq C|x_1-x_2|^{\beta-\intpart\beta}.
\end{align*}
In a similar way we can also estimate the term 
\begin{align*}
\sum_{\alpha\leq\gamma}\partial^\alpha \big(u-Qd^s\big)(x_2)\,\big[
\partial^{\gamma-\alpha}d^{-s}(x_1)
-\partial^{\gamma-\alpha}d^{-s}(x_2) \big]
\end{align*}
and we conclude that
\begin{align*}
\big[d^{-s}u\big]_{C^\beta(\overline B_r(x_0))}\leq C
\end{align*}
with $C$ independent of $r$ and $x_0$.
\end{proof}

The key step towards the proof of Theorem \ref{thm:u1overu2} is the following.

\begin{proposition}\label{u1overu2}
Let $\beta>1$ be such that $\beta\not\in\N$ and $\beta\pm s\not\in\N$.
Let $\Omega\subseteq\Rn$ be a bounded domain of class
$C^{\beta}$, $z\in \partial\Omega$, and $u_1,u_2\in L^\infty(\Rn)$ solutions of 
\begin{align*}
\left\lbrace\begin{aligned}
Lu_i &= f_i & & \hbox{in }\Omega\cap B_1(z) \\
u_i &= 0 & & \hbox{in }B_1(z)\setminus\Omega
\end{aligned}\right.
\end{align*}
with $f_i\in C^{\beta-s}(\overline\Omega),\ i=1,2$.
Suppose that $\partial\Omega\cap B_1(z)$ is the graph of a function with $C^{\beta}$ norm less than~$1$.
Assume that, for some $c_1>0$,
\begin{align}\label{u2-growth}
u_2(x)\geq c_1d^s(x),
\qquad \text{for any }x\in B_1(z).
\end{align}

Then, for any $z\in\partial\Omega\cap B_{1/2}$, there exists a $Q(\cdot,z)\in\mathbf{P}_{\intpart\beta}$ such that
\begin{align}\label{u1minusQu2}
\big|u(x)-Q(x,z)u_2(x)\big| \leq C{|x-z|}^{\beta+s},
\qquad\text{for any }x\in B_1(z),
\end{align}
where $C>0$ depends only on $n,s,\beta,c_1,$ and the $C^{2\beta+1}(\Sn)$ norm of $K$.

Moreover, if $x_0\in\Omega\cap B_1(z)$, $d(x_0)=2r=|z-x_0|>0$,
\begin{align}\label{calpha12}
\big[u_1-Q(\cdot,z)u_2\big]_{C^{\beta+s}(\overline B_r(x_0))}\leq C.
\end{align}
\end{proposition}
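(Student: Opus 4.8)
The plan is to follow the blow-up and compactness scheme of Proposition~\ref{uoverds}, with the model weight $d^s$ systematically replaced by $u_2$; I will only highlight the changes. Arguing by contradiction, one produces sequences $\Omega_j$, $u_{1,j},u_{2,j}\in L^\infty(\Rn)$, $f_{1,j},f_{2,j}$, $L_j\in\mathfrak{L}$ satisfying the hypotheses uniformly (with $0\in\partial\Omega_j\in C^\beta$, $u_{2,j}\geq c_1 d_j^s$, $\|f_{i,j}\|_{C^{\beta-s}(\overline\Omega_j)}+\|u_{i,j}\|_{L^\infty(\Rn)}+\|K_j\|_{C^{2\beta+1}(\Sn)}\leq C_0$) but with $\sup_j\sup_{r>0}r^{-\beta-s}\|u_{1,j}-Qu_{2,j}\|_{L^\infty(B_r)}=\infty$ for every $Q\in\mathbf{P}_{\intpart\beta}$. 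Letting $Q_{j,r}\in\mathbf{P}_{\intpart\beta}$ be the $L^2(B_r)$-projection of $u_{1,j}$ onto $u_{2,j}\mathbf{P}_{\intpart\beta}$, putting
\[\theta(r):=\sup_j\sup_{\rho\geq r}\rho^{-\beta-s}\|u_{1,j}-Q_{j,\rho}u_{2,j}\|_{L^\infty(B_\rho)},\]
and checking (via the variant of Lemma~\ref{lem:theta explodes} that uses $u_{2,j}\geq c_1 d_j^s$) that $\theta(r)\uparrow\infty$ as $r\downarrow0$, one picks $r_m\downarrow0$, $j_m$ with $\|u_{1,j_m}-Q_{j_m,r_m}u_{2,j_m}\|_{L^\infty(B_{r_m})}\geq\tfrac12 r_m^{\beta+s}\theta(r_m)$, and defines
\[v_m(x):=\frac{u_{1,j_m}(r_mx)-Q_{j_m,r_m}(r_mx)\,u_{2,j_m}(r_mx)}{r_m^{\beta+s}\,\theta(r_m)},\]
so that $\|v_m\|_{L^\infty(B_1)}\geq\tfrac12$ and $\int_{B_1}v_m\,Q(r_m\cdot)\,u_{2,j_m}(r_m\cdot)=0$ for all $Q\in\mathbf{P}_{\intpart\beta}$.

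The growth bound $\|v_m\|_{L^\infty(B_R)}\leq CR^{\beta+s}$ is obtained as in Proposition~\ref{uoverds}: by a rescaled Lemma~\ref{lem:poly} and $cr^s\leq u_{2,j}\leq Cr^s$ on $B_r\cap\{d_j>r/2\}$ one gets $|q_{j,r}^{(\alpha)}-q_{j,2r}^{(\alpha)}|\leq C\theta(r)r^{\beta-|\alpha|}$, hence $\|Q_{j,Rr}u_{2,j}-Q_{j,r}u_{2,j}\|_{L^\infty(B_{Rr})}\leq C\theta(r)(Rr)^{\beta+s}$ and $|q_{j,r}^{(\alpha)}|/\theta(r)\to0$ as $r\downarrow0$. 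Then $r_m^{-1}\Omega_{j_m}\to\Pi=\{x\cdot e>0\}$, and by Proposition~\ref{prop:growth-bound-reg} and Ascoli--Arzelà $v_m\to v$ in $L^\infty_{loc}(\Rn)$ with $v=0$ off $\Pi$, $\|v\|_{L^\infty(B_R)}\leq CR^{\beta+s}$, $\|v\|_{L^\infty(B_1)}\geq\tfrac12$, while $K_{j_m}\to K_\star$ in $C^{\intpart{\beta-s}+1}(\Sn)$. Using the identity
\[L_{j_m}v_m=\frac{r_m^{s-\beta}}{\theta(r_m)}\Bigl(f_{1,j_m}(r_m\cdot)-L_{j_m}(Q_{j_m,r_m}u_{2,j_m})(r_m\cdot)\Bigr)\quad\text{in }r_m^{-1}\Omega_{j_m},\]
one shows (see the next paragraph) that the right-hand side equals a polynomial of degree $\leq\intpart{\beta-s}$ plus a remainder tending to $0$ in $L^\infty_{loc}(\Pi)$; Lemma~\ref{limit with growth} then yields $L_\star v\stackrel{k}{=}0$ in $\Pi$ with $k=\intpart{\beta-s}+1$ and $v=0$ in $\Rn\setminus\Pi$. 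Since $\beta+s<\intpart{\beta-s}+1+2s\leq k+2s$, Theorem~\ref{liouville-nd} gives $v(x)=p(x\cdot e)(x\cdot e)_+^s$ with $p\in\mathbf{P}_{\intpart\beta}$ (the degree forced by the growth). Finally, $u_{2,j_m}/d_{j_m}^s$ is uniformly Hölder continuous up to the boundary (by Theorem~\ref{thm:uoverds} applied with $\beta-1$ in place of $\beta$, or \cite{annali}*{Theorem~1.2} when $\beta\leq1+s$) and $\geq c_1$, so $r_m^{-s}u_{2,j_m}(r_m\cdot)\to c_\star(x\cdot e)_+^s$ in $L^\infty_{loc}$ with $c_\star\geq c_1$; testing the orthogonality relation against $Q_m$ with $Q_m(r_mx)=p(x\cdot e)$ and letting $m\to\infty$ gives $c_\star\int_{B_1}p(x\cdot e)^2(x\cdot e)_+^{2s}\,dx=0$, hence $p\equiv0$, $v\equiv0$, contradicting $\|v\|_{L^\infty(B_1)}\geq\tfrac12$. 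This proves \eqref{u1minusQu2}.

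The one genuinely new point — and the step I expect to be the main obstacle — is the control of $L(Q_{j_m,r_m}u_{2,j_m})$ near $z=0$. The key is the splitting $Qu_2=Q(0)u_2+(Q-Q(0))u_2$. For the first term, linearity gives $L(Q(0)u_2)=Q(0)f_2\in C^{\beta-s}$, and since $q_{j_m,r_m}^{(0)}/\theta(r_m)\to0$ its contribution to the bracket is, after a Taylor expansion of $f_{2,j_m}$ at $0$, a polynomial plus $o(1)$. For the second term, write $(Q-Q(0))u_2=\mu\,d^s$ with $\mu:=(Q-Q(0))\,(u_2/d^s)$, which \emph{vanishes at} $z=0$. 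Running the computation in the proof of Corollary~\ref{cor:Lds} and invoking Theorem~\ref{thm:Lds} — now with the $C^\infty$ weight replaced by the $C^{\beta-1}$ function $u_2/d^s$, which by Theorem~\ref{thm:uoverds} (at level $\beta-1$) does satisfy the interior bounds $|D^j(u_2/d^s)|\leq Cd^{\beta-1-j}$ for $j>\beta-1$ required there — the estimate $|D^jL(\mu d^s)(x)|\leq C_j(|\mu(0)|+|x|)d(x)^{\beta-1-s-j}=C_j|x|\,d(x)^{\beta-1-s-j}$ upgrades $L((Q-Q(0))u_2)$ from $C^{\beta-1-s}$ to $C^{\beta-s}$ near $z$, with norm $\leq C\max_{1\leq|\alpha|\leq\intpart\beta}|q^{(\alpha)}|$. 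Since $\max_\alpha|q_{j_m,r_m}^{(\alpha)}|/\theta(r_m)\to0$, this contribution to the bracket is again a polynomial of degree $\leq\intpart{\beta-s}$ plus $o(1)$. Making the substitution ``$u_2/d^s$ in place of a smooth weight'' rigorous in Theorem~\ref{thm:Lds} — checking that all the cancellations there rely only on the interior bounds and on $L\bigl[(z_n)_+^{s-1}\bigr]=0$, not on smoothness of the weight — is precisely the delicate part, and the gain of one derivative produced by the vanishing $\mu(0)=0$ is indispensable, since without it the factor $r_m^{s-\beta}/\theta(r_m)$ cannot absorb the $C^{\beta-1-s}$ Taylor remainder.

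Finally, \eqref{calpha12} follows, exactly as in the last part of the proof of Proposition~\ref{uoverds}, by applying the rescaled interior Schauder estimate of Proposition~\ref{prop:growth-int-reg} to $v_r(x):=r^{-\beta-s}\bigl(u_1(x_0+rx)-Q(x_0+rx,z)u_2(x_0+rx)\bigr)$: the bound $\|v_r\|_{L^\infty(B_1)}\leq C$ is \eqref{u1minusQu2}, while $[Lv_r]_{C^{\beta-s}(\overline B_1)}=[f_1-L(Q(\cdot,z)u_2)]_{C^{\beta-s}(\overline B_r(x_0))}\leq C$ by the $C^{\beta-s}$ regularity of $L(Q(\cdot,z)u_2)$ just established; this yields $[u_1-Q(\cdot,z)u_2]_{C^{\beta+s}(\overline B_{r/2}(x_0))}\leq C$.
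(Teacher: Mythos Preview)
Your overall scheme is sound, but there is one genuine gap, and it is exactly the one you yourself flag: controlling $L\bigl((Q-Q(0))u_2\bigr)$ by writing it as $L(\mu d^s)$ with $\mu=(Q-Q(0))(u_2/d^s)$ and appealing to Theorem~\ref{thm:Lds}. The hypotheses of Theorem~\ref{thm:Lds} require $\psi\in C^{\beta-1}(\overline B_1)\cap C^\infty(\Omega\cap B_1)$ with interior bounds $\|\psi\|_{C^j}\leq C_\star d^{\beta-1-j}$ for \emph{all} $j>\beta-1$. Here $u_2/d^s$ is only as regular in the interior as interior Schauder allows, namely $C^{\beta+s}_{\rm loc}$ (since $f_2\in C^{\beta-s}$), so those bounds are available only for $j\leq\intpart{\beta+s}$; the proof of Theorem~\ref{thm:Lds} is very delicate and uses higher-order interior derivatives of the weight (through $\rho$ in Lemmas~\ref{lem:cancellation} and~\ref{lem:cancellation2}). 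Establishing that the argument survives with this limited interior regularity is not a routine check, and you have not carried it out.

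The paper sidesteps this difficulty entirely by a reformulation you do not use. Before setting up the contradiction, it rewrites \eqref{u1minusQu2} via the expansion $u_2=Q_2 d^s+v_2$ (from Proposition~\ref{uoverds} at level $\beta-1$, or \cite{annali} when $\beta<1+s$) as the equivalent statement that $|u_1-\tilde q^{(0)}u_2-\widetilde Q^{(1)}d^s|\leq C|x|^{\beta+s}$ for some $\widetilde Q\in\mathbf{P}_{\intpart\beta}$ with $\widetilde Q^{(1)}(0)=0$. The blow-up is then run with the \emph{mixed} ansatz $q^{(0)}u_2+Q^{(1)}d^s$: one projects onto this two-parameter family, and the equation for $v_m$ involves $f_1-q^{(0)}f_2-L(Q^{(1)}d^s)$, where $Q^{(1)}$ is a \emph{polynomial} (hence $C^\infty$) vanishing at~$0$, so Corollary~\ref{cor:Lds} applies verbatim and yields the crucial extra factor $|x|$ via Lemma~\ref{lem:integrated inequalities}. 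The price is a slightly more involved coefficient estimate (one needs Lemma~\ref{lem:poly2} to separate $q^{(0)}$ from $Q^{(1)}$, rather than just Lemma~\ref{lem:poly}), but that lemma is elementary, whereas the extension of Theorem~\ref{thm:Lds} you would need is not.

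In short: your direct ansatz $Qu_2$ makes the coefficient bookkeeping a bit cleaner but pushes the hard work into an unproved extension of Theorem~\ref{thm:Lds}; the paper's mixed ansatz $q^{(0)}u_2+Q^{(1)}d^s$ trades a small amount of extra algebra (Lemma~\ref{lem:poly2}) for being able to invoke Corollary~\ref{cor:Lds} with a smooth weight, which is already proved. The rest of your argument (growth bounds, compactness, Liouville, orthogonality contradiction, and the derivation of~\eqref{calpha12}) matches the paper's line.
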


\begin{proof}
The argument starts along the same lines of the proof of Proposition \ref{uoverds}. 
Let us set, without loss of generality, $z=0$.

If we write 
\begin{align*}
Q(x)=\sum_{|\alpha|\leq\intpart\beta}\!\! q^{(\alpha)}x^\alpha
=q^{(0)}+\!\!\sum_{1\leq|\alpha|\leq\intpart\beta}\!\! q^{(\alpha)}x^\alpha
=q^{(0)}+Q^{(1)}(x),
\quad Q,Q^{(1)}\in\mathbf{P}_{\intpart\beta},\ |Q_1(x)|\leq C|x|,
\end{align*}
and, in view of\footnote{Proposition \ref{uoverds} justifies \eqref{5555555} when $\beta>1+s$; 
if $\beta<1+s$, then \eqref{5555555}  is covered by \cite{annali}*{Theorem 1.2}.}
Proposition \ref{uoverds},
\begin{align}\label{5555555}
u_2(x)=Q_2(x)\,d^s(x)+v_2(x),
\qquad x\in B_1,\ Q_2\in\mathbf{P}_{\intpart\beta},\ |v_2(x)|\leq C|x|^{\beta-1+s},
\end{align}
then \eqref{u1minusQu2} is equivalent to
\begin{align*}
\big|u_1(x)-q^{(0)}u_2(x)-Q^{(1)}(x)\,Q_2(x)\,d^s(x)-Q^{(1)}(x)\,v_2(x)\big|\leq C|x-z|^{\beta+s}
\end{align*}
so that the claim of the theorem is equivalent to saying that there exists $\widetilde Q\in\mathbf{P}_{\intpart\beta}$ such that
\begin{align*}
\big| u_1(x) - \tilde q^{(0)} \, u_2(x) - \widetilde Q^{(1)}(x) d^s(x) \big| &\leq C|x|^{\beta+s},
\qquad \text{for any }x\in B_1.
\end{align*}
We argue by contradiction: suppose that, for any $i=1,2$ and $j\in\N$, there exists 
$\Omega_j\subseteq\Rn,u_{i,j}\in L^\infty(\Omega_j),f_{i,j}\in C^{\beta-s}(\overline\Omega_j),r_j>0$, and $L_j\in\mathfrak L$
such that
\begin{align*}
\left\lbrace\begin{aligned}
L_ju_{i,j} &= f_{i,j} & & \hbox{in }\Omega_j, \quad \|f_{i,j}\|_{C^{\beta+1-s}(\overline\Omega)}\leq C_0, \\
u_{i,j} &= 0 & & \hbox{in }\Rn\setminus\Omega_j,
\end{aligned}\right.
\end{align*}
and $0\in\partial\Omega_j\in C^{\beta}$; moreover,
\begin{align}\label{u2j-growth}
u_{2,j}(x)\geq c_1d^s(x), \qquad x\in B_1(z),
\end{align}
with $c>0$ independent of $j$, and
\begin{align*}
\sup_{j\in\N}\sup_{r>0}r^{-\beta-s}\Big\|u_{1,j}-q^{(0)}\,u_{2,j}-Q^{(1)}d_j^s\Big\|_{L^\infty(B_r)}=\infty,
\qquad\text{for any }Q\in\textbf{P}_{\intpart{\beta}}.
\end{align*}
Let us consider $Q_{j,r}\in\textbf{P}_{\intpart{\beta}+1}$ as the polynomial obtained
via minimization
\begin{align*}
& \Big\| u_{1,j} - q^{(0)}_{j,r}u_{2,j} - Q^{(1)}_{j,r} d_j^s \Big\|_{L^2(B_r)} \leq \Big\| u_{1,j} - q^{(0)}u_{2,j} - Q^{(1)}d_j^s \Big\|_{L^2(B_r)}
& & \quad\text{for any }Q\in\textbf{P}_{\intpart{\beta}}, \\
& \int_{B_r} \big( u_{1,j} - q^{(0)}_{j,r}u_{2,j} - Q^{(1)}_{j,r} d_j^s \big) \, \big(q^{(0)}u_{2,j} + Q^{(1)}d_j^s\big) = 0
& & \quad\text{for any }Q\in\textbf{P}_{\intpart{\beta}}.
\end{align*}
Define the monotone quantity
\begin{align*}
\theta(r):=\sup_{j\in\N}\sup_{\rho\geq r}\rho^{-\beta-s}\Big\|u_{1,j}-q^{(0)}_{j,r}u_{2,j}-Q^{(1)}_{j,\rho}d_j^s\Big\|_{L^\infty(B_\rho)}.
\end{align*}
Note that $\theta(r)\uparrow\infty$ as $r\downarrow0$, as we prove in Lemma \ref{lem:theta explodes2} below, 
therefore there are sequences 
${(r_m)}_{m\in\N}$ and ${(j_m)}_{m\in\N}$ such that
\begin{align}\label{654+54}
\frac{\big\|u_{1,j_m}-q^{(0)}_{j_m,r_m}u_{2,j_m}-Q^{(1)}_{j_m,r_m}d_{j_m}^s\big\|_{L^\infty(B_{r_m})}}
{r_m^{\beta+s}\,\theta(r_m)} \geq \frac12.
\end{align} 
Define now
\begin{align*}
v_m(x):=\frac{u_{1,j_m}(r_mx)-q^{(0)}_{j_m,r_m}u_{2,j_m}(r_mx)-Q^{(1)}_{j_m,r_m}(r_mx)d_{j_m}^s(r_mx)}{r_m^{\beta+s}\,\theta(r_m)} 
\qquad x\in\Rn,\ m\in\N.
\end{align*} 
and notice that $\|v_m\|_{L^\infty(B_1)}\geq 1/2$ and
\begin{align}\label{1654864-2}
\int_{B_1} v_m(x)\,\big(q^{(0)}u_{2,j_m}(r_mx)+Q^{(1)}(r_mx)\,d_{j_m}^s(r_mx)\big)\;dx=0,
\qquad m\in\N,\ Q\in\mathbf{P}_{\intpart\beta}.
\end{align}
Using Lemma \ref{lem:poly}, we estimate for any $\alpha\in\N^n$ with $|\alpha|\leq\intpart\beta$
\begin{align*}
& r^{|\alpha|+s}\big|q_{j,r}^{(\alpha)}-q_{j,2r}^{(\alpha)}\big|\ \leq \\
& \leq c_\beta \big\|Q_{j,r}^{(1)}d_j^s-Q_{j,2r}^{(1)}d_j^s\big\|_{L^\infty(B_r\cap\{d>r/2\})} \\
& \leq c_\beta \big\|u_{1,j}-q^{(0)}_{j,r}u_{2,j}-Q_{j,r}^{(1)}d_j^s\big\|_{L^\infty(B_r)}
+ c_\beta \big\|u_{1,j}-q^{(0)}_{j,2r}u_{2,j}-Q_{j,2r}^{(1)}d_j^s\big\|_{L^\infty(B_{2r})} \ + \\ 
& \quad + c_\beta \big\|q^{(0)}_{j,r}u_{2,j}-q^{(0)}_{j,2r}u_{2,j}\big\|_{L^\infty(B_r)} \\
& \leq c_\beta\,\theta(r)r^{\beta+s}+c_\beta\,\theta(2r)(2r)^{\beta+1+s}
+ c_\beta \big\|q^{(0)}_{j,r}u_{2,j}-q^{(0)}_{j,2r}u_{2,j}\big\|_{L^\infty(B_r)} \\
& \leq c_\beta\,\theta(r) \, r^{\beta+s}+c_\beta \Big|q^{(0)}_{j,r}-q^{(0)}_{j,2r}\Big|\|u_{2,j}\|_{L^\infty(B_r)} 
\end{align*}
which means
\begin{align}\label{546321}
r^{|\alpha|}\big|q_{j,r}^{(\alpha)}-q_{j,2r}^{(\alpha)}\big|
\leq 2c_\beta\,\theta(r)\,r^{\beta}+c_\beta \big|q^{(0)}_{j,r}-q^{(0)}_{j,2r}\big|,
\qquad 1\leq|\alpha|\leq\intpart\beta.
\end{align}
Also, by the definition of $\theta$, we have
\begin{align*}
\Big\|u_{1,j}-q^{(0)}_{j,r}u_{2,j}-Q^{(1)}_{j,r}d_j^s\Big\|_{L^\infty(B_r)}\leq\theta(r)r^{\beta+s}
\end{align*}
from which we deduce by the triangle inequality that
\begin{align*}
\Big\|\big(q^{(0)}_{j,r}-q^{(0)}_{j,2r}\big)\frac{u_{2,j}}{d_j^s}-Q^{(1)}_{j,r}+Q^{(1)}_{j,2r}\Big\|_{L^\infty(B_r\cap\{d>r/2\})}\leq\theta(r)r^{\beta}.
\end{align*}
Recalling assumption \eqref{u2j-growth} and using Lemma \ref{lem:poly2} we deduce
\begin{align*}
\big|q^{(0)}_{j,r}-q^{(0)}_{j,2r}\big|\leq\theta(r)r^{\beta}
\end{align*}
which yields, thanks to \eqref{546321},
\begin{align*}
\big|q^{(\alpha)}_{j,r}-q^{(\alpha)}_{j,2r}\big|\leq c_\beta\theta(r)r^{\beta-|\alpha|},
\qquad |\alpha|\leq\intpart\beta.
\end{align*}

%Also, reasoning as above, for any $\alpha\in\N^n$ with $|\alpha|\leq\intpart\beta+1$
%\begin{align*}
%& r^{|\alpha|+s}\big|q_{j,r}^{(\alpha)}-q_{j,2r}^{(\alpha)}\big|\ \leq \\
%& \leq c_\beta \big\|Q_{j,r}u_{2,j}-Q_{j,2r}u_{2,j}\big\|_{L^\infty(B_r\cap\{d>r/2\})} \\
%& = c_\beta \big\|q_{j,r}^{(0)}u_{2,j}-q_{j,2r}^{(0)}u_{2,j}+Q_{j,r}^{(1)}u_{2,j}-Q_{j,2r}^{(1)}u_{2,j}\big\|_{L^\infty(B_r\cap\{d>r/2\})}.
%\end{align*}
%Using Lemma \ref{uoverds} we write
%\begin{align*}
%u_{2,j}(x)=P_j(x)\,d_j^s(x)+\Theta_j(x),
%\qquad P_j\in\mathbf{P}_{\intpart\beta},\ |\Theta_j(x)|\leq C_j|x|^{\beta+s},\ x\in B_{2r},
%\end{align*}
%and we plug this in the last inequality obtaining
%\begin{align*}
%& r^{|\alpha|+s}\big|q_{j,r}^{(\alpha)}-q_{j,2r}^{(\alpha)}\big|\ \leq \\
%& \leq c_\beta \big\|q_{j,r}^{(0)}u_{2,j}-q_{j,2r}^{(0)}u_{2,j}+\big(Q_{j,r}^{(1)}-Q_{j,2r}^{(1)}\big)
%(P_j(x)\,d(x)^s+\Theta_j(x))\big\|_{L^\infty(B_r\cap\{d>r/2\})} \\
%& \leq c_\beta \big\|q_{j,r}^{(0)}u_{2,j}-q_{j,2r}^{(0)}u_{2,j}+\widetilde Q_{j,r}\,d_j^s\big\|_{L^\infty(B_r)} 
%+ c_{\beta,j}r^{\beta+s}\sum_{1\leq|\alpha|\leq\intpart\beta+1} \big|q_{j,r}^{(\alpha)}-q_{j,2r}^{(\alpha)}\big| 
%\end{align*}
%for some $\widetilde Q_{j,r}\in\mathbf{P}_{\intpart\beta+1}$; absorbing the last addend on the right into the left-hand side 
%(for $r$ small enough) and using the definition of $\theta$ we get
%\begin{align*}
%r^{|\alpha|+s}\big|q_{j,r}^{(\alpha)}-q_{j,2r}^{(\alpha)}\big|\leq c_\beta \theta(r)\,r^{\beta+1+s}
%\end{align*}
%\todo{conclude}

From the last inequality, in a similar way to what we have done in Proposition \ref{uoverds}, it is now possible to prove that
\begin{align*}
\|v_m\|_{L^\infty(B_R)}\leq c_\beta \, R^{\beta+s},
\end{align*}
while we remark that
\begin{align*}
L_{j_m}v_m(x)=\frac{f_{1,j_m}(r_mx)-q^{(0)}_{j_m,r_m}f_{2,j_m}(r_mx)-L_{j_m}\big(Q^{(1)}_{j_m,r_m}d_{j_m}^s\big)(r_mx)}{r_m^{\beta-s}\,\theta(r_m)} 
\qquad x\in r_m^{-1}\Omega_{j_m},\ m\in\N.
\end{align*}
By the regularity of $f_{1,j_m}$ and $f_{2,j_m}$, and by Lemma \ref{lem:integrated inequalities} 
applied to $L_{j_m}(Q^{(1)}_{j_m,r_m}d_{j_m}^s)$ 
---which satisfies the assumption in view of\footnote{Again, Corollary \ref{cor:Lds}
applies when $\beta>1+s$, otherwise we refer to \cite{annali}*{Proposition 2.3}.} Corollary \ref{cor:Lds}---,
there exists $P_m\in\mathbf{P}_{\intpart{\beta-s}}$ for which
\begin{multline*}
r_m^{s-\beta}\big|f_{1,j_m}(r_mx)-q_{j_m,r_m}^{(0)}f_{2,j_m}(r_mx)-L_{j_m}(Q_{j_m,r_m}^{(1)}d_{j_m}^s)(r_mx)-P_m(r_mx)\big|
\ \leq \\
\leq\ C_0 r_m^{s-\beta} d_{j_m}(r_mx)^{\beta-s-1} \big( d_{j_m}(r_mx) + |r_mx| \big).
\end{multline*}
Denoting $\Omega_m:=r_m^{-1}\Omega_{j_m}$, this gives
\[\qquad \big|L_{j_m} v_m\big| \leq C_0 |x|^{\beta-s}\quad \textrm{in}\ \Omega_m,\qquad \textrm{if}\quad \beta>1+s,\]
while
\[\qquad \big|L_{j_m} v_m\big| \leq C_0|x| d_m^{\beta-s-1}\quad \textrm{in}\ \Omega_m,\qquad \textrm{if}\quad \beta<1+s,\]
where we denoted $d_m(x):={\rm dist}(x, \Omega_m^c)$.
In any case, we get $\big|L_{j_m} v_m\big| \leq C_0 d_m^{\varepsilon-s}$ in $\Omega_m$, $\varepsilon>0$, with $C_0$ independent of $m$, and therefore by Proposition \ref{prop:growth-bound-reg} we get a uniform bound on $\|v_m\|_{C^s(K)}$, for any compact set $K\subset\R^n$.

We now proceed as in the proof of Proposition \ref{uoverds}, and
using an Ascoli-Arzelà argument and the Liouville result in Theorem \ref{liouville-nd}, we conclude that the sequence 
${(v_m)}_{m\in\N}$ is converging in $L^\infty_{loc}(\Rn)$ to
\begin{align*}
v(x)=p(x\cdot e)\,(x\cdot e)_+^s,
\qquad p\in\mathbf{P}_{\intpart\beta},\ e\in\Sn,\ x\in\R^n.
\end{align*}
We underline how this is made possible by the fine estimate 
in Corollary \ref{cor:Lds} which improves of one order the decay at $0$ when $\eta(0)=0$
(and we are applying the corollary with $\eta=Q^{(1)}_{j_m,r_m}$ in our case).
Call now
\begin{align*}
\ell_2=\lim_{m\uparrow\infty}\frac{u_{2,j_m}(r_mx)}{(r_mx\cdot e)_+^s}\ :
\end{align*}
the limit exists by Proposition \ref{uoverds} and is different from zero by \eqref{u2-growth}.
Choose 
\begin{align*}
Q^{(1)}_m(x)=p^{(1)}(x/r_m),
\quad q^{(0)}=\frac{p^{(0)}}{\ell_2},
\end{align*}
and deduce
\[\begin{split}
0 &= \lim_{m\uparrow\infty} r_m^{-s}\int_{B_1} v_m(x)\,\big(q^{(0)}u_{2,j_m}(r_mx)+Q^{(1)}_m(r_mx)\,d_{j_m}^s(r_mx)\big)\;dx\  \\
&=\ \int_{B_1} v(x)\,\big(q^{(0)}\ell_2\,(x\cdot e)_+^s+p^{(1)}(x)\,(x\cdot e)_+^s\big)\;dx
=\int_{B_1} p(x\cdot e)^2\,(x\cdot e)_+^{2s}\;dx,
\end{split}\]
which means that $p\equiv 0$ and then also $v\equiv 0$. 
But this is in contradiction with $\|v\|_{L^\infty(B_1)}\geq 1/2$ ---which follows from \eqref{654+54}---, and thus \eqref{u1minusQu2} is proved.

We now move on to the proof of \eqref{calpha12}. Let
\begin{align*}
v_r(x):=r^{-\beta-s}u_1(x_0+rx)-r^{-\beta-s}Q(x_0+rx,z)u_2(x_0+rx)
\qquad x\in \Rn.
\end{align*}
The first part of the proof is telling us that
\begin{align*}
\|v_r\|_{L^\infty(B_1)}\leq C.
\end{align*}
By Proposition \ref{prop:growth-int-reg} we have that
\begin{align*}
& \hspace{-8mm}\big[u\big]_{C^{\beta+s}(\overline B_{r/2}(x_0))}
=\big[v_r\big]_{C^{\beta+s}(\overline B_{1/2})} \leq C\left(\Big\|\frac{v_r}{1+|x|^{\beta+s}}\Big\|_{L^\infty(\Rn)}+\big[Lv_r\big]_{C^{\beta-s}(\overline B_1)}\right) \\
& \leq C\left(\|u\|_{L^\infty(\Rn)}+\big[Lu_1\big]_{C^{\beta-s}(\overline B_r(x_0))}+\big[L(Q(\cdot,z)u_2)\big]_{C^{\beta-s}(\overline B_r(x_0))}\right) \\
& \leq C\left(\|u\|_{L^\infty(\Rn)}+\big[f_1\big]_{C^{\beta-s}(\overline\Omega)}+\big[f_2\big]_{C^{\beta-s}(\overline\Omega)}+\big[L(Q^{(1)}(\cdot,z)Q_2(\cdot,z)d^s)\big]_{C^{\beta-s}(\overline B_{1/2})}\right)
\end{align*}
which is finite by Corollary \ref{cor:Lds} ---to this end, 
recall that $Q(\cdot,z)=q^{(0)}+Q^{(1)}(\cdot,z)$ with $Q^{(1)}(z,z)=0$. 
\end{proof}

\begin{lemma}\label{lem:theta explodes2}
Let $\beta>0$, $\beta\not\in\N$, $\Omega\subset\Rn$ such that $0\in\partial\Omega$, and $u_1,u_2\in C(\overline{B}_1)$.
If, for any $r\in(0,1)$, $Q_r\in\mathbf{P}_{\intpart\beta}$ satisfies
\begin{align*}
\|u-q^{(0)}_ru_2-Q^{(1)}_rd^s\|_{L^2(B_r)}\leq\|u-q^{(0)}u_2-Q^{(1)}d^s\|_{L^2(B_r)},
\qquad\text{for any }Q\in\mathbf{P}_{\intpart\beta},
\end{align*}
and
\begin{align*}
\frac1{c_0}d^s\leq u_2\leq c_0d^s\quad\text{in }B_1,\qquad\|u-q^{(0)}_ru_2-Q^{(1)}_rd^s\|_{L^\infty(B_r)}\leq c_0r^{\beta+s},
\end{align*}
then there exists $Q_0\in\mathbf{P}_{\intpart\beta}$ such that
\begin{align*}
\|u-q^{(0)}_0u_2-Q^{(1)}_0d^s\|_{L^\infty(B_r)}\leq Cc_0 \, r^{\beta+s},
\qquad r\in(0,1),
\end{align*}
where $C>0$ only depends on $n,s,$ and $\beta$.
\end{lemma}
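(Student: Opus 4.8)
The plan is to mirror closely the proof of Lemma~\ref{lem:theta explodes}: I will show that the coefficients of the near-optimal polynomials $Q_r$ stabilise as $r\downarrow0$, by establishing geometric-type decay of their increments across dyadic scales, and then define $Q_0$ through the limiting coefficients. Write $Q_r(x)=\sum_{|\alpha|\le\intpart\beta}q_r^{(\alpha)}x^\alpha$, let $q_r^{(0)}$ be the constant term and $Q_r^{(1)}=Q_r-q_r^{(0)}$ the part vanishing at $0$. The core estimate I aim for is
\[
\big|q_r^{(\alpha)}-q_{2r}^{(\alpha)}\big|\le Cc_0\,r^{\beta-|\alpha|},\qquad |\alpha|\le\intpart\beta,\ r\in(0,1/2),
\]
with $C$ depending only on $n,s,\beta$.

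To get this, I would first combine the two hypotheses at scales $r$ and $2r$ by the triangle inequality, obtaining $\big\|(q_r^{(0)}-q_{2r}^{(0)})u_2+(Q_r^{(1)}-Q_{2r}^{(1)})d^s\big\|_{L^\infty(B_r)}\le(1+2^{\beta+s})c_0\,r^{\beta+s}$. Dividing by $d^s$ on $B_r\cap\{d>r/2\}$ (where $d^s\ge(r/2)^s$, and which contains a ball of radius comparable to $r$ since $d$ is the regularised distance of Definition~\ref{defi-d}) gives $\big\|(q_r^{(0)}-q_{2r}^{(0)})\tfrac{u_2}{d^s}+(Q_r^{(1)}-Q_{2r}^{(1)})\big\|_{L^\infty(B_r\cap\{d>r/2\})}\le Cc_0\,r^{\beta}$. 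Here the new feature relative to Lemma~\ref{lem:theta explodes} shows up: $u_2/d^s$ is not a polynomial, only a function comparable to a positive constant by the assumption $\tfrac1{c_0}d^s\le u_2\le c_0d^s$, so one cannot directly read off the coefficients. I would therefore first apply Lemma~\ref{lem:poly2}, exactly as in the proof of Proposition~\ref{u1overu2}, to peel off the constant part and deduce $|q_r^{(0)}-q_{2r}^{(0)}|\le Cc_0\,r^{\beta}$. Since $0\in\partial\Omega$ we have $d\le C|x|\le Cr$ on $B_r$, hence $\|u_2\|_{L^\infty(B_r)}\le c_0\|d^s\|_{L^\infty(B_r)}\le Cr^s$, so the term $(q_r^{(0)}-q_{2r}^{(0)})u_2$ can be absorbed, leaving $\|(Q_r^{(1)}-Q_{2r}^{(1)})d^s\|_{L^\infty(B_r)}\le Cc_0r^{\beta+s}$. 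Dividing again by $d^s$ on $B_r\cap\{d>r/2\}$ and invoking a rescaled version of Lemma~\ref{lem:poly} then yields $|q_r^{(\alpha)}-q_{2r}^{(\alpha)}|\le Cc_0r^{\beta-|\alpha|}$ for $1\le|\alpha|\le\intpart\beta$, which together with the constant-term bound proves the core inequality.

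Since $\beta\notin\N$, every exponent satisfies $\beta-|\alpha|\ge\beta-\intpart\beta>0$, so a telescopic/Cauchy argument shows that $q_0^{(\alpha)}:=\lim_{r\downarrow0}q_r^{(\alpha)}$ exists and $|q_0^{(\alpha)}-q_r^{(\alpha)}|\le Cc_0r^{\beta-|\alpha|}$ for all $|\alpha|\le\intpart\beta$ (the range $r\in[1/2,1)$ being handled trivially from the bound on $\|u\|_{L^\infty(B_1)}$). Setting $Q_0=\sum_\alpha q_0^{(\alpha)}x^\alpha$ and $Q_0^{(1)}=Q_0-q_0^{(0)}$, I would conclude with
\[
\|u-q_0^{(0)}u_2-Q_0^{(1)}d^s\|_{L^\infty(B_r)}\le\|u-q_r^{(0)}u_2-Q_r^{(1)}d^s\|_{L^\infty(B_r)}+|q_r^{(0)}-q_0^{(0)}|\,\|u_2\|_{L^\infty(B_r)}+\!\!\sum_{1\le|\alpha|\le\intpart\beta}\!\!|q_r^{(\alpha)}-q_0^{(\alpha)}|\,r^{|\alpha|}\|d^s\|_{L^\infty(B_r)},
\]
the three terms being bounded respectively by $c_0r^{\beta+s}$, by $Cc_0r^{\beta}\cdot Cr^s$, and by $\sum_\alpha Cc_0r^{\beta-|\alpha|}\cdot r^{|\alpha|}\cdot Cr^s$, all of order $c_0r^{\beta+s}$.

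I expect the only genuinely non-routine point to be the decoupling of the constant coefficient $q_r^{(0)}$ from the higher-order ones: this is the step where $u_2$ replaces the explicit power $d^s$ of Lemma~\ref{lem:theta explodes}, and it is precisely what forces the use of Lemma~\ref{lem:poly2} together with the two-sided comparability $u_2\approx d^s$. Everything after that coefficient extraction is a verbatim adaptation of the proof of Lemma~\ref{lem:theta explodes}.
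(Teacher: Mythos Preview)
Your proposal is correct and follows essentially the same approach as the paper's proof. The only cosmetic difference is the order of operations: the paper first expresses the higher-order coefficient increments in terms of $|q^{(0)}_r-q^{(0)}_{2r}|$ (via Lemma~\ref{lem:poly}), then bounds the latter via Lemma~\ref{lem:poly2}, and finally substitutes back; you instead go straight to Lemma~\ref{lem:poly2} to control the constant term first, then handle the rest, which is arguably cleaner but amounts to the same argument.
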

\begin{proof}
It holds
\begin{align*}
& \big\|Q^{(1)}_rd^s-Q^{(1)}_{2r}d^s\big\|_{L^\infty(B_r)} \ \leq \\
& \leq\ \big\|u-q^{(0)}_r u_2-Q^{(1)}_rd^s\big\|_{L^\infty(B_r)}+\big\|u-q^{(0)}_{2r}u_2-Q^{(1)}_{2r}d^s\big\|_{L^\infty(B_{2r})} 
+\big\|(q^{(0)}_r-q^{(0)}_{2r})u_2\big\|_{L^\infty(B_r)}  \\
& \leq\ (1+2^{\beta+s})c_0r^{\beta+s}+\big|q^{(0)}_r-q^{(0)}_{2r}\big|\,\|u_2\|_{L^\infty(B_r)}.
\end{align*}
In particular,
\begin{align*}
\big|Q^{(1)}_r(x)d^s(x)-Q^{(1)}_{2r}(x)d^s(x)\big|\leq 
(1+2^{\beta+s})c_0r^{\beta+s}+\big|q^{(0)}_r-q^{(0)}_{2r}\big|\,\|u_2\|_{L^\infty(B_r)},
\qquad x\in\partial B_r,
\end{align*}
which yields, by a rescaled version of Lemma \ref{lem:poly},
\begin{align}\label{qr2}
\big|q^{(\alpha)}_r-q^{(\alpha)}_{2r}\big|\leq
Cc_0\,r^{\beta-|\alpha|}+c_0\big|q^{(0)}_r-q^{(0)}_{2r}\big|\,r^{-|\alpha|},
\qquad \alpha\in\N^n,\ 1\leq|\alpha|\leq\intpart\beta.
\end{align}
Also,
\begin{align*}
\Big\|\big(q^{(0)}_r-q^{(0)}_{2r}\big)\frac{u_2}{d^s}+Q^{(1)}_r-Q^{(1)}_{2r}\Big\|_{L^\infty(B_r\cap\{d>r/2\})}
\leq Cc_0\,r^\beta
\end{align*}
Using Lemma \ref{lem:poly2} and the assumptions on $u_2$ we deduce 
\begin{align*}
\big|q^{(0)}_r-q^{(0)}_{2r}\big|\leq Cc_0 \, r^\beta
\end{align*}
which, along with \eqref{qr2}, gives 
\begin{align*}
\big|q^{(\alpha)}_r-q^{(\alpha)}_{2r}\big|\leq
Cc_0\,r^{\beta-|\alpha|},
\qquad \alpha\in\N^n,\ 1\leq|\alpha|\leq\intpart\beta.
\end{align*}
Also, by a similar reasoning,
\begin{align*}
\big\|q^{(0)}_1u_2+Q^{(1)}_1d^s\big\|_{L^\infty(B_1)} \leq c_0+\|u_1\|_{L^\infty(B_1)}
\end{align*}
implies
\begin{align}\label{q12}
\big|q_1^{(\alpha)}\big|\leq C(c_0+\|u\|_{L^\infty(B_1)}),
\qquad \alpha\in\N^n,\ |\alpha|\leq\intpart\beta.
\end{align}
Since $\beta\not\in\N$, then $\beta-|\alpha|\geq\beta-\intpart\beta>0$,
and this, together with \eqref{qr} and \eqref{q12}, yields the existence of limits
\begin{align*}
q_0^{(\alpha)}=\lim_{r\downarrow 0}q_r^{(\alpha)},
\qquad \alpha\in\N^n,\ |\alpha|\leq\intpart\beta.
\end{align*}
Moreover, using a telescopic series and \eqref{qr}, 
\begin{align*}
& \big|q^{(\alpha)}_0-q^{(\alpha)}_{r}\big|\leq\sum_{j=0}^\infty\big|q^{(\alpha)}_{2^{-j}r}-q^{(\alpha)}_{2^{-j-1}r}\big|
\leq Cc_0\sum_{j=0}^\infty (2^{-j}r)^{\beta-|\alpha|}\leq Cc_0\,r^{\beta-|\alpha|}, 
\qquad \alpha\in\N^n,\ |\alpha|\leq\intpart\beta, \\
& \big\|q^{(0)}_0u_2+Q^{(1)}_0d^s-q^{(0)}_ru_2-Q^{(1)}_rd^s\big\|_{L^\infty(B_r)}\leq 
C\sum_{|\alpha|\leq\intpart\beta}\big|q^{(\alpha)}_0-q^{(\alpha)}_{r}\big|r^{|\alpha|+s}\leq 
Cc_0 \, r^{\beta+s}.
\end{align*}
and therefore 
\begin{align*}
& \big\|u_1-q^{(0)}_0u_2-Q^{(1)}_0d^s\big\|_{L^\infty(B_r)} \ \leq \\
& \leq\ \big\|u_1-q^{(0)}_ru_2-Q^{(1)}_rd^s\big\|_{L^\infty(B_r)}+\big\|q^{(0)}_0u_2+Q^{(1)}_0d^s-q^{(0)}_ru_2-Q^{(1)}_rd^s\big\|_{L^\infty(B_r)} \leq Cc_0 \, r^{\beta+s}.
\end{align*}
\end{proof}

We have now all the ingredients to prove Theorem \ref{thm:u1overu2}.

\begin{proof}[Proof of Theorem \ref{thm:u1overu2}]
Let $r>0$ be fixed and $x_0\in\Omega,z\in\partial\Omega$ be such that $d(x_0)=2r=|x_0-z|$.
Consider $x_1,x_2\in B_r(x_0)\subset\Omega$. Then, for $Q=Q(\cdot,z)\in\mathbf{P}_{\intpart\beta}$
as constructed in Proposition \ref{u1overu2},
We closely follow the proof of Theorem \ref{thm:uoverds}.  
Let us first notice that, as therein, it is possible to write
\begin{align*}
\partial^\gamma\big(u_2^{-1}u_1\big)(x_1)-\partial^\gamma\big(u_2^{-1}u_1\big)(x_2)= & 
\sum_{\alpha\leq\gamma}\big[
\partial^\alpha \big(u_1-Qu_2\big)(x_1)
-\partial^\alpha \big(u_1-Qu_2\big)(x_2) \big]\,\partial^{\gamma-\alpha}u_2^{-1}(x_1) \\
& +\sum_{\alpha\leq\gamma}\partial^\alpha \big(u_1-Qu_2\big)(x_2)\,\big[
\partial^{\gamma-\alpha}u_2^{-1}(x_1)
-\partial^{\gamma-\alpha}u_2^{-1}(x_2) \big].
\end{align*}
We first estimate 
\begin{align*}
\big|\partial^{\gamma-\alpha}u_2^{-1}(x_1)\big|\leq Cr^{-s-\intpart\beta+|\alpha|},
\end{align*}
which follows after explicit differentiation and the regularity properties of $u_2$.
By \eqref{calpha12}, we have
\begin{align*}
\big|\partial^\alpha \big(u_1-Qu_2\big)(x_1)
-\partial^\alpha \big(u_1-Qu_2\big)(x_2) \big| \leq
Cr^{\beta+s-(|\alpha|+\beta-\intpart\beta)}|x_1-x_2|^{\beta-\intpart\beta}
\end{align*}
which the implies 
\begin{align*}
\Big|\sum_{\alpha\leq\gamma}\big[
\partial^\alpha \big(u_1-Qu_2\big)(x_1)
-\partial^\alpha \big(u_1-Qu_2\big)(x_2) \big]\,\partial^{\gamma-\alpha}u_2^{-1}(x_1)\Big|
\leq C|x_1-x_2|^{\beta-\intpart\beta}.
\end{align*}
The estimate for 
\begin{align*}
\sum_{\alpha\leq\gamma}\partial^\alpha \big(u_1-Qu_2\big)(x_2)\,\big[
\partial^{\gamma-\alpha}u_2^{-1}(x_1)
-\partial^{\gamma-\alpha}u_2^{-1}(x_2) \big]
\end{align*}
is analogous. So we conclude that
\begin{align*}
\big[u_1/u_2\big]_{C^\beta(\overline B_r(x_0))}\leq C
\end{align*}
with $C$ independent of $r$ and $x_0$.
\end{proof}

\section{Smoothness of free boundaries in obstacle problems}
%\section{Proof of Theorems \texorpdfstring{\ref{thm:Cinfty}}{Cinfty} and \texorpdfstring{\ref{thm:low-reg}}{low-reg}}
\label{sec:final}

Using the results from the previous sections, we can now show our main results on the higher regularity of free boundaries for obstacle problems of type \eqref{obstacle}.

\begin{proof}[Proof of Theorem \ref{thm:low-reg}]
Notice first that, by \cite{inventiones}, we have $v\in C^1(\R^n)$.
Let $x_0\in\partial\{v>\varphi\}$ be any regular point. 
By \cite{inventiones}*{Theorem 1.1}, there exists $r>0$ such that $\partial\{v>\varphi\}\cap B_r(x_0) \in  C^{\beta}$ for any~$\beta<1+s$.

Let us define
\begin{align*}
w=v-\varphi,
\end{align*}
which solves
\begin{align}\label{obstacle-v}
\left\lbrace\begin{aligned}
Lw &= f & & \text{in } \{ w > 0 \} \\
w &\geq 0 & & \text{in }\Rn,
\end{aligned}\right.
\end{align}
where $f=-L\varphi\in C^{\theta-s}(\Rn)$. 
Note that $w\in C^1(\R^n)$ so that, 
for any $i\in\{1,\ldots,n\}$, we can differentiate \eqref{obstacle-v} to get
\begin{align}\label{obstacle-v'}
\left\lbrace\begin{aligned}
L\big(\partial_i w\big) &= f_i & & \text{in } \{ w > 0 \}\cap B_r(x_0) \\
\partial_i w &= 0 & & \text{in } B_r(x_0)\setminus \{ w > 0 \}
\end{aligned}\right.
\end{align}
with $f_i:=\partial_i f \in C^{\theta-1-s}(\Rn)$. 
Suppose now, without loss of generality, 
that $e_n$ is normal to $\partial\{v>\varphi\}$ at $x_0$. 
Since at $x_0$ we have \eqref{regular-point} and the free boundary is $C^\beta$ in $B_r(x_0)$, with $\beta>1$, it follows from \cite{annali} that 
\begin{align*}
\partial_n w \geq c_1 d^s
\qquad \text{in } \{ w > 0 \}\cap B_r(x_0)
\end{align*}
for some $c_1>0$. 

We are therefore in the assumptions of Theorem \ref{thm:u1overu2} and, as long as $\beta\leq \theta-1$ (recall that $f_i\in C^{\theta-1-s}$), we deduce that
\begin{align*}
\frac{\partial_i w}{\partial_n w} \in C^\beta\big(\overline{\{w>0\}}\cap B_r(x_0)\big),
\end{align*}
for any~$i\in\{1,\ldots,n-1\}$.

Now, notice that the normal vector $\nu(x)$ to the level set $\{w=t\}$ for $t>0$ and $w(x)=t$ is given by 
\begin{align}\label{nu1}
\nu^i(x)= \frac{\partial_i w}{|\nabla w|}(x) = \frac{\partial_i w/\partial_n w}{\sqrt{\sum_{j=1}^{n-1}(\partial_j w/\partial_n w)^2 +1}},\qquad i=1,...,n.
\end{align}
Therefore, denoting $\Omega=\{w>0\}$ we deduce that in $B_r(x_0)$ we have
\begin{align}\label{nu2}
\partial\Omega\in C^\beta\quad \Longrightarrow \quad \frac{\partial_i w}{\partial_n w} \in C^\beta \quad \Longrightarrow 
\quad \nu\in C^\beta \quad \Longrightarrow \quad \partial\Omega\in C^{\beta+1},
\end{align}
as long as $\beta\leq \theta-1$.

Bootstrapping this argument and recalling that $\Omega=\{v>\varphi\}$, in a finite number of steps we find that $\partial\{v>\varphi\}\cap B_r(x_0)\in C^{\theta}$, as wanted.
\end{proof}

\begin{remark}
The statement of \cite{inventiones}*{Theorem 1.1} requires 
$\varphi\in C^{2,1}(\Rn)$.
Nevertheless, a quick inspection of the proofs therein reveals that this is inessential and that the assumption
on the regularity of the obstacle can be weakened to $\varphi\in C^\gamma$, with $\gamma>\max\{2,1+2s\}$.
In particular, if $\varphi\in C^{\theta+s}$ with $\theta>2$, then \cite{inventiones}*{Theorem 1.1} holds.
\end{remark}

To conclude, we give the proof of the $C^\infty$ regularity.

\begin{proof}[Proof of Theorem \ref{thm:Cinfty}]
It follows immediately from Theorem \ref{thm:low-reg}.
\end{proof}

\appendix

\section{Technical lemmas and tools}

\begin{notation} We define the binomial of two multi-indices $\alpha=(\alpha_1,\dots,\alpha_n)$ and
$\gamma=(\gamma_1,\ldots,\gamma_n)$ (with $\gamma\leq\alpha$ as multi-indices) as
\begin{align*}
\binom{\alpha}{\gamma}=\prod_{i=1}^n\binom{\alpha_i}{\gamma_i}.
\end{align*}
\end{notation}

\begin{lemma}\label{lem:regularized distance} 
Let $U\subseteq\Rn$ be an open bounded domain with $\partial U \in C^\beta$, $\beta>1$, $\beta\not\in\N$.
Then there exists $d\in C^\infty(U)\cap C^\beta(\overline{U})$ 
such that for every $j\in\N,\ j>\beta$, there exists $C=C(n,j,U)>0$ such that
\begin{align}\label{high derivatives of d}
\frac1C\,\dist(\cdot,\partial U)\leq d \leq C\,\dist(\cdot,\partial U),
\qquad
\big| D^j d \big| \leq C_j d^{\beta-j}
\qquad\text{in }U.
\end{align}
\end{lemma}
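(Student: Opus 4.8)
The plan is to build $d$ explicitly by a Whitney-type smoothing: one weights the Taylor polynomials of a simple boundary-defining function for $U$ by a partition of unity adapted to a Whitney decomposition of $U$. First I would fix an auxiliary function $\bar d\in C^\beta(\overline U)$ with $\bar d>0$ in $U$, $\bar d=0$ on $\partial U$, $\tfrac1C\dist(\,\cdot\,,\partial U)\le\bar d\le C\dist(\,\cdot\,,\partial U)$ in $U$, and $\bar d$ smooth away from $\partial U$. Such a $\bar d$ is standard: cover the compact set $\partial U$ by finitely many balls in each of which, after a rotation, $U$ is the epigraph of a $C^\beta$ function $\gamma_k$, put $\bar d_k(x):=x_n-\gamma_k(x')$ there, glue the $\bar d_k$ by a partition of unity on a neighbourhood of $\partial U$, and extend by a strictly positive smooth function away from $\partial U$ using a cut-off; comparability with $\dist(\,\cdot\,,\partial U)$ near $\partial U$ follows from $\grad\bar d\neq 0$ there, and globally from the boundedness of $U$.

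Next I would take a Whitney decomposition $U=\bigcup_iQ_i$ into essentially disjoint dyadic cubes with $\mathrm{side}(Q_i)\le\eps\,\dist(Q_i,\partial U)$ and $\dist(Q_i,\partial U)\le C\,\mathrm{side}(Q_i)$, where $\eps>0$ is a small constant to be fixed; let $Q_i^*$ be slight dilations of the $Q_i$, of bounded overlap and still covering $U$, let $x_i$ be the centre of $Q_i$, and let $\{\psi_i\}\subset C^\infty_c(Q_i^*)$ be a partition of unity of $U$ with $|D^j\psi_i|\le C_j\,\mathrm{side}(Q_i)^{-j}$. Writing $m:=\intpart{\beta}$ and $T_{x_i}\bar d$ for the degree-$m$ Taylor polynomial of $\bar d$ at $x_i$, I would set
\begin{align*}
d(x):=\sum_i\psi_i(x)\,T_{x_i}\bar d(x),\qquad x\in U.
\end{align*}
Near any point of $U$ only finitely many $\psi_i$ are nonzero and each summand is a smooth cut-off times a polynomial, so $d\in C^\infty(U)$. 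Moreover, for $x\in Q_i^*$ one has $|x-x_i|\le C\eps\,\dist(x_i,\partial U)$, so from $\grad\bar d$ bounded and $\bar d\approx\dist(\,\cdot\,,\partial U)$ one gets $|T_{x_i}\bar d(x)-\bar d(x_i)|\le C\eps\,\bar d(x_i)$; choosing $\eps$ small this forces $T_{x_i}\bar d(x)\approx\dist(x,\partial U)$ on $Q_i^*$, and since $d$ is a convex combination of such quantities, $\tfrac1C\dist(\,\cdot\,,\partial U)\le d\le C\dist(\,\cdot\,,\partial U)$ in $U$.

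For the derivative bounds I would fix $x\in U$, put $\rho:=\dist(x,\partial U)$, pick a reference cube $Q_{i_0}\ni x$, and use $\sum_i\psi_i\equiv1$ to write $d=T_{x_{i_0}}\bar d+\sum_i\psi_i\big(T_{x_i}\bar d-T_{x_{i_0}}\bar d\big)$. Since $D^k_xT_{x_i}\bar d(x)$ is the degree-$(m-k)$ Taylor polynomial of $D^k\bar d$ at $x_i$, the Taylor remainder for $\bar d\in C^\beta$ gives $|D^k(T_{x_i}\bar d-T_{x_{i_0}}\bar d)(x)|\le C\rho^{\beta-k}$ for $0\le k\le m$ and $D^k(T_{x_i}\bar d-T_{x_{i_0}}\bar d)\equiv0$ for $k>m$ (recall $x$, $x_i$, $x_{i_0}$ all lie in a ball of radius $\le C\rho$). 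For $j>m$ one has $D^jT_{x_{i_0}}\bar d\equiv0$, so expanding $D^j$ of each product by the Leibniz rule and bounding each surviving term by $\mathrm{side}(Q_i)^{-(j-k)}\cdot\rho^{\beta-k}\approx\rho^{\beta-j}$, a sum over the boundedly many nonzero indices gives $|D^jd(x)|\le C_j\rho^{\beta-j}$; the same computation for $j\le m$ gives $|D^jd(x)|\le C$ (the contribution $D^jT_{x_{i_0}}\bar d(x)$ is bounded since $D^j\bar d$ is, and $\rho^{\beta-j}\le C$). In view of $d\approx\dist(\,\cdot\,,\partial U)$ this is exactly $|D^jd|\le C_jd^{\beta-j}$ for $j>\beta$. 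Finally, $D^md$ is bounded while $|D^{m+1}d|\le C\dist(\,\cdot\,,\partial U)^{\beta-m-1}$ with $\beta-m\in(0,1)$; since $\partial U\in C^\beta$ is in particular Lipschitz, the standard fact that a smooth function on a bounded Lipschitz domain whose gradient is $O\big(\dist(\,\cdot\,,\partial U)^{\sigma-1}\big)$ extends to a $C^{0,\sigma}$ function on the closure, applied to each component of $D^md$, yields $d\in C^\beta(\overline U)$.

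The part I expect to be most delicate is the derivative estimate: one has to carry the multi-index Leibniz expansion of $D^j\big(\psi_i(T_{x_i}\bar d-T_{x_{i_0}}\bar d)\big)$ together with the Taylor bounds for $\bar d$ at the various centres, and check that the outcome collapses to the single exponent $\beta-j$ uniformly in $x$, independently of which Whitney cube $x$ falls into. The construction of $\bar d$, the choice of the small Whitney ratio $\eps$ securing comparability away from the boundary (where the Whitney cubes are large), and the boundary-extension lemma invoked at the very end are all routine by comparison.
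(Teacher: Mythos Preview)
Your construction is correct and is the classical Whitney--Lieberman regularized distance. The paper takes a much shorter PDE route: it simply defines $d$ as the solution of $-\Delta d=1$ in $U$ with $d=0$ on $\partial U$. Comparability with $\dist(\cdot,\partial U)$ then comes from the Hopf lemma and a barrier, $d\in C^\beta(\overline U)$ comes from boundary Schauder theory in $C^\beta$ domains, and the key derivative bound is obtained by observing that $\partial^\gamma d$ is harmonic for $|\gamma|=\intpart\beta$ and applying the interior gradient estimate for harmonic functions to $\partial^\gamma d-\partial^\gamma d(x_0)$ on a ball of radius comparable to $\dist(x_0,\partial U)$. Your approach is more elementary in that it avoids elliptic regularity entirely and is fully constructive; it also transports without change to settings where no nice PDE is available. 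The paper's approach is considerably shorter and yields the estimate in two lines once Schauder is taken for granted, but it hides the dependence on that black box.
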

\begin{proof}
We define $d$ as the only solution of 
\begin{align*}
\left\lbrace\begin{aligned}
-\Delta d &= 1 & & \text{in } U, \\
d &= 0 & & \text{on } \partial U.
\end{aligned}\right.
\end{align*}
The existence and uniqueness of $d$ is classical so let us directly go for \eqref{high derivatives of d}.
Let us consider $\gamma\in\N^n$, $|\gamma|=\intpart\beta$. Then 
\begin{align*}
\left\lbrace\begin{aligned}
-\Delta (\partial^\gamma d) &= 0 & & \text{in }U \\
\partial^\gamma d &\in C^{\beta-\intpart\beta}(\overline{U}).
\end{aligned}\right.
\end{align*}
Let $x_0\in U$ be arbitrary and $r=\dist(x_0,\partial U)/3$.
Using the interior estimates for harmonic functions (\textit{cf}.~\cite{gt}*{Theorem 2.10}) we get,
for any $j\in\N$
\begin{align*}
\sup_{x\in B_r(x_0)}\big|D^j\partial^\gamma d(x)\big|\leq 
\bigg(\frac{nj}{r}\bigg)^j \sup_{x\in B_{2r}(x_0)}\big|\partial^\gamma d(x)-\partial^\gamma d(x_0)\big|\leq
(nj)^j\,[\partial^\gamma d]_{C^{\beta-\intpart\beta}(\overline U)}\,r^{\beta-\intpart\beta-j}.
\end{align*}
\end{proof}

\begin{lemma}\label{lem:diffeo}
Let $U\subseteq\Rn$ be an open bounded domain with $\partial U\in C^\beta$, $\beta>1$, $\beta\not\in\N$, $0\in\partial U$.
Suppose that $-\nu(z)\cdot e_n>1/2$ and every $z\in\partial U\cap B_1$, where $\nu(z)$ denotes the outward unit normal vector to $\partial U$ at $z$. There exists a diffeomorphism $\phi:B_1\cap\{x_n \geq 0\}\to B_1\cap\overline{U}$ such that $\phi(B_1\cap\{x_n=0\})=\partial U$, $\phi(B_1\cap\{x_n>0\})=B_1\cap U$, $\phi\in C^\beta(B_1)\cap C^\infty(B_1\cap\{x_n>0\})$,
and satisfying~\eqref{diffeo}.
\end{lemma}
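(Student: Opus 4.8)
The plan is to obtain $\phi$ as a suitable mollification of the local graph representation of $\partial U$, much in the spirit of the regularized distance of Lemma \ref{lem:regularized distance}. First I would use the hypothesis $-\nu(z)\cdot e_n>\tfrac12$ on $\partial U\cap B_1$ to write, after a harmless rotation and dilation, $\partial U\cap B_1$ as a graph $\{x_n=g(x')\}$ with $g\in C^\beta$, $g(0)=0$ and $\|\nabla g\|_{L^\infty}\leq\sqrt3$, and $U\cap B_1=\{x_n>g(x')\}\cap B_1$. Then I fix a nonnegative mollifier $\eta\in C^\infty_c(\R^{n-1})$, $\int\eta=1$, whose first moment $m_1(\eta)=\int|y'|\,\eta(y')\,dy'$ is small enough that $\sqrt3\,m_1(\eta)<\tfrac12$ (possible by concentrating $\eta$ near the origin), set $\eta_t(y')=t^{-(n-1)}\eta(y'/t)$ for $t>0$,
\[
G(x',t):=(g*\eta_t)(x')\quad(t>0),\qquad G(x',0):=g(x'),
\]
and take as candidate $\phi(z',z_n):=\big(z',\,z_n+G(z',z_n)\big)$, later glued with the identity outside $B_1$ by a standard cut-off (using that $\partial U$ may be taken flat near $\partial B_1$).

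The second, and key, step is the derivative bookkeeping for $G$. For $t\neq0$ one has the scaling identity $\partial_t^kD^\gamma_{x'}\eta_t(y')=t^{-(n-1)-|\gamma|-k}\,\Xi_{\gamma,k}(y'/t)$ with $\Xi_{\gamma,k}\in C^\infty_c$ depending only on $\eta$, and the moment computation shows that $\Xi_{\gamma,k}$ has vanishing moments up to order $|\gamma|+k-1$. Hence, whenever $|\gamma|+k>\beta$ (so $|\gamma|+k-1\geq\intpart{\beta}$), subtracting from $g(x'-\cdot)$ its Taylor polynomial of degree $\intpart{\beta}$ and using $g\in C^\beta$ yields
\[
\big|\partial_t^kD^\gamma_{x'}G(x',t)\big|\ \leq\ C(\eta)\,[g]_{C^\beta}\,t^{\beta-|\gamma|-k},\qquad |\gamma|+k>\beta,
\]
while for $j\leq\intpart{\beta}$ one has the plain bounds $\|D^jG(\cdot,t)\|_{L^\infty}\leq\|g\|_{C^j}$, and $|G(x',t)-g(x')|\leq\|\nabla g\|_{L^\infty}m_1(\eta)\,t$ together with $|\partial_tG|\leq\|\nabla g\|_{L^\infty}m_1(\eta)$. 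The displayed decay at order $\intpart{\beta}+1$, combined with a standard prolongation argument as in Lemma \ref{lem:regularized distance} (or Lemma \ref{lem:prolongation Calpha}), gives $G\in C^\infty(\{t>0\})$ and shows that $G$ extends to a $C^\beta$ function up to $\{t=0\}$.

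With these estimates the verification of the properties of $\phi$ is routine. From $\phi(z',0)=(z',g(z'))$ we get $\phi(\{z_n=0\})\subseteq\partial U$; from $z_n+G(z',z_n)-g(z')\geq z_n-\|\nabla g\|_{L^\infty}m_1(\eta)\,z_n\geq\tfrac12z_n>0$ for $z_n>0$ we get $\phi(\{z_n>0\})\subseteq U$ and $\dist(\phi(z),\partial U)\approx z_n+G(z',z_n)-g(z')\approx z_n$. Since $\partial_{z_n}\big(z_n+G(z',z_n)\big)=1+\partial_tG\geq\tfrac12>0$, the map $t\mapsto t+G(z',t)$ is a strictly increasing homeomorphism of $[0,\infty)$; this makes $\phi$ injective, solving $z_n+G(x',z_n)=x_n$ for $(x',x_n)\in\overline U$ near $0$ makes it onto $B_1\cap\overline U$ (after the usual shrinking and rescaling of $B_1$), and $\det D\phi=1+\partial_tG>0$ makes it a $C^\infty$ diffeomorphism on $\{z_n>0\}$; the regularity up to the boundary is the $C^\beta$ statement for $G$. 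Finally, for $j>\beta$ (hence $j\geq\intpart{\beta}+1\geq2$) the components $z'$ and the linear term $z_n$ carry no derivatives, so $D^j\phi=D^jG$ and the decay estimate gives $|D^j\phi(z)|\leq C_j\,z_n^{\beta-j}\leq C_j\,d(\phi(z))^{\beta-j}$, which is \eqref{diffeo}. If one further wants the normalization $z_n=d(\phi(z))$ used in Section \ref{sec:Lds}, it suffices to post-compose $\phi$ with $(z',z_n)\mapsto(z',h(z',z_n))$, with $h$ defined implicitly by $d\big(\phi(z',h(z',z_n))\big)=z_n$; the implicit function theorem and the chain rule show $h$ has the same interior regularity and decay, so \eqref{diffeo} survives the composition.

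The step I expect to be the real technical heart is the second one, the sharp interior derivative bounds for $G$: the exponent $\beta-|\gamma|-k$ is precisely what \eqref{diffeo} demands, and it rests on using all the cancellation encoded in the vanishing moments of $\partial_t^kD^\gamma\eta_t$ together with the H\"older continuity of the top-order derivatives of $g$. Everything else — that $\phi$ is a genuine bijection onto $B_1\cap\overline U$, the $C^\beta$ regularity up to the flat boundary, and the gluing with the identity outside $B_1$ — is then standard, the only point requiring some care being the choice of a mollifier concentrated enough that $\|\nabla g\|_{L^\infty}m_1(\eta)<\tfrac12$, which is exactly where the normal condition $-\nu(z)\cdot e_n>\tfrac12$ enters.
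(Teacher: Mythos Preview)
Your argument is correct, but the paper takes a different and considerably shorter route. Instead of mollifying the graph function, the paper simply recycles the regularized distance $d$ already built in Lemma~\ref{lem:regularized distance}: it sets $\Phi(x)=(x_1,\ldots,x_{n-1},d(x))$ on $B_1\cap\overline U$, observes that $\det D\Phi=\partial_n d\neq0$ (this is where the normal condition $-\nu\cdot e_n>\tfrac12$ is used, since $\nabla d$ is parallel to $-\nu$ on $\partial U$), and then takes $\phi=\Phi^{-1}$. All the interior derivative bounds~\eqref{diffeo} are inherited directly from~\eqref{high derivatives of d} via the inverse function theorem and Fa\`a di Bruno, and the normalization $z_n=d(\phi(z))$ is automatic by construction rather than requiring your post-composition step.

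Your approach has the advantage of being self-contained (no PDE needs to be solved) and is a standard device in the boundary-regularity literature; the moment-vanishing computation you outline for $\partial_t^kD_{x'}^\gamma\eta_t$ is exactly the right mechanism, and your bookkeeping is accurate. The paper's approach trades this explicitness for brevity: once Lemma~\ref{lem:regularized distance} is in hand, the diffeomorphism is essentially a one-liner, and it delivers the identity $d(\phi(z))=z_n$ for free, which is used repeatedly in Section~\ref{sec:Lds}.
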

\begin{proof}
Recall the construction of $d$ given by Lemma \ref{lem:regularized distance}. Define 
\begin{align*}
\Phi: B_1 \cap \overline{U} & \longrightarrow B_1\cap \{x_n \geq 0\} \\
x & \longmapsto \big(x_1,\ldots,x_{n-1}, d(x) \big).
\end{align*}
Then $|\det D\Phi(x)|=|\nabla d(x)\cdot e_n|\neq 0$ for any $x\in B_1\cap U$.
Moreover, the derivatives of $\Phi$ inherit the boundary estimates from $d$ (see \eqref{high derivatives of d}).
Then consider $\phi=\Phi^{-1}$.
\end{proof}

\begin{lemma}\label{lem:incr-quot}
Let $\kappa:\R^n\setminus\{0\}\to\R$ be homogeneous of degree $-\gamma,\gamma\in\R$.
Then there exists $C>0$ such that for any $N\in\N$ and $x,h\in\R^n\setminus\{0\}$
%with $|h|<2|x|/N$ it holds
\begin{align*}
\left|\frac1{|h|^N}\Delta_h^N\kappa(x)\right|\leq C\;
\frac{{(|x|+|h|)}^{N(\gamma-1)}}{\prod_{i=0}^N\left|x+\big(\frac{N}2-i\big)h\right|^\gamma}.
\end{align*}
\end{lemma}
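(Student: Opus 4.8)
The plan is to prove the estimate by comparing the size of $|h|$ with that of $|x|$ and treating the two resulting regimes by different elementary tools. Since in all uses $\kappa$ is smooth away from the origin, I will freely use that each derivative $D^k\kappa$ is homogeneous of degree $-\gamma-k$, hence $|D^k\kappa(y)|\le A_k|y|^{-\gamma-k}$ for $y\neq 0$ with $A_k=\|D^k\kappa\|_{L^\infty(\Sn)}$; the constant $C$ below is accordingly allowed to depend on $N$ and on $\max_{0\le k\le N}A_k$. As a preliminary reduction one may assume that all $N+1$ evaluation points $x_i:=x+(\tfrac N2-i)h$, $i=0,\dots,N$, are nonzero, since otherwise the right-hand side is $+\infty$ and there is nothing to prove.

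In the regime $|h|\le |x|/N$, the whole segment $\{x+th:|t|\le\tfrac N2\}$ stays at distance at least $\tfrac12|x|$ from the origin, so $\kappa$ is smooth on a neighbourhood of it and I can use the integral (Taylor) representation of the centred finite difference,
\begin{align*}
\Delta_h^N\kappa(x)=\int_{[-1/2,1/2]^N} D^N\kappa\big(x+(\tau_1+\cdots+\tau_N)h\big)\big[h,\ldots,h\big]\;d\tau.
\end{align*}
Inserting $|D^N\kappa(x+\sigma h)|\le A_N(\tfrac12|x|)^{-\gamma-N}$ for $|\sigma|\le\tfrac N2$ yields $|h|^{-N}|\Delta_h^N\kappa(x)|\le C|x|^{-\gamma-N}$; and since in this regime $|x_i|\asymp|x|\asymp|x|+|h|$ for every $i$, the right-hand side of the asserted bound is itself comparable to $|x|^{N(\gamma-1)}/|x|^{(N+1)\gamma}=|x|^{-\gamma-N}$, which closes this case. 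In the complementary regime $|h|>|x|/N$ one has $|x|+|h|\asymp|h|$, hence $(|x|+|h|)^{N(\gamma-1)}\ge c\,|h|^{N(\gamma-1)}$, and $|x_i|\le|x|+\tfrac N2|h|\le C|h|$ for every $i$. Here I simply bound the finite difference term by term, $|\Delta_h^N\kappa(x)|\le A_0\sum_{i=0}^N\binom Ni|x_i|^{-\gamma}$, and observe that the desired inequality then reduces to $\sum_{i=0}^N\binom Ni\prod_{j\neq i}|x_j|^{\gamma}\le C|h|^{N\gamma}$, which is immediate from $|x_j|\le C|h|$ since each of these products has exactly $N$ factors.

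The only genuinely delicate point is the exponent bookkeeping that makes the two regime estimates glue to precisely the stated quantity $(|x|+|h|)^{N(\gamma-1)}\big/\prod_i|x_i|^\gamma$: one must recognize that near the line $x\parallel h$ the right-hand side is supposed to capture the mild blow-up produced by those $x_i$ that sit close to the origin, while the finite difference on the left stays finite because it only samples $\kappa$ at the $N+1$ discrete points $x_i$. Everything else — the triangle-inequality estimates on $|x_i|$, the validity of the integral representation on the relevant segment, and the homogeneity scaling of $D^k\kappa$ — is routine.
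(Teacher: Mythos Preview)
Your proof is correct for $\gamma\ge 0$ and follows the same two--regime strategy as the paper: split according to whether $|h|$ is small compared to $|x|$ (you use the threshold $|h|\le |x|/N$, the paper uses $2|h|\le |x|$), and in the complementary large--$|h|$ regime bound the finite difference term by term via the triangle inequality. Your treatment of the small--$|h|$ regime, using the iterated--integral representation $\Delta_h^N\kappa(x)=\int_{[-1/2,1/2]^N}D^N\kappa(x+(\tau_1+\cdots+\tau_N)h)[h,\ldots,h]\,d\tau$ together with the homogeneity bound $|D^N\kappa(y)|\le A_N|y|^{-\gamma-N}$, is the clean way to capture the necessary cancellation; the paper alludes to the same mechanism more tersely (``regularity of $|x|^{-\gamma}$ at points of $\partial B_1$'').

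One point to flag: in your Regime~2 the step ``$\prod_{j\neq i}|x_j|^{\gamma}\le C|h|^{N\gamma}$, immediate from $|x_j|\le C|h|$'' uses $\gamma\ge 0$; for $\gamma<0$ the inequality on $|x_j|^\gamma$ goes the wrong way. The paper likewise only writes out the case $\gamma\ge 0$ and defers $\gamma<0$ to a one-line remark, so your argument matches the level of detail there; just be aware that as literally stated the bound does not carry over to $\gamma<0$ without modification (indeed, if some $x_i$ is very close to $0$ the right-hand side of the lemma degenerates while the left-hand side need not). Also note that the constant $C$ you obtain depends on $N$ and on $\max_{k\le N}\|D^k\kappa\|_{L^\infty(\Sn)}$, which is how the lemma is used in the paper despite the ``for any $N$'' phrasing.
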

\begin{proof}
Write
\begin{align*}
& \frac{\prod_{j=0}^N\left|x+\big(\frac{N}2-j\big)h\right|^\gamma}{|h|^N}\left|\Delta_h^N\kappa(x)\right|
=\frac{\prod_{j=0}^N\left|x+\big(\frac{N}2-j\big)h\right|^\gamma}{|h|^N}\left|\sum_{i=0}^N(-1)^i\binom{N}{i}\;\kappa\bigg(x+\Big(\frac{N}2-i\Big)h\bigg)\right| \\
& =\ \frac{\prod_{j=0}^N\left|x+\big(\frac{N}2-j\big)h\right|^\gamma}{|h|^N}\left|\sum_{i=0}^N(-1)^i\binom{N}{i}\;\kappa\bigg(\lrangle{x+\Big(\frac{N}2-i\Big)h}\bigg)\Big|x+\Big(\frac{N}2-i\Big)h\Big|^{-\gamma}\right| \\
& \leq \ \frac{C}{|h|^N}\sum_{i=0}^N\Big|x+\Big(\frac{N}2-i\Big)h\Big|^{-\gamma} \prod_{j=0}^N\left|x+\Big(\frac{N}2-j\Big)h\right|^\gamma \\
& = \ C\frac{|x|^{\gamma N}}{|h|^N}\sum_{i=0}^N\Big|\lrangle{x}+\Big(\frac{N}2-i\Big)\frac{h}{|x|}\Big|^{-\gamma} \prod_{j=0}^N\left|\lrangle{x}+\Big(\frac{N}2-j\Big)\frac{h}{|x|}\right|^\gamma.
\end{align*}
Assume first that $\gamma\geq 0$. Suppose now that $|x|\leq 2|h|$, then
\begin{align*}
& \frac{\prod_{j=0}^N\left|x+\big(\frac{N}2-j\big)h\right|^\gamma}{|h|^N}\left|\Delta_h^N\kappa(x)\right|
\leq\ C\frac{|x|^{\gamma N}}{|h|^N}\sum_{i=0}^N\Big(1+\Big(\frac{N}2+i\Big)\frac{|h|}{|x|}\Big)^{-\gamma} \prod_{j=0}^N\left(1+\Big(\frac{N}2+j\Big)\frac{|h|}{|x|}\right)^\gamma \\
& \leq\ C\frac{|x|^{\gamma N}}{|h|^N}\bigg(1+\frac{|h|}{|x|}\bigg)^{\gamma N}
\leq C|h|^{(\gamma-1)N}.
\end{align*}
If, instead, $2|h|\leq|x|$ then
\begin{align*}
& \frac{\prod_{j=0}^N\left|x+\big(\frac{N}2-j\big)h\right|^\gamma}{|h|^N}\left|\Delta_h^N\kappa(x)\right|
\leq C\frac{|x|^{\gamma N}}{|h|^N}\sum_{i=0}^N\Big|\lrangle{x}+\Big(\frac{N}2-i\Big)\frac{h}{|x|}\Big|^{-\gamma} \prod_{j=0}^N\left(1+\Big(\frac{N}2+j\Big)\frac12\right)^\gamma \\
& \leq C|x|^{(\gamma-1)N}\frac{|x|^{N}}{|h|^N}\sum_{i=0}^N\Big|\lrangle{x}+\Big(\frac{N}2-i\Big)\frac{h}{|x|}\Big|^{-\gamma} 
\leq C|x|^{(\gamma-1)N} 
\end{align*}
where the last inequality is justified by the regularity of the function $x\mapsto|x|^{-\gamma}$ at points of~$\partial B_1$.

The proof for $\gamma<0$ follows by adapting the technical details of the above computations and we omit them here.
\end{proof}

\begin{lemma}\label{derivative-diagonal}
Let $U\subseteq\Rn$ be open and $f:U\times U\to\R$ be a function of class $C^q$, $q\in\N$. 
Then, for any multi-index $\alpha\in\N^n$, $|\alpha|\leq q$, and $x_0\in U$,
\begin{align*}
\partial_x^\alpha\big|_{x_0} f(x,x)
=\sum_{\gamma\leq\alpha}\binom{\alpha}{\gamma}
\partial_x^\gamma\big|_{x_0}\partial_y^{\alpha-\gamma}\big|_{x_0}f(x,y).
\end{align*}
\end{lemma}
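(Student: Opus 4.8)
The statement to prove is Lemma~\ref{derivative-diagonal}: for $f\in C^q(U\times U)$, a multi-index $\alpha$ with $|\alpha|\le q$, and $x_0\in U$,
\[
\partial_x^\alpha\big|_{x_0} f(x,x)=\sum_{\gamma\le\alpha}\binom{\alpha}{\gamma}\partial_x^\gamma\big|_{x_0}\partial_y^{\alpha-\gamma}\big|_{x_0}f(x,y).
\]

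My plan is to prove this by induction on $|\alpha|$, reducing the multivariable case to the one-variable chain rule plus the Leibniz rule. The base case $|\alpha|=0$ is trivial. For the inductive step, write $\alpha=\alpha'+e_i$ for some index $i$ with $\alpha'_i<\alpha_i$, so $|\alpha'|=|\alpha|-1<q$. Set $g(x):=f(x,x)$. The key elementary fact I would establish first is the ``single-variable'' diagonal derivative identity: for $h\in C^1(U\times U)$,
\[
\partial_{x_i}\big[h(x,x)\big]=\big(\partial_{x_i}h\big)(x,x)+\big(\partial_{y_i}h\big)(x,x),
\]
which is just the chain rule applied to $x\mapsto(x,x)$. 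Applying this with $h=\partial_x^{\alpha'}\partial_y^{\beta}f$ (legitimate since the relevant mixed partials up to order $q$ exist and are continuous, so they commute and the chain rule applies), one gets a recursion relating the diagonal evaluation of derivatives of order $|\alpha|$ to those of order $|\alpha|-1$.

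Concretely, by the inductive hypothesis applied to $\partial_{x_i}g$ — or more cleanly, by differentiating the inductive identity for $\alpha'$ — I would compute
\[
\partial_x^{\alpha}g(x_0)=\partial_{x_i}\Big[\partial_x^{\alpha'}g\Big](x_0)
=\sum_{\gamma\le\alpha'}\binom{\alpha'}{\gamma}\partial_{x_i}\Big[\big(\partial_x^{\gamma}\partial_y^{\alpha'-\gamma}f\big)(x,x)\Big]\Big|_{x_0},
\]
and then apply the single-variable diagonal rule to each summand, turning $\partial_{x_i}[(\partial_x^\gamma\partial_y^{\alpha'-\gamma}f)(x,x)]$ into $(\partial_x^{\gamma+e_i}\partial_y^{\alpha'-\gamma}f)(x,x)+(\partial_x^{\gamma}\partial_y^{\alpha'-\gamma+e_i}f)(x,x)$. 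This produces a sum over $\gamma\le\alpha'$ of two families of terms indexed by $\gamma+e_i$ and $\gamma$ respectively; reindexing and collecting, the coefficient of $\partial_x^{\delta}\partial_y^{\alpha-\delta}f(x_0,x_0)$ for a given $\delta\le\alpha$ becomes $\binom{\alpha'}{\delta-e_i}+\binom{\alpha'}{\delta}$, and Pascal's rule $\binom{\alpha'}{\delta-e_i}+\binom{\alpha'}{\delta}=\binom{\alpha'+e_i}{\delta}=\binom{\alpha}{\delta}$ (applied in the $i$-th coordinate, with the other coordinates of the binomials equal) yields exactly the claimed formula.

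The only genuinely nontrivial point — and hence the ``main obstacle'', though it is minor — is bookkeeping: making sure the reindexing of the two families is handled correctly at the boundary of the range of $\gamma$ (terms with $\delta_i=0$ or $\delta=\alpha$ in the $i$-th slot), where one of the two binomial contributions vanishes, and checking that the Leibniz/Pascal identity for multi-index binomials holds coordinatewise because the shift is only in the $i$-th coordinate. One must also be careful that all mixed partials invoked have order at most $q$ so Schwarz's theorem applies and the order of differentiation is immaterial; this is guaranteed since every $\partial_x^\gamma\partial_y^{\alpha-\gamma}f$ appearing has total order $|\alpha|\le q$. With these routine verifications, the induction closes and the lemma follows.
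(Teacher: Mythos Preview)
Your proof is correct and follows essentially the same approach as the paper: induction on $|\alpha|$, with the inductive step obtained by differentiating the identity for $\alpha'=\alpha-e_i$ in the $i$-th coordinate, applying the chain rule to the diagonal evaluation, and then collecting terms via Pascal's identity $\binom{\alpha'}{\delta-e_i}+\binom{\alpha'}{\delta}=\binom{\alpha}{\delta}$. The only cosmetic difference is that the paper starts the induction at $|\alpha|=1$ rather than $|\alpha|=0$.
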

\begin{proof}
If $|\alpha|=1$, then this follows by the chain rule applied to the composition $x\mapsto(x,x)\mapsto f(x,x)$.
If the claim holds for some multi-index $\alpha$, then, for any multi-index $e\in\N^n$ with $|e|=1$,
\begin{align*}
& \partial_x^{e+\alpha}\big|_{x_0} f(x,x)=
\partial_{x_0}^e\sum_{\gamma\leq\alpha}\binom{\alpha}{\gamma}
\partial_x^\gamma\big|_{x_0}\partial_y^{\alpha-\gamma}\big|_{x_0}f(x,y) \\
& =\ \sum_{\gamma\leq\alpha}\binom{\alpha}{\gamma}\left(
\partial_x^{e+\gamma}\big|_{x_0}\partial_y^{\alpha-\gamma}\big|_{x_0}f(x,y)
+\partial_x^{\gamma}\big|_{x_0}\partial_y^{e+\alpha-\gamma}\big|_{x_0}f(x,y) \right) \\
& =\ \sum_{\gamma\leq\alpha+e}\left(\binom{\alpha}{\gamma-e}+\binom{\alpha}{\gamma}\right)
\partial_x^{\gamma}\big|_{x_0}\partial_y^{\alpha-\gamma}\big|_{x_0}f(x,y)
=\sum_{\gamma\leq\alpha+e}\binom{\alpha+e}{\gamma}
\partial_x^{\gamma}\big|_{x_0}\partial_y^{\alpha-\gamma}\big|_{x_0}f(x,y)
\end{align*}
where we have used identity $\binom{\alpha}{\gamma-e}+\binom{\alpha}{\gamma}=\binom{\alpha+e}{\gamma}$.
\end{proof}

\begin{lemma}\label{lem:super chain rule}
Let $U,V\subseteq\Rn$ be open and $g:U\to V$, $f:V\to\Rn$ be two functions of class $C^q$, $q\in\N$.
Then, for any $j\leq q$, there exists constants $\{c_{q,j,k_1,\ldots,k_{q-j}}:k_1+\ldots+k_{q-j}=j\}$ for which it holds
\begin{align*}
D^q(f\circ g)=\sum_{j=0}^{q-1}\sum_{k_1+\ldots+k_{q-j}=j}
c_{q,j,k_1,\ldots,k_{q-j}}
% \binom{j}{k_1,\ldots,k_{q-j}}
(D^{q-j}f)(g)\left[ D^{1+k_1}g,\ldots,D^{1+k_{q-j}}g \right]
\end{align*}
\end{lemma}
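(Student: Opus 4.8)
The plan is to prove the identity by induction on $q$, the point being that the statement is purely structural: it asserts that $D^q(f\circ g)$ is a finite linear combination of \emph{elementary terms} of the form $(D^{q-j}f)(g)\,[D^{1+k_1}g,\ldots,D^{1+k_{q-j}}g]$ with $k_1+\ldots+k_{q-j}=j$ and $0\le j\le q-1$. Thus it suffices to check that this class of expressions is stable under one further differentiation and that the bookkeeping of the indices is preserved; the precise values of the constants $c_{q,j,k_1,\ldots,k_{q-j}}$ (which are the multinomial coefficients appearing in Fa\`a di Bruno's formula) play no role here and need not be tracked.

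For $q=1$ the formula is just the chain rule $D(f\circ g)=(Df)(g)[Dg]$, corresponding to $j=0$, $q-j=1$, $k_1=0$. For the inductive step I would assume the formula at order $q$ and differentiate both sides. A generic summand on the right has the shape $c\,(D^{q-j}f)(g)\,[D^{1+k_1}g,\ldots,D^{1+k_{q-j}}g]$, and by the Leibniz rule its derivative is a sum of $q-j+1$ pieces. The piece in which the derivative hits $(D^{q-j}f)(g)$ gives, by the chain rule, $(D^{q-j+1}f)(g)\,[Dg,D^{1+k_1}g,\ldots,D^{1+k_{q-j}}g]$: writing $q+1$ in place of $q$, this is an elementary term with the same parameter $j$ (since $(q+1)-j=(q-j)+1$) and with multilinear slots of orders $0,k_1,\ldots,k_{q-j}$ summing to $j$. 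The piece in which the derivative hits a factor $D^{1+k_i}g$ turns it into $D^{1+(k_i+1)}g$, producing an elementary term with $q-j=(q+1)-(j+1)$ derivatives of $f$ and slot orders summing to $j+1$, i.e.\ with parameter $j'=j+1$. As $j$ ranges over $\{0,\ldots,q-1\}$ at order $q$, the parameters produced at order $q+1$ range over $\{0,\ldots,q-1\}\cup\{1,\ldots,q\}=\{0,\ldots,q\}$, which is precisely the admissible range at order $q+1$. Collecting the finitely many terms of each type and adding their (integer) coefficients yields the desired constants $c_{q+1,j,k_1,\ldots,k_{q+1-j}}$, and the regularity assumption $f,g\in C^{q+1}$ ensures that all derivatives written above exist and the identities hold pointwise.

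The only thing requiring care is the combinatorial bookkeeping --- verifying that the number of slots $q-j$, the total order spent on $g$, namely $\sum_i k_i=j$, and the overall order $q$ transform consistently when one differentiates, and that the index range for $j$ closes up correctly. There is no analytic obstacle; this is the main (and essentially the only) step of the argument.
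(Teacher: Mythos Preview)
Your proposal is correct and follows essentially the same approach as the paper: the paper's proof verifies the base case $q=1$ via the chain rule, writes out the cases $q=2,3$ explicitly, and then states that the general formula ``follows by iteratively applying the chain and the product rules,'' which is precisely the inductive step you have carried out in detail. Your account of the index bookkeeping under one further differentiation simply makes explicit what the paper leaves to the reader.
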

\begin{proof}
For $q=1$ the claim simply follows by the chain rule. For $q=2$ and $q=3$ we respectively have
\begin{align*}
D^2(f\circ g) &= D\big(Df(g)\,Dg\big)=(D^2 f)(g)\,[Dg,Dg]+(Df)(g)\, D^2g, \\
D^3(f\circ g) &= D\big(Df(g)\,Dg\big)=(D^3 f)(g)\,[Dg,Dg,Dg]+3(D^2 f)(g)\,[D^2g,Dg]+(Df)(g)\,D^3g.
\end{align*}
The general formula follows by iteratively applying the chain and the product rules.
%If we proceed by induction
%\begin{align*}
%& D^q(f\circ g)=D \left\{ \sum_{j=0}^{q-2}\binom{q-1}{j}\sum_{\overset{k_1+\ldots+k_{q-1-j}=q-1}{k_1,\ldots,k_{q-1-j}>0}}\binom{j}{k_1,\ldots,k_{q-1-j}}
%D^{q-1-j}f(g)\left[ D^{k_1}g,\ldots,D^{k_{q-1-j}}g \right] \right\} \\
%& =\ \sum_{j=0}^{q-2}\binom{q-1}{j}\sum_{\overset{k_1+\ldots+k_{q-1-j}=q-1}{k_1,\ldots,k_{q-1-j}>0}}\binom{j}{k_1,\ldots,k_{q-1-j}}
%D^{q-j}f(g)\left[ D^{k_1}g,\ldots,D^{k_{q-1-j}}g, Dg \right] \\
%& \quad +\sum_{j=0}^{q-2}\binom{q-1}{j}\sum_{\overset{k_1+\ldots+k_{q-1-j}=q}{k_1,\ldots,k_{q-1-j}>0}}\binom{j}{k_1,\ldots,k_{q-1-j}}
%D^{q-1-j}f(g)\left[ D^{k_1}g,\ldots,D^{k_{q-1-j}}g \right] \\
%& =\ \sum_{j=0}^{q-2}\binom{q-1}{j}\sum_{\overset{1+k_1+\ldots+k_{q-1-j}=q}{k_1,\ldots,k_{q-1-j}>0}}\binom{j}{1,k_1,\ldots,k_{q-1-j}}
%D^{q-j}f(g)\left[ D^{k_1}g,\ldots,D^{k_{q-1-j}}g, Dg \right] \\
%& \quad +\sum_{j=1}^{q-1}\binom{q-1}{j-1}\sum_{\overset{k_1+\ldots+k_{q-j}=q}{k_1,\ldots,k_{q-j}>0}}\binom{j-1}{k_1,\ldots,k_{q-j}}
%D^{q-j}f(g)\left[ D^{k_1}g,\ldots,D^{k_{q-j}}g \right] \\
%& =\ \sum_{j=0}^{q-1}\left(\binom{q-1}{j}+\binom{q-1}{j-1}\right)\sum_{\overset{k_1+\ldots+k_{q-j}=q}{k_1,\ldots,k_{q-j}>0}}\binom{j}{k_1,\ldots,k_{q-j}}
%D^{q-j}f(g)\left[ D^{k_1}g,\ldots,D^{k_{q-j}}g \right] \\
%& =\ \sum_{j=0}^{q-1}\binom{q}{j}\sum_{k_1+\ldots+k_{q-j}=j}\binom{j}{k_1,\ldots,k_{q-j}}
%D^{q-j}f(g)\left[ D^{k_1}g,\ldots,D^{k_{q-j}}g \right]
%\end{align*}
%where we have used identity $\binom{q}{j}=\binom{q-1}{j}+\binom{q-1}{j-1}$.
\end{proof}

\begin{lemma}\label{lem:prolongation Calpha}
Let $\alpha>0$, $f\in C^{\alpha}([0,1])$, $p$ a polynomial of degree at most $\intpart{\alpha}$, and $r\in(0,1)$.
Suppose that there exists $c>0$ such that
\begin{align}\label{14313}
\big|f(t)-p(t)\big|\leq c_0r^{\alpha}, \qquad \text{for }t\in[0,r].
\end{align}
Then
\begin{align*}
\big|f(t)-p(t)\big|\leq C\left(c_0+\|f\|_{C^\alpha([0,1])}\right) \, t^q \, r^{\alpha-q}, \qquad \text{for }t\in[r,1],\ q\geq\alpha.
\end{align*}
\end{lemma}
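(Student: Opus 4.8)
The plan is to reduce the statement to elementary estimates for polynomials. The naive idea of applying Schauder-type reasoning to $f-p$ directly does not work: although $f-p\in C^\alpha([0,1])$, its $C^\alpha$ norm is \emph{not} controlled by $c_0$ and $\|f\|_{C^\alpha([0,1])}$, since the coefficients of $p$ are only bounded like $r^{-k}$ as $r\downarrow 0$. The remedy is to compare \emph{both} $f$ and $p$ with the Taylor polynomial of $f$ at the origin, and then to exploit the sign of the exponents for $t\ge r$.

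Concretely, I would set $m:=\lfloor\alpha\rfloor$ and let $T$ be the Taylor polynomial of $f$ at $0$ of degree $m$ (of degree $m-1$ when $\alpha\in\N$); in all cases $\deg T\le m$, and Taylor's theorem together with the Hölder (resp.\ Lipschitz) bound on the top derivative of $f$ gives $|f(t)-T(t)|\le C\,\|f\|_{C^\alpha([0,1])}\,t^\alpha$ for $t\in[0,1]$. Next I would consider $P:=p-T$, a polynomial of degree at most $m$. On $[0,r]$, the hypothesis \eqref{14313} and the Taylor estimate yield
\[
|P(t)|\le |p(t)-f(t)|+|f(t)-T(t)|\le c_0 r^\alpha+C\|f\|_{C^\alpha([0,1])}t^\alpha\le C_0\,r^\alpha,\qquad t\in[0,r],
\]
with $C_0:=C\big(c_0+\|f\|_{C^\alpha([0,1])}\big)$. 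Writing $P(t)=\sum_{k=0}^m b_k t^k$, a rescaled version of the standard bound relating the coefficients of a polynomial of degree $\le m$ to its supremum on an interval (\textit{cf.} Lemma \ref{lem:poly}) gives $|b_k|\,r^k\le C_m C_0\,r^\alpha$, i.e.\ $|b_k|\le C_m C_0\,r^{\alpha-k}$. Hence, for $t\in[r,1]$, using $t/r\ge 1$ and $0\le k\le m$,
\[
|P(t)|\le\sum_{k=0}^m|b_k|\,t^k\le C_m C_0\,r^\alpha\sum_{k=0}^m (t/r)^k\le C_m C_0\,r^\alpha (t/r)^m=C_m C_0\,r^{\alpha-m}t^m.
\]

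To conclude, for $t\in[r,1]$ and any $q\ge\alpha$ I would combine the two bounds:
\[
|f(t)-p(t)|\le |f(t)-T(t)|+|P(t)|\le C\|f\|_{C^\alpha([0,1])}t^\alpha+C_m C_0\,r^{\alpha-m}t^m,
\]
and observe that both terms are dominated by $C C_0\,t^q r^{\alpha-q}$: indeed $t^\alpha=t^q t^{\alpha-q}\le t^q r^{\alpha-q}$ since $\alpha-q\le 0$ and $t\ge r$, while $r^{\alpha-m}t^m=t^q r^{\alpha-q}(r/t)^{q-m}\le t^q r^{\alpha-q}$ since $q\ge\alpha\ge m$ and $r/t\le 1$. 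This gives the claimed estimate, with $C=C(\alpha)$. There is no genuine difficulty here — the argument is purely elementary and one-dimensional; the only point that requires a moment's care is the realization that one must pass through the Taylor polynomial of $f$ rather than work with $f-p$, after which everything follows from scaling, the finite-dimensional coefficient bound for polynomials, and the monotonicity of $t\mapsto t^{\,(\text{nonpositive exponent})}$.
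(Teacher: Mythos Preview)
Your proof is correct and follows essentially the same approach as the paper: both arguments introduce the Taylor polynomial of $f$ at $0$ (the paper writes it as $\sum_{i=0}^{\lfloor\alpha\rfloor} a_i t^i$), bound the coefficients of $p-T$ via the hypothesis on $[0,r]$ and the finite-dimensional norm equivalence of Lemma~\ref{lem:poly}, and then combine $|f-T|\le C\|f\|_{C^\alpha}t^\alpha$ with the coefficient bounds to conclude. Your verification of the final inequality (that both $t^\alpha$ and $r^{\alpha-m}t^m$ are dominated by $t^q r^{\alpha-q}$ for $t\ge r$ and $q\ge\alpha$) is slightly more explicit than the paper's, but the argument is the same.
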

\begin{proof}
Let us write
\begin{align*}
f(t)=\sum_{i=0}^{\intpart\alpha}a_it^i+e(t),
\qquad |e(t)|\leq \|f\|_{C^\alpha([0,1])}t^\alpha,
\qquad t\in[0,1].
\end{align*}
Then by \eqref{14313} it follows, by reverse triangle inequality,
\begin{align*}
\left|p(t)-\sum_{i=0}^{\intpart\alpha}a_it^i\right|\leq \left(c_0+\|f\|_{C^\alpha([0,1])}\right)r^\alpha,
\qquad \text{for } t\in[0,r],
\end{align*}
which implies 
\begin{align*}
|p^{(i)}-a_i|\leq C\left(c_0+\|f\|_{C^\alpha([0,1])}\right)r^{\alpha-i}
\qquad i=0,\ldots,\intpart\alpha,
\end{align*}
see Lemma \ref{lem:poly}.
Therefore
\begin{align*}
\big|f(t)-p(t)\big| & \leq \left|p(t)-\sum_{i=0}^{\intpart\alpha}a_i t^i\right|+|e(t)|
\leq C\left(c_0+\|f\|_{C^\alpha([0,1])}\right)\sum_{i=0}^{\intpart\alpha}r^{\alpha-i}t^i 
+\|f\|_{C^\alpha([0,1])}t^{\alpha} \\
& \leq C\left(c_0+\|f\|_{C^\alpha([0,1])}\right)\, t^q \, r^{\alpha-q}
\qquad \text{for } t\in[r,1].
\end{align*}
\end{proof}

\begin{lemma}\label{lem:regularity of incremental quotients}
Let $f\in C^\alpha([0,1])$, $\alpha>1$. Then, for $g$ defined as
\begin{align*}
g(t):=\frac{f(t)-f(0)}{t}, \qquad t\in[0,1],
\end{align*}
it holds
\begin{align*}
\|g\|_{C^{\alpha-1}([0,1])}\leq\|f\|_{C^\alpha([0,1])}.
\end{align*}
\end{lemma}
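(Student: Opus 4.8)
The plan is to reduce everything to the elementary identity
\begin{align*}
g(t)=\frac{f(t)-f(0)}{t}=\int_0^1 f'(st)\,ds,\qquad t\in(0,1],
\end{align*}
which follows from the fundamental theorem of calculus and the substitution $r=st$, and which extends continuously to $t=0$ with $g(0)=f'(0)$. Write $\alpha=k+\gamma$ with $k=\intpart\alpha\ge 1$ and $\gamma=\alpha-k\in[0,1)$. Since $f\in C^k([0,1])$, one may differentiate under the integral sign --- this is legitimate because $f^{(k)}$ is continuous, hence uniformly continuous, on the compact interval $[0,1]$, so dominated convergence applies --- obtaining, for every $j\in\{0,\dots,k-1\}$,
\begin{align*}
g^{(j)}(t)=\int_0^1 s^j\,f^{(j+1)}(st)\,ds,\qquad t\in[0,1].
\end{align*}
In particular $g\in C^{k-1}([0,1])$, and $\|g^{(j)}\|_{L^\infty([0,1])}\le\frac1{j+1}\,\|f^{(j+1)}\|_{L^\infty([0,1])}$ for $0\le j\le k-1$.

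If $\gamma\in(0,1)$, it then remains to control the Hölder seminorm of the highest derivative. For $0\le t_1<t_2\le 1$ the representation above gives
\begin{align*}
g^{(k-1)}(t_2)-g^{(k-1)}(t_1)=\int_0^1 s^{k-1}\big(f^{(k)}(st_2)-f^{(k)}(st_1)\big)\,ds,
\end{align*}
and bounding the integrand by $[f^{(k)}]_{C^\gamma([0,1])}\,s^{\gamma}(t_2-t_1)^{\gamma}$ and integrating $s^{k-1+\gamma}$ over $(0,1)$ yields
\begin{align*}
\big|g^{(k-1)}(t_2)-g^{(k-1)}(t_1)\big|\le\frac{[f^{(k)}]_{C^\gamma([0,1])}}{k+\gamma}\,(t_2-t_1)^{\gamma}=\frac{[f^{(k)}]_{C^\gamma([0,1])}}{\alpha}\,(t_2-t_1)^{\gamma},
\end{align*}
that is, $[g^{(k-1)}]_{C^\gamma([0,1])}\le\frac1\alpha\,[f^{(k)}]_{C^\gamma([0,1])}$.

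Finally I would collect the estimates. Since $\alpha>1$, all the constants $\tfrac1{j+1}$ (for $0\le j\le k-1$) and $\tfrac1\alpha$ are $\le 1$, hence
\begin{align*}
\|g\|_{C^{\alpha-1}([0,1])}=\sum_{j=0}^{k-1}\|g^{(j)}\|_{L^\infty([0,1])}+[g^{(k-1)}]_{C^\gamma([0,1])}\le\sum_{m=1}^{k}\|f^{(m)}\|_{L^\infty([0,1])}+[f^{(k)}]_{C^\gamma([0,1])}\le\|f\|_{C^\alpha([0,1])},
\end{align*}
which is the assertion; when $\gamma=0$ the same computation applies, simply dropping the last seminorm term. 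There is no genuine obstacle in this proof: the only step deserving a line of justification is the interchange of differentiation and integration, which is routine given the continuity of $f^{(k)}$ on $[0,1]$.
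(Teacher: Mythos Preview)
Your proof is correct and follows essentially the same approach as the paper: both use the integral representation $g(t)=\int_0^1 f'(st)\,ds$ and differentiate under the integral sign. Your version is in fact more careful --- you correctly record the factor $s^j$ in $g^{(j)}(t)=\int_0^1 s^j f^{(j+1)}(st)\,ds$, which the paper's sketch omits (writing $g^{(k)}(t)=\int_0^1 f^{(k+1)}(\eta t)\,d\eta$ without the $\eta^k$), though since $s^j\le 1$ the paper's stated bounds are still valid.
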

\begin{proof}
Write
\begin{align*}
g(t)=\int_0^1 f'(\eta t)\;d\eta, \qquad t\in[0,1],
\end{align*}
and for any $k\in\{1,\ldots,\intpart\alpha-1\}$
\begin{align*}
g^{(k)}(t)=\int_0^1 f^{(k+1)}(\eta t)\;d\eta, \qquad t\in[0,1].
\end{align*}
Then $\|g^{(k)}\|_{L^\infty([0,1])}\leq\|f^{(k+1)}\|_{L^\infty([0,1])}$ for any $k\in\{1,\ldots,\intpart\alpha-1\}$ and 
\begin{align*}
\left|\frac{g^{(\intpart\alpha-1)}(t_1)-g^{(\intpart\alpha-1)}(t_2)}{t_1-t_2}\right|
\leq \int_0^1 \left| \frac{f^{(\intpart\alpha)}(\eta t_1)-f^{(\intpart\alpha)}(\eta t_2)}{t_1-t_2} \right| d\eta,
\qquad t_1,t_2\in[0,1].
\end{align*}
\end{proof}

\begin{lemma}\label{scaling of integral}
Let $a,b,r>0$ and $x_0\in B_1$ be such that $B_r(x_0)\subseteq B_1$ and $(x_0)_n>2r$. Then
there exists $c>0$ such that, for every $x\in B_{r/2}(x_0)$, it holds
\begin{align*}
\int_{B_1\setminus B_r(x_0)}(y_n)_+^{s-1}\big|y-x\big|^{-n-a}dy
& \leq cr^{s-1-a}  \qquad \text{and}  \\
\int_{B_r(x_0)}(y_n)_+^{s-1}\big|y-x\big|^{-n+b}dy
& \leq cr^{s-1+b}.
\end{align*}
\end{lemma}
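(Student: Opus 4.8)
The plan is to prove the two inequalities separately by elementary estimates, the only point being to keep track of the two sources of singularity: the weight $(y_n)_+^{s-1}$ near $\{y_n=0\}$ and the kernel near $y=x$. Two facts will be used throughout: since $x\in B_{r/2}(x_0)$ and $B_r(x_0)\subseteq B_1$, one has $x\in B_1$ and $x_n>(x_0)_n-\tfrac r2>\tfrac{3r}2$; and for every $y\in B_1\setminus B_r(x_0)$ one has $|y-x|\ge|y-x_0|-|x-x_0|>\tfrac r2$ and $|y-x|\le|y|+|x|<2$. The second inequality is then immediate: on $B_r(x_0)$ the hypothesis $(x_0)_n>2r$ forces $y_n\ge(x_0)_n-r>r>0$, hence $(y_n)_+=y_n>r$ and, since $s-1<0$, $(y_n)_+^{s-1}<r^{s-1}$; moreover $B_r(x_0)\subseteq B_{3r/2}(x)$, so that
\begin{align*}
\int_{B_r(x_0)}(y_n)_+^{s-1}\,|y-x|^{-n+b}\,dy \ \le\ r^{s-1}\int_{B_{3r/2}(x)}|y-x|^{-n+b}\,dy \ =\ c\,r^{s-1}\Big(\tfrac{3r}2\Big)^{b},
\end{align*}
the last integral being convergent at $y=x$ precisely because $b>0$.

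For the first inequality I would write $E:=B_1\setminus B_r(x_0)$ and split it according to whether $y_n\ge r$ or $0<y_n<r$, the set $\{y_n\le0\}$ contributing nothing. On $E\cap\{y_n\ge r\}$ we use $(y_n)_+^{s-1}=y_n^{s-1}\le r^{s-1}$ and integrate the remaining factor over $E$; since $\tfrac r2\le|y-x|<2$ there,
\begin{align*}
\int_{E\cap\{y_n\ge r\}}(y_n)_+^{s-1}\,|y-x|^{-n-a}\,dy \ \le\ r^{s-1}\int_{\{r/2\le|z|<2\}}|z|^{-n-a}\,dz \ \le\ c\,r^{s-1}r^{-a},
\end{align*}
using $a>0$ for the last step. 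On $E\cap\{0<y_n<r\}$ we instead note $x_n-y_n>\tfrac{3r}2-r=\tfrac r2>0$, hence $|y-x|\ge x_n-y_n>\tfrac r2$ uniformly in the tangential variables $y'$; freezing $y_n$, enlarging the $y'$–domain to $\R^{n-1}$ and substituting $y'=x'+(x_n-y_n)w$ gives, for $n\ge2$,
\begin{align*}
\int_{\R^{n-1}}\big(|y'-x'|^2+(x_n-y_n)^2\big)^{-\frac{n+a}{2}}\,dy' \ =\ c_{n,a}\,(x_n-y_n)^{-1-a},\qquad c_{n,a}:=\int_{\R^{n-1}}\big(|w|^2+1\big)^{-\frac{n+a}{2}}\,dw<\infty
\end{align*}
(finiteness from $a>-1$; for $n=1$ one just uses $|y-x|^{-1-a}\le(r/2)^{-1-a}$). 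Using $x_n-y_n>\tfrac r2$ once more and then $\int_0^r y_n^{s-1}\,dy_n=\tfrac{r^{s}}{s}$ (convergent since $s>0$),
\begin{align*}
\int_{E\cap\{0<y_n<r\}}(y_n)_+^{s-1}\,|y-x|^{-n-a}\,dy \ \le\ c_{n,a}\Big(\tfrac r2\Big)^{-1-a}\int_0^r y_n^{s-1}\,dy_n \ =\ c\,r^{-1-a}r^{s}.
\end{align*}
Adding the two contributions yields $\int_E(y_n)_+^{s-1}|y-x|^{-n-a}\,dy\le c\,r^{s-1-a}$, which is the claim.

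The only step that requires attention is the region $E\cap\{0<y_n<r\}$: one must integrate in the tangential directions $y'$ \emph{before} integrating in $y_n$. Indeed, the naive pointwise bound $|y-x|^{-n-a}\le(x_n-y_n)^{-n-a}\le(r/2)^{-n-a}$ would only produce the weaker power $r^{s-n-a}$, whereas performing the $y'$–integral first recovers the sharp decay $(x_n-y_n)^{-1-a}$ of the inner integral and hence the correct exponent $s-1-a$. Everything else is routine.
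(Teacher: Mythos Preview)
Your proof is correct. The second inequality is handled identically to the paper (bound $(y_n)_+^{s-1}\le r^{s-1}$ on $B_r(x_0)$ and integrate the remaining kernel). For the first inequality, however, your route differs from the paper's: the paper performs the single change of variable $y=r\eta$ to pull out the factor $r^{s-1-a}$ and then asserts that the rescaled integral
\[
\int_{B_{1/r}\setminus B_1(x_0/r)}(\eta_n)_+^{s-1}\big|\eta-\tfrac{x}{r}\big|^{-n-a}d\eta
\]
is bounded uniformly in $r$, $x$, $x_0$ (the singularity at $\eta=x/r$ being excised by a ball of radius $1/2$ and the decay at infinity coming from $s-1-a<0$). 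Your argument instead avoids any rescaling and splits $B_1\setminus B_r(x_0)$ into $\{y_n\ge r\}$ and $\{0<y_n<r\}$, using the explicit tangential integral in the slab. This is slightly longer but entirely self-contained, and it makes transparent the integrability near $\{y_n=0\}$ that the paper's version leaves implicit; conversely, the scaling argument has the advantage of immediately explaining \emph{why} the exponent $s-1-a$ is the natural one.
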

\begin{proof}
We proceed by applying the change of variable $y=r\eta$ in order to deduce
\begin{multline*}
\int_{B_1\setminus B_r(x_0)}(y_n)_+^{s-1}\big|y-x\big|^{-n-a}dy = 
r^{s-1-a}\int_{B_{1/r}\setminus B_1(x_0/r)}(\eta_n)_+^{s-1}\left|\eta-\frac{x}r\right|^{-n-a}d\eta\leq \\
Cr^{s-1-a}\int_{\Rn\setminus B_{1/2}}(\eta_n)_+^{s-1}\left|\eta\right|^{-n-a}d\eta
\end{multline*}
and 
\begin{align*}
\int_{B_r(x_0)}(y_n)_+^{s-1}\big|y-x\big|^{-n+b}dy\leq Cr^{s-1}\int_{B_r(x_0)}\big|y-x\big|^{-n+b}dy\leq Cr^{s-1+b}.
\end{align*}
\end{proof}

\begin{lemma}\label{lem:poly} 
For some $\ell\in\N$, let $Q\in\mathbf{P}_\ell$ with
\begin{align*}
Q(x)=\sum_{|\alpha|\leq\ell}q_{\alpha}\,x^\alpha.
\end{align*} 
Let $U\subseteq\Rn$ be a bounded domain. Then there exists a constant $c=c(\ell,U)>0$ such that
\begin{align}\label{90e832}
\frac1{c} \|Q\|_{L^\infty(U)}\leq\sum_{|\alpha|\leq\ell}|q_{\alpha}|\leq c \|Q\|_{L^\infty(U)}.
\end{align}
\end{lemma}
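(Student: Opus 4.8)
The plan is to view \eqref{90e832} as the standard equivalence of norms on the finite-dimensional vector space $\mathbf{P}_\ell$ of polynomials of degree at most $\ell$ in $n$ variables, whose dimension is $\binom{n+\ell}{n}$. Indeed, $Q\mapsto\sum_{|\alpha|\leq\ell}|q_\alpha|$ is visibly a norm on $\mathbf{P}_\ell$ (it is the $\ell^1$-norm of the coefficient vector), and I will check below that $Q\mapsto\|Q\|_{L^\infty(U)}$ is a norm as well; the claim then follows from the fact that all norms on a finite-dimensional space are equivalent, with the equivalence constant depending only on $\ell$ (equivalently on $n$ and $\ell$) and on $U$.

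First I would dispose of the upper bound in \eqref{90e832}, which is elementary and does not use finite-dimensionality: since $U$ is bounded, $C_U:=\max_{|\alpha|\leq\ell}\sup_{x\in U}|x^\alpha|$ is finite, and for every $x\in U$ one has $|Q(x)|\leq\sum_{|\alpha|\leq\ell}|q_\alpha|\,|x^\alpha|\leq C_U\sum_{|\alpha|\leq\ell}|q_\alpha|$, hence $\|Q\|_{L^\infty(U)}\leq C_U\sum_{|\alpha|\leq\ell}|q_\alpha|$. Applied to a difference $Q_1-Q_2$, this same estimate also shows that $Q\mapsto\|Q\|_{L^\infty(U)}$ is continuous as a function of the coefficient vector.

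For the lower bound I would argue by compactness. First, $\|\cdot\|_{L^\infty(U)}$ is a seminorm on $\mathbf{P}_\ell$, and it is nondegenerate: if $\|Q\|_{L^\infty(U)}=0$, then $Q$ vanishes on the nonempty open set $U$, and a polynomial vanishing on a nonempty open set is identically zero, so $Q\equiv0$; thus $\|\cdot\|_{L^\infty(U)}$ is a norm on $\mathbf{P}_\ell$. Next, consider the compact set $S:=\{Q\in\mathbf{P}_\ell:\sum_{|\alpha|\leq\ell}|q_\alpha|=1\}$; the continuous function $Q\mapsto\|Q\|_{L^\infty(U)}$ is strictly positive on $S$ (since $0\notin S$), hence attains a minimum $m>0$ there. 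By homogeneity, $\|Q\|_{L^\infty(U)}\geq m\sum_{|\alpha|\leq\ell}|q_\alpha|$ for every $Q\in\mathbf{P}_\ell$, which is precisely the lower bound. Taking $c:=\max\{C_U,1/m\}$ then yields \eqref{90e832}.

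The argument is entirely routine, so there is no genuine obstacle; the only point deserving a word is the nondegeneracy of $\|\cdot\|_{L^\infty(U)}$ on $\mathbf{P}_\ell$, which rests solely on the fact that a domain has nonempty interior together with the identity principle for polynomials.
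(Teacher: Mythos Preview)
Your argument is correct and follows exactly the paper's approach: both quantities are norms on the finite-dimensional space $\mathbf{P}_\ell$, so they are equivalent. You have simply spelled out the details (nondegeneracy of $\|\cdot\|_{L^\infty(U)}$ and the compactness argument) that the paper leaves implicit.
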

\begin{proof}  
As both the expressions in \eqref{90e832} are norms on $\mathbf{P}_\ell$,
then the claim follows by the equivalence of all norms in finite dimensional
vector spaces.
\end{proof}

\begin{lemma}\label{lem:poly2}
Let $U\subset\Rn$ be a bounded domain such that $0\in\partial U$.
Let $a\in\R$, $f\in L^\infty(U)$ with $f\geq c_2$ in $U$ for some $c_2>0$, 
$Q\in\mathbf{P}_k$ such that $Q(0)=0$, $\theta,r>0$ such that
\begin{align*}
\|af+Q\|_{L^\infty(\{\dist(\cdot,\partial U)>r\}}\leq\theta.
\end{align*} 
Then 
\begin{align*}
|a|+\|Q\|_{L^\infty(U)}\leq C(1+\|f\|_{L^\infty(U)})\theta,
\end{align*}
with $C=C(n,k,U,c_2)$.
\end{lemma}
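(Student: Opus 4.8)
The plan is to argue by contradiction and compactness, in the same spirit as Lemma~\ref{lem:tildep}. Suppose the statement fails; then there exist sequences $(a_m)_{m\in\N}\subseteq\R$, $(f_m)_{m\in\N}\subseteq L^\infty(U)$ with $f_m\geq c_2$, $(Q_m)_{m\in\N}\subseteq\mathbf{P}_k$ with $Q_m(0)=0$, and $(\theta_m)_{m\in\N}\subseteq(0,\infty)$, $(r_m)_{m\in\N}\subseteq(0,\infty)$ such that
\[
\|a_mf_m+Q_m\|_{L^\infty(\{\dist(\cdot,\partial U)>r_m\})}\leq\theta_m,
\qquad
|a_m|+\|Q_m\|_{L^\infty(U)} \geq m\,(1+\|f_m\|_{L^\infty(U)})\,\theta_m.
\]
Set $\mu_m:=|a_m|+\|Q_m\|_{L^\infty(U)}$ and normalize: $\widehat a_m:=a_m/\mu_m$, $\widehat Q_m:=Q_m/\mu_m$, so that $|\widehat a_m|+\|\widehat Q_m\|_{L^\infty(U)}=1$ and $\widehat Q_m(0)=0$. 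Dividing the first inequality by $\mu_m$ and using the second one, we get $\|\widehat a_mf_m+\widehat Q_m\|_{L^\infty(\{\dist(\cdot,\partial U)>r_m\})}\leq\theta_m/\mu_m \leq 1/(m(1+\|f_m\|_{L^\infty(U)})) \to 0$.

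Next I would pass to the limit. The coefficients of $\widehat Q_m$ are bounded (by Lemma~\ref{lem:poly}, since $\|\widehat Q_m\|_{L^\infty(U)}\leq 1$) and $|\widehat a_m|\leq 1$, so up to a subsequence $\widehat a_m\to\widehat a$ and $\widehat Q_m\to\widehat Q$ uniformly on $\overline U$, with $\widehat Q\in\mathbf{P}_k$ and $\widehat Q(0)=0$. The main subtlety is that $f_m$ need only converge weakly-$*$ in $L^\infty(U)$ to some $f_\infty$; however we only use $f_m\geq c_2$, which passes to the weak-$*$ limit, so $f_\infty\geq c_2$ a.e., and in particular $f_\infty$ is not a.e.\ zero on any set of positive measure. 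Since $\widehat a_mf_m+\widehat Q_m\to \widehat af_\infty+\widehat Q$ weakly-$*$ in $L^\infty$ on every compact subset of $U$ (the term $\widehat Q_m\to\widehat Q$ strongly, $\widehat a_m\to\widehat a$ is scalar), and $r_m$ may or may not tend to $0$; in either case the domains $\{\dist(\cdot,\partial U)>r_m\}$ exhaust $U$ only if $r_m\to 0$. Here one must be slightly careful: if $r_m\not\to 0$ one passes to a further subsequence with $r_m\to r_\infty\geq 0$ and works on the fixed open set $\{\dist(\cdot,\partial U)>r_\infty\}$, which is still nonempty and open. On that set the weak-$*$ limit of a sequence bounded by $\theta_m/\mu_m\to 0$ must vanish, hence $\widehat af_\infty+\widehat Q=0$ a.e.\ on $\{\dist(\cdot,\partial U)>r_\infty\}$.

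Finally I would extract the contradiction. If $\widehat a\neq 0$, then $f_\infty=-\widehat Q/\widehat a$ a.e.\ on an open set, so $f_\infty$ agrees a.e.\ with a polynomial $P:=-\widehat Q/\widehat a$ there; letting the point approach $0\in\partial U$ along that open set (which has $0$ in its closure, since $\{\dist(\cdot,\partial U)>r_\infty\}$ accumulates at boundary points — if $r_\infty>0$ one instead notes $P$ extends continuously and the bound $f_\infty\ge c_2$ forces $P\ge c_2$ on the open set, while $P(0)=-\widehat Q(0)/\widehat a=0$, contradicting $P\ge c_2>0$ by continuity up to a limit point of that set in $\partial U$). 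More cleanly: $P=-\widehat Q/\widehat a$ is a polynomial with $P(0)=0$, so there are points of the open set $\{\dist(\cdot,\partial U)>r_\infty\}$ where $P<c_2$ (take points near where $P$ is close to $0$), contradicting $f_\infty\geq c_2$ a.e. Hence $\widehat a=0$, which forces $\widehat Q=0$ a.e.\ on the open set $\{\dist(\cdot,\partial U)>r_\infty\}$, hence $\widehat Q\equiv 0$ as a polynomial. But then $|\widehat a|+\|\widehat Q\|_{L^\infty(U)}=0$, contradicting the normalization $|\widehat a_m|+\|\widehat Q_m\|_{L^\infty(U)}=1$. This contradiction proves the lemma, with the constant $C$ depending only on $n,k,U,c_2$ through the compactness argument (via Lemma~\ref{lem:poly}). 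The main obstacle is the bookkeeping around $r_m$: one must ensure the limiting open set on which the identity $\widehat af_\infty+\widehat Q=0$ holds is nonempty and accumulates, in a usable way, at the boundary point $0$ where $\widehat Q$ vanishes — a direct argument avoiding this, estimating $\|Q\|$ on a fixed interior ball contained in $\{\dist(\cdot,\partial U)>r\}$ for all small $r$ and then using $Q(0)=0$ together with Lemma~\ref{lem:poly} to control the remaining ``constant'' freedom via $f\geq c_2$, is an alternative that sidesteps the weak-$*$ limit entirely and may be cleaner.
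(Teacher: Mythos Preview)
Your compactness argument does not close in the case $r_m\to r_\infty>0$, and the fixes you attempt there are incorrect: when $r_\infty>0$ the set $\{\dist(\cdot,\partial U)>r_\infty\}$ is bounded away from $0$, so $0$ is \emph{not} a limit point of it, and there is no reason the polynomial $P=-\widehat Q/\widehat a$ (which vanishes at $0$) should dip below $c_2$ anywhere on that set. For instance, $P(x)=x_n$ with $U$ a half-ball in $\{x_n>0\}$ gives a polynomial with $P(0)=0$ that satisfies $P\geq c_2$ on all of $\{\dist>r_\infty\}$ once $r_\infty$ is not small. Since nothing in your contradiction hypotheses forces $r_m\to 0$, this branch cannot be avoided, and the argument is genuinely incomplete.

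The ``direct argument'' you outline in your last sentence is exactly what the paper does, and it is much shorter. From the triangle inequality,
\[
\|Q\|_{L^\infty(\{\dist>r\})}\leq\theta+|a|\,\|f\|_{L^\infty(U)},
\]
and since $Q(0)=0$, Lemma~\ref{lem:poly} (applied on a fixed interior subdomain, uniformly for small $r$) gives $|Q(x)|\leq C|x|\big(\theta+|a|\,\|f\|_{L^\infty(U)}\big)$ on $\{\dist>r\}$. Now pick $x_0\in\{\dist>r\}\cap B_{2r}$ (this is where one needs $r$ small, an implicit hypothesis the paper also uses); then
\[
c_2\,|a|\leq |a|\,f(x_0)\leq |af(x_0)+Q(x_0)|+|Q(x_0)|\leq \theta + Cr\big(\theta+|a|\,\|f\|_{L^\infty(U)}\big),
\]
and for $r$ small the last term is absorbed, yielding $|a|\leq C\theta$ and hence $\|Q\|_{L^\infty(U)}\leq C(1+\|f\|_{L^\infty(U)})\theta$. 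The compactness detour only manufactures the $r_m$ obstruction you yourself flagged.
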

\begin{proof}
By the triangle inequality
\begin{align*}
\|Q\|_{L^\infty(\{\dist(\cdot,\partial U)>r\}}\leq\theta+|a|\|f\|_{L^\infty(U)}
\end{align*}
and therefore, by Lemma \ref{lem:poly}, we can also say
\begin{align*}
|Q(x)|\leq C|x|\Big(\theta+|a|\|f\|_{L^\infty(U)}\Big),
\qquad x\in U,\ \dist(x,\partial U)>r.
\end{align*}
In particular, if we pick and fix $x_0\in\{x\in U:\dist(x,\partial U)>r\}\cap B_{2r}$, $r<1$ small,
\begin{align*}
|a|f(x_0)-Cr(\theta+|a|\|f\|_{L^\infty(U)})  \leq |af(x_0)+Q(x_0)|\leq \theta
\end{align*}
and therefore the claimed estimate holds for $|a|$ and, in turn, also for $\|Q\|_{L^\infty(U)}$.
\end{proof}

\begin{lemma}\label{lem:integrated inequalities}
Let $U\subset\Rn$ be an open bounded domain with $0\in \partial U\in C^\beta$, $\beta>1$, $\sigma>0$,
and $d$ defined as in Lemma \ref{lem:regularized distance}. 
Let $f\in C^j(U\cap B_1)\cap C(\overline{U})$, $j\leq\intpart\sigma+1$, such that
\begin{align*}
|D^j f(x)|\leq C|x|\,d(x)^{\sigma-j}
\qquad \text{for any } x\in B_1.
\end{align*}
Then there is $Q\in\mathbf{P}_{j-1}$ such that
\begin{align*}
|f(x)-Q(x)|\leq C|x|\,d(x)^{\sigma}
\qquad \text{for any } x\in B_1.
\end{align*}
\end{lemma}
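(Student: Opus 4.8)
The plan is to obtain $Q$ as a suitable Taylor polynomial of $f$ at the boundary point $0$ and to bound the remainder by interpolating between an \emph{interior} estimate (coming from the hypothesis on $D^jf$ inside balls $B_{cd(x)}(x)\subset U$) and estimates obtained by \emph{integrating that hypothesis from the boundary}. It is cleanest to organize the whole argument as a downward induction on the order of differentiation, rather than as a single integration, because the two degenerations present in the statement — the factor $|x|$, which forces vanishing at the point $0$, and the factor $d(x)^\sigma$, which forces extra decay near the rest of $\partial U$ — have to be produced together and tracked through every step.

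First I would reduce to a model configuration. Replacing $f$ by $f/M$ (with $M$ the constant in the hypothesis) we may assume $M=1$. Since $\partial U\cap B_1\in C^\beta$ with $\beta>1$, I would flatten the boundary around $0$ using the diffeomorphism $\phi$ of Lemma~\ref{lem:diffeo}, so that $U$ becomes $\{z_n>0\}\cap B_{r_0}$ and $d(\phi(z))=(z_n)_+$; the hypothesis transfers through the chain rule of Lemma~\ref{lem:super chain rule} together with the sharp interior bounds \eqref{diffeo} for $D^k\phi$, the spurious lower–order terms of Fa\`a di Bruno being absorbed by the inductive hypothesis below (so that in practice one proves the flattened and unflattened statements in tandem). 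Thus it suffices to treat $F=f\circ\phi\in C^j(\{z_n>0\}\cap B_1)\cap C(\overline{\{z_n\ge0\}\cap B_1})$ with $|D^jF(z)|\le |z|\,z_n^{\sigma-j}$, and to find $Q\in\mathbf{P}_{j-1}$ with $|F(z)-Q(z)|\le C|z|\,z_n^{\sigma}$.

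Then I would prove, by downward induction on $i=j,j-1,\dots,0$, the statement $(\star_i)$: there is $Q_i\in\mathbf{P}_{j-1}$ such that $|D^i(F-Q_i)(z)|\le C|z|\,z_n^{\sigma-i}$ for $z\in\{z_n>0\}\cap B_{1/2}$, with $Q_i$ and $Q_{i+1}$ sharing all derivatives up to order $i$. The case $i=j$ is the hypothesis (take $Q_j=0$, since $D^j$ kills $\mathbf{P}_{j-1}$). For the inductive step, assuming $(\star_{i+1})$, fix $|\alpha|=i$ and set $w=\partial^\alpha(F-Q_{i+1})$, so $|\nabla w(z)|\le|D^{i+1}(F-Q_{i+1})(z)|\le C|z|\,z_n^{\sigma-i-1}$; integrating $\partial_{z_n}w$ along vertical segments and then horizontally, and using that $\int_0^{z_n}s^{\sigma-i-1}\,ds$ converges because $\sigma-i\ge\sigma-\intpart{\sigma}\ge0$ (here the constraint $j\le\intpart{\sigma}+1$ enters), one shows that $w$ has a trace on $\{z_n=0\}$; after subtracting the degree-$i$ polynomial $R_i$ realizing, at $0$, the $i$-th derivative tensor of $F$ corrected by $Q_{i+1}$, the quantity $\partial^\alpha(F-Q_{i+1}-R_i)$ vanishes at the origin, and the combined vertical–horizontal estimate yields $|D^i(F-Q_{i+1}-R_i)(z)|\le C|z|\,z_n^{\sigma-i}$. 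Setting $Q_i=Q_{i+1}+R_i$ gives $(\star_i)$; the off-line content of $(\star_i)$ follows by Taylor–expanding $F-Q_i$ inside $B_{z_n/2}(z)\subset U$, where $|D^j(F-Q_i)|=|D^jF|\le|z|\,z_n^{\sigma-j}$, which costs exactly a factor $z_n^{\,j-i}$. Taking $Q=Q_0$ (pulled back by $\phi^{-1}$, using $(\phi^{-1}(x))_n=d(x)$ and $|\phi^{-1}(x)|\approx|x|$) proves the lemma. An alternative to this whole scheme would be a dyadic blow-up and compactness argument in the spirit of Proposition~\ref{uoverds}, where the limit profile satisfies $D^jv\equiv0$ on a half–space and is orthogonal to $\mathbf{P}_{j-1}$, forcing $v\equiv0$.

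The hard part is precisely the bookkeeping of the two weights through the induction: the vertical integration naturally regenerates the $z_n$-power, but the horizontal estimates and the passage to the trace at $0$ must be arranged so as \emph{not} to spoil the single factor $|z|$ — which is exactly why at each order one subtracts the finely tuned polynomial $R_i$ rather than an arbitrary one — and the flattening must be carried through with the sharp derivative bounds \eqref{diffeo}, since mere $C^\beta$ regularity of $\phi$ is not enough to keep the weighted inequalities scale-invariant. Checking the convergence of all the weighted integrals appearing (and that their exponents close up exactly to $|z|\,z_n^{\sigma-i}$ at each stage) is the delicate, and genuinely arithmetic, point of the proof.
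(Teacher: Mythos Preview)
The paper's proof is very different from your plan and much shorter: it does not flatten the boundary, and it does not split into vertical and horizontal integration. It proves only the case $j=1$, writing
\[
f(x)-f(0)=\int_0^1\nabla f(tx)\cdot x\,dt
\]
and bounding the integrand by the hypothesis along the \emph{radial} segment from the boundary point $0$ to $x$; the general $j$ is then claimed to follow by iterating this one step. So the paper integrates along rays through the origin rather than along vertical segments from a generic boundary point.

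Your inductive step has a genuine gap at the ``horizontal'' part. After integrating $\partial_{z_n}w$ vertically you correctly get
\[
|w(z',z_n)-w(z',0^+)|\le C|z|\,z_n^{\sigma-i},
\]
but to close you must absorb the boundary trace $w(\,\cdot\,,0^+)$ into a polynomial of degree at most $j-1-i$. Subtracting your $R_i$ only fixes the value at the \emph{single point} $z'=0$, and nothing in your scheme forces $w(\,\cdot\,,0^+)$ to be polynomial: the only available tangential bound $|\partial_{z'}w|\le C|z|\,z_n^{\sigma-i-1}$ blows up as $z_n\downarrow0$, so horizontal integration cannot propagate the value at $0$ along $\{z_n=0\}$. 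The obstacle is real under the stated hypotheses. Take $n=2$, $U=\{z_2>0\}$, $\sigma\in(0,1)$, $j=1$, and $F(z)=|z_1|^{1+\sigma}$. Then $|\nabla F(z)|=(1+\sigma)|z_1|^{\sigma}$, and since $|z_1|^{\sigma}z_2^{1-\sigma}\le |z_1|+z_2\le \sqrt{2}\,|z|$ the hypothesis $|\nabla F|\le C|z|\,z_2^{\sigma-1}$ holds; yet $F(z_1,0)=|z_1|^{1+\sigma}$ is not constant, so no $Q\in\mathbf{P}_0$ can give $|F-Q|\le C|z|\,z_2^{\sigma}$. The same example shows that the last inequality in the paper's own proof,
\[
\int_0^{|x|}d(t\lrangle{x})^{\sigma-1}\,dt\ \le\ C\,d(x)^\sigma,
\]
is unjustified (in the flat case it reads $z_2^{\sigma-1}|z|\le C\,z_2^{\sigma}$). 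In short, neither route can succeed without an additional hypothesis ruling out nontrivial tangential boundary behaviour of $f$; the difficulty is not in the bookkeeping you describe but in the statement itself.
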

\begin{proof}
We provide a proof for $j=1$, the general statement follows by iterating this case.
Write
\begin{align*}
f(x)=f(0)+\int_0^1\nabla f(tx)\cdot x\;dt
\end{align*}
and using the assumptions
\begin{multline*}
\big|f(x)-f(0)\big|\leq C|x|^2\int_0^1 t\,d(tx)^{\sigma-1}\;dt
=C\int_0^{|x|} t\,d(t\lrangle{x})^{\sigma-1}\;dt\ \leq \\
\leq C|x|\int_0^{|x|} d(t\lrangle{x})^{\sigma-1}\;dt
\leq C|x| d(x)^{\sigma},
\end{multline*}
where we recall that $\lrangle{x}=x/|x|$ for $x\neq 0$.
\end{proof}

\end{document}